\documentclass[a4paper,english]{smfart}

\usepackage[T1]{fontenc} 
\usepackage[utf8]{inputenc}
\usepackage[english]{babel} 
\usepackage{newpxtext} %
\usepackage{eulervm} %

\usepackage{amssymb,textcomp,mathrsfs,braket,commath,relsize}

\usepackage{stmaryrd}  %
	\expandafter\def\csname opt@stmaryrd.sty\endcsname
	{only,shortleftarrow,shortrightarrow}

\usepackage{bm} %
	\SetSymbolFont{stmry}{bold}{U}{stmry}{m}{n} %

\usepackage{etoolbox} 
    \newcommand{\addQEDstyle}[2]{\AtBeginEnvironment{#1}{\pushQED{\qed}\renewcommand{\qedsymbol}{#2}}
    \AtEndEnvironment{#1}{\popQED}} %
    \addQEDstyle{rema}{$\triangle$}
    \addQEDstyle{rema*}{$\triangle$} %
	\addQEDstyle{exem}{$\triangle$}
    \addQEDstyle{exem*}{$\triangle$} %

\usepackage{tikz}
	\usetikzlibrary{cd} %

\usepackage{float} %
\usepackage{extpfeil} %
\usepackage{accents}
\interfootnotelinepenalty=10000 %
\AtBeginDocument{\def\MR#1{}} %
\apptocmd{\sloppy}{\hbadness 10000\relax}{}{} %

\AtBeginDocument{%
}

\usepackage{mathtools}
	\mathtoolsset{showonlyrefs} %

\usepackage{smfthm} 
	\NumberTheoremsAs{subsubsection}\SwapTheoremNumbers

\usepackage{mymacros}

\usepackage[colorlinks,psdextra,pdfencoding=auto]{hyperref}
	\hypersetup{colorlinks,linkcolor={red!50!black},filecolor={green!50!black},urlcolor={blue!80!black},citecolor={blue!50!black}}

\begin{document}
\frontmatter

\title[Wild orbits \& generalised singularity modules]{Wild orbits and generalised singularity modules:
	stratifications and quantisation} %

\author[D.~Calaque]{Damien Calaque}
\address[D.~Calaque]{Institut Montpelliérain Alexander Grothendieck (IMAG),
	University of Montpellier,
	Place Eugène Bataillon 34090 Montpellier (France)}
\email{damien.calaque@umontpellier.fr}
\thanks{D.~C.
	has received funding from the European Research Council (ERC) under the European Union’s Horizon 2020 research and innovation programme (Grant Agreement No.
	768679).}

\author[G.~Felder]{Giovanni Felder}
\address[G.~Felder]{Department of Mathematics,
	ETH Zurich,
	Rämistrasse 101,
	8092 Zürich (Switzerland)}
\email{giovanni.felder@math.ethz.ch}
\thanks{G.~F.
	is partly supported by the National Centre of Competence in Research SwissMAP---%
	The Mathematics of Physics---%
	of the Swiss National Science Foundation.}

\author[G.~Rembado]{Gabriele Rembado}
\address[G.~Rembado]{Institut Montpelliérain Alexander Grothendieck (IMAG),
	University of Montpellier,
	Place Eugène Bataillon 34090 Montpellier (France)}
\email{gabriele.rembado@umontpellier.fr}
\thanks{G.~R.
	was supported by the Deutsche Forschungsgemeinschaft (DFG,
	German Research Foundation) under Germany’s Excellence Strategy - GZ 2047/1,
	Projekt-ID 390685813;
	he is now funded by the European Commission under the grant agreement n.~101108575 (HORIZON-MSCA project~\href{https://cordis.europa.eu/project/id/101108575}{QuantMod}),
	and by the \emph{Ministero de Ciencia} under the \emph{Innovación y Universidades}' grant PID2024-155686NB-I00.}

\author[R.~Wentworth]{Richard Wentworth}
\address[R.~Wentworth]{Department of Mathematics,
	University of Maryland,
	College Park,
	MD 20742 (United States)}
\email{raw@umd.edu}
\thanks{R.~W.'s research is supported by the NSF grants DMS-2204346 and DMS-2506596.}

\subjclass{14D21; 17B22; 17B67; 32S60; 54E20; 17B08; 53D55; 81T40}

\keywords{Affine Lie algebras,
	gauge theory,
	conformal field theory,
	generalised Verma modules,
	irregular-singular connections,
	Levi factors,
	topological stratifications}

\begin{abstract}
		We study truncated gauge-orbits through principal parts of irregular-singular connection germs,
	in the untwisted/unramified setting:
	for any connected complex reductive structure group $G$,
	in the general multilevel case.
	
    In particular,
	we compute the stabilisers of the formal normal forms using filtrations of Levi root systems,
    showing that they are connected.
	When the residue is semisimple we then stratify the space of orbits by the conjugacy class of the stabilisers,
    i.e.,
    by quotients of root-valuation strata;
    the dense stratum corresponds to the generic setting of isomonodromic deformations,
    à la Jimbo--Miwa--Ueno.
	
    Then we adapt a result of Alekseev--Lachowska to deformation-quantise nongeneric orbits.
	The $\ast$-product involves affine-Lie-algebra modules,
	extending:
	(i) the parabolic Verma modules (in the case of regular singularities);
	and (ii) the `singularity' modules of F.--R.
	(in the case of generic irregular singularities).
	They contain Whittaker vectors for the Gaiotto--Teschner/Bonelli--Maruyoshi--Tanzini Virasoro pairs in irregular Liouville conformal field theory,
	and they provide all the quotients obtained by leaving the aforementioned dense strata.
	We also construct Shapovalov forms for the corresponding representations of truncated-current Lie algebras,
	which enter into the category $\mathcal O$ of Chaffe--Topley;
	and we state a sharp irreducibility criterion.
	Finally,
	we use these representations to construct vector bundles of genus-zero vacua/covacua,
	equipped with flat connections à la Knizhnik--Zamolodchikov/Reshetikhin.
\end{abstract}

{\let\newpage\relax\maketitle} %

\setcounter{tocdepth}{1}  %
\tableofcontents

\mainmatter

\section{Introduction and main results}
\label{sec:intro}

\subsection{}

This text consists of two parts,
covering a semiclassical and a quantum side.
They are respectively motivated by the theory of isomonodromic deformations of irregular-singular meromorphic connections on Riemann surfaces,
and by irregular versions of the Wess--Zumino--Novikov--Witten model (= WZNW~\cite{wess_zumino_1971_consequences_of_anomalous_ward_identities,novikov_1982_the_hamiltonian_formalism_and_a_multivalued_analogue_of_morse_theory,witten_1983_global_aspects_of_current_algebra,witten_1984_nonabelian_bosonization_in_two_dimensions}),
in 2d conformal field theory (= CFT~\cite{belavin_polyakov_zamolodchikov_1984_infinite_conformal_symmetry_in_two_dimensional_quantum_field_theory,segal_1988_the_definition_of_conformal_field_theory}).

This introduction provides some motivation and historical background (cf.~\S~\ref{sec:background}),
before moving on to a description of the main results (cf.~\S~\ref{sec:main_results_1}--\ref{sec:main_results_2}),
and to a layout of the sections in the body/appendices of this text (cf.~\S~\ref{sec:layout})

\subsection{Some motivation/background}
\label{sec:background}

On the semiclassical side,
the differential equations governing isomonodromic deformations can be regarded as \emph{nonlinear} flat (Ehresmann~\cite{ehresmann_1995_les_connexions_infinitesimales_dans_un_espace_fibre_differentiable}) connections.
They are defined on Poisson/symplectic fibre bundles of moduli spaces arising in 2d gauge theory.
In turn,
the quantisation of certain examples of such isomonodromy connections yields \emph{linear} projectively-flat (Koszul) connections,
defined on vector bundles of conformal blocks and nonabelian theta functions.
This encompasses important archetypes in conformal/topological quantum field theory (cf.~Rmk.~\ref{rmk:quantum_monodromy_2} on the conformal side).

The main point is that the present scope of the semiclassical theory suggests that a lot more quantum connections should exist.

\subsubsection{}

In more details,
recall that Jimbo--Miwa--Ueno~\cite{jimbo_miwa_ueno_1981_monodromy_preserving_deformation_of_linear_ordinary_differential_equations_with_rational_coefficients_i_general_theory_and_tau_function} (cf.~\cite{jimbo_miwa_1981_monodromy_preserving_deformaton_of_linear_ordinary_differential_equations_with_rational_coefficients_ii,jimbo_miwa_1982_monodromy_preserving_deformaton_of_linear_ordinary_differential_equations_with_rational_coefficients_iii}) defined explicit isomonodromy equations for \emph{generic} irregular-singular connections on trivial vector bundles over $\mb C P^1$:
having high-order poles and regular semisimple leading term at each pole.
This vastly extended the logarithmic case of Fuchsian system,
in classical work of Schlesinger~\cite{schlesinger_1905_ueber_die_loesungen_gewisser_linearer_differentialgleichungen_als_funktionen_der_singularen_punkte,schlesinger_1912_ueber_eine_klasse_von_differentialsystemen_beliebiger_ordnung_mit_festen_kritischen_punkten},
R.~Fuchs~\cite{fuchs_1907_ueber_lineare_homogene_differentialgleichungen_zweiter_ordnung_mit_drei_im_endlichen_gelegenen_wesentlich_singulaeren_stellen},
Garnier~\cite{garnier_1912_sur_des_equations_differentielles_du_troisieme_ordre_dont_l_integrale_generale_est_uniforme_et_sur_une_classe_d_equations_nouvelles_d_ordre_superieur_dont_l_integrale_generale_a_ses_points_critiques_fixes},
etc.

The nonlinear partial differential equations of~\cite{jimbo_miwa_ueno_1981_monodromy_preserving_deformation_of_linear_ordinary_differential_equations_with_rational_coefficients_i_general_theory_and_tau_function} were rephrased geometrically by Boalch~\cite{boalch_2001_symplectic_manifolds_and_isomonodromic_deformations},
as a symplectic flat connection on a symplectic fibre bundle.
This was later extended to include general connected reductive structure groups~\cite{boalch_2002_g_bundles_isomonodromy_and_quantum_weyl_groups, boalch_2007_quasi_hamiltonian_geometry_of_meromorphic_connections};
also in the \emph{nongeneric}~\cite{boalch_2014_geometry_and_braiding_of_stokes_data_fission_and_wild_character_varieties,boalch_2012_simply_laced_isomonodromy_systems} and \emph{twisted} setting~\cite{boalch_yamakawa_2015_twisted_wild_character_varieties},
covering the general setup of (reductive) meromorphic 2d gauge theory.
What matters is that the base spaces of these bundles encode the time-variables of isomonodromic deformations in intrinsic fashion,
by parameterising admissible deformations of \emph{wild Riemann surfaces}~\cite[Def.~8.1+~10.1]{boalch_2014_geometry_and_braiding_of_stokes_data_fission_and_wild_character_varieties}:
namely,
they map to moduli spaces of pointed Riemann surfaces by adding on an irregular type/class at each marked point.
The latter control---%
the irregular part of---%
the formal normal forms of the meromorphic connections;
cf.~below,
as well as~\cite{doucot_rembado_tamiozzo_local_wild_mapping_class_groups_and_cabled_braids,doucot_rembado_2025_topology_of_irregular_isomonodromy_times_on_a_fixed_pointed_curve,boalch_doucot_rembado_2025_twisted_local_wild_mapping_class_groups_configuration_spaces_fission_trees_and_complex_braids,doucot_rembado_tamiozzo_moduli_spaces_of_untwisted_wild_riemann_surfaces,doucot_rembado_yamakawa_twisted_g_local_wild_mapping_class_groups} (whose many references point to the important past work of many more people,
besides the authors and their collaborators.)

\subsubsection{}
\label{sec:some_references}

This viewpoint lends itself to \emph{quantisation},
asking whether one can replace such symplectic local systems with (projectively) flat vector bundles over the same base (cf.~\cite[\S~1]{boalch_2001_symplectic_manifolds_and_isomonodromic_deformations}).
Ultimately,
such a theory of \emph{quantum} isomonodromic deformations leads to (projective) representations of generalised mapping class/braid groups,
via the monodromy of flat (projective) connections à la Hitchin/WZNW.

Notably,
this framework relates with:
\begin{itemize}
	\item
	      the geometric quantisation of compact Chern--Simons gauge theory~\cite{hitchin_1990_flat_connections_and_geometric_quantization,axelrod_dellapietra_witten_1991_geometric_quantisation_of_chern_simons_gauge_theory} (cf.~\cite{andersen_malusa_rembado_2022_genus_one_complex_quantum_chern_simons_theory,andersen_malusa_rembado_2024_sp_1_symmetric_hyperkaehler_quantisation} in the complexified case,
	      as well as~\cite{andersen_2012_hitchin_connection_toeplitz_operators_and_symmetry_invariant_deformation_quantisation});

	\item
	      the Knizhnik--Zamolodchikov connection (= KZ~\cite{knizhnik_zamolodchikov_1984_current_algebra_and_wess_zumino_model_in_two_dimensions}),
	      in 2d CFT (cf.~\cite{tsuchiya_kanie_1987_vertex_operators_in_conformal_field_theory_on_cp1_and_monodromy_representations_of_braid_groups,babujian_kitaev_1998_generalized_knizhnik_zamolodchikov_equations_and_isomonodromy_quantization_of_the_equations_integrable_via_the_inverse_scattering_transform_maxwell_bloch_system_with_pumping});

	\item
	      the higher-genus analogues of KZ,
	      i.e.,
	      the connections of Bernard (= KZB~\cite{bernard_1988_on_the_wess_zumino_witten_models_on_the_torus,bernard_1989_on_the_wess_zumino_witten_models_on_riemann_surfaces}) and Tsuchiya--Ueno--Yamada (= TUY~\cite{tsuchiya_ueno_yamada_1989_conformal_field_theory_on_universal_family_of_stable_curves_with_gauge_symmetries});

	\item
	      Reshetikhin's irregular generalisation of KZ~\cite{reshetikhin_1992_the_knizhnik_zamolodchikov_system_as_a_deformation_of_the_isomonodromy_problem} (cf.~\cite{harnad_1996_quantum_isomonodromic_deformations_and_the_knizhnik_zamolodchikov_equations}),
	      which was given a representation-theoretic derivation in~\cite{felder_rembado_2023_singular_modules_for_affine_lie_algebras_and_applications_to_irregular_wznw_conformal_blocks};

	\item
	      the relation between WZNW conformal blocks and nonabelian theta functions~\cite{beauville_laszlo_1994_conformal_blocks_and_generalized_theta_functions,faltings_1994_a_proof_for_the_verlinde_formula},
	      and of the respective flat connections~\cite{laszlo_1998_hitchin_s_and_wzw_connections_are_the_same};

	\item
	      the `parabolic' extension of the previous identification~\cite{biswas_mukhopadhyay_wentworth_2023_a_hitchin_connection_on_nonabelian_theta_functions_for_parabolic_g_bundles, biswas_mukhopadhyay_wentworth_2024_geometrization_of_the_tuy_wzw_kz_connection, biswas_mukhopadhyay_wentworth_2024_a_parabolic_analog_of_a_theorem_of_beilinson_and_schechtman},
	      which passes through a `geometrised' KZ connection (cf.~\cite{egsgaard_2015_hitchin_connection_for_genus_0_quantum_representation,baier_bolognesi_martens_pauly_2023_the_hitchin_connection_in_arbitrary_characteristic});

	\item
	      Harnad's dual Schlesinger system~\cite{harnad_1994_dual_isomonodromic_deformations_and_moment_maps_to_loop_algebras},
	      which combines with Schlesinger's to yield the isomonodromy equations of Jimbo--Miwa--Môri--Sato (= JMMS~\cite{jimbo_miwa_mori_sato_1980_density_matrix_of_an_impenetrable_bose_gas_and_the_fifth_painleve_transcendent});

	\item
	      the quantisation of the dual Schlesinger system~\cite{boalch_2002_g_bundles_isomonodromy_and_quantum_weyl_groups},
	      and more generally of JMMS~\cite{rembado_2019_simply_laced_quantum_connections_generalising_kz},
	      which yield the connections of De Concini/Millson--Toledano Laredo (= DMT~\cite{millson_toledanolaredo_2005_casimir_operators_and_monodromy_representations_of_generalised_braid_groups}) and Felder--Markov--Tarasov--Varchenko (= FMTV~\cite{felder_markov_tarasov_varchenko_2000_differential_equations_compatible_with_kz_equations});

	\item
	      more generally,
	      the deformation quantisation~\cite{rembado_2019_simply_laced_quantum_connections_generalising_kz,yamakawa_2022_quantization_of_simply_laced_isomonodromy_systems_by_the_quantum_spectral_curve_method} of the `simply-laced' isomonodromy systems~\cite{boalch_2012_simply_laced_isomonodromy_systems} (cf.~\cite{rembado_2020_symmetries_of_the_simply_laced_quantum_connections_and_quantisation_of_quiver_varieties});

	\item
	      the confluence view on quantum isomonodromic deformations~\cite{nagoya_sun_2010_confluent_primary_fields_in_the_conformal_theory,nagoya_sun_2011_confluent_kz_equations_for_sl_n_with_poincare_rank_2_at_infinity, gaiur_mazzocco_rubtsov_2023_isomonodromic_deformations_confluence_reduction_and_quantisation};

	\item
	      more literature on irregular conformal blocks~\cite{feigin_frenkel_toledanolaredo_2010_gaudin_models_with_irregular_singularities,fedorov_2010_irregular_wakimoto_modules_and_the_casimir_connection,gaiotto_teschner_2012_irregular_singularities_in_liouville_theory_and_argyres_douglas_type_gauge_theories,gaiotto_lamypoirier_2013_irregular_singularities_in_the_h3plus_wzw_model,bonelli_maruyoshi_tanzini_2012_wild_quiver_gauge_theories, gaiotto_2013_asymptotically_free_n_equal_2_theories_and_irregular_conformal_blocks,its_lisovyy_tykhyy_2015_connection_problem_for_the_sine_gordon_painleve_iii_tau_function_and_irregular_conformal_blocks,nagoya_2015_irregular_conformal_blocks_with_an_application_to_the_fifth_and_fourth_painleve_equations},
	      and wild de Rham moduli spaces~\cite{yamakawa_2019_fundamental_two_forms_for_isomonodromic_deformations,chaffe_rembado_yamakawa_genus_zero_wild_quantum_de_rham_spaces}.
\end{itemize}

One thus expects to be able to generalise some such examples,
as well as their applications in representation theory/low-dimensional topology.

\subsubsection{}

More concretely,
one result of~\cite{felder_rembado_2023_singular_modules_for_affine_lie_algebras_and_applications_to_irregular_wznw_conformal_blocks} is the construction of a flat vector bundle equipped with an irregular-singular version of KZ.
in a quantum version of flat families of moduli spaces of \emph{generic} irregular-singular connections.
Then,
independently,
the articles~\cite{doucot_rembado_tamiozzo_local_wild_mapping_class_groups_and_cabled_braids,doucot_rembado_2025_topology_of_irregular_isomonodromy_times_on_a_fixed_pointed_curve,doucot_rembado_tamiozzo_moduli_spaces_of_untwisted_wild_riemann_surfaces} studied spaces of irregular isomonodromy times in the \emph{nongeneric} case:
the latter suggest a far-reaching generalisation of the setup of~\cite{felder_rembado_2023_singular_modules_for_affine_lie_algebras_and_applications_to_irregular_wznw_conformal_blocks},
which we explore here.

\subsubsection{}

Let thus $G$ be a connected complex reductive Lie group,
and $\Sigma$ a closed Riemann surface.
The semiclassical side involves a symplectic moduli space $\mc M_{\dR}(\Sigma;G)$ of meromorphic connections on holomorphic principal $G$-bundles over $\Sigma$,
with prescribed polar divisor.
Its symplectic form generalises Narasimhan/Atiyah--Bott/Goldman's~\cite{narasimhan_1970_geometry_of_moduli_spaces_of_vector_bundles,atiyah_bott_1983_yang_mills_equations_over_riemann_surfaces,goldman_1984_the_symplectic_nature_of_fundamental_groups_of_surfaces},
and was first constructed in~\cite{boalch_2001_symplectic_manifolds_and_isomonodromic_deformations}.
(We also tacitly view $G$ as an algebraic group;
and much can be phrased in the algebraic category,
cf.~Rmk.~\ref{rmk:affine_setup} +~\ref{rmk:affine_setting}.)
This is the \emph{de Rham} realisation of---%
the meromorphic version of---%
the $G$-valued first cohomology of $\Sigma$,
in the wild nonabelian Hodge correspondence~\cite{sabbah_1999_harmonic_metrics_and_connections_with_irregular_singularities,biquard_boalch_2004_wild_nonabelian_hodge_theory_on_curves,huang_sun_2023_meromorphic_parahoric_higgs_torsors_and_filtered_stokes_g_local_systems_on_curves}:
cf.~\cite{simpson_1990_harmonic_bundles_on_noncompact_curves,boalch_2011_riemann_hilbert_for_tame_complex_parahoric_connections,biquard_garciaprada_mundetiriera_2020_parabolic_higgs_bundles_and_representations_of_the_fundamental_group_of_a_punctured_surface_into_a_real_group} in the tame case,\fn{
	Rmk.~\ref{rmk:tame_wild_terminology} explains the `tame/wild' terminology.
	Also,
	hereafter,
	`$G$-connection' is a shorthand for `connection on a principal $G$-bundle'.}
and~\cite{hitchin_1987_the_self_duality_equations_on_a_riemann_surface,donaldson_1987_twisted_harmonic_maps_and_the_self_duality_equations,corlette_1988_flat_g_bundles_with_canonical_metrics,simpson_1992_higgs_bundles_and_local_systems} in the nonsingular one;
as well as more work of Simpson~\cite{simpson_1994_moduli_of_representations_of_the_fundamental_group_of_a_smooth_projective_variety_i,simpson_1994_moduli_of_representations_of_the_fundamental_group_of_a_smooth_projective_variety_ii},
and the surveys~\cite{boalch_2012_hyperkaehler_manifolds_and_nonbelian_hodge_theory_of_irregular_curves,boalch_2017_wild_character_varieties_meromorphic_hitchin_systems_and_dynkin_graphs}.

Importantly,
as mentioned above,
in the wild case there are new local moduli at each marked point.
Namely,
the \emph{irregular type} of an untwisted connection at a pole $a \in \Sigma$ is an element $Q \in \mf t (\wh{\ms K}_a) \bs \mf t (\wh{\ms O}_a)$,
where $\wh{\ms O}_a \hra \wh{\ms K}_a$ are the completed local ring/field of $\Sigma$ (cf.~\cite[Def.~7.1]{boalch_2014_geometry_and_braiding_of_stokes_data_fission_and_wild_character_varieties}).
In turn,
the \emph{irregular class} is the orbit of $Q$ under the Weyl group $W \ceqq N_G(T) \bs T$,
where $T \sse G$ is the maximal torus integrating $\mf t$.
(This involves choosing/forgetting a compatible frame at $a$ which `diagonalises' the irregular-singular part.)~Neglecting stability and parabolic/parahoric structures,
on top of a pointed Riemann surface with prescribed irregular types/classes sits a complex Poisson de Rham space;
its symplectic leaves are parameterised by (formal) residue orbits,
governing the (formal) monodromy representation around each pole.

\subsubsection{}

Besides \S~\ref{sec:irregular_blocks},
this text deals with the local situation:
we consider \emph{formal germs} of untwisted meromorphic $G$-connections on Riemann surfaces.
Precisely,
we work over $\mb C$ and consider irregular-singular connections on a principal $G$-bundle over the spectrum of a complete discrete valuation ring.
Choosing for convenience a uniformizer $z$,
these can be defined as equivariant maps from trivialisations of the bundle to 1-forms $\wh{\mc A} \in \mf g (\!( z )\!) \dif z$,
with respect to the action of the formal-holomorphic gauge group $G \llb z \rrb \ceqq G \bigl( \mb C \llb z \rrb \bigr)$ (cf.~\S~\ref{sec:wild_orbits};
and~\cite{fernandezherrero_reduction_theory_for_connections_over_the_formal_punctured_disc} over the punctured disc).
This yields orbits for the standard affine $G\llb z \rrb$-action on $\mf g(\!( z )\!) \dif z$.

But the construction of wild de Rham spaces typically involves truncations,
as dictated by the polar divisor.
Notably,
the standard genus-zero open parts $\mc M^*_{\dR} \sse \mc M_{\dR}$ are $G$-Hamiltonian reductions of products of \emph{truncated} $G\llb z \rrb$-orbits:
cf.~\cite{boalch_2001_symplectic_manifolds_and_isomonodromic_deformations,boalch_2002_g_bundles_isomonodromy_and_quantum_weyl_groups,boalch_2007_quasi_hamiltonian_geometry_of_meromorphic_connections} generically,
and~\cite{boalch_2012_simply_laced_isomonodromy_systems,yamakawa_2019_fundamental_two_forms_for_isomonodromic_deformations,chaffe_rembado_yamakawa_genus_zero_wild_quantum_de_rham_spaces} in the general \emph{nonresonant} nongeneric case.
(One new insight is that nonresonance plays a role in the quantum theory,
cf.~Exmp.~\ref{ex:nonresonance}).
Analogously,
and more generally,
on the Betti side of nonabelian Hodge theory certain operations of `fusion/fission' feature in the construction of wild character varieties $\mc M_{\on B}$~\cite{boalch_2007_quasi_hamiltonian_geometry_of_meromorphic_connections,boalch_2009_through_the_analytic_halo_fission_via_irregular_singularities,boalch_2014_geometry_and_braiding_of_stokes_data_fission_and_wild_character_varieties,boalch_yamakawa_2015_twisted_wild_character_varieties},
which are receptacles for Stokes data---%
involving multiplicative analogues of the orbits.
In particular,
fission at an irregular singularity breaks down the structure group $G$ to a connected reductive subgroup thereof:
this will be relevant for us (cf.~Rmk.~\ref{rmk:fission}).

\subsubsection{}

The `wild orbits' in the title are precisely orbits of \emph{principal/polar parts} of formal $G$-connection germs,
under the (linear) action of finite-order jets of $G$-bundles automorphisms.
Upon fixing an invariant pairing on $\mf g$,
the latter is tantamount to the coadjoint action of a \emph{truncated-current Lie group} (= TCLG,
cf.~\eqref{eq:tclg}),
on the dual of its \emph{truncated-current Lie algebra} (= TCLA~\cite{takiff_1971_rings_of_invariant_polynomials_for_a_class_of_lie_algebras,rais_tauvel_1992_indice_et_polynomes_invariants_pour_certaines_algebres_de_lie,geoffriau_1994_sur_le_centre_de_l_algebre_enveloppante_d_une_algebre_de_takiff,geoffriau_1995_homomorphismo_de_harish_chandra_pour_les_algebres_de_takiff_generalisees,chaffe_2023_category_o_for_takiff_lie_algebras,chaffe_topley_2023_category_o_for_truncated_current_lie_algebras},
a.k.a.~`Takiff' Lie algebra;
cf.~\eqref{eq:tcla}).\fn{
	The TCLAs also appear on the Dolbeault side of the nonabelian-Hodge hyperkähler rotation,
	when considering meromorphic $G$-Higgs fields,
	cf.~\cite{donagi_1997_seiberg_witten_integrable_systems,markman_1998_algebraic_geometry_integrable_systems_and_seiberg_witten_theory}.}~Then in this text:
(i) we study and deformation-quantise nongeneric examples of wild orbits,
viewed as local pieces of wild de Rham spaces in meromorphic 2d gauge theory;
and (ii) we relate with local wild mapping class groups,
and with spaces of genus-zero irregular vacua/covacua.

\begin{rema}
	We should stress again that we focus on the untwisted case,
	with a view towards quantisation.
	We thus consider `very good' orbits~\cite[Def.~4]{boalch_2017_wild_character_varieties_meromorphic_hitchin_systems_and_dynkin_graphs},
	and particularly the subcase of \emph{semisimple} residue.

	Nonetheless,
	the natural semiclassical setup allows for residues with nilpotent parts,
	and it goes beyond the untwisted case.
	E.g.,
	there are extensions of untwisted irregular types/classes which might be of independent interest (cf.~\S~\ref{sec:generalised_irregular_type}--\ref{sec:generalised_birkhoff_residue_orbit} and Rmk.~\ref{rmk:generalised_irregular_classes_and_residue_orbits};
	the twisted case was considered in~\cite{boalch_doucot_rembado_2025_twisted_local_wild_mapping_class_groups_configuration_spaces_fission_trees_and_complex_braids,doucot_rembado_yamakawa_twisted_g_local_wild_mapping_class_groups}).
	We take this more general viewpoint in order to give cleaner inductive proofs for the case of interest.
\end{rema}

\subsection{Main results (I)}
\label{sec:main_results_1}

In brief:
(i) we realise the (marked) wild orbits as homogeneous complex manifolds for TCLGs;
(ii) we define a Shapovalov form on suitable parabolically-induced representations of TCLAs,
in order to deform the natural symplectic structure of the orbits;
and (iii) we use the affine version of the modules to construct parabolic generalisations of the irregular KZ connection.
Here we review the first point;
cf.~instead \S~\ref{sec:main_results_2} for the representation-theoretic quantisation,
and the relation with conformal field theory.

\subsubsection{}

First,
one can introduce a `graded' notion of semisimplicity in the \emph{nonreductive} TCLAs,
thereby filtering the space of orbits (cf.~Def.~\ref{def:s_semisimplicity} and~\S~\ref{sec:wild_orbits_whole}).
It is a natural extension of the usual notion in $\mf g \simeq \mf g^{\dual}$,
and indeed we first review the tame case where the irregular type/class vanishes.

Then we proceed by \emph{marking} the orbits,
generalising the tame situation.
Indeed,
the Cartan subalgebra $\mf t \sse \mf g$ is naturally identified with the space of marked semisimple orbits in $\mf g$,
and it is stratified by \emph{Levi} subsystems of the root system $\Phi = \Phi(\mf g,\mf t)$ (cf.~\S~\ref{sec:pure_isomorphism_tame}).
This is the basic example of a Whitney stratification (cf.~\cite{goresky_macpherson_1988_stratified_morse_theory};
the material we use is reviewed in \S~\ref{sec:stratifications}).
What matters is that the viewpoint of stratifications generalises perfectly to the wild case,
and the phenomenon of fission identifies the natural index set for the nonempty root-valuation strata (cf.~\cite[Prop.~3.4.1]{goresky_kottwitz_macpherson_2009_codimensions_of_root_valuation_strata}).

\subsubsection{}

More precisely,
we consider principal parts of (formal) germs of meromorphic $G$-connections,
with a pole of any order $r \geq 1$.
Under a choice of trivialisation/uniformiser (cf.~Rmk.~\ref{rmk:changing_trivialization_and_uniformizer}),
these can be written
\begin{equation}
	\label{eq:principal_part_intro}
	\mc A = \sum_{i = 1}^r \frac{A_i}{z^i} \dif z \in z^{-1} \mf g[ z^{-1}] \dif z \simeq \mf g (\!( z )\!) \dif z \bs \mf g \llb z \rrb \dif z,
	\qquad A_1,\dc,A_r \in \mf g.
\end{equation}
The tame case corresponds to $r = 1$ (provided that $A_r$ is \emph{not} nilpotent,
cf.~Rmk.~\ref{rmk:tame_wild_terminology}).

After choosing an invariant pairing on $\mf g$,
the 1-form~\eqref{eq:principal_part_intro} matches up with an element of the Poisson variety $\mf g_r^{\dual}$,
where
\begin{equation}
	\mf g_r
	\ceqq \mf g \ots_{\mb C} \bigl( \mb C \llb z \rrb \bs z^r \mb C \llb z \rrb \bigr)
	\simeq \mf g \llb z \rrb \bs z^r \mf g \llb z \rrb.
\end{equation}
The latter is the depth-$r$ TCLA associated with $\mf g$,
integrated by a TCLG denoted by
\begin{equation}
	G\llb z \rrb \lthra G_r \ceqq G \bigl( \mb C \llb z \rrb \bs z^r \mb C \llb z \rrb \bigr).
\end{equation}

As a particular case of a more general definition,
the coadjoint $G_r$-orbit $\mc O \ceqq G_r.\mc A \sse \mf g_r^{\dual}$ (through~\eqref{eq:principal_part_intro}) is $r$-\emph{semisimple} if it passes through an element $\mc A' = \sum_{i = 1}^r A'_i z^{-i} \dif z$ such that the new coefficients $A'_1,\dc,A'_r$ are semisimple and commute with each other.
The $G$-orbit of the (formal) normal form $\mc A'$ is then uniquely determined by $\mc O$,
as per the standard classification (à la Turrittin--Levelt/Babbitt--Varadarajan~\cite{turrittin_1955_convergent_solutions_of_ordinary_linear_homogeneous_differential_equations_in_the_neighborhood_of_an_irregular_singular_point,wasow_1965_asymptotic_expansion_for_ordinary_differential_equations,levelt_1975_jordan_decomposition_for_a_class_of_singular_differential_operators,babbitt_varadarajan_1983_formal_reduction_theory_of_meromorphic_differential_equations_a_group_theoretic_view,sibuya_1990_linear_differential_equations_in_the_complex_domain_problems_of_analytic_continuation,malgrange_1991_equations_differentielles_a_coefficients_polynomiaux};
cf.~the `UTS' orbits of~\cite{chaffe_rembado_yamakawa_genus_zero_wild_quantum_de_rham_spaces}).
Moreover,
since $G$ is connected/reductive,
up to acting diagonally by $G$ one may assume that the tuple $(A'_1,\dc,A'_r)$ actually lies within $\mf t^r \sse \mf g^r$.
In turn,
one can \emph{mark} the orbit by the corresponding point $\mc A' \in \mc O'$:
there are finitely many such choices,
and the bijection $(\mc O,\mc A') \mt (A'_1,\dc,A'_r)$ identifies the space $\mf t^r$ with the set of marked $r$-semisimple orbits.
Then one can prove that:

\begin{theo}[cf.~Cor.~\ref{cor:very_good_infinitesimal_centraliser} +~\ref{cor:very_good_centraliser} and~\S~\ref{sec:about_main_theo_1}]
	\label{thm:main_theo_1}

	The $G_r$-stabilizer of $\mc A'$ is \emph{connected},
	and it is uniquely determined by the `fission' sequence of nested centralisers of $A'_r,\dc,A'_1$.
\end{theo}

\subsubsection{}

Therefore,
the complex geometry of $r$-semisimple orbits is controlled by filtrations of Levi factors of nested parabolic subgroups of $G$ which contains the given maximal torus $T \sse G$.
The latter match up with filtrations of \emph{Levi subsystems of} $\Phi$ (cf.~Def.~\ref{def:levi_filtration} +~Rmk.~\ref{rmk:fission}),
and as mentioned above there is a finite stratification of $\mf t^r$ indexed by---%
depth-bounded---%
Levi filtrations of $\Phi$ (cf.~Lem.~\ref{lem:wild_stratification} +~Cor.~\ref{cor:truncated_wild_stratification}).
It follows that:

\begin{coro}[Thm.~\ref{thm:pure_isomorphism_wild}]
	\label{thm:main_theo_2}

	Two marked $r$-semisimple orbits in the same stratum are \emph{canonically} isomorphic as complex homogeneous $G_r$-manifolds.
\end{coro}

\begin{rema}
	\label{rmk:about_partitions}

	Cor.~\ref{thm:main_theo_2} can be rephrased as follows.
	In view of Thm.~\ref{thm:main_theo_1},
	the identification with the space of marked $r$-semisimple orbits yields a map from $\mf t^r$ to the set of quotients $G_r \bs \bm L$,
	where $\bm L \sse G_r$ is a---%
	connected---%
	complex Lie subgroup:
	this map is \emph{constant} on each root-valuation stratum of $\mf t^r$.
\end{rema}

\subsubsection{}

In particular,
the genericity assumption in the classical theory of isomonodromic deformations (cf.~\cite{birkhoff_1913_the_generalized_riemann_problem_for_linear_differential_equations_and_allied_problems_for_linear_difference_and_q_difference_equations,balser_jurkat_lutz_1979_birkhoff_invariants_and_stokes_multipliers_for_meromorphic_linear_differential_equations,jimbo_miwa_mori_sato_1980_density_matrix_of_an_impenetrable_bose_gas_and_the_fifth_painleve_transcendent,jimbo_miwa_ueno_1981_monodromy_preserving_deformation_of_linear_ordinary_differential_equations_with_rational_coefficients_i_general_theory_and_tau_function,boalch_2001_symplectic_manifolds_and_isomonodromic_deformations};
etc.)~is tantamount to working within the dense stratum.
In this case the exponential factors $q_\alpha \ceqq \alpha (Q) \in z^{-1} \mb C [z^{-1}]$ of the irregular type $Q$ are all nonvanishing ($\alpha \in \Phi$),
and have one and the same pole order.
This pole order is the---%
unique---%
level of $Q$,
and the \emph{nongeneric/multilevel} case is precisely about exploring the deeper strata,
as we do here (cf.~\cite{biquard_boalch_2004_wild_nonabelian_hodge_theory_on_curves,boalch_2009_through_the_analytic_halo_fission_via_irregular_singularities,boalch_2012_simply_laced_isomonodromy_systems,boalch_2014_geometry_and_braiding_of_stokes_data_fission_and_wild_character_varieties,yamakawa_2019_fundamental_two_forms_for_isomonodromic_deformations,cotti_dubrovin_guzzetti_2019_isomonodromy_deformations_at_an_irregular_singularity_with_coalescing_eigenvalues,cotti_2021_on_the_universality_of_integrable_deformations_of_solutions_of_degenerate_riemann_hilbert_birkhoff_problems,cotti_2021_degenerate_riemann_hilbert_birkhoff_problems_semisimplicity_and_convergence_of_wdvv_potentials,sabbah_2021_integrable_deformations_and_degenerations_of_some_irregular_singularities,lopezreyes_2022_isomonodromic_deformations_along_the_caustic_of_a_dubrovin_frobenius_manifold,guzzetti_2022_isomonodromic_deformations_along_a_stratum_of_the_coalescence_locus};
etc.).

Note that such strata are considered in~\cite{doucot_rembado_tamiozzo_local_wild_mapping_class_groups_and_cabled_braids} as fine moduli spaces of formal germs of wild Riemann surfaces.
Namely,
op.~cit.~regards them as \emph{universal} admissible deformation spaces of irregular types (cf.~\cite[Def.~10.1]{boalch_2014_geometry_and_braiding_of_stokes_data_fission_and_wild_character_varieties},
as well as the `$B$-equivalence' of~\cite[\S~4]{yamakawa_2019_fundamental_two_forms_for_isomonodromic_deformations}).
The fundamental groups of the strata are thus---%
local---%
wild analogues of \emph{pure} mapping class groups of pointed surfaces,
whose Poisson/symplectic dynamics on wild character varieties correspond to braiding Stokes data (vastly extending Dubrovin's~\cite[App.~F]{dubrovin_1996_geometry_of_2d_topological_field_theories}).
Then in this text we also take into account the formal residue,
thereby shifting focus from the bases of the de Rham/Betti local systems to their fibres.
(Cf.~\S~\ref{sec:irregular_blocks},
as well as~\cite{doucot_rembado_tamiozzo_moduli_spaces_of_untwisted_wild_riemann_surfaces,boalch_doucot_rembado_2025_twisted_local_wild_mapping_class_groups_configuration_spaces_fission_trees_and_complex_braids,doucot_rembado_yamakawa_twisted_g_local_wild_mapping_class_groups},
about the global setting.)

\subsubsection{}

Before moving on to the quantum side,
we also discuss the change of marking.
This relates with the \emph{full/nonpure} versions of local wild mapping class groups (cf.~\cite{doucot_rembado_2025_topology_of_irregular_isomonodromy_times_on_a_fixed_pointed_curve}),
passing from an irregular type to the underlying irregular class.

Precisely,
in the tame case the Weyl group $W$ permutes the Levi strata of $\mf t$,
and gives rise to a \emph{quotient stratification} of the adjoint quotient $\mb A \ceqq \mf g \bs\!\!\bs G \simeq \mf t \bs W$ (cf.~\S~\ref{sec:quotient_stratifications};
this is a `discriminant stratification'~\cite[\S~1]{bessis_2015_finite_complex_reflection_arrangements_are_k_pi_1}).
In turn,
the latter is the natural---%
moduli---%
space of semisimple orbits in $\mf g \simeq \mf g^{\dual}$,
forgetting the choice of marking (cf.~\S~\ref{sec:nonpure_isomorphism_tame}).
Then again,
for an integer $r \geq 1$ one can consider an $r$-semisimple principal part as in~\eqref{eq:principal_part_intro},
and gauge it to a $\mf t$-valued normal form $\mc A'$.
As mentioned above,
the latter is \emph{not} uniquely determined by the wild orbit $\mc O = G_r.\mc A \sse \mf g_r^{\dual}$:
but its class under the diagonal $W$-action \emph{is} a well-defined invariant of $\mc O$,
and conversely the $W$-orbit $W.\mc A' \sse \mc O$ gives all possible markings.

Eventually,
the quotient $\mb A_r \ceqq \mf t^r \bs W$ can be naturally identified with the space of (unmarked) $r$-semisimple orbits,
and one can extend the topological statements from the tame case:

\begin{theo}[cf.~Thm.~\ref{thm:quotient_wild_stratification} +~Prop.~\ref{prop:intrinsic_excellent_orbit_space} +~Thm.~\ref{thm:nonpure_isomorphism_wild}]
	\leavevmode
	\label{thm:main_theo_3}

	\begin{enumerate}
		\item
		      There is a \emph{finite} stratification of $\mb A_r$,
		      indexed by $W$-orbits of (depth-bounded) Levi filtrations of $\Phi$.

		\item
		      And two $r$-semisimple orbits in the same stratum are isomorphic as complex homogeneous $G_r$-manifolds.
	\end{enumerate}
\end{theo}

\begin{rema}
	The set of noncanonical isomorphisms is controlled by Thm.~\ref{thm:nonpure_isomorphism_wild}.

	Moreover,
	the identification with the space of $r$-semisimple orbits yields a map from $\mb A_r$ to the set of complex homogeneous $G_r$-manifolds (cf.~Rmk.~\ref{rmk:about_partitions}).
	The caveat is that the unmarked orbits only determine \emph{conjugacy classes} of---%
	connected---%
	Lie subgroups $\bm L \sse G_r$.\fn{
		The special choice of marking implies that these subgroups are conjugated by the action of the normaliser $N_G(T) \sse G$ of the given maximal torus,
		in the natural embedding $G \sse G_r$ (cf.~Lem.~\ref{lem:down_to_weyl}).}~Nonetheless,
	the induced map to the set of isomorphism classes of such $G_r$-manifolds is constant on each part of the underlying partition of $\mb A_r$.
	(This rephrases Thm.~\ref{thm:main_theo_3}~(2.).)
\end{rema}

\subsection{Main results (II)}
\label{sec:main_results_2}

Everything is ready for the second part,
which deals with a generalisation of the affine-Lie-algebra modules of~\cite{felder_rembado_2023_singular_modules_for_affine_lie_algebras_and_applications_to_irregular_wznw_conformal_blocks},
and of their finite/truncated version---%
for TCLAs.
(We now call them `singularity modules';
cf.~Fn.~1 of op.~cit.)

\subsubsection{}

Recall that the singularity modules are representations of the affinisation $\wh{\mf g} \thra \mf g (\!( z )\!)$,
corresponding to the invariant pairing on $\mf g$,
induced from a Lie subalgebra of an affine Borel subalgebra $\wh{\mf b} \sse \wh{\mf g}$.

In this text we reinterpret the choice of a Borel subalgebra $\mf b \sse \mf g$ in geometric terms,
as a \emph{polarisation} on a generic wild orbit.
Then,
leveraging the results of \S~\ref{sec:main_results_1},
we derive the natural nongeneric extension:
opposite pairs of filtrations of $\Phi$ by \emph{parabolic subsets} of roots yield transverse polarisations on $r$-semisimple wild orbits (cf.~Thm.~\ref{thm:wild_polarisations}).
This brings forward a natural space of parameters for inducing new modules.

Namely,
we define a Lie subalgebra $\wh{\mf S}^{\bm \psi} \sse \wh{\mf g}$ involving \emph{parabolic filtrations} $\bm \psi$ of the root system $\Phi$ (cf.~Def.~\ref{def:affine_singularity_algebra}).
Then we also fix a scalar $\kappa \in \mb C$ for the action of the central element,
and prove that the space of level-$\kappa$ characters of $\wh{\mf S}^{\bm \psi}$ is uniquely determined by the sequence of---%
centres of---%
Levi factors underlying $\bm \psi$ (cf.~Lem.~\ref{lem:bracket_nested_parabolic} + Cor.~\ref{cor:abelianisation}).
This makes it possible to attach new $U\wh{\mf g}$-modules $\wh M$ to \emph{polarised} (marked) $r$-semisimple orbits.
The latter are the \emph{generalised} affine singularity modules,
featuring in the title (cf.~Def.~\ref{def:affine_singularity_module}).
Besides~\cite{felder_rembado_2023_singular_modules_for_affine_lie_algebras_and_applications_to_irregular_wznw_conformal_blocks},
they also generalise the parabolic/generalised affine Verma modules.

\subsubsection{}

In particular,
we compute the annihilator of the natural cyclic vector $w \in \wh M$.
This yields yet a further extension of the Virasoro relations satisfied by irregular Gaiotto--Teschner/Bonelli--Maruyoshi--Tanzini states (= GT/BMT~\cite{gaiotto_teschner_2012_irregular_singularities_in_liouville_theory_and_argyres_douglas_type_gauge_theories,bonelli_maruyoshi_tanzini_2012_wild_quiver_gauge_theories,felinska_jaskolski_kosztolowicz_2012_whittaker_pairs_for_the_virasoro_algebra_and_the_gaiotto_bonelli_maruyoshi_tanzini_states}),
from the generic case (cf.~Rmk.~\ref{rmk:virasoro_symmetries} +~\ref{rmk:virasoro_symmetries_2},
and~\eqref{eq:irregular_state_relations}).
Moreover,
via the Segal--Sugawara representation of the Virasoro algebra on the completion of $U\wh{\mf g}$,
one can prove that:

\begin{theo}[cf.~Def.~\ref{def:covacua_vacua} +~Thm.~\ref{thm:flat_connection}]
	There is a (strongly) flat connection à la KZ/Reshetikhin,
	defined on the vector bundle of genus-zero vacua/covacua for affine generalised singularity modules,
	extending~\cite{felder_rembado_2023_singular_modules_for_affine_lie_algebras_and_applications_to_irregular_wznw_conformal_blocks} from the generic case.
\end{theo}

\begin{rema}
	The blocks of Feigin--Frenkel--Toledano Laredo~\cite{feigin_frenkel_toledanolaredo_2010_gaudin_models_with_irregular_singularities} are of different flavour.
	We rather work with highest-weight modules at noncritical level,
	and our semiclassical limits recover isomonodromic families of de Rham spaces,
	instead of meromorphic Hitchin systems on the Dolbeault side (cf.~\cite{teschner_2011_quantization_of_the_hitchin_moduli_spaces_liouville_theory_and_the_geometric_langlands_correspondence_i} in Liouville CFT).

	On a related note,
	inducing from truncated-current versions of Cartan subalgebras~\cite{fedorov_2010_irregular_wakimoto_modules_and_the_casimir_connection} does not seem to take into account the natural polarisations on wild orbits;
	nonetheless,
	the fact that op.~cit.~recovers the connections of~\cite{felder_markov_tarasov_varchenko_2000_differential_equations_compatible_with_kz_equations,toledanolaredo_2002_a_kohno_drinfeld_theorem_for_quantum_weyl_groups,millson_toledanolaredo_2005_casimir_operators_and_monodromy_representations_of_generalised_braid_groups} is an important feature which we aim to reproduce in the future.
	Here we just showcase the braiding of marked points (cf.~Rmk.~\ref{rmk:quantum_monodromy_1}--\ref{rmk:quantum_monodromy_2}).
\end{rema}

\subsubsection{}

Importantly,
in view of Thm.~\ref{thm:finite_description},
the spaces of generalised (co)vacua involve the submodule $M \sse \wh M$ generated by the action of holomorphic currents $\mf g \llb z \rrb \sse \mf g(\!( z )\!)$ on the cyclic vector $w \in \wh M$.
This yields new parabolically-induced representations of TCLAs (cf.~Thm.~\ref{thm:finite_modules}).

Thus $M$ is the corresponding \emph{finite} generalised singularity module.
It is more general than a `confluent' Verma module~\cite{jimbo_nagoya_sun_2008_remarks_on_the_confluent_kz_equations_for_sl_2_and_quantum_painleve_equations,nagoya_sun_2011_confluent_kz_equations_for_sl_n_with_poincare_rank_2_at_infinity},
and it also extends the generic TCLA setting of Wilson~\cite{wilson_2011_highest_weight_theory_for_truncated_current_lie_algebras} (cf.~\S~\ref{sec:simple_quotients} +~Rmk.~\ref{rmk:category_O}).
Conversely,
Cor.~7.4 of op.~cit.~provides an irreducibility criterion which---%
in our viewpoint---%
corresponds to choosing a parameter in the dense stratum of the (dual) stratification of $\mf t^r$.
Then descending into lower strata yields nontrivial quotients,
and in turn:

\begin{theo}[cf.~Thm.~\ref{thm:wild_parabolic_to_wild_parabolic}]
	\label{thm:main_theo_4}
	The finite generalised singularity modules exhaust \emph{all} such quotients.
\end{theo}
(Cf.~\S~\ref{sec:affine_wild_parabolic_to_wild_parabolic} in the affine case.)

\subsubsection{}

Thm.~\ref{thm:main_theo_4} is consistent with the tame setting,
where a Borel-induced Verma module for $\mf g$ \emph{cannot} be simple if its highest weight lies on some coroot hyperplanes (cf.~Rmk.~\ref{rmk:minimal_modules}).
However,
inducing from parabolic subalgebras does \emph{not} ensure that the corresponding Verma module be simple:
the standard theory identifies the unique maximal proper submodule as the radical of the Shapovalov form~\cite{shapovalov_1972_a_certain_bilinear_form_on_the_universal_enveloping_algebra_of_a_complex_semisimple_lie_algebra},
involving the usual weight-space decompositions (cf.~\S~\ref{sec:tame_shapovalov}).

Therefore,
we generalise all this material in the wild case,
constructing a Shapovalov form $\mc S$---%
on $M$---%
whose radical coincides with the unique maximal proper submodule $N \sse M$ (cf.~Lem./Def.~\ref{lem:maximal_proper_submodule_wild} +~Thm.~\ref{thm:shapovalov_radical_equals_maximal_proper_submodule_wild}).
In particular,
there is an $\mc S$-orthogonal decomposition of $M$ into weight subspaces for the centre of the Levi factor of a parabolic subalgebra of $\mf g$.

\subsubsection{}

After dealing with additional truncations (cf.~\S~\ref{sec:simple_quotients}),
if follows that the nondegeneracy of the Shapovalov form $\mc S$ controls the unique simple quotient $M \thra L \ceqq M \bs N$ of the new TCLA modules.
Then a recursive usage of the `parabolic induction' functors of Chaffe--Topley~\cite{chaffe_2023_category_o_for_takiff_lie_algebras,chaffe_topley_2023_category_o_for_truncated_current_lie_algebras} leads to a necessary/sufficient irreducibility condition,
extending~\cite{wilson_2011_highest_weight_theory_for_truncated_current_lie_algebras} (cf.~Thm.~\ref{thm:simple_finite_modules}).
Interestingly,
this relates with the \emph{nonresonance} conditions for (formal) germs of untwisted $G$-connections,
viz.,
the requirement that the adjoint action of the (formal) residue has no nonzero integer eigenvalues upon restriction to the centraliser of the irregular type (cf.~\cite[p.~6]{boalch_2008_irregular_connections_and_kac_moody_root_systems},
and~\cite[\S~9.6]{boalch_2012_simply_laced_isomonodromy_systems},
and~\cite{yamakawa_2019_fundamental_two_forms_for_isomonodromic_deformations},
and Exmp.~\ref{ex:nonresonance}).
The consequence for \emph{quantum} de Rham spaces has been worked out in~\cite{chaffe_rembado_yamakawa_genus_zero_wild_quantum_de_rham_spaces} (cf.~Rmk.~\ref{rmk:quantum_de_rham}).

\subsubsection{}

This representation-theoretic setup is reminiscent of the orbit method in geometric quantisation~\cite{kirillov_2004_lectures_on_the_orbit_method},
notably the theorem of Borel--Weil~\cite{tits_1955_sur_certaines_classes_d_espaces_homogens_de_groupes_de_lie,serre_1995_representations_lineaires_et_espaces_homogenes_kaehleriens_des_groupes_de_lie_compacts_d_apres_armand_borel_et_andre_weil}(--Bott~\cite{bott_1957_homogeneous_vector_bundles}).
In brief,
recall that the (dual) simple quotients of standard Verma modules can be geometrically realised via global sections of line bundles over flag varieties;
while the (dual) Vermas correspond to local sections over an open cell,
which need not extend.
This is relevant for the geometrisation of KZ~\cite{egsgaard_2015_hitchin_connection_for_genus_0_quantum_representation,biswas_mukhopadhyay_wentworth_2023_a_hitchin_connection_on_nonabelian_theta_functions_for_parabolic_g_bundles, biswas_mukhopadhyay_wentworth_2024_geometrization_of_the_tuy_wzw_kz_connection},
and indeed our view is towards quantum moduli spaces of (gauge-theoretic data on) principal bundles over Riemann surfaces.

Therefore,
we conclude by proving a precise quantisation statement.
To set it up,
note that Prop.~\ref{prop:nonsingular_characters} characterises \emph{nonsingular} characters in the truncated-current setting.
Under this hypothesis,
a nonsymmetric variant of the wild Shapovalov form $\mc S$ extends the Kirillov--Kostant--Souriau (= KKS~\cite{kirillov_1962_unitary_representations_of_nilpotent_lie_groups,kostant_1970_quantisation_and_unitary_representations_i_prequantization,souriau_1970_structure_des_systemes_dynamiques}) holomorphic-symplectic structure on the corresponding $r$-semisimple coadjoint orbit $\mc O \sse \mf g_r^{\dual}$ (cf.~Thm.~\ref{thm:nondegenerate_character_and_symplectic_form} +~Cor./Def.~\ref{cor:nonsymmetric_shapovalov}).
One thus expects to be able to use the canonical quadratic tensor $F_c \ceqq \mc S_c^{-1}$ of a dilated version of $\mc S$,
provided that the latter is \emph{nondegenerate} for generic values of the dilation parameter $c \in \mb C^{\ts}$,
in order to deformation-quantise the Lie--Poisson(--Berezin) bracket on the symplectic leaf $\mc O$.

This is a viewpoint pursued in~\cite{alekseev_lachowska_2005_invariant_star_product_on_coadjoint_orbits_and_the_shapovalov_pairing} in a quite general setting (cf.~\cite{etingof_schiffmann_lectures_on_the_dynamical_yang_baxter_equations,etingof_varchenko_1999_exchange_dynamical_quantum_groups,etingof_schedler_schiffmann_2000_explicit_quantization_of_dynamical_r_matrices_for_finite_dimensional_semisimple_lie_algebras,enriquez_etingof_2003_quantization_of_alekseev_meinrenken_dynamical_r_matrices,enriquez_etingof_2005_quantization_of_classical_dynamical_r_matrices_with_nonabelian_base}),
essentially matching up with the `triangular' Lie-theoretic setup of~\cite{wilson_2011_highest_weight_theory_for_truncated_current_lie_algebras} .
As a result,
Alekseev--Lachowska's quantisation captures the \emph{monolevel} examples of polarised wild orbits;
but it misses the multilevel case where the Levi/parabolic filtrations are \emph{nonconstant}.
Hence,
we adapt the main construction of~\cite{alekseev_lachowska_2005_invariant_star_product_on_coadjoint_orbits_and_the_shapovalov_pairing} by filling in the absence of $\mb Z$-graded Lie algebras with few technical lemmas,
relying on the $\mc S$-orthogonal weight grading of $M$.
The final result reads as follows:

\begin{theo}[cf.~Thm./Def.~\ref{thm:deformation_quantisation}]
	Let $\mc O \sse \mf g_r^{\dual}$ be an $r$-semisimple orbit,
	equipped with a \emph{balanced} polarisation corresponding to a parabolic filtration $\bm\psi$ (cf.~Rmk.~\ref{rmk:balanced_filtrations});
	choose a marking $\bm\lambda \in \mc O$ in the stratum determined by the Levi factor of $\bm\psi$,
	and let $M$ be the corresponding finite generalised singularity module.
	Then a formal/deformed version of the inverse Shapovalov form of $M$ yields a \emph{local} $G_r$-\emph{invariant deformation quantisation} of $\mc O$.
\end{theo}

(Cf.~\S~\ref{sec:deformation_quantisation_our_case}--\ref{sec:deformation_quantisation_conclusion} for the detailed step-by-step construction of the $\ast$-product.)

\begin{rema}
	Of course it is now quite natural to look into quantisations which use all of the geometric structure at hand.
	E.g.,
	since the standard complex coadjoint orbits carry the hyperkähler structure of Kronheimer--Kovalev--Biquard (= KKB~\cite{kronheimer_1990_a_hyperkaehlerian_structure_on_coadjoint_orbits_of_a_semisimple_complex_group,kovalev_1996_nahm_equations_and_complex_adjoint_orbits,biquard_1996_sur_les_equations_de_nahm_et_la_structure_de_poisson_des_algebres_de_lie_semisimples_complexes}),
	one wonders whether the main construction of~\cite{andersen_malusa_rembado_2024_sp_1_symmetric_hyperkaehler_quantisation} might be adapted.

	On the deformation side,
	it is natural to conjecture that the main construction of~\cite{donin_gurevich_shnider_1996_quantization_of_function_algebras_on_semisimple_orbits_in_g_star} might be generalised to TCLAs---%
	and then related to Thm./Def.~\ref{thm:deformation_quantisation}.
	Moreover,
	we plan to consider the twisted version of this story elsewhere.
\end{rema}

\subsection{Layout of the paper}
\label{sec:layout}

\subsubsection{}

In \S~\ref{sec:setup_1} we gather the data used in the first part of the paper,
involving:
(i) the Levi subsystems of the root system of the Lie algebra of a connected complex reductive Lie group;
(ii) the corresponding strata of semisimple orbits;
(iii) their (complex) symplectic structures;
and (iv) the corresponding actions of (subquotients of) the Weyl group.
In \S~\ref{sec:wild_stratification} we (re)introduce root-valuation stratifications in the wild case,
using filtrations of Levi subsystems.
In \S~\ref{sec:wild_orbits}--\ref{sec:wild_orbits_whole} we recall that wild orbits are attached to isomorphism classes of formal irregular-singular connections,
and classify/mark them in a `truncated' notion of semisimplicity.
In \S~\ref{sec:wild_orbit_strata} we decompose the parameter spaces of marked wild orbits into pieces;
in the untwisted case,
this allows for an explicit description of the orbits as complex homogeneous manifolds.
In \S~\ref{sec:intrinsic_wild_orbit_space} we discuss the change of marking,
getting to intrinsic parameter spaces;
in the untwisted case with semisimple residue,
these are controlled by quotient strata for the Weyl-group action.

This is the end of the first part.

In \S~\ref{sec:setup_2} we collect further data used in the second part of the paper,
involving:
(i) affine Lie algebras;
(ii) (generalised) affine/finite Verma modules;
(iii) Shapovalov forms;
and (iv) parabolic subsets of roots.
In \S~\ref{sec:generic_singularity_modules} we recall the definition of the modules of~\cite{felder_rembado_2023_singular_modules_for_affine_lie_algebras_and_applications_to_irregular_wznw_conformal_blocks}.
In \S~\ref{sec:singularity_subalgebra_and_polarisations} we define affine/finite singularity subalgebras,
involving filtrations of parabolic subsets,
and relate them with polarisations of holomorphic-symplectic wild orbits.
(We also determine the characters of singularity subalgebras,
relating them to the underlying Levi factors.)~In \S~\ref{sec:generalised_singularity_modules} we define the \emph{generalised} affine/finite singularity modules,
compute the relations satisfied by the associated (generalised) irregular state,
and realise them explicitly as quotients of the generic modules.
In \S~\ref{sec:simple_quotients} we make few general remarks about simple quotients of finite generalised singularity modules.
In \S~\ref{sec:nonsingular_characters} we introduce/determine nonsingular characters in the finite wild setting,
with a view towards \S~\ref{sec:deformation_quantisation}.
In \S~\ref{sec:shapovalov} we introduce the Shapovalov form on the new finite modules,
using it to characterise their unique maximal proper ideal,
and we prove a sharp criterion about their simplicity.
In \S~\ref{sec:deformation_quantisation} we adapt the construction of~\cite{alekseev_lachowska_2005_invariant_star_product_on_coadjoint_orbits_and_the_shapovalov_pairing} to deformation-quantise \emph{nongeneric} wild orbits in invariant fashion.
In \S~\ref{sec:irregular_blocks} we extend the construction of flat vector bundles of~\cite{felder_rembado_2023_singular_modules_for_affine_lie_algebras_and_applications_to_irregular_wznw_conformal_blocks}.

\subsubsection{}

In \S~\ref{sec:stratifications} we collect the required material on (topological) stratifications.
In \S~\ref{sec:missing_proofs} we postpone selected proofs,
aiming to ease the reading flow.
In \S~\ref{sec:acronyms} we list the acronyms which appear in this text.

\subsection{Concluding remarks and conventions}

\subsubsection{}

Unless otherwise specified,
in this text:
\begin{enumerate}
	\item all Lie algebras/groups are defined over $\mb C$;

	\item all tensor products are $\mb C$-bilinear;

	\item all modules over noncommutative rings are left.
\end{enumerate}

Also,
the end of remarks/examples is signalled by a `$\triangle$'.

\begin{rema}
	\label{rmk:tame_wild_terminology}

	Let $G \ceqq \GL_m(\mb C)$,
	and identify as usual a principal $G$-bundle with a rank-$m$ vector bundle $E \to \Sigma$,
	where $\Sigma$ is a Riemann surface.
	Choose a point $a \in \Sigma$ and a trivialisation of $E$ around $a$:
	as per Deligne's moderate-growth condition~\cite{deligne_1970_equations_differentielles_a_points_singuliers_reguliers},
	a connection is \emph{tame} (or `regular-singular') at $a$ if any horizontal section on $\Sigma^o \ceqq \Sigma \sm \set{a}$ has at most polynomial growth when approaching the puncture along any direction;
	else it is \emph{wild} (or `irregular-singular').
	E.g.,
	a connection is tame if it is \emph{logarithmic},
	i.e.,
	if it has at most a simple pole at $a \in \Sigma$ in any local trivialisation.
	The converse fails in general,
	but if the leading term is \emph{not} nilpotent then connections with nonsimple poles are wild.

	In view of this,
	hereafter,
	the word `tame' refers to objects associated to simple poles of meromorphic $G$-connections on (germs of) Riemann surfaces,
	i.e.,
	the case $r = 1$ in the above notation.
	Analogously,
	the word `wild' points to high-order poles,
	such as the wild orbits in the title.
\end{rema}

\begin{rema}
	\label{rmk:category_O}

	The results of~\cite{chaffe_2023_category_o_for_takiff_lie_algebras,chaffe_topley_2023_category_o_for_truncated_current_lie_algebras} are quite synergistic with this text.
	In brief,
	there is a \emph{category} $\mc O$ for TCLAs,
	à la Bernstein--Gelfand--Gelfand (= BGG~\cite{bernstein_gelfand_gelfand_1971_structure_of_representations_that_are_generated_by_vectors_of_highest_weight,bernstein_gelfand_gelfand_1975_differential_operators_on_the_base_affine_space_and_a_study_of_g_modules};
	cf.~\cite{humphreys_2008_representations_of_semisimple_lie_algebras_in_the_bgg_category_o}).
	In this terminology,
	the finite generic singularity modules are `regular' Verma modules in category $\mc O$.
	This viewpoint is helpful to establish an irreducibility criterion (cf.~Thm.~\ref{thm:simple_finite_modules}),
	and to deformation-quantise wild de Rham spaces (cf.~Rmk.~\ref{rmk:quantum_de_rham}).

	(Incidentally,
	the \emph{affine} singularity modules might feature in the future in an \emph{affine} BGG category $\mc O$ for TCLAs.)
\end{rema}

\section{Lie-algebraic setup (I)}
\label{sec:setup_1}

\subsection{Basic terminology}

We start by reviewing the objects/notations used in the first part of the paper,
aiming to streamline the exposition:
please refer,
e.g.,
to~\cite{bourbaki_1968_elements_de_mathematiques_fascicule_xxxvii_chapitres_iv_v_vi,humphreys_1972_introduction_to_lie_algebras_and_representation_theory,borel_1991_linear_algebraic_groups,collingwood_mcgovern_1993_nilpotent_orbits_in_semisimple_lie_algebras} for more details.
(Experts might want to skip to \S~\ref{sec:wild_stratification}.)

\subsubsection{}

Let $\mf g = \mf Z_{\mf g} \ops [\mf g,\mf g]$ be a \emph{reductive} Lie algebra.
An element $X \in \mf g$ is semisimple if $\ad_X = [X,\cdot] \in \mf{der}(\mf g)$ is a semisimple endomorphism.
A Cartan subalgebra $\mf t \sse \mf g$ is a maximal abelian Lie subalgebra consisting of semisimple elements.

We consider infinitesimal centralisers of semisimple elements $X \in \mf g$,
i.e.,
Lie subalgebras of the form
\begin{equation}
	\mf l = \mf g^X \ceqq \ker(\ad_X) = \Set{ Y \in \mf g | \ad_Y(X) = 0 },
\end{equation}
so that $\mf t \sse \mf l$ if $X \in \mf t$.
These are \emph{Levi factors} of parabolic subalgebras of $\mf g$,
sometimes abusively referred to as `Levi subalgebras' (cf.~\S~\ref{sec:setup_2}.)

Denote by $\Phi = \Phi(\mf g,\mf t) \sse \mf t^{\dual}$ the root system of $(\mf g,\mf t)$,
which can also be regarded as a (spanning) root system inside $\mf Z_{\mf g}^\perp \sse \mf t^{\dual}$.
There is a root-space decomposition
\begin{equation}
	\label{eq:root_decomposition}
	\mf g = \mf t \ops \bops_\Phi \mf g_\alpha,
	\qquad \mf g_\alpha = \Set{ Y \in \mf g | \ad_X(Y) = \Braket{\alpha,X } Y \text{ for } X \in \mf t } \sse \mf g.
\end{equation}
We use the \emph{dual/inverse} root system $\Phi^{\dual} \sse \mf t$,
containing coroots $\alpha^{\dual} \in \mf t_\alpha \ceqq [\mf g_{-\alpha},\mf g_\alpha]$.
It comes with a canonical bijection $\Phi \lxra{\simeq} \Phi^{\dual}$~\cite[Ch.~6,
	\S~1.1]{bourbaki_1968_elements_de_mathematiques_fascicule_xxxvii_chapitres_iv_v_vi}.
The \emph{rank of} $\mf g$ is the complex dimension of $\mf t$,
while the \emph{semisimple rank of} $\mf g$ is the dimension of $[\mf g,\mf g] \cap \mf t$,
i.e.,
the dimension of $\mf t \slash \mf Z_{\mf g}$;
this is also the \emph{rank of} $\Phi$.

In \S~\ref{sec:tame_strata} we consider a connected reductive Lie group $G$ with Lie algebra $\mf g$,
and the adjoint action $\Ad \cl G \to \Aut(\mf g)$.
(The choice of $G$ does \emph{not} matter,
as we only deal with inner Lie-algebra automorphisms of $\mf g$.)
Then we also consider centralisers of semisimple elements $X \in \mf g$,
i.e.,
connected reductive Lie subgroups of the form
\begin{equation}
	L = G^X \ceqq \Set{ g \in G | \Ad_g(X) = X } ,
\end{equation}
so that $T \sse L$ if $X \in \mf t$,
where $T \sse G$ is the maximal torus integrating $\mf t$.
These are Levi factors of parabolic subgroups of $G$ (a.k.a~`Levi subgroups'),
and the standard correspondence yields $\Lie(L) = \mf l$.

A semisimple adjoint $G$-orbit $\mc O \sse \mf g$ (i.e.,
a `semisimple orbit') is an orbit for the adjoint action through a semisimple element,
whence it consists of semisimple elements.
The orbit through $X \in \mf g$ is denoted by
\begin{equation}
	\mc O_X = G \cdot X \ceqq \Ad_G(X) = \Set{ \Ad_g(X) | g \in G }.
\end{equation}

We also use a nondegenerate $\Ad_G$-invariant symmetric bilinear form on $\mf g$~\cite[Lem.~1.3.4]{collingwood_mcgovern_1993_nilpotent_orbits_in_semisimple_lie_algebras},
first in Rmk.~\ref{rmk:dual_version}.
This is a pairing $(\cdot \mid \cdot) \cl \mf g \ots \mf g \to \mb C$ such that
\begin{equation}
	\bigl( X \mid [Y,Z] \bigr) = \bigl( [X,Y] \mid Z \bigr) ,
	\qquad X,Y,Z \in \mf g,
\end{equation}
making $\bigl( \mf g,(\cdot \mid \cdot) \bigr)$ into a \emph{quadratic} Lie algebra.

In \S~\ref{sec:intrinsic_tame_orbit_space} we consider the Weyl group $W = W(\mf g,\mf t) \simeq N_G(T) \bs T$ of the root system,
acting both on $\mf t$ and $\mf t^{\dual}$ as usual---%
and trivially on $\mf Z_{\mf g}$.
It is generated by reflections $\sigma_\alpha \in \GL_{\mb C}(\mf t^{\dual})$,
for $\alpha \in \Phi$:
a \emph{root subsystem of} $\Phi$ is a subset $\phi \sse \Phi$ which is closed under the Weyl reflections generated by its elements,
i.e.,
$\sigma_\alpha(\phi) \sse \phi$ for $\alpha \in \phi$.

In \S~\ref{sec:relation_meromorphic_connections} we consider a formal variable $z$,
and the corresponding (formal) \emph{loop algebra} $\mf g(\!(z)\!) \ceqq \mf g \ots \mb C(\!(z)\!)$,
invoking the field of formal Laurent series.
Pure tensors are denoted by $Xz^i \ceqq X \ots z^i$,
for $X \in \mf g$ and $i \in \mb Z$.
More generally,
if $V \sse \mf g$ is a vector subspace,
set
\begin{equation}
	\label{eq:graded_piece}
	V \cdot z^i \ceqq \Set{ Xz^i | X \in V } \sse \mf g(\!(z)\!).
\end{equation}
Then multiplying everything by $\dif z$ yields (formal) meromorphic $\mf g$-valued 1-forms $X z^i \dif z \in \mf g(\!(z)\!) \dif z$.

Let finally $r \geq 1$ be an integer.
Starting from \S~\ref{sec:wild_orbits},
we consider the quotient $\mb C$-algebra
\begin{equation}
	\mb C_r \ceqq \mb C \llb z \rrb \bs z^r \mb C \llb z \rrb \simeq \bops_{i = 0}^{r-1} \bigl( \mb C \cdot \varepsilon^i \bigr),
	\qquad \varepsilon \ceqq z + z^r \mb C\llb z \rrb.
\end{equation}
The `deeper' version of $\mf g$ is the TCLA
\begin{equation}
	\label{eq:tcla}
	\mf g_r \ceqq \mf g \ots \mb C_r \simeq \mf g \llb z \rrb \bs z^r \mf g \llb z \rrb,
	\qquad \mf g \llb z \rrb = \mf g \ots \mb C \llb z \rrb,
\end{equation}
and $V \cdot \varepsilon^i \sse \mf g_r$ is defined as in~\eqref{eq:graded_piece},
for $V \sse \mf g$ and $i \in \set{0,\dc,r-1}$.
Analogously,
if $\mf h \sse \mf g$ is any Lie subalgebra,
its `deeper' version is $\mf h_r \ceqq \mf h \ots \mb C_r \sse \mf g_r$.

\subsection{Levi subsystems}

A subset $\phi \sse \Phi$ is a \emph{Levi subsystem} if it is obtained by intersecting $\Phi$ with a vector subspace of $\mf t^{\dual}$,
i.e.,
equivalently,
if one has the equality $\spann_{\mb C}(\phi) \cap \Phi = \phi$.
(One may equivalently take $\mb Q$-linear combinations.)
In particular it is a root subsystem,
but this condition is stronger:
e.g.,
the subsets of short/long roots in type $B_2$ are \emph{not} Levi---%
even though the latter is closed under sum.

We will use the following characterisation,
which relates Levi subsystems with the geometry of semisimple orbits:

\begin{lemm}
	\label{lem:levi_subsystems}

	Let $\phi \sse \Phi$ be a root subsystem.
	Then the following are equivalent:
	\begin{enumerate}
		\item the subset $\phi$ is Levi;

		\item there exists $X \in \mf t$ such that $\phi = \Phi \cap \Set{X}^\perp = \Set{\alpha \in \Phi | \Braket{\alpha,X } = 0 }$;

		\item and the following is a \emph{nonempty} hyperplane complement:
		      \begin{equation}
			      \label{eq:tame_stratum}
			      \mf t_{\phi} \ceqq \ker(\phi) \, \bigsm \, \bigcup_{\Phi \sm \phi} \ker(\alpha),
			      \qquad \ker(\phi) = \bigcap_{\phi} \ker(\alpha) \sse \mf t.
		      \end{equation}
	\end{enumerate}
\end{lemm}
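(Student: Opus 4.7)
My plan is to establish the chain (2) $\Leftrightarrow$ (3), (2) $\Rightarrow$ (1), and (1) $\Rightarrow$ (3); of these, only the last carries genuine content, the others being pure unwinding of definitions plus one finite-dimensional duality fact.

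For the easy directions I would first observe that (2) and (3) say the same thing twice: an $X \in \mf t$ satisfies (2) exactly when $\Braket{\alpha | X} = 0$ for all $\alpha \in \phi$ (i.e. $X \in \Ker(\phi)$) and $\Braket{\alpha | X} \neq 0$ for all $\alpha \in \Phi \sm \phi$ (i.e. $X \notin \Ker(\alpha)$), so existence of such an $X$ is exactly $\mf t_\phi \neq \varnothing$. For (2) $\Rightarrow$ (1), note that the annihilator $\Set{X}^{\perp} \sse \mf t^{\dual}$ is a vector subspace containing $\phi$ and hence $\spann_{\mb C}(\phi)$; so $\spann_{\mb C}(\phi) \cap \Phi \sse \Set{X}^{\perp} \cap \Phi = \phi$, and the reverse inclusion is tautological.

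For (1) $\Rightarrow$ (3), the heart of the proof, I would set $V \ceqq \spann_{\mb C}(\phi) \sse \mf t^{\dual}$, so that $\Ker(\phi) = V^{\perp} \sse \mf t$. The Levi hypothesis $V \cap \Phi = \phi$ says that each $\alpha \in \Phi \sm \phi$ lies outside $V$; by finite-dimensional biduality $(V^{\perp})^{\perp} = V$, so $\alpha \notin V$ translates to $\alpha \notin (V^{\perp})^{\perp}$, i.e. the restriction $\alpha|_{V^{\perp}}$ is a nonzero linear form, and therefore $\Ker(\alpha) \cap V^{\perp}$ is a proper subspace of $V^{\perp}$. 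Since $\Phi \sm \phi$ is finite and $\mb C$ is infinite, the classical fact that a vector space over an infinite field is never a finite union of proper subspaces forces $\mf t_\phi = V^{\perp} \sm \bigcup_{\alpha \in \Phi \sm \phi} \Ker(\alpha)$ to be nonempty. The only (mild) conceptual point where I could foresee a hitch is recognising that (1) is precisely the biduality condition on $V$---once this is identified, the argument is immediate, and it uses neither the root-subsystem structure of $\phi$ nor the Weyl group action.
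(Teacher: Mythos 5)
Your proof is correct and follows essentially the same route as the paper's: the paper likewise observes that $\mf t_{\phi}$ is empty precisely when $\Ker(\phi) \sse \Ker(\alpha)$ for some $\alpha \in \Phi \sm \phi$, which by biduality is equivalent to $\alpha \in \spann_{\mb C}(\phi)$, and the nonemptiness in the Levi case comes (implicitly in the paper, explicitly in yours) from the fact that a complex vector space is not a finite union of proper subspaces. The only difference is presentational: the paper states the equivalence (1)$\Leftrightarrow$(3) as a single contrapositive biconditional, whereas you split it into the two implications and spell out the biduality step $(V^{\perp})^{\perp}=V$ more explicitly.
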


\begin{proof}
	Postponed to~\ref{proof:lem_levi_subsystems}.
\end{proof}

\begin{rema}
	Suppose that $\phi \sse \Phi$ is Levi,
	and let $\phi^{\dual} \sse \Phi^{\dual}$ be the subset of coroots obtained by restricting the canonical bijection to $\phi$.
	Then $\phi^{\dual}$ is a root subsystem of $\Phi^{\dual}$,
	and the restriction of the canonical bijection of $(\Phi,\Phi^{\dual})$ yields the canonical bijection of $(\phi,\phi^{\dual})$~\cite[Ch.~6,
		\S~1.1,
		Prop.~4]{bourbaki_1968_elements_de_mathematiques_fascicule_xxxvii_chapitres_iv_v_vi}.
	Furthermore,
	the subset $\phi^{\dual} \sse \Phi^{\dual}$ is still a Levi subsystem:
	this can be seen by choosing a $W$-invariant nondegenerate pairing $\mf t \ots \mf t \to \mb C$,
	such as the restriction of $(\cdot \mid \cdot) \in \mf g^{\dual} \ots \mf g^{\dual}$---%
	whence $\alpha^{\dual} \in \mf t$ maps to a dilation of $\alpha \in \mf t^{\dual}$.
\end{rema}

\subsection{The Levi poset}
\label{sec:levi_poset}

Let $\mc L_\Phi \sse 2^{\Phi}$ be the set of Levi subsystems of $\Phi$.
We consider it as a poset,
equipped with the \emph{anti-inclusion} order:
$\vn$ is the greatest element,
and $\Phi$ the least one.
Consider then the function
\begin{equation}
	\label{eq:rank_function_levi}
	\rho_\Phi \cl \mc L_\Phi \lra \mb Z_{\geq 0},
	\qquad \rho_\Phi(\phi) \ceqq \dim_{\mb C} \bigl( \ker(\phi) \bigr),
\end{equation}
taking values between the dimension of $\mf Z_{\mf g}$ and the rank of $\mf g$.

\begin{lemm}
	\label{lem:graded_poset}

	The function~\eqref{eq:rank_function_levi} makes $\mc L_\Phi$ into a \emph{graded} poset:
	it is a \emph{rank} function.\fn{
		Note that $\mf g$ is semisimple if and only if $\rho_\Phi(\Phi) = 0$,
		in which case the rank of $\mc L_\Phi$ equals $\rk(\mf g)$.}
\end{lemm}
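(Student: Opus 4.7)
The plan is to reduce the statement to the standard fact that the lattice of flats of a central hyperplane arrangement is ranked by codimension, after identifying $(\mc L_\Phi, \supseteq)$ with the intersection lattice of the root arrangement $\{\Ker(\alpha)\}_{\alpha \in \Phi}$ inside $\mf t$ via the map $\phi \mapsto \Ker(\phi)$. Concretely I will verify two things: (a) $\rho_\Phi$ is strictly monotonic with respect to the anti-inclusion order, and (b) every covering relation $\phi \lessdot \phi'$ satisfies $\rho_\Phi(\phi') = \rho_\Phi(\phi) + 1$.

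For (a), if $\phi \supsetneq \phi'$ are both Levi, then $\spann_{\mb C}(\phi) \supsetneq \spann_{\mb C}(\phi')$ strictly---otherwise both subsystems would coincide with $\Phi \cap \spann_{\mb C}(\phi)$---so $\Ker(\phi) = \spann_{\mb C}(\phi)^{\perp} \subsetneq \spann_{\mb C}(\phi')^{\perp} = \Ker(\phi')$, giving the desired strict inequality $\rho_\Phi(\phi) < \rho_\Phi(\phi')$.

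For (b), starting from a covering $\phi \supsetneq \phi'$ with no Levi strictly between, I will pick any $\alpha \in \phi \sm \phi'$ and exhibit a candidate intermediate Levi built from the subspace $F'' \ceqq \Ker(\phi') \cap \Ker(\alpha)$. The decisive observation is that $\alpha$ does not vanish identically on $\Ker(\phi')$---since $\alpha \notin \phi' = \Phi \cap \spann_{\mb C}(\phi')$ implies $\alpha \notin \spann_{\mb C}(\phi') = \Ker(\phi')^{\perp}$---whereas $\alpha$ vanishes on $\Ker(\phi)$ because $\alpha \in \phi$. Consequently $F''$ is a codimension-one subspace of $\Ker(\phi')$ containing $\Ker(\phi)$, and as an intersection of root hyperplanes it is a flat of the arrangement, so by Lemma \ref{lem:levi_subsystems} the subsystem $\phi'' \ceqq \Phi \cap (F'')^{\perp}$ is Levi with $\Ker(\phi'') = F''$. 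By construction $\phi' \subsetneq \phi'' \subseteq \phi$, and the covering hypothesis forces $\phi'' = \phi$, giving $\Ker(\phi) = \Ker(\phi') \cap \Ker(\alpha)$ and hence $\rho_\Phi(\phi) = \rho_\Phi(\phi') - 1$, as desired.

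I do not anticipate any serious obstacle: the argument is essentially the classical proof that intersection lattices of central hyperplane arrangements are geometric (hence graded), specialised to the root arrangement of $\Phi$. The only genuinely root-theoretic ingredient is the bijective correspondence between Levi subsystems of $\Phi$ and their kernels in $\mf t$, which is already supplied by the preceding lemma.
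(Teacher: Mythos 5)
Your proof is correct, and it takes a genuinely different route from the paper's. The paper first reduces the covering relation to the case where the smaller element of $\mc L_{\Phi}$ is $\Phi$ itself, by replacing $(\mf g,\mf t,\Phi)$ with the split pair $(\mf l_{\wt\phi},\mf t,\wt\phi)$; it then invokes the existence of a base $\Delta$ adapted to $\phi$ (cf.~Rmk.~\ref{rmk:levi_systems_simple_roots}) so that $\phi$ is generated by a subset $\Sigma \subsetneq \Delta$, observes that the covering hypothesis forces $\abs\Sigma = \abs\Delta - 1$, and finishes by a dimension count. Your argument instead stays entirely inside the intersection lattice of the root-hyperplane arrangement: from any $\alpha \in \phi \setminus \phi'$ you build the flat $F'' = \Ker(\phi') \cap \Ker(\alpha)$ of the right codimension, check it is the kernel of a Levi subsystem $\phi''$ sandwiched between $\phi'$ and $\phi$, and let the covering hypothesis collapse $\phi'' = \phi$. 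This is the classical argument that intersection posets of central arrangements are graded by codimension, specialised to the root arrangement, and it avoids both the reduction to $\mf l_{\wt\phi}$ and the use of simple roots. The one step worth spelling out a bit more (which you do handle, just tersely) is $\phi'' \subseteq \phi$: since $\Ker(\phi) \subseteq F''$, one gets $(F'')^{\perp} \subseteq \Ker(\phi)^{\perp} = \spann_{\mb C}(\phi)$, and intersecting with $\Phi$ uses that $\phi$ is Levi. Both routes are sound; the paper's choice is presumably motivated by the fact that the adapted-base machinery is reused later (e.g.~Lem.~\ref{lem:base_levi_filtrations}), while yours is shorter and self-contained for this lemma.
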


\begin{proof}
	Postponed to~\ref{proof:lem_graded_poset}.
\end{proof}

\begin{rema}
	\label{rmk:levi_systems_simple_roots}

	This recovers the intersection lattice of flats for the root-hyperplane arrangement of $(\mf g,\mf t)$.

	Note also that the proof~\ref{proof:lem_graded_poset} uses the fact that for every Levi subsystem $\phi \sse \Phi$ there exists a base $\Delta \sse \Phi$ of simple roots such that $\Delta \cap \phi$ is a base for $\phi$,
	i.e.,
	such that~\eqref{eq:tame_stratum} contains a $\Delta$-dominant element.
	While this is a useful fact,
	beware that one \emph{cannot} fix a base $\Delta$ to obtain all Levi subsystems from subsets $\Sigma \sse \Delta$.
	E.g.,
	if $\mf g = \mf{gl}_3(\mb C)$ then $\abs{\mc L_\Phi} = 5 > 4 = 2^{\rk(\Phi)}$ (cf.~\S~\ref{sec:rank_3_example}).
\end{rema}

\subsection{Marked strata of semisimple orbits}
\label{sec:tame_strata}

Here we review standard constructions around semisimple orbits,
putting forward the perspective of stratifications.

\subsubsection{}

Every semisimple orbit $\mc O \sse \mf g$ intersects the chosen Cartan subalgebra in a finite set.
More precisely,
the intersection $\mc O \cap \mf t$ is a $W$-orbit~\cite[\S~2]{collingwood_mcgovern_1993_nilpotent_orbits_in_semisimple_lie_algebras},
and it will be useful to mark the orbits:

\begin{defi}
	\label{def:tame_marking}
	\leavevmode

	\begin{enumerate}
		\item
		      A $\mf t$-\emph{marking of} $\mc O \sse \mf g$ is the choice of a point $X \in \mc O \cap \mf t$.

		\item
		      With this choice,
		      $\mc O = \mc O_X$ is a \emph{marked} semisimple orbit.
	\end{enumerate}
\end{defi}

(Since $\mf t$ is fixed,
we just speak of a `marking'.)

\subsubsection{}

The geometry of a marked semisimple orbit $\mc O_X \sse \mf g$ is governed by the centraliser $G^X \sse G$ of the marking,
in an isomorphism $\mc O_X \simeq G \bs G^X$ of (homogeneous) complex $G$-manifolds.
Recall that $G^X$ is connected~\cite[Thm.~2.3.3]{collingwood_mcgovern_1993_nilpotent_orbits_in_semisimple_lie_algebras},
hence controlled by the corresponding Lie subalgebra $\mf g^X \sse \mf g$.
In turn,
the latter is determined by the Levi subsystem
\begin{equation}
	\label{eq:levi_subsystem_semisimple_element}
	\phi_X \ceqq \Phi \cap \Set{X}^\perp = \Set{ \alpha \in \Phi | \Braket{ \alpha,X } = 0 } \sse \Phi,
\end{equation}
via the root-space decomposition $\mf g^X = \mf t \ops \bops_{\phi_X} \mf g_\alpha$.

Hence,
if two elements of $\mf t$ lie in~\eqref{eq:tame_stratum} for some Levi subsystem $\phi \sse \Phi$,
then the corresponding marked semisimple orbits are canonically isomorphic as complex homogeneous $G$-manifolds;
in the common identification with the quotient $G \bs L_{\phi}$,
where $L_{\phi} \sse G$ is the (connected) Lie subgroup integrating
\begin{equation}
	\label{eq:levi_algebra_from_levi_system}
	\mf l_{\phi} \ceqq \mf t \ops \bops_{\phi} \mf g_\alpha \sse \mf g.
\end{equation}
This yields a biholomorphism-preserving partition of the space of marked semisimple orbits,
identified to $\mf t$ via the bijection $\mc O_X \mt X$,
whose set of parts is indexed by the Levi poset $\mc L_\Phi$ of \S~\ref{sec:levi_poset}.

But there is more topological structure.
Namely,
equip $\mf t$ with the analytic topology,
i.e.,
consider it as a complex manifold;\fn{
	So that $\rho_\Phi(\phi) \geq 0$ coincides with the dimension of~\eqref{eq:tame_stratum}.
	Much works the same in Zariski topology,
	with the usual caveats in the definitions of Galois coverings/fundamental groups.}~then:

\begin{lemm}
	\label{lem:tame_stratification}

	The spaces~\eqref{eq:tame_stratum} define an $\mc L_\Phi$-stratification of $\mf t$,
	with extremal strata
	\begin{equation}
		\mf t_{\reg} \ceqq \mf t_{\vn} = \mf t \, \bigsm \, \bigcup_\Phi \ker(\alpha) ,
		\qquad \mf t_\Phi = \mf Z_{\mf g}.
	\end{equation}
\end{lemm}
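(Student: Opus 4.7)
The plan is to verify the axioms of an $\mc L_{\Phi}$-stratification (in the sense of \S~\ref{sec:stratifications}, with $\mc L_{\Phi}$ under anti-inclusion, so $\wt\phi \leq \phi$ means $\wt\phi \supseteq \phi$). Three checks are needed: (i) the $\mf t_{\phi}$ partition $\mf t$; (ii) each stratum is locally closed and the indexing is finite; (iii) the frontier/order-compatibility condition $\overline{\mf t_{\phi}} = \bigsqcup_{\wt\phi \supseteq \phi} \mf t_{\wt\phi}$.

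For (i) and (ii) I would argue as follows. Given any $X \in \mf t$, set $\phi_X \ceqq \Phi \cap \Set{X}^{\perp}$; this is Levi by Lem.~\ref{lem:levi_subsystems}, and $X$ lies in $\mf t_{\phi_X}$ by construction, while $X \in \mf t_{\phi}$ forces $\phi_X = \phi$. Hence the strata are pairwise disjoint and cover $\mf t$. Finiteness is immediate since $\Phi$, and thus $\mc L_{\Phi} \sse 2^{\Phi}$, is finite; and each $\mf t_{\phi}$ is the intersection of the closed linear subspace $\Ker(\phi) \sse \mf t$ with the complement of the finite hyperplane arrangement $\bigcup_{\Phi \sm \phi} \Ker(\alpha)$, so locally closed.

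The main content is (iii), i.e. $\overline{\mf t_{\phi}} = \Ker(\phi)$. The inclusion $\sse$ is trivial since $\Ker(\phi)$ is closed. For the reverse, I would use a linear perturbation: pick once and for all a basepoint $X_0 \in \mf t_{\phi}$—nonempty by Lem.~\ref{lem:levi_subsystems}(iii)—and for arbitrary $Y \in \Ker(\phi)$ consider the affine line $X_t \ceqq Y + t X_0$ with $t \in \mb C$. For $\alpha \in \phi$ one has $\Braket{\alpha | X_t} = 0$, since both $Y$ and $X_0$ lie in $\Ker(\alpha)$; for $\alpha \in \Phi \sm \phi$ instead $\Braket{\alpha | X_0} \neq 0$, so $\Braket{\alpha | X_t}$ is a nonconstant affine function of $t$ vanishing at a single value. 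Hence $X_t \in \mf t_{\phi}$ for all but finitely many $t$, and $Y = \lim_{t \to 0} X_t \in \overline{\mf t_{\phi}}$. Combined with the partition from (i), applied to points of $\Ker(\phi)$, this yields the frontier identity and the compatibility with the anti-inclusion order.

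The extremal strata are then read off directly from the definitions: $\mf t_{\vn} = \mf t \sm \bigcup_{\Phi} \Ker(\alpha)$ is the open dense stratum at the top of $\mc L_{\Phi}$, while $\mf t_{\Phi} = \bigcap_{\Phi} \Ker(\alpha) = \mf Z_{\mf g}$ is the closed stratum at the bottom (the equality with $\mf Z_{\mf g}$ reflecting that $\Phi$ spans $\mf Z_{\mf g}^{\perp} \sse \mf t^{\dual}$). No step is really difficult; the only substantive input is Lem.~\ref{lem:levi_subsystems}(iii), used to supply the basepoint $X_0$, and I expect that choice of basepoint to be the main (minor) subtlety in writing the proof up.
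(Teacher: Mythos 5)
Your proof is correct and follows essentially the same route as the paper: verify the three stratification axioms, with the closure identity $\overline{\mf t_{\phi}} = \Ker(\phi)$ as the key step. The only difference is that where the paper simply asserts this identity, you supply an explicit linear-perturbation argument for it (using Lem.~\ref{lem:levi_subsystems}(iii) to produce the basepoint $X_0$), which is the same idea made more rigorous.
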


\begin{proof}
	Postponed to~\ref{proof:lem_tame_stratification}.
\end{proof}

\subsubsection{}
\label{sec:pure_isomorphism_tame}

In summary,
we aim to generalise the following statement:

\begin{center}
	\emph{If two marked semisimple orbits} $\mc O_{X_1},\mc O_{X_2}$ \emph{lie in one and the same stratum of} $\mf t$,
	\emph{then there is a canonical isomorphism of complex} $G$-\emph{manifolds} $\mc O_{X_1} \simeq \mc O_{X_2}$.
\end{center}

\begin{rema}
	\label{rmk:dual_version}

	There is a $G$-equivariant $\mb C$-linear musical isomorphism $(\cdot \mid \cdot)^\sharp \cl \mf g^{\dual} \lxra{\simeq} \mf g$.
	Under this identification,
	all this material transfers to \emph{coadjoint} $G$-orbits $\mc O \sse \mf g^{\dual}$,
	i.e.,
	orbits for the coadjoint action $\Ad^{\dual} \cl G \to \GL_{\mb C}(\mf g^{\dual})$---%
	cf.~the `lazy' approach of~\cite[\S~1.3]{collingwood_mcgovern_1993_nilpotent_orbits_in_semisimple_lie_algebras}.

	Precisely,
	identify $\mf t^{\dual}$ with the annihilator $\bigl( \bops_\Phi \mf g_\alpha \bigr)^{\! \perp} \sse \mf g^{\dual}$,
	using~\eqref{eq:root_decomposition}.
	Then define a marking of a semisimple coadjoint orbit $\mc O \sse \mf g^{\dual}$ as the choice of an element $\lambda \in \mc O \cap \mf t^{\dual}$:
	this yields a marked semisimple orbit $\mc O_{\lambda} = G \cdot \lambda \ceqq \Ad^{\dual}_G(\lambda)$.
	Introduce then the coadjoint stabiliser
	\begin{equation}
		G^{\lambda} \ceqq \Set{ g \in G | \Ad^{\dual}_g(\lambda) = \lambda } \sse G,
	\end{equation}
	and its Lie algebra
	\begin{equation}
		\label{eq:infinitesimal_coadjoint_stabiliser}
		\mf g^{\lambda} \ceqq \Set{ Y \in \mf g | \ad^{\dual}_Y(\lambda) = 0 } \sse \mf g.
	\end{equation}
	(Recall that $\lambda \in \mf g^{\dual}$ is defined to be semisimple if $\mf g^{\lambda}$ is reductive in $\mf g$,
	without appealing to the identification $\mf g \simeq \mf g^{\dual}$;
	cf.~the `sophisticated' approach of~\cite[\S~1.3]{collingwood_mcgovern_1993_nilpotent_orbits_in_semisimple_lie_algebras}.)

	Again one has $\mc O_{\lambda} \simeq G \bs G^{\lambda}$ as complex homogeneous $G$-manifolds,
	and the corresponding stratification of $\mf t^{\dual}$ by these stabilisers will be introduced in Rmk.~\ref{rmk:minimal_modules} (and generalised in \S~\ref{sec:nonsingular_characters});
	in the tame case,
	it is parameterised by Levi subsystems of the inverse root system $\Phi^{\dual} \sse \mf t$.
	At this stage,
	just note that $G^{\lambda} = G^X$ for $X \ceqq ( \cdot \mid \cdot )^\sharp(\lambda) \in \mf g$.
\end{rema}

\subsubsection{}
\label{sec:dual_kks}

The coadjoint orbits are the symplectic leaves for the linear Poisson bracket of $\mf g^{\dual}$,
and are equipped with the KKS symplectic structure.
This carries over to adjoint ones along the lines of the previous Rmk.~\ref{rmk:dual_version}.

Concretely,
choose $X \in \mf g$.
The alternating $\Ad_{G^X}$-invariant $\mb C$-bilinear form
\begin{equation}
	\label{eq:dual_KKS}
	Y \wdg Z \lmt \bigl( X \mid [Y,Z] \bigr) = \bigl( [X,Y] \mid Z \bigr) \in \mb C,
	\qquad Y,Z \in \mf g,
\end{equation}
induces (after modding out the radical $\mf g^X \sse \mf g$) a \emph{nondegenerate} form $\bigwedge^2 (\mf g \bs \mf g^X) \to \mb C$,
which extends uniquely to a $G$-invariant complex symplectic form $\omega$ on the marked orbit $\mc O_X \sse \mf g$---%
upon suitably trivialising the tangent bundle.

Furthermore,
this (dual) KKS structure is holomorphic with respect to the $G$-invariant complex structure coming from the complex scalar multiplication of $\mf g$ (cf.~\S~\ref{sec:standard_polarisations};
the complex structure does \emph{not} depend on the marking).

\subsection{Unmarked strata of semisimple orbits}
\label{sec:intrinsic_tame_orbit_space}

Act now by the Weyl group $W$.
This forgets the choice of marking and permutes the above strata,
getting to the intrinsic moduli/parameter space $\mb A \ceqq \mf t \bs W$ of semisimple orbits.

More precisely,
the whole stratification descends to the topological quotient,
as follows.
Given $\phi \in \mc L_\Phi$ let $N_W(\mf t_{\phi}) \sse W$ be the setwise Weyl-stabiliser of the stratum~\eqref{eq:tame_stratum}.
Then,
as in \S~\ref{sec:quotient_stratifications},
consider the quotients
\begin{equation}
	\label{eq:tame_quotient_stratum}
	\mf t^W_{\phi} \ceqq \mf t_{\phi} \bs N_W(\mf t_{\phi}) \lhra \mb A.
\end{equation}

\begin{prop}
	\label{prop:quotient_stratification_weyl}

	The spaces~\eqref{eq:tame_quotient_stratum} define a stratification of $\mb A$ indexed by the quotient poset $\mc L_\Phi \bs W$,
	with extremal strata $\mf t_{\reg} \bs W$ and $\mf Z_{\mf g} \simeq \mf Z_{\mf g} \bs W$.
\end{prop}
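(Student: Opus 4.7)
The plan is to obtain the quotient stratification by combining Lem.~\ref{lem:tame_stratification} with the general framework of \S~\ref{sec:quotient_stratifications}: once we show that the $W$-action on $\mf t$ permutes the strata $\set{\mf t_\phi}_{\phi \in \mc L_\Phi}$ in a way compatible with a $W$-action on the indexing poset $\mc L_\Phi$, the statement will follow by invoking the formal construction of quotient stratifications---which is precisely what~\eqref{eq:tame_quotient_stratum} encodes.

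The first step is to make $\mc L_\Phi$ into a $W$-poset. Since $W \subseteq \GL_{\mb C}(\mf t^{\dual})$ acts linearly on $\mf t^{\dual}$ and preserves $\Phi$, for any $\phi \in \mc L_\Phi$ and $w \in W$ the image $w(\phi) \ceqq \set{w(\alpha) \mid \alpha \in \phi}$ is the intersection of $\Phi$ with $\spann_{\mb C}(w(\phi)) = w(\spann_{\mb C}(\phi))$, hence Levi; this makes $W$ act by poset automorphisms on $\mc L_\Phi$ (preserving the rank function~\eqref{eq:rank_function_levi}). The second step is to check equivariance of the stratification map $\phi \mt \mf t_\phi$: for $X \in \mf t$ and $w \in W$ one has $\braket{\alpha \mid w(X)} = \braket{w^{-1}(\alpha) \mid X}$, so the vanishing locus of $\phi$ transports to that of $w(\phi)$, and more precisely $w(\mf t_\phi) = \mf t_{w(\phi)}$. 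In particular $N_W(\mf t_\phi)$ coincides with the $W$-stabiliser of $\phi \in \mc L_\Phi$, and $N_W(\mf t_\phi) \cdot \mf t_\phi = \mf t_\phi$.

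At this point the machinery of \S~\ref{sec:quotient_stratifications} applies to the $\mc L_\Phi$-stratification of $\mf t$ and its compatible $W$-action, yielding the decomposition of $\mf t \bs W$ into the quotient pieces~\eqref{eq:tame_quotient_stratum} indexed by $\mc L_\Phi \bs W$, with the required topological properties (local finiteness, frontier condition, local closedness of strata) inherited from the tame case in Lem.~\ref{lem:tame_stratification}. The final step is to identify the extremal strata: the regular Levi subsystem $\vn$ and the full subsystem $\Phi$ are both $W$-invariant (being characterised by their rank, respectively $\dim_{\mb C} \mf t$ and $\dim_{\mb C} \mf Z_{\mf g}$), so the corresponding $W$-orbits are singletons, and~\eqref{eq:tame_quotient_stratum} reads $\mf t_{\reg} \bs W$ and $\mf Z_{\mf g} \bs W$; but the Weyl action on $\mf t$ fixes $\mf Z_{\mf g}$ pointwise, so $\mf Z_{\mf g} \bs W \simeq \mf Z_{\mf g}$.

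The main conceptual hurdle is really just bookkeeping: matching the abstract notion of quotient stratification from~\S~\ref{sec:quotient_stratifications} with our concrete setup, and making sure that $N_W(\mf t_\phi)$ is literally the $W$-stabiliser of $\phi$ rather than merely containing it (which is immediate from the equivariance $w(\mf t_\phi) = \mf t_{w(\phi)}$ combined with disjointness of distinct strata). Everything else reduces to routine verifications on Weyl-invariance of the various structures.
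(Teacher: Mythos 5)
Your proof is correct and follows essentially the same route as the paper: you verify $W$-compatibility of the $\mc L_{\Phi}$-stratification (that $W$ acts on $\mc L_{\Phi}$ by poset automorphisms and $w(\mf t_{\phi}) = \mf t_{w(\phi)}$), then invoke Prop.~\ref{prop:quotient_stratification}, and finally note that $W$ acts trivially on $\mf Z_{\mf g}$ to identify the minimal stratum. The paper's proof is just a more compressed version of the same argument.
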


\begin{proof}
	The Weyl group acts on the set of root subsystems,
	preserving the Levi subsystems and their (anti-)inclusion order.
	Furthermore,
	if $\alpha \in \Phi$ has associated reflection $\sigma_\alpha \in W \sse \GL_{\mb C}(\mf t^{\dual})$,
	then one has $\ps{t}{}\sigma_\alpha^{-1}(\mf t_{\phi}) = \mf t_{\sigma_\alpha(\phi)} \sse \mf t$.
	Hence,
	the stratification of Lem.~\ref{lem:tame_stratification} is $W$-compatible.
	One concludes by Prop.~\ref{prop:quotient_stratification},
	noting that $W = N_W(\mf Z_{\mf g})$ acts trivially on $\mf Z_{\mf g}$.
\end{proof}

\begin{rema}
	\label{rmk:tame_free_weyl_actions}

	The $N_W(\mf t_{\phi})$-action on $\mf t_{\phi}$ is \emph{not} free,
	besides the generic case $\phi = \vn$.
	Nonetheless,
	a Weyl reflection fixes one point of each stratum if and only if it acts trivially thereon:
	the strata consist of points in general positions within the flats of the rational reflection group $(\mf t,W)$.

	Consider then the \emph{pointwise} stabiliser of $\mf t_{\phi} \sse \mf t$,
	i.e.,
	\begin{equation}
		\label{eq:parabolic_subgroup}
		W_{\phi} \ceqq \Set{ w \in W | \eval[1] w_{\mf t_{\phi}} = \Id_{\mf t_{\phi}} } \sse W.
	\end{equation}
	This is the parabolic subgroup of $W$ fixing the vector subspace $\ker(\phi) = \ol{\mf t_{\phi}} \sse \mf t$;
	as well as the Weyl group of $(\mf l_{\phi},\mf t)$,
	in the notation of~\eqref{eq:levi_algebra_from_levi_system}.
	Furthermore,
	$W_{\phi}$ is normalized by $N_W(\mf t_{\phi})$,
	and so the quotient stratification involves normalisers of parabolic subgroups of finite Coxeter groups (cf.~\cite{howlett_1980_normalizers_of_parabolic_subgroups_of_reflection_groups};
	this is expanded upon in~\cite{doucot_rembado_yamakawa_twisted_g_local_wild_mapping_class_groups}).

	Hence,
	on the left-hand side of~\eqref{eq:tame_quotient_stratum} one can replace the denominator with the group of outer automorphisms of the stratum:
	\begin{equation}
		\label{eq:outer_stratum_automorphism}
		\Out(\mf t_{\phi}) \ceqq N_W(\mf t_{\phi}) \bs W_{\phi}.
	\end{equation}
	This is a subquotient of $W$ acting faithfully on $\ker(\phi)$ and freely on $\mf t_{\phi}$.
	It is a Coxeter group,
	a.k.a.~the `relative' Weyl group of $(\mf g,\mf t,\phi)$ (cf.~\cite{bonnafe_2004_actions_of_relative_weyl_groups_i,bonnafe_2005_actions_of_relative_weyl_groups_ii}),
	which need \emph{not} act as a reflection group on $\ker(\phi)$.
	Nonetheless,
	the projection $\mf t_{\phi} \thra \mf t^W_{\phi}$ is a Galois covering with Galois group $\Out(\mf t_{\phi})$---%
	à la Steinberg.
\end{rema}

\subsubsection{}
\label{sec:nonpure_isomorphism_tame}

Finally,
the following clean statement will be generalised in the wild case:

\begin{center}
	\emph{If two semisimple orbits lie in one and the same stratum of} $\mb A$,
	\emph{then they are isomorphic as complex} $G$-\emph{manifolds}.
	\emph{Furthermore,
		if the orbit} $\mc O \sse \mf g$ \emph{intersects} $\mf t_{\phi}$,
	\emph{then the set of identifications} $\mc O \simeq G \bs L_{\phi}$ \emph{obtained from a marking is an} $\Out(\mf t_{\phi})$-\emph{torsor}.
\end{center}

\begin{rema}
	\label{rmk:affine_setup}

	In this text we work in the complex-analytic category,
	viewing semisimple orbits as holomorphic-symplectic manifolds,
	etc.
	However,
	they are also Zariski-closed affine subvarieties of $\mf g^{\dual} = \Spec \Sym(\mf g)$,
	and one could work in the complex-algebraic setting:
	the same works for $r$-semisimple orbits in the wild case (cf.~Rmk.~\ref{rmk:affine_setting}).
\end{rema}

\subsection{Example:
	rank-3 general linear case}
\label{sec:rank_3_example}

We dedicate a subsection to detail some of these constructions in the nonsemisimple case of $\mf g = \mf{gl}_3(\mb C)$.

The standard Cartan subalgebra $\mf t \sse \mf g$ of diagonal matrices is identified with $\mb C^3$,
and the six roots are the functionals $\alpha_{ij} \in \mf t^{\dual}$,
where
\begin{equation}
	\alpha_{ij} \cl (\lambda_1,\lambda_2,\lambda_3) \lmt \lambda_i - \lambda_j,
	\qquad i \neq j \in \Set{1,2,3},
	\quad \lambda_1,\lambda_2,\lambda_3 \in \mb C.
\end{equation}

\subsubsection{}

All root subsystems are Levi,
and there are five of them.
The three proper ones (of rank one) are $\phi_{\Set{i,j}} \ceqq \set{\alpha_{ij},\alpha_{ji}} \sse \Phi$ for $i \neq j \in \Set{1,2,3}$.
The Hasse diagram of the graded poset $\mc L_\Phi$ appears in Fig.~\ref{fig:hasse_strata_a_2}.

\begin{figure}
	\centering
	\begin{tikzpicture}
		\node (top) at (0,0) {$\vn$};
		\node (13) [below of =top] {$\phi_{\Set{1,3}}$};
		\node (12) at (-2,-1) {$\phi_{\Set{1,2}}$};
		\node (23) at (2,-1) {$\phi_{\Set{2,3}}$};
		\node (bottom) [below of=13] {$\Phi$};
		\draw (top) -- (12) -- (bottom);
		\draw (top) -- (13) -- (bottom);
		\draw (top) -- (23) -- (bottom);
	\end{tikzpicture}
	\caption{Example of Levi stratification.}
	\label{fig:hasse_strata_a_2}
\end{figure}

\subsubsection{}

The marked strata are:
\begin{enumerate}
	\item the (rank-3) dense stratum
	      \begin{equation}
		      \mf t_{\vn} = \mf t_{\reg} = \Set{ (\lambda_1,\lambda_2,\lambda_3) \in \mb C^3 | \lambda_1 \neq \lambda_2 \neq \lambda_3 \neq \lambda_1 } \simeq \Conf_3(\mb C),
	      \end{equation}
	      viz.,
	      the space of configurations of ordered triples of \emph{distinct} points in the complex plane;

	\item  the rank-2 strata
	      \begin{equation}
		      \mf t_{\Set{i,j}} \ceqq \mf t_{\phi_{\Set{i,j}}} = \Set{ (\lambda_1,\lambda_2,\lambda_3) \in \mb C^3 | \lambda_i = \lambda_j \neq \lambda_k,
			      \,
			      \Set{i,j,k} = \Set{1,2,3}} \sse \mf t,
	      \end{equation}
	      which are copies of $\Conf_2(\mb C)$;

	\item and the (rank-1) minimal stratum
	      \begin{equation}
		      \mf t_\Phi = \mf Z_{\mf g} = \Set{ (\lambda,\lambda,\lambda) | \lambda \in \mb C } \simeq \mb C = \Conf_1(\mb C).
	      \end{equation}
\end{enumerate}

\subsubsection{}

The Weyl group $W$ can be identified with the symmetric group $\mf S_3$ of the set $\Set{1,2,3}$,
acting on $\mf t \simeq \mb C^3$ with the standard action,
whence on the roots by permuting the indices $i \neq j$.
Thus,
$\mb A$ is the topological space of \emph{unordered} triples in $\mb C$,
and the 3-element quotient poset $\mc L_\Phi \bs W$ consists of the chain $\ol \Phi < \ol \phi < \ol \vn$,
denoting by $\ol \phi$ the $W$-orbit of $\phi_{\Set{i,j}}$---%
for any $i \neq j \in \Set{1,2,3}$.

The whole of $W$ preserves the extremal strata,
with $\Out(\mf t_{\vn}) = W$ and $\Out(\mf t_\Phi) = (1)$.
Finally,
the setwise stabiliser of $\mf t_{\Set{i,j}}$ is identified with the subset of permutations fixing the complementary index $k \in \Set{1,2,3} \sm \Set{i,j}$.
This is also the pointwise stabiliser,
whence $\Out \bigl( \mf t_{\Set{i,j}} \bigr) = (1)$;
the fact is that the two eigenspaces of an element $X \in \mf t_{\Set{i,j}}$ have different dimensions,
by which one can order them.

\subsubsection{}

The quotient strata are:
\begin{enumerate}
	\item the dense stratum
	      \begin{equation}
		      (\mf t \bs W)_{\ol \vn} = \mf t_{\reg} \bs W \simeq \Conf_3(\mb C) \bs \mf S_3 \eqqc \UConf_3(\mb C),
	      \end{equation}
	      which is the space of configurations of \emph{unordered} triples of distinct points in the complex plane;

	\item the `rank-2' stratum
	      \begin{equation}
		      (\mf t \bs W)_{\ol \phi} = \Set{ (\lambda_i,\lambda_k) = (\lambda_j,\lambda_k) \in \mb C^2 | \lambda_i = \lambda_j \neq \lambda_k } \simeq \Conf_2(\mb C),
	      \end{equation}
	      which is still a space of ordered pairs;

	\item and the minimal stratum
	      \begin{equation}
		      (\mf t \bs W)_{\ol \Phi} = \mf Z_{\mf g} \simeq \UConf_1(\mb C).
	      \end{equation}
\end{enumerate}

\section{Wild stratifications}
\label{sec:wild_stratification}

\subsection{Stratifications by Levi filtrations}
\label{sec:levi_filtrations}

Therefore,
the geometry of marked semisimple orbits is controlled by the strata~\eqref{eq:tame_stratum},
and that of (unmarked) semisimple orbits by the quotient strata~\eqref{eq:tame_quotient_stratum}.
Here we extend these stratifications (cf.~again~\cite{goresky_kottwitz_macpherson_2009_codimensions_of_root_valuation_strata}),
and in \S~\ref{sec:wild_orbit_strata}--\ref{sec:intrinsic_wild_orbit_space} we explain the relation with the wild orbits in the title.

\subsubsection{}

Again let $\Phi$ be the root system of $(\mf g,\mf t)$.
We consider nondecreasing exhaustive filtrations of $\Phi$ by Levi subsystems:

\begin{defi}
	\label{def:levi_filtration}

	A \emph{Levi filtration of} $\Phi$ is a sequence $\bm \phi = (\phi_i)_{i \geq 0}$,
	of root subsystems of $\Phi$,
	such that:
	\begin{enumerate}
		\item $\phi_i \sse \phi_{i+1}$ for $i \geq 0$;

		\item $\phi_i \sse \Phi$ is Levi for $i \geq 0$;

		\item and $\Phi = \bigcup_{\mb Z_{\geq 0}} \phi_i$.
	\end{enumerate}
\end{defi}

\subsubsection{}

It follows that $\phi_i$ is a Levi subsystem of $\phi_{i+1}$,
and that $\phi_i = \Phi$ for $i \gg 0$.
The minimal integer such that the latter happens is the \emph{depth} of the Levi filtration.

In \S~\ref{sec:levi_poset} we considered the poset $\mc L_\Phi$ of Levi subsystems:
a Levi filtration $\phi_0 \sse \phi_1 \sse \dc$ is the same as a \emph{nonincreasing} sequence $\phi_0 \geq \phi_1 \geq \dc$ which reaches the least element $\Phi \in \mc L_\Phi$.
Such sequences yield a subset $\mc L_\Phi^{(\infty)} \sse \mc L_\Phi^{\mb Z_{\geq 0}}$ of the set of all $\mc L_\Phi$-valued sequences,
which inherits a partial order;
namely,
declare that $\bm \phi \leq \bm \phi'$ if $\phi_i \leq \phi_i'$ for $i \geq 0$,
i.e.,
if $\phi_i \spse \phi'_i$.

\begin{rema}
	If the inclusion $\phi_i \sse \phi_{i+1}$ is \emph{proper},
	then $\rho_\Phi(\phi_i) > \rho_\Phi(\phi_{i+1})$,
	using the rank function of Lem.~\ref{lem:graded_poset}.
	Hence,
	there are finitely many such jumps,
	and a Levi filtration without repetitions is the same as a path in the Hasse diagram of $\mc L_\Phi$.
	(Whose maximal length equals the \emph{semisimple rank} of $\mf g$.)
\end{rema}

\subsubsection{}

On the Lie-algebraic side,
consider instead the topological subspace $\mf t^{(\infty)} \sse \mf t^{\mb Z_{\geq 0}}$,
of the infinite (topological) Cartesian product $\mf t^{\mb Z_{\geq 0}} \ceqq \prod_{i \geq 0} \mf t$,
consisting of sequences which are eventually central.
I.e.,
let
\begin{equation}
	\mf t^{(\infty)}
	\ceqq \Set{ \bm X = (X_0,X_1,\dc) \in \mf t^{\mb Z_{\geq 0}} | X_i \in \mf Z_{\mf g} \text{ for } i \gg 0 }.
\end{equation}
Equivalently,
we ask that $\Braket{ \alpha,X_i } = 0$ for all roots $\alpha \in \Phi$ and for $i \gg 0$.
(Again we equip $\mf t$ with the analytic/Euclidean topology,
cf.~Lem.~\ref{lem:tame_stratification}.)

Finally,
for $\bm \phi \in \mc L_\Phi^{(\infty)}$ set
\begin{equation}
	\label{eq:wild_strata}
	\mf t^{(\infty)}_{\bm \phi} \ceqq \prod_{\mb Z_{\geq 0}} \Bigl( \ker(\phi_i) \bigsm \bigcup_{\phi_{i+1} \sm \phi_i} \ker(\alpha) \Bigr) \sse \mf t^{\mb Z_{\geq 0}}.
\end{equation}
By construction $\prod_{\mb Z_{\geq 0}} \ker(\phi_i)$ lies in $\mf t^{(\infty)}$,
and a fortiori~\eqref{eq:wild_strata} does too.

\begin{rema}
	Choosing a formal variable $z$,
	there is a natural identification $\mf t^{\mb Z_{\geq 0}} \simeq \mf t\llb z \rrb$ as $\mb C$-vector spaces---%
	taking coefficients.
	However,
	we are \emph{not} using the $z$-adic topology:
	this would induce the discrete topology on the finite-dimensional subspace/truncation considered in \S~\ref{sec:truncated_wild_stratification}.
	(The actual choice of topology does not matter in this text;
	but it does when taking fundamental groups,
	cf.~\S~\ref{sec:about_imds}.)

	Moreover,
	contrary to~\cite{goresky_kottwitz_macpherson_2009_codimensions_of_root_valuation_strata},
	here we do not restrict to the subspace of `regular' elements of $\mf t\llb z \rrb$---%
	which are not annihilated by any root $\alpha \in \Phi$.
	This is required,
	in ordered to treat general multilevel irregular types/classes.
\end{rema}

\begin{lemm}
	\label{lem:wild_stratification}

	The subspaces~\eqref{eq:wild_strata} define a \emph{non-locally-finite} partition of $\mf t^{(\infty)}$ into (nonempty) locally-closed subspaces:
	it is indexed by $\mc L_\Phi^{(\infty)}$,
	and it satisfies the strong frontier condition~\eqref{eq:stratified_closure}.
\end{lemm}

\begin{proof}
	As for local closedness,
	the general properties of the product/subspace topologies yield
	\begin{equation}
		\label{eq:wild_stratum_closure}
		\ol{\mf t^{(\infty)}_{\bm \phi}} = \prod_{\mb Z_{\geq 0}} \ker(\phi_i) \sse \mf t^{(\infty)},
		\qquad \bm\phi \in \mc L^{(\infty)}_\Phi.
	\end{equation}
	Then compute
	\begin{equation}
		\ol{\mf t^{(\infty)}_{\bm \phi}} \sm \mf t^{(\infty)}_{\bm \phi}
		= \bigcup_{i \geq 0} \Bigl( \prod_{j < i} \ker(\phi_j) \ts \bigcup_{\phi_{i+1} \sm \phi_i} \bigl( \ker(\phi_i) \cap \ker(\alpha) \bigr) \ts \prod_{j > i} \ker(\phi_j) \Bigr),
	\end{equation}
	cf.~\eqref{eq:local_closedness}.
	Each term of this union is a product of closed subspaces,
	and so it is closed in $\mf t^{(\infty)}$;
	and the union is \emph{finite},
	as there are only finitely many jump-indices $i \geq 0$ such that $\phi_{i+1} \sm \phi_i \neq \vn$.
	Therefore,
	the subspace~\eqref{eq:wild_strata} is open in its closure.

	Now choose any element $\bm X = (X_0,X_1,\dc) \in \mf t^{(\infty)}$,
	and for an integer $i \geq 0$ set
	\begin{equation}
		\phi_i \ceqq \bigcap_{j \geq i} \phi_{X_j} \sse \Phi.
	\end{equation}
	Then tautologically $\bm X$ lies in the subspace corresponding to this Levi filtration.
	Conversely,
	suppose that $\mf t^{(\infty)}_{\bm \phi} \cap \mf t^{(\infty)}_{\bm \phi'} \neq \vn$,
	and let $i \geq 0$ be an integer such that $\phi_{i+1} = \phi'_{i+1}$.
	Then,
	reasoning as in the proof~\ref{proof:lem_tame_stratification},
	we see that $\phi_i = \phi'_i$;
	by induction,
	the sequences coincide over $\Set{0,\dc,i}$.
	But by construction there exists an integer $i \gg 0$ such that $\phi_{i+1} = \phi'_{i+1} = \Phi$,
	and for this choice they also coincide over $\mb Z_{> i}$.
	So indeed one has a partition.

	Moreover,
	suppose that $\mf t^{(\infty)}_{\bm \phi} \cap \prod_{\mb Z_{\geq 0}} \ker(\phi'_i) \neq \vn$,
	and let $i \geq 0$ be an integer such that $\phi'_{i+1} \sse \phi_{i+1}$.
	Then,
	reasoning as in the proof~\ref{proof:lem_tame_stratification},
	we see that $\phi'_i \sse \phi_i$;
	by induction,
	the same inclusion holds over $\Set{0,\dc,i}$.
	Again,
	by construction,
	there exists an integer $i \gg 0$ such that $\phi'_{i+1} \sse \phi_{i+1} = \Phi$,
	and then the inclusions also hold automatically over $\mb Z_{> i}$.
	In conclusion $\bm \phi \leq \bm \phi'$,
	and the remaining implications are clear.

	Finally,
	to see that the partition is \emph{not} locally-finite,
	observe that the origin $\bm 0 \ceqq (0,0,\dc) \in \mf t^{(\infty)}$ lies in the closure~\eqref{eq:wild_stratum_closure} for all $\bm\phi \in \mc L^{(\infty)}_\Phi$.
\end{proof}

\subsection{Truncation}
\label{sec:truncated_wild_stratification}

The failure of local-finiteness in Lem.~\ref{lem:wild_stratification} reflects the fact that $\mc L^{(\infty)}_{\Phi}$ has no greatest element.
Wanting to relate with meromorphic gauge theory,
we now \emph{truncate} the above partition,
which is akin to bound the pole order of the irregular type/class of an untwisted irregular-singular meromorphic connection germ.
(This actually leads to finitely many strata.)

\subsubsection{}

Precisely,
fix an integer $s \geq 0$ and consider the subposet of Levi filtrations of depth bounded above by $s$:
\begin{equation}
	\mc L_\Phi^{(s)} \ceqq \Set{ \bm \phi = (\phi_0,\phi_1,\dc) \in \mc L^{(\infty)}_\Phi | \phi_s = \Phi } \sse \mc L^{(\infty)}_\Phi.
\end{equation}
In particular,
$s = 0$ yields the constant minimal sequence $\bm\phi_0 \ceqq (\Phi,\Phi,\dc)$.
Of course one may neglect the constant infinite tail $(\phi_s = \Phi,\phi_{s+1} = \Phi,\dc)$,
so that equivalently $\mc L_\Phi^{(s)}$ consists of sequences of finite length.

Analogously,
identify the finite topological Cartesian power $\prod_{i = 0}^{s-1} \mf t\simeq \mf t^s$ with the subspace of $\mf t^{(\infty)}$ consisting of sequences $\bm X = (X_0,X_1,\dc)$ such that $X_i = 0$ for $i \geq s$---%
in Euclidean topology.
The composite
\begin{equation}
	\mf t^s \lhra \mf t^{\mb Z_{\geq 0}} \lthra \mf t^{\mb Z_{\geq 0}} \bs \mf t^{\mb Z_{\geq s}},
	\qquad \mf t^{\mb Z_{\geq s}}
	\ceqq \prod_{i \geq s} \mf t,
\end{equation}
yields a $\mb C$-linear homeomorphism,
and so one can equivalently view this as a projection/truncation.
Define now
\begin{equation}
	\label{eq:wild_truncated_strata}
	\mf t^s_{\bm \phi} \ceqq \prod_{i = 0}^{s-1} \Bigl( \ker(\phi_i) \bigsm \bigcup_{\phi_{i+1} \sm \phi_i} \ker(\alpha) \Bigr) \sse \mf t^s,
	\qquad \bm \phi \in \mc L^{(s)}_\Phi,
\end{equation}
removing the infinite tail of a particular case of~\eqref{eq:wild_strata}.
Then:

\begin{coro}
	\label{cor:truncated_wild_stratification}

	The subspaces~\eqref{eq:wild_truncated_strata} define an $\mc L^{(s)}_\Phi$-\emph{stratification of} $\mf t^s$.
	Its dense stratum is
	\begin{equation}
		\label{eq:wild_truncated_bulk}
		\mf t^s_{\reg} \ceqq \mf t^s_{\pmb \vn_s} \simeq \mf t^{s-1} \ts \mf t_{\reg},
		\qquad \pmb \vn_s = (\underbrace{\vn,\dm,\vn}_{s-1 \text{ terms}},\Phi,\Phi,\dc) \in \mc L^{(s)}_\Phi,
	\end{equation}
	and its minimal stratum is $\mf t^s_{\bm\phi_0} = \mf Z_{\mf g}^s$,
	where again $\bm\phi_0 = (\Phi,\Phi,\dc)$.
\end{coro}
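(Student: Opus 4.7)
The plan is to realise $\mf t^s$ as a closed subspace of $\mf t^{(\infty)}$ via the embedding $(X_0,\dc,X_{s-1}) \lmt (X_0,\dc,X_{s-1},0,0,\dc)$---which is well-defined since $0 \in \mf Z_{\mf g}$---and then to deduce the substratification structure by intersecting the ambient strata of Thm.~\ref{thm:wild_stratification} with $\mf t^s$, invoking the substratification formalism recalled in \S~\ref{sec:substratifications}.

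The combinatorial core will be to identify exactly which ambient strata meet $\mf t^s$ nontrivially, and to match the resulting intersections with the products~\eqref{eq:wild_truncated_strata}. For this I would exploit the innocuous but crucial observation that $0 \in \Ker(\alpha)$ for every root $\alpha \in \Phi$: if $\bm X = (X_0,X_1,\dc) \in \mf t^{(\infty)}_{\bm \phi}$ satisfies $X_i = 0$ for all $i \geq s$, then the condition $X_i \notin \bigcup_{\alpha \in \phi_{i+1} \sm \phi_i} \Ker(\alpha)$ appearing in~\eqref{eq:wild_strata} forces $\phi_{i+1} = \phi_i$ for every $i \geq s$; combined with the eventual exhaustion required by Def.~\ref{def:levi_filtration}, this is the same as demanding $\phi_s = \Phi$, i.e.\ $\bm \phi \in \mc L^{(s)}_{\Phi}$. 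Conversely, when $\bm \phi \in \mc L^{(s)}_{\Phi}$ each factor at index $i \geq s$ in~\eqref{eq:wild_strata} collapses to $\Ker(\Phi) = \mf Z_{\mf g}$ (no root is removed), and the vanishing condition $X_i = 0$ just picks out the origin there; meanwhile the first $s$ factors reproduce~\eqref{eq:wild_truncated_strata} verbatim. Since the ambient strata are locally closed in $\mf t^{(\infty)}$ with the frontier relations dictated by Thm.~\ref{thm:wild_stratification}, intersecting with the closed subspace $\mf t^s$ preserves both features and yields the claimed $\mc L^{(s)}_{\Phi}$-substratification.

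The extremal strata are then read off directly from~\eqref{eq:wild_truncated_strata}. The constant filtration $\bm \Phi$---the bottom of $\mc L^{(s)}_{\Phi}$---makes every factor equal to $\Ker(\Phi) = \mf Z_{\mf g}$, whence $\mf t^s_{\bm \Phi} = \mf Z_{\mf g}^s$. For the dense stratum~\eqref{eq:wild_truncated_bulk}, one maximises the number of empty Levi subsystems in the filtration $\pmb \vn$ before the mandatory jump to $\Phi$: at indices with $\phi_i = \phi_{i+1} = \vn$ the factor becomes $\Ker(\vn) = \mf t$, while at the index where the filtration jumps it becomes $\Ker(\vn) \sm \bigcup_{\Phi} \Ker(\alpha) = \mf t_{\reg}$.

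The hard part will be essentially bookkeeping, once one recognises that the truncation condition $\phi_s = \Phi$ defining $\mc L^{(s)}_{\Phi}$ is exactly the shadow of the vanishing condition $X_i = 0$ for $i \geq s$ inside $\mf t^s$; no deeper obstacle is expected beyond a careful application of the general substratification formalism to the explicit product structure of~\eqref{eq:wild_truncated_strata}.
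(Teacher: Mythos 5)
Your argument is correct and takes essentially the same route as the paper: both establish $\mf t^s_{\bm \phi} = \mf t^{(\infty)}_{\bm \phi} \cap \mf t^s$, note the closedness of $\mf t^s$ inside $\mf t^{(\infty)}$, and then invoke Lem.~\ref{lem:closed_substratification}. You simply make explicit the small point the paper files under ``by construction'', namely that $X_i = 0$ for $i \geq s$ together with $0 \in \Ker(\alpha)$ forces $\phi_{i+1} = \phi_i$ there (hence $\phi_s = \Phi$ by exhaustion), so the restriction of the ambient index map to $\mf t^s$ lands in $\mc L^{(s)}_{\Phi}$.
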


\begin{proof}
	Consider first the topological subspace
	\begin{equation}
		\mf t^{(s)}
		\ceqq \bigcup_{\mc L^{(s)}_\Phi} \mf t^{(\infty)}_{\bm\phi} \sse \mf t^{(\infty)}.
	\end{equation}
	This is a union of parts of $\mf t^{(\infty)}$,
	and by Cor.~\ref{cor:union_of_strata} it inherits an $\mc L_\Phi^{(s)}$-stratification:
	here the point is that the strong frontier property~\eqref{eq:stratified_closure} holds,
	and then local closedness/finiteness are automatic since $\mc L^{(s)}_\Phi$ is \emph{finite} (cf.~Rmk.~\ref{rmk:stratifications_and_closures}).
	Moreover,
	since $\pmb \vn_s$ is the greatest element of $\mc L^{(s)}_\Phi$,
	the corresponding stratum is dense and one has
	\begin{equation}
		\mf t^{(s)}
		= \ol{\mf t^{(s)}_{\pmb \vn_s}}
		= \mf t^s \ts \mf Z_{\mf g}^{\mb Z_{\geq s}}.
	\end{equation}
	Now the canonical projection $\mf t^{(s)} \thra \mf t^s$ simply deletes the infinite central tail,
	and it maps each stratum homeomorphically onto its truncated version~\eqref{eq:wild_truncated_strata}.
\end{proof}

\begin{rema}
	\label{rmk:wild_hyperplane_complement}

	Note that~\eqref{eq:wild_truncated_strata} is a hyperplane complement in a vector subspace of $\mf t^s$,
	generalising~\eqref{eq:tame_stratum} from the tame case $s = 1$.

	Namely,
	given $\bm \phi \in \mc L_\Phi^{(s)}$ and $\alpha \in \Phi$ define the integer $d_\alpha = d_{\alpha,\bm \phi} \in \set{0,\dc,s}$ by
	\begin{equation}
		d_\alpha \ceqq \min \Set{ i \in \mb Z_{\geq 0} | \alpha \in \phi_i }.
	\end{equation}
	Thus,
	one has $\alpha \in \phi_{d_\alpha} \sm \phi_{d_\alpha-1}$,
	with the convention $\phi_{-1} \ceqq \vn$,
	and clearly the strata~\eqref{eq:wild_truncated_strata} are determined by these integers---%
	e.g.,
	the dense stratum corresponds to having $d_\alpha = r$ for all $\alpha \in \Phi$.
	Then consider the hyperplane
	\begin{equation}
		H_\alpha^{(d_\alpha)} \ceqq \mf t^{d_\alpha-1} \ts \ker(\alpha) \ts \mf t^{s-d_\alpha} \sse \mf t^s.
	\end{equation}
	Swapping complements with union/intersections yields the equality
	\begin{equation}
		\label{eq:wild_hyperplane_complement}
		\mf t^s_{\bm \phi} = \ker(\bm \phi) \, \bigsm  \,
		\bigcup_{d_\alpha > 0} H_\alpha^{(d_\alpha)} ,
		\qquad \ker(\bm \phi) \ceqq \prod_{i = 0}^{s-1} \ker(\phi_i) \sse \mf t^s.
		\qedhere
	\end{equation}
\end{rema}

\begin{exem}
	\label{ex:sl_2_wild_truncated_stratification}

	Consider $\mf g = \mf{sl}_2(\mb C)$,
	with root system $\Phi = \set{\pm \alpha}$ determined as usual by $\braket{ \alpha, H } = 2$,
	invoking the standard generator $H = \on{diag}(1,-1)$ of the standard Cartan subalgebra $\mf t = \mb CH$.

	Here $\mc L_\Phi = \Set{ \Phi < \vn }$ as a poset,
	so for any integer $s \geq 1$ there is an order-preserving bijection
	\begin{equation}
		\label{eq:levi_filtration_sl_2}
		\Set{0,\dc,s} \lxra{\simeq} \mc L^{(s)}_\Phi,
		\qquad k \lmt \bm \phi^{(k)} \ceqq (\underbrace{\vn,\dc,\vn,\Phi}_{k \text{ terms}},\dc),
	\end{equation}
	and in turn the truncated wild strata are
	\begin{equation}
		t^s_{\bm \phi^{(k)}} \simeq \mf t^{k-1} \ts \mf t_{\reg} \ts \Set{0}^{s-k} \simeq \mf t^{k-1} \ts \mf t_{\reg},
		\qquad k \in \Set{1,\dc,s}.
		\qedhere
	\end{equation}
\end{exem}

\subsubsection{}

Once more,
choosing bases breaks the Weyl symmetry that we will consider below,
so the definitions are independent of that.
Nonetheless,
such a choice is helpful to bound the number of truncated strata.

Namely,
choose an integer $s \geq 1$ and a depth-bounded Levi filtration $\bm \phi \in \mc L_\Phi^{(s)}$:

\begin{lemm}[Cf.~\cite{doucot_rembado_tamiozzo_local_wild_mapping_class_groups_and_cabled_braids}]
	\label{lem:base_levi_filtrations}

	There exists a base $\Delta \sse \Phi$ of simple roots such that $\Delta \cap \phi_i$ is a base of $\phi_i$,
	for all $i \in \set{0,\dc,s}$.
\end{lemm}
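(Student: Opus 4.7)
My plan is to argue by induction on the depth $s$, reducing everything to the following single extension principle: \emph{given a Levi subsystem $\phi' \sse \Phi$ and a base $\Delta'$ of $\phi'$, there exists a base $\Delta$ of $\Phi$ with $\Delta \cap \phi' = \Delta'$}. This strengthens Rmk.~\ref{rmk:levi_systems_simple_roots}, where one is merely given $\phi'$ (and no preferred base thereof).

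For $s = 0$ the filtration is the constant sequence $\bm \Phi$, and any base of $\Phi$ works. For the inductive step, first observe that each $\phi_i$ with $i \leq s-1$ is a Levi subsystem of $\phi_{s-1}$: indeed $\phi_{s-1} \cap \spann_{\mb C}(\phi_i) \sse \Phi \cap \spann_{\mb C}(\phi_i) = \phi_i$, the reverse inclusion being tautological. Applying the inductive hypothesis to the filtration $\phi_0 \sse \dc \sse \phi_{s-1}$ of $\phi_{s-1}$ yields a base $\Delta' \sse \phi_{s-1}$ such that $\Delta' \cap \phi_i$ is a base of $\phi_i$ for every $i \leq s-1$. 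The extension principle applied to $\phi' = \phi_{s-1}$ then upgrades $\Delta'$ to a base $\Delta$ of $\Phi$ with $\Delta \cap \phi_{s-1} = \Delta'$, whence $\Delta \cap \phi_i = \Delta' \cap \phi_i$ is a base of $\phi_i$ for $i \leq s-1$, while $\Delta \cap \phi_s = \Delta$ is trivially a base of $\phi_s = \Phi$.

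To prove the extension principle, fix a real form of $\mf t$ on which the roots are real-valued, pick a real $\phi'$-regular element $X' \in \mf t$ realising the positive system of $\Delta'$, and invoke Lem.~\ref{lem:levi_subsystems} to select a real point $Y \in \mf t_{\phi'}$, so that $\alpha(Y) = 0$ for $\alpha \in \phi'$ and $\alpha(Y) \neq 0$ for $\alpha \in \Phi \sm \phi'$. For $\varepsilon > 0$ small enough, $X \ceqq Y + \varepsilon X' \in \mf t$ is $\Phi$-regular and determines a base $\Delta$ of $\Phi$, with the sign of $\alpha(X)$ matching that of $\alpha(Y)$ for $\alpha \in \Phi \sm \phi'$ and that of $\alpha(X')$ for $\alpha \in \phi'$. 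The inclusion $\Delta \cap \phi' \sse \Delta'$ is automatic: a $\Phi$-simple root lying in $\phi'$ is a fortiori $\phi'$-simple, by restriction of positive systems. Conversely, suppose $\alpha \in \Delta'$ decomposes as $\alpha = \beta + \gamma$ with $\beta, \gamma$ positive in $\Phi$ with respect to $X$. The case $\beta, \gamma \in \phi'$ contradicts the $\phi'$-simplicity of $\alpha$; the mixed case (say $\gamma \in \phi'$ and $\beta \notin \phi'$) is ruled out by $\beta = \alpha - \gamma \in \Phi \cap \spann_{\mb C}(\phi') = \phi'$; and the case $\beta, \gamma \notin \phi'$ gives $\alpha(Y) = \beta(Y) + \gamma(Y) > 0$, against $\alpha \in \phi'$.

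The main obstacle is precisely the mixed-case argument: this is the one place where the hypothesis \emph{Levi} (and not merely root subsystem) is decisive, through the saturation identity $\Phi \cap \spann_{\mb C}(\phi') = \phi'$. Without this saturation one could have $\beta \notin \phi'$ while $\alpha, \gamma \in \phi'$, breaking the conclusion---in line with the standard $B_2$ example of a non-Levi root subsystem. Everything else is standard perturbation bookkeeping for regular elements and positive systems.
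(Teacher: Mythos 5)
Your proof is correct, but it takes a genuinely different route from the paper's. You proceed \emph{bottom-up}: induct on the depth, building a base of $\phi_{s-1}$ adapted to $\phi_0 \sse \dc \sse \phi_{s-1}$ by the inductive hypothesis, and then lifting it to a base of $\Phi$ via an \emph{extension principle} (any base of a Levi subsystem extends to a base of $\Phi$), which you prove by a standard real-form perturbation argument $X = Y + \varepsilon X'$. The paper instead proceeds \emph{top-down}: starting from an arbitrary base $\Delta_s$ of $\Phi$, it repeatedly Weyl-conjugates so that each stratum $\mf t_{\phi_i}$ meets the new dominant chamber, crucially using the pointwise stabiliser $W_{\phi_i} \sse W$ of $\mf t_{\phi_i}$ to conjugate without spoiling the adaptations already achieved for $\phi_{i+1},\dc,\phi_{s-1}$. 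The paper also avoids a real form altogether by working with the complex lexicographic `dominant chamber' $\mf C_{\Delta}$ defined via the half-plane $\mf D \sse \mb C$. The payoffs diverge: your version isolates a reusable and cleaner statement (the extension principle, which genuinely strengthens Rmk.~\ref{rmk:levi_systems_simple_roots}), while the paper's version stays entirely within the complex picture and makes the role of the parabolic subgroups $W_{\phi}$ explicit---these reappear later, e.g.\ in Rmk.~\ref{rmk:tame_free_weyl_actions}.

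One minor imprecision: you \emph{invoke} Lem.~\ref{lem:levi_subsystems} to pick a real point $Y \in \mf t_{\phi'}$, but that lemma only asserts nonemptiness over $\mb C$. You should add the (standard, easy) observation that $\mf t_{\phi'}$ is the complement of a finite union of hyperplanes cut out over $\mb Q$, hence has real---in fact rational---points whenever it is nonempty; similarly for the choice of a real regular $X'$ realising $\Delta'$. Once this is spelled out, your case analysis on $\alpha = \beta + \gamma$ is airtight, and you correctly identify where Levi-ness (the saturation $\Phi \cap \spann_{\mb C}(\phi') = \phi'$) is indispensable, namely to rule out the mixed case.
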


\begin{proof}
	Postponed to~\ref{proof:lem_base_levi_filtrations}.
\end{proof}

\begin{coro}
	One has
	\begin{equation}
		\abs{\mc L_\Phi^{(s)}} \leq \abs W (s+1)^{\rk(\Phi)},
		\qquad s \in \mb Z_{\geq 1}.
	\end{equation}
\end{coro}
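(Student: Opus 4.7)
The plan is to exploit Lem.~\ref{lem:base_levi_filtrations} to reduce the counting of depth-bounded Levi filtrations to the easy combinatorics of labelled simple roots, then pay a factor of $\abs W$ to account for the choice of base.

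First I would fix a base $\Delta \sse \Phi$ of simple roots and count the Levi filtrations $\bm \phi = (\phi_0 \sse \cdots \sse \phi_s = \Phi)$ which are \emph{compatible} with $\Delta$, in the sense that $\Delta \cap \phi_i$ is a base of $\phi_i$ for every $i \in \Set{0,\dc,s}$. By a standard fact about root systems, a Levi subsystem admitting a base contained in $\Delta$ is uniquely determined by that base, so such a filtration $\bm \phi$ is equivalent to the chain of subsets
\begin{equation}
	\Delta \cap \phi_0 \sse \Delta \cap \phi_1 \sse \cdots \sse \Delta \cap \phi_s = \Delta.
\end{equation}
Such nondecreasing chains of subsets of $\Delta$ exhausting $\Delta$ at step $s$ are in bijection with maps $d \cl \Delta \to \Set{0,1,\dc,s}$, sending a simple root $\alpha$ to the smallest $i$ with $\alpha \in \phi_i$; the number of such maps is $(s+1)^{\abs \Delta} = (s+1)^{\rk(\Phi)}$.

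Next I would recall that the Weyl group acts simply transitively on bases of $\Phi$, so $\Phi$ admits exactly $\abs W$ distinct bases. By Lem.~\ref{lem:base_levi_filtrations}, every $\bm \phi \in \mc L_{\Phi}^{(s)}$ is compatible with \emph{at least one} base $\Delta$, hence belongs to the union (over all bases) of the sets counted in the previous step. The desired inequality follows at once by a union bound:
\begin{equation}
	\abs{\mc L_{\Phi}^{(s)}} \leq \sum_{\Delta} (s+1)^{\rk(\Phi)} = \abs W \, (s+1)^{\rk(\Phi)}.
\end{equation}

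No real obstacle is expected; the whole content is pushed into the nontrivial Lem.~\ref{lem:base_levi_filtrations}, so once that is granted the corollary is an immediate pigeonhole argument. The only subtlety worth flagging is that the bound is typically far from sharp, since a given filtration may be compatible with several bases (exactly the bases of the Weyl chamber of its dense stratum, in the sense of Rmk.~\ref{rmk:wild_hyperplane_complement}); an exact count would require quotienting by these redundancies, but this is not needed for the stated upper bound.
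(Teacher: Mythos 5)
Your argument is correct and is essentially the paper's own: both invoke Lem.~\ref{lem:base_levi_filtrations} to associate to every $\bm\phi \in \mc L_{\Phi}^{(s)}$ a compatible base $\Delta$, count the chains $\Sigma_0 \sse \cdots \sse \Sigma_s = \Delta$ via the $(s+1)$ independent ``first-appearance'' slots per simple root, and then pay a factor of $\abs W$ for the choice of base. Your explicit framing of the last step as a union bound (and the remark that overcounting is what makes the inequality nonsharp) is a slight expository improvement, but the mathematics is identical.
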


\begin{proof}
	By Lem.~\ref{lem:base_levi_filtrations},
	for every $\bm \phi \in \mc L_\Phi^{(s)}$ there exists a base $\Delta \sse \Phi$ such that $\phi_0 \sse \phi_1 \sse \dc$ corresponds to a nondecreasing sequence of subsets $\Sigma_i \ceqq \phi_i \cap \Delta \sse \Delta$---%
	such that $\Sigma_s = \Delta$.
	The number of such sequences is $(s+1)^{\, \abs{\Delta}} = (s+1)^{\rk(\Phi)}$,
	as every simple root has $(s+1)$ independent slots where it can first appear in the sequence,
	including not appearing at all;
	then multiply by the cardinality of the set of bases,
	which is a $W$-torsor.
\end{proof}

\subsubsection{}

In brief,
length-bounded nondecreasing sequences of subsets of $\Delta \sse \Phi$ yield a subposet of $\mc L_\Phi^{(s)}$,
which in turn is a nondisjoint union of such:
what matters are the linearly independent subsets of $\Phi$,
and not just the maximal such (cf.~\cite{oxley_1992_matroid_theory}).
In type $A$,
one can count these strata by means of `fission trees'~\cite{boalch_2025_counting_the_fission_trees_and_nonabelian_hodge_graphs_untwisted_case}.

\subsection{Quotients}

Again one can take out the action of the Weyl group.
Namely,
$W$ acts naturally on (each term of) Levi filtrations,
preserving the partial order;
analogously,
it acts diagonally on $\mf t^s$,
in continuous fashion,
for any integer $s \geq 1$.
Then the proof of Prop.~\ref{prop:quotient_stratification_weyl} generalises to yield the following:

\begin{theo}
	\label{thm:quotient_wild_stratification}

	There is a stratification of $\mb A_s \ceqq \mf t^s \bs W$,
	indexed by $W$-orbits of (depth-bounded) Levi filtrations.
	The stratum corresponding to $\bm \phi \in \mc L^{(s)}_\Phi$ is the subspace of $W$-orbits intersecting~\eqref{eq:wild_truncated_strata};
	the dense stratum is $\mf t^s_{\reg} \bs W$,
	and the minimal stratum is $\mf Z_{\mf g}^s \simeq \mf Z^s_{\mf g} \bs W$.
\end{theo}

\begin{proof}[Proof omitted]
\end{proof}

\begin{rema}
	As in Rmk.~\ref{rmk:tame_free_weyl_actions},
	for all $\bm \phi \in \mc L^{(s)}_\Phi$ one can replace the setwise stabiliser $N_W \bigl( \mf t^s_{\bm \Phi} \bigr) \sse W$ with a suitable quotient thereof,
	which acts freely on the stratum.
	The result is still a piecewise (finite) Galois covering,
	with varying groups of deck transformations,
	akin to a constructible sheaf (cf.~\cite{doucot_rembado_2025_topology_of_irregular_isomonodromy_times_on_a_fixed_pointed_curve,doucot_rembado_yamakawa_twisted_g_local_wild_mapping_class_groups}).
\end{rema}

\subsection{Relation to isomonodromic deformations}
\label{sec:about_imds}

As recalled in the introduction,
the fundamental groups $\pi_1 \bigl( \mf t^s_{\bm \phi} \bigr)$ also play the role of \emph{pure} local wild mapping class groups,
while after taking the quotient one gets to the \emph{full/nonpure} version;
in brief,
they generalise the Brieskorn--Deligne $G$-braid groups~\cite{brieskorn_1971_die_fundamentalgruppe_des_raumes_der_regulaeren_orbits_einer_endlichen_komplexen_spiegelungsgruppe,deligne_1972_les_immeubles_des_groupes_de_tresses_generalises},
and they govern the braiding of Stokes data of wild $G$-connections over a disc (cf.~\cite{boalch_2014_geometry_and_braiding_of_stokes_data_fission_and_wild_character_varieties}).

The generic case of isomonodromic deformations thus corresponds to working within the dense stratum $\mf t^s_{\reg} \sse \mf t^s$.
The nongeneric case,
instead,
is about exploring the deeper strata,
i.e.,
the components of
\begin{equation}
	\mf t^s \sm \mf t^s_{\reg} = \ol{\mf t^s_{\reg}} \sm \mf t^s_{\reg} = \bigcup_{\mc L^{(s)}_\Phi \sm \set{\pmb \vn_s}} \mf t^s_{\bm \phi}.
\end{equation}

Note in particular that the nonzero integers $d_\alpha \in \set{1,\dc,s}$ which occur in the hyperplane complement~\eqref{eq:wild_hyperplane_complement} coincide with the \emph{levels} of an irregular-singular untwisted meromorphic $G$-connection germ:
these are by definition the pole orders of the nonvanishing exponential factors based there,
featuring in the formal fundamental horizontal sections,
and the nongeneric case enters into the study of \emph{multilevel} meromorphic connections.

For the twisted/ramified case,
instead,
we refer to~\cite{boalch_yamakawa_2015_twisted_wild_character_varieties,boalch_doucot_rembado_2025_twisted_local_wild_mapping_class_groups_configuration_spaces_fission_trees_and_complex_braids,doucot_rembado_yamakawa_twisted_g_local_wild_mapping_class_groups}.
This involves \emph{complex} reflection groups,
twisted root-valuation stratifications~\cite[\S~4]{goresky_kottwitz_macpherson_2009_codimensions_of_root_valuation_strata},
Springer/Lehrer--Springer theory~\cite{springer_1974_regular_elements_of_finite_reflection_groups,lehrer_springer_1999_reflection_subquotients_of_unitary_reflection_groups},
and Bessis' lift thereof to braid groups~\cite{bessis_2015_finite_complex_reflection_arrangements_are_k_pi_1}.

\section{Introducing wild orbits}
\label{sec:wild_orbits}

\subsection{}
\label{sec:relation_meromorphic_connections}

Here we recall about the wild generalisations of marked/unmarked semisimple orbits.
Please refer to~\cite{boalch_2001_symplectic_manifolds_and_isomonodromic_deformations,boalch_2007_quasi_hamiltonian_geometry_of_meromorphic_connections,yamakawa_2019_fundamental_two_forms_for_isomonodromic_deformations,chaffe_rembado_yamakawa_genus_zero_wild_quantum_de_rham_spaces} for the link with $(r-1)$-jets of principal $G$-bundle automorphisms and de Rham spaces (this is more leisurely explained in~\cite[Ch.~2]{boalch_1999_symplectic_geometry_and_isomonodromic_deformations},
in the generic vector-bundle case);
cf.~also~\cite{fernandezherrero_reduction_theory_for_connections_over_the_formal_punctured_disc}.

\subsubsection{}
\label{sec:wild_orbits_symplectic_reduction}

Let $z$ be a formal variable,
and $\wh{\mc D} \ceqq \Spec \mb C\llb z \rrb$ the corresponding standard disc.
Introduce the vector-space splitting
\begin{equation}
	\mf g(\!(z)\!) = \mf g^- \ops \mf g^+,
	\qquad \mf g^- = z^{-1} \mf g [z^{-1}] \ceqq \mf g \ots z^{-1} \mb C [z^{-1}],
	\quad \mf g ^+ \ceqq \mf g \llb z \rrb.
\end{equation}
Viewing $G$ as---%
the group of $\mb C$-points of---%
a smooth affine algebraic group over $\mb C$,
recall that any fppf/étale-locally-trivial principal $G$-bundle $\mc E \to \wh{\mc D}$ is globally trivializable (by~\cite[Thm.~11.7]{grothendieck_1968_le_groupe_de_brauer_iii_examples_et_complements} +~\cite[Exp.~XXIV, Prop.~8.1]{demazure_grothendieck_1970_schemas_en_groupes_iii_structure_des_schemas_en_groupes_reductifs}).
Upon choosing a trivialization of $\mc E$,
and basing at the trivial connection,
the affine space $\on{Conn}_G\bigl( \wh{\mc D} \bigr)$ of singular $G$-connections on $\mc E$ can be identified with the model vector space $\mf g(\!(z)\!)\dif z$.
The space of principal/polar parts of such connections is then
\begin{equation}
	\on{Conn}_G^-\bigl( \wh{\mc D} \bigr) \ceqq \bigl( \mf g (\!(z)\!) \dif z \bigr) \bs \mf g^+ \dif z \simeq \mf g^- \dif z.
\end{equation}
These may be viewed as principal parts of (formal) \emph{germs of} meromorphic $G$-connections on a Riemann surface,
but---%
besides \S~\ref{sec:irregular_blocks}---%
in this text we do not attach the disc to any Riemann surface.

Now extend the $\Ad_G$-invariant pairing of $\mf g$ to a nondegenerate $\mb C$-bilinear form $( \cdot \mid \cdot)_z \cl \mf g (\!(z)\!) \dif z \ots \mf g (\!(z)\!) \to \mb C$,
defined on pure tensors by
\begin{equation}
	\label{eq:affine_pairing}
	\bigl( X \ots \alpha \, | \, Y \ots f \bigr)_{\!z} \ceqq ( X \mid Y) \Ress_{z = 0}(f\alpha),
	\qquad X,Y \in \mf g,
	\quad f \in \mb C(\!(z)\!),
	\quad \alpha \in \mb C(\!(z)\!) \dif z.
\end{equation}
This is essentially the 2-cocycle defining the central extension $\wh{\mf g} \thra \mf g(\!(z)\!)$ (cf.~\S~\ref{sec:setup_2};
the left slot should be viewed as the linearised version of $\on{Conn}_G \bigl( \wh{\mc D} \bigr)$).
Moreover,
the restriction of~\eqref{eq:affine_pairing} to the subspace $\mf g^- \dif z \ots \mf g^+ \sse \mf g(\!(z)\!) \dif z \ots \mf g(\!(z)\!)$ remains nondegenerate,
whence a $\mb C$-linear isomorphism
\begin{equation}
	\label{eq:loop_algebra_duality}
	\on{Conn}_G^-\bigl( \wh{\mc D} \bigr) \simeq  \bigl( \mf g^+ \bigr)^{\! \dual}.
\end{equation}

Then consider the group $G \llb z \rrb \ceqq G \bigl( \mb C \llb z \rrb \bigr)$,
of gauge transformations of the trivial principal $G$-bundle over $\wh{\mc D}$.
It acts on $\on{Conn}_G\bigl( \wh{\mc D} \bigr)$ in affine fashion.
Moreover,
the pairing~\eqref{eq:affine_pairing} is $G \llb z \rrb$-invariant,
by letting $G \llb z \rrb$ act on the left slot by the \emph{linearised} gauge action,
and on the right slot by the adjoint action on $\mf g^+ \simeq \Lie \bigl( G \llb z \rrb \bigr)$.
In particular,
the duality~\eqref{eq:loop_algebra_duality} is $G\llb z \rrb$-equivariant.

Finally,
choose an integer $r \geq 1$ and consider the finite-dimensional subspace
\begin{equation}
	\label{eq:bounded_principal_parts}
	\on{Conn}_G^-\bigl( \wh{\mc D} \bigr) \supseteq \on{Conn}_G^{-r}\bigl( \wh{\mc D} \bigr) \ceqq \bigl( z^{-r} \mf g^+ \dif z \bigr) \bs \mf g^+ \dif z \simeq \bops_{i = 1}^r \mf g \cdot z^{-i} \dif z,
\end{equation}
consisting of principal parts of bounded pole order.
Under the duality~\eqref{eq:loop_algebra_duality},
it maps isomorphically onto the annihilator of the Lie ideal $z^r \mf g^+ \sse \mf g^+$,
which can be identified with $\mf g_r^{\dual}$ (cf.~\eqref{eq:tcla}).
Moreover,
the space~\eqref{eq:bounded_principal_parts} carries an action of the finite-dimensional $r$-jet Lie group
\begin{equation}
	\label{eq:tclg}
	G_r
	= \on J_r(G)
	\ceqq G (\mb C_r) \simeq G \llb z \rrb \bs H_r,
\end{equation}
where $\mb C_r = \mb C \llb z \rrb \bs z^r \mb C\llb z \rrb$ and $H_r \ceqq \exp( z^r \mf g^+ ) \sse G \llb z \rrb$,
as the latter normal subgroup fixes it pointwise.
Overall,
in view of the above equivariance,
and in the Lie-algebra isomorphism $\Lie(G_r) \simeq \mf g_r$,
this truncation of the linearised gauge action matches up with the coadjoint action of the TCLG~\eqref{eq:tclg} on $\mf g_r^{\dual}$.
(By Weil-restricting scalars~\cite{weil_1982_adeles_and_algebraic_groups},
the TCLG still underlies a connected complex algebraic group,
now \emph{nonreductive}.)

\subsubsection{}

In the end,
the (formal-holomorphic) gauge action on principal parts corresponds to the coadjoint action on a dual TCLA.
For example,
in the tame case $r = 1$ this matches up a (formal) residue term $\Lambda z^{-1} \dif z \in \mf g \cdot z^{-1}\dif z$ with a covector $\lambda \in \mf g^{\dual}$.

The coadjoint orbits $\mc O \sse \mf g_r^{\dual}$ are the wild orbits in the title of this paper.

\subsection{Semidirect splittings}

There is an exact sequence of Lie algebras
\begin{equation}
	0 \lra \mf{bir}_r^{\mf g} \lra \mf g_r \lra \mf g \lra 0,
\end{equation}
where the surjection is the quotient modulo the nilpotent `Birkhoff' Lie ideal $\mf{bir}_r \ceqq \varepsilon \mf g_r \sse \mf g_r$.
(Recall that $\varepsilon$ is the class of $z$ in $\mb C_r$.)
The sequence splits via the embedding $\mf g \simeq \mf g \cdot \varepsilon^0 \hra \mf g_r$,
so that there is a semidirect factorization $\mf g_r \simeq \mf g \lts \mf{bir}_r^{\mf g}$,
involving the $\mb C_r$-linear extension of the adjoint $\mf g$-action.

There is an analogous split exact sequence of Lie/algebraic groups:
\begin{equation}
	1 \lra \on{Bir}_r^G \lra G_r \lra G \lra 1,
\end{equation}
where the `Birkhoff' subgroup $\on{Bir}_r^G \ceqq \exp \bigl( \mf{bir}_r^{\mf g} \bigr) \sse G_r$ corresponds to gauge transformations which are the identity at $z = 0$.
We usually drop the superscripts from Birkhoff Lie subalgebras/subgroups;
note that they are actually the nilradical/unipotent radical of $\mf g_r$ and $G_r$,
respectively.

\subsection{Dual viewpoint}

There is finally a third nondegenerate pairing,
which we use to equivalently work with \emph{adjoint} $G_r$-orbits $\mc O \sse \mf g_r$.
(This makes use of the function $z$,
which the affine scheme $\wh{\mc D}$ comes equipped with.)

To introduce it,
write an element $\bm X \in \mf g_r$ as a sum
\begin{equation}
	\label{eq:element_deeper_lie_algebra}
	\bm X = \sum_{i = 0}^{r-1} X_i \varepsilon^i,
	\qquad X_0,\dc,X_{r-1} \in \mf g,
	\quad X_i \varepsilon^i \ceqq X_i \ots \varepsilon^i.
\end{equation}

\begin{enonce}{Lemma/Definition}
	\label{lem:deeper_pairing}

	Choose a complex number $c \in \mb C$.
	The symmetric $\mb C$-bilinear form $( \cdot \mid \cdot )_c \cl \mf g_r \ots \mf g_r \to \mb C$ defined by
	\begin{equation}
		\label{eq:deeper_pairing}
		\bigl( X\varepsilon^i \mid Y\varepsilon^j \bigr)_{\!c} \ceqq ( X \mid Y ) \delta_{i+j,c-1},
		\qquad X,Y \in \mf g,
		\quad i,j \in \Set{0,\dc,r-1},
	\end{equation}
	is nondegenerate and $\Ad_{G_r}$-invariant \emph{if and only if} $c = r$.
\end{enonce}

\begin{proof}
	Postponed to~\ref{proof:lem_deeper_pairing}.
\end{proof}

\subsubsection{}

Under the musical isomorphism $(\cdot \mid \cdot)_r^\sharp \cl \mf g^{\dual}_r \lxra{\simeq} \mf g_r$,
the correspondence with an element~\eqref{eq:principal_part_intro} becomes
\begin{equation}
	\label{eq:deeper_duality}
	\sum_{i = 1}^r A_i z^{-i} \dif z = \mc A \lmt \bm X = \sum_{i = 0}^{r-1} X_i \varepsilon^i \in \mf g_r,
	\qquad X_i \ceqq A_{r-i} \in \mf g.
\end{equation}
The residue $A_1$ thus matches up with the `lowest' coefficient $X_{r-1}$,
and conversely $X_0$ is the `leading' term.

\begin{rema}
	\label{rmk:changing_trivialization_and_uniformizer}

	Since we only look at (truncated) gauge orbits,
	the choice of an initial trivialization is w.l.o.g.:
	the group $G\llb z \rrb$ acts on the set of such choices in simply-transitive fashion.

	Somewhat analogously,
	the choice of a formal variable $z$ is convenient and also w.l.o.g.,
	as far the symplectomorphism class of wild orbits is concerned.
	Indeed,
	the group of continuous $\mb C$-algebra isomorphisms of $\wh{\ms O} \ceqq \mb C \llb z \rrb$ acts in simply-transitive fashion on the set of uniformizers of the complete discrete valuation ring $\wh{\ms O}$:
	upon truncation,
	this results in a Poisson action on $\mf g_r^{\dual}$ which permutes the coadjoint orbits in symplectic fashion.
	Moreover,
	the latter corresponds to a natural embedding of the group of $\mb C$-algebra automorphisms of $\mb C_r$ into the group of (outer) Lie-algebra automorphisms of $\mf g_r$ (cf.~\cite[App.~A]{chaffe_rembado_yamakawa_genus_zero_wild_quantum_de_rham_spaces}).
\end{rema}

\section{Wild orbits:
  Birkhoff action}
\label{sec:birkhoff_action}

\subsection{}

In the tame case ($r = 1$) an orbit $\mc O \sse \mf g^{\dual} \simeq \mf g$ is either semisimple,
or it is not.
Furthermore,
when it is semisimple,
it lies in one stratum of $\mb A = \mf t \bs W$,
as controlled by (Weyl orbits of) Levi subsystems $\phi \sse \Phi$.

\subsubsection{}

We will extend such statements to any pole order $r \geq 1$.
Concretely,
following the standard reduction theory for (formal) rational differential operators,
we proceed by first using the Birkhoff action to make an element of $\mf g_r$ `as semisimple as possible',
while in \S~\ref{sec:wild_orbits_whole} we use the residual $G$-action to `diagonalise' it.

\begin{defi}
	\label{def:s_semisimplicity}

	Choose an integer $s \in \Set{0,\dc,r}$:
	\begin{enumerate}
		\item a $\on{Bir}_r$-orbit $\mc O \sse \mf g_r$ is $s$-\emph{semisimple} if it contains an element $\bm X = \sum_{i = 0}^{r-1} X_i \varepsilon^i$ such that $X_0,\dc,X_{s-1} \in \mf g$ are semisimple and commute with each other;

		\item and an element $\bm X \in \mf g_r$ is $s$-\emph{semisimple} if it lies in an $s$-semisimple $\on{Bir}$-orbit.
	\end{enumerate}
\end{defi}

\begin{rema}
	If $r = 1$ then $\on{Bir}_r \sse G_r$ is trivial.
	In that case Def.~\ref{def:s_semisimplicity} just says that an element $X \in \mf g = \mf g_1$ is either semisimple (= 1-semisimple),
	or it is not.
	One can therefore assume that $r \geq 2$ to deal with interesting cases.
\end{rema}

\subsubsection{}

Let $\mc O \sse \mf g_r$ be a $\on{Bir}_r$-orbit.
There is a unique (maximal) integer $s \in  \Set{0,\dc,r}$ such that $\mc O$ is $s$-semisimple,
but \emph{not} $(s+1)$-semisimple:
we then say $\mc O$ that is \emph{strictly} $s$-\emph{semisimple},
and use the same terminology for an element $\bm X \in \mf g_r$.

In particular,
setting $\bm s(\bm X) \ceqq s$ if $\bm X$ is strictly $s$-semisimple defines a $\on{Bir}_r$-invariant function $\bm s \cl \mf g_r \to \Set{0,\dc,r}$.
This yields a filtration
\begin{equation}
	\label{eq:semisimplicity_strata}
	\mf g_r = \bigcup_{s = 0}^r \mf g_r^{(\geq s)},
	\qquad \mf g_r^{(\geq s)} \ceqq \bm s^{-1} \bigl( \Set{s,\dc,r} \bigr) \sse \mf g_r,
\end{equation}
so that $\mf g_r^{(\geq s)}$ is the subspace of $s$-semisimple elements,
while $\mf g_r^{(s)} \ceqq \mf g_r^{(\geq s)} \sm \mf g_r^{(\geq s+1)}$ is the subspace of strictly $s$-semisimple elements.\fn{
The latter \emph{cannot} define a stratification in general,
e.g.,
since $\mf g_1^{(1)} = \mf g_1^{(\geq 1)} \sse \mf g$ is dense but not open.
}

But there are finer invariants attached to a strict $s$-semisimple $\on{Bir}_r$-orbit.
To extract them we repeatedly use the following elementary fact:

\begin{rema}
	\label{rmk:semisimple_trick}

	Let $V$ be a finite-dimensional complex vector space,
	and $f \cl V \to V$ a semisimple/diagonalisable endomorphism.
	Then there is a $\mb C$-linear splitting $V = \ker(f) \ops f(V)$.
\end{rema}

\subsection{Generalised irregular type}
\label{sec:generalised_irregular_type}

Consider the truncation/quotient map
\begin{equation}
	\label{eq:truncation_map}
	\tau_k \cl \mf g_r \lthra \mf g_k \simeq \mf g_r \bs \varepsilon^k \mf g_r,
	\qquad k \in \Set{1,\dc,r},
\end{equation}
i.e.,
$\tau_k(\bm X) \ceqq X_0 + \dm + X_{k-1} \varepsilon^{k-1}$ (cf.~\eqref{eq:element_deeper_lie_algebra}).
Choose then a $\on{Bir}_r$-orbit $\mc O \sse \mf g_r$:

\begin{lemm}
	\label{lem:generalised_irregular_type}

	Suppose that $\bm X,\bm X' \in \mc O$ are such that each of $\tau_k(\bm X),\tau_k(\bm X')$ has semisimple commuting coefficients,
	for some $k \in \set{0,\dc,r}$.
	Then $\tau_k(\bm X) = \tau_k(\bm X')$.
\end{lemm}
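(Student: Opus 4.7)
The plan is to proceed by induction on $k$. The base case $k = 0$ is vacuous, and in the inductive step the lower coefficients $X_i = X'_i$ for $i < k-1$ are already supplied by the induction hypothesis applied to $k-1$, so the task reduces to showing $X_{k-1} = X'_{k-1}$.

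I would write $\bm X' = \Ad_g(\bm X)$ for some $g = \exp(\bm Z) \in \on{Bir}_r$, with $\bm Z = \sum_{a = 1}^{r-1} Z_a \varepsilon^a \in \varepsilon \mf g_r$, and expand $\Ad_g - \Id = \exp(\ad_{\bm Z}) - \Id$ as a (finite) sum of nested brackets. The key structural input is the $\mb C$-linear direct-sum decomposition
\[
\mf g = \mf l_{k-2} \ops \bops_{j = 0}^{k-2} [X_j,\mf l_{j-1}], \qquad \mf l_j \ceqq \mf g^{X_0} \cap \cdots \cap \mf g^{X_j}, \quad \mf l_{-1} \ceqq \mf g,
\]
obtained by iterating the semisimple trick (Rmk.~\ref{rmk:semisimple_trick}) applied, at each step $j$, to $\ad_{X_j}$ restricted to the $\ad_{X_j}$-invariant subspace $\mf l_{j-1}$, where it remains semisimple because the $X_j$'s pairwise commute.

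Then I would carry out a secondary induction on $m = 1,\dc,k-2$, showing that the vanishing of the $\varepsilon^m$-coefficient of $\Ad_g(\bm X) - \bm X$ forces $Z_a \in \mf l_{m-a}$ for every $a \leq m$. At each such level, the single-bracket contribution $\sum_{a+b = m,\, a \geq 1}[Z_a,X_b]$ decomposes across pairwise distinct summands $[X_b,\mf l_{b-1}]$ of the above direct sum (using the inductive containment of the $Z_a$'s), while every $n$-fold nested bracket with $n \geq 2$ vanishes identically at $\varepsilon^m$: indeed its innermost commutator $[Z_{a_n},X_b]$ satisfies $a_n + b \leq m - (n-1) \leq m-1$, and so is zero by the previous step of the secondary induction. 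At level $m = k-1$ the same vanishing of higher brackets still applies, leaving
\[
X'_{k-1} - X_{k-1} = \sum_{a = 1}^{k-1} [Z_a,X_{k-1-a}],
\]
whose left-hand side lies in $\mf l_{k-2}$ while each right-hand term lies in a distinct summand $[X_{k-1-a},\mf l_{k-2-a}]$; the direct-sum decomposition of $\mf g$ then forces every component to vanish separately, giving $X'_{k-1} = X_{k-1}$.

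The main obstacle is the bookkeeping of the higher-order brackets in $\exp(\ad_{\bm Z})$, together with the tight matching between the constraint at each level $\varepsilon^m$ and the precise depth $\mf l_{m-a}$ to which it pins $Z_a$. The explicit pattern is quick to verify by hand for the initial levels $m = 1,2,3$; the general case should follow from a uniform combinatorial argument on the multi-indices $a_1 + \dc + a_n + b = m$ with $a_i \geq 1$, exploiting that the innermost bracket is always covered by the previous inductive step.
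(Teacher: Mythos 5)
Your proof is correct and takes essentially the same route as the paper: an induction on the $\varepsilon$-degree, maintaining the hypotheses that the lower coefficients agree and that the gauge-transformation coefficients $Z_a$ commute with the appropriate initial coefficients $X_0,\dc,X_{m-a}$ (your condition $Z_a \in \mf l_{m-a}$ is exactly the paper's condition $[Y_i,X_j] = 0$ for $i+j \leq m$), and then invoking the semisimple trick to kill the leading commutator. The one cosmetic difference is that you package the iterated semisimple trick as a single direct-sum decomposition $\mf g = \mf l_{k-2} \ops \bops_j [X_j,\mf l_{j-1}]$ stated up front, whereas the paper applies it coefficient-by-coefficient inline; this is a matter of presentation. (Your outer induction on $k$ is redundant, since your secondary induction on $m$ already handles all levels at once, but this is harmless.)
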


\begin{proof}
	Postponed to~\ref{proof:lem_generalised_irregular_type}.
\end{proof}

\subsubsection{}
\label{sec:pure_orbit_invariant_1}

By Lem.~\ref{lem:generalised_irregular_type},
setting $\tau_s(\mc O) \ceqq \tau_s(\bm X) \in \mf g_s$ defines an invariant of strictly $s$-semisimple $\on{Bir}_r$-orbits,
where $\bm X \in \mc O$ is any element featuring the (unique) longest sequence of semisimple commuting coefficients.
When $s = r-1$,
this is essentially the \emph{irregular type} of an untwisted meromorphic $G$-connection germ,
on the other side of~\eqref{eq:deeper_duality}.
This is thus the `very good' case of~\cite[Def.~4]{boalch_2017_wild_character_varieties_meromorphic_hitchin_systems_and_dynkin_graphs},
and for $s < r-1$ it generalises into the twisted/ramified case.

At this stage one may then consider the subspace of elements $\bm X \in \mf g_r$ such that $\tau_s(\bm X) \in \mf g_s$ has semisimple commuting coefficients,
and conclude that any (strictly) $s$-semisimple $\on{Bir}_r$-orbit intersects it.

\subsection{Generalised (Birkhoff) residue orbit}
\label{sec:generalised_birkhoff_residue_orbit}

However,
more is true:
for an integer $k \in \set{0,\dc,r-1}$ consider the complementary truncation map
\begin{equation}
	\tau'_k \cl \mf g_r \lthra \mf g_{r-k},
	\qquad \bm X \lmt \varepsilon^{-k} \bigl( \bm X - \tau_k(\bm X) \bigr),
\end{equation}
i.e.,
$\tau'_k(\bm X) \ceqq X_k + \dm + X_{r-1} \varepsilon^{r-k-1}$ (cf.~\eqref{eq:element_deeper_lie_algebra}).
Choose again a $\on{Bir}_r$-orbit $\mc O \sse \mf g_r$:

\begin{lemm}
	\label{lem:commutation_lower_coefficients}

	Suppose that $\bm X \in \mc O$ is such that $\tau_k(\bm X)$ has semisimple commuting coefficients,
	for some $k \in \set{0,\dc,r}$.
	Then there exists $\bm X' \in \mc O$ such that:
	\begin{enumerate}
		\item $\tau_k(\bm X') = \tau_k(\bm X)$;

		\item and all coefficients of $\tau'_k(\bm X')$ commute with all coefficients of $\tau_k(\bm X')$.
	\end{enumerate}
\end{lemm}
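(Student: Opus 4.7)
The plan is to argue by induction on $j = k, k+1, \dc, r$, producing $\bm X^{(j)} \in \mc O$ with $\tau_k(\bm X^{(j)}) = \tau_k(\bm X)$ and $X^{(j)}_i \in \mf g^{X_0, \dc, X_{k-1}}$ for all $i \in \Set{k, \dc, j-1}$; the base case $j = k$ takes $\bm X^{(k)} \ceqq \bm X$, and the output is $\bm X' \ceqq \bm X^{(r)}$.

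For the inductive step $j \to j+1$, I would act on $\bm X^{(j)}$ by a Birkhoff element $g = \exp(\bm Y)$ with $\bm Y = Y_1 \varepsilon + \dm + Y_j \varepsilon^j \in \mf{Bir}_r$. Using that $\ad_{\bm Y}$ strictly raises the $\varepsilon$-order, the conjugation $\Ad_g = \exp(\ad_{\bm Y})$ modifies each coefficient of $\bm X^{(j)}$ by a finite sum of nested commutators. Preserving $X_0, \dc, X_{j-1}$ imposes a triangular cascade of linear equations on $(Y_1, \dc, Y_{j-1})$, which I would solve by the ansatz $Y_\ell \in \mf g^{X_0, \dc, X_{k-1-\ell}}$ for $\ell < k$ (with the convention $\mf g^{X_0, \dc, X_{-1}} \ceqq \mf g$), supplemented by analogous constraints for $k \leq \ell < j$ coming from the induction hypothesis that $X^{(j)}_k, \dc, X^{(j)}_{j-1}$ already lie in $\mf g^{X_0, \dc, X_{k-1}}$. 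At each step of this cascade, the direct-sum decomposition $\mf g^{X_0, \dc, X_{i-1}} = \mf g^{X_0, \dc, X_i} \ops \ad_{X_i}\bigl( \mf g^{X_0, \dc, X_{i-1}} \bigr)$ from the semisimple trick (Rmk.~\ref{rmk:semisimple_trick}), applied within the already-constrained subspace, is what makes the equations consistent.

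With the preservation constraints in place, a bookkeeping check shows that the higher-order Baker--Campbell--Hausdorff corrections at $\varepsilon^j$ all vanish (their inner brackets route through the vanishing lower-order contributions), so the net modification reduces to $\delta X_j = \sum_{\ell = 1}^{j} [Y_\ell, X^{(j)}_{j - \ell}]$. The key surjectivity claim is that, as the constrained $Y_\ell$'s vary, $\delta X_j$ sweeps out the entire direct-sum complement $\sum_{i = 0}^{k-1} \ad_{X_i}\bigl( \mf g^{X_0, \dc, X_{i-1}} \bigr)$ of $\mf g^{X_0, \dc, X_{k-1}}$ inside $\mf g$: the summand with $\ell = j$ contributes $\ad_{X_0}(\mf g)$ (with $Y_j$ free), the summand with $\ell = j-1$ contributes $\ad_{X_1}(\mf g^{X_0})$ (with $Y_{j-1} \in \mf g^{X_0}$), and so on down to $\ell = j - k + 1$. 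This follows from the simultaneous-eigenspace decomposition of $\mf g$ under the commuting semisimples $X_0, \dc, X_{k-1}$: every nonzero joint weight space has a smallest index $i$ at which the weight is nonzero, and then it lies in $\ad_{X_i}\bigl( \mf g^{X_0, \dc, X_{i-1}} \bigr)$.

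The main technical obstacle is the BCH bookkeeping to justify that the cross-correction terms at $\varepsilon^j$ vanish; I would handle this via a subsidiary induction on $\varepsilon$-order, exploiting that once $[\bm Y, \bm X^{(j)}]_{\varepsilon^{\ell'}} = 0$ for $\ell' < j$ (by the preservation conditions), any deeper nested bracket routed through these orders likewise vanishes. Having obtained a Birkhoff element that projects $X^{(j)}_j$ onto $\mf g^{X_0, \dc, X_{k-1}}$, I set $\bm X^{(j+1)} \ceqq \Ad_g(\bm X^{(j)})$, completing the inductive step.
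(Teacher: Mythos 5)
Your plan is a genuinely different route from the paper's, and the key mechanism --- the nested semisimple-trick decomposition $\mf g = \mf g^{X_0,\dc,X_{k-1}} \ops \bops_{i=0}^{k-1}\ad_{X_i}\bigl(\mf g^{X_0,\dc,X_{i-1}}\bigr)$ driving surjectivity of the degree-$j$ modification --- is sound. The paper's proof works ``one semisimple coefficient at a time'': in round $i\in\set{0,\dc,k-1}$ it applies Birkhoff elements supported at a \emph{single} $\varepsilon$-degree, with scalar part taken in $\ad_{X_i}(\mf g^{X_0,\dc,X_{i-1}})$, to push all lower coefficients into $\mf g^{X_0,\dc,X_i}$, and then restarts inside the smaller reductive centraliser. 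You swap the loops: you fix the $\varepsilon$-degree $j$ and build one multi-term exponential that kills all $k$ complementary components of $X_j$ at once. This is a legitimate alternative; it trades the paper's essentially trivial BCH bookkeeping (each $\bm Y$ has a single nonzero coefficient) for the vanishing argument you outline in your last paragraph, which does go through.

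The issue is that the ansatz you write does not match the one your surjectivity paragraph requires, and the mismatch breaks the induction hypothesis as soon as $j>k$. You state $Y_\ell\in\mf g^{X_0,\dc,X_{k-1-\ell}}$ for $\ell<k$; what the surjectivity step actually uses is $Y_{j-i}\in\mf g^{X_0,\dc,X_{i-1}}$, i.e.\ $Y_\ell\in\mf g^{X_0,\dc,X_{j-1-\ell}}$ for $\ell\in\set{j-k+1,\dc,j}$, together with $Y_\ell=0$ for $\ell\leq j-k$. The two coincide only in the base case $j=k$. With the $k$-indexed version, the first-order modification at a degree $m\in\set{k,\dc,j-1}$ is nonzero in general: e.g.\ $[Y_{k-1},X_1]\in\ad_{X_1}(\mf g^{X_0})$ contributes at degree $m=k$ and is transverse to $\mf g^{X_0,\dc,X_{k-1}}$, so $X_m^{(j+1)}$ escapes the required centraliser and the induction hypothesis fails. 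With the $j$-indexed ansatz there is in fact no ``cascade of equations'' to solve: for $m\leq j-1$ one has $m-\ell\leq j-1-\ell$, so $X_{m-\ell}$ lies in the centralising list for $Y_\ell$ and every first-order contribution below degree $j$ vanishes identically; the same inequality kills the inner brackets of all higher-order BCH terms through degree $j$, exactly as your closing paragraph anticipates.
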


(The latter will be abusively written as $\bigl[ \tau_k(\bm X'),\tau'_k(\bm X') \bigr] = 0$.)

\begin{proof}
	Postponed to~\ref{proof:lem_commutation_lower_coefficients}.
\end{proof}

\begin{rema}
	\label{rmk:diagonalising_algorithm}

	Adapting the proof~\ref{proof:lem_commutation_lower_coefficients} yields a recursive construction of the longest list of leading commuting semisimple coefficients of a $\on{Bir}_r$-orbit $\mc O \sse \mf g_r$,
	starting from any input element $\bm X^{(0)} = \sum_{i = 0}^{r-1} X_i^{(0)} \varepsilon^i \in \mc O$.

	Namely,
	if $X_0 = X_0^{(0)}$ is not semisimple then $\bm X^{(0)}$ and $\mc O$ are both strictly 0-semisimple.
	Otherwise,
	use Rmk.~\ref{rmk:semisimple_trick} to find \emph{unique} new coefficients
	\begin{equation}
		X_i^{(1)} = X^{(0)}_i - \ad_{X_0}(Y_i) \in \mf g^{X_0},
		\qquad Y_i \in \ad_{X_0}(\mf g),
		\quad i \in \set{1,\dc,r-1}.
	\end{equation}
	Now either the subleading coefficient $X_1^{(1)}$ is semisimple in $\mf g^{(1)} \ceqq \mf g^{X_0}$,
	or it is not.
	(This is equivalent to asking that $X_1^{(1)}$ be semisimple in $\mf g$,
	and \emph{independent} of whether $X_1^{(0)}$ was semisimple.)
	If it is not,
	the algorithm terminates;
	else,
	repeat inside $\mf g^{(1)}$,
	constructing a new coefficient $X_2^{(2)} \in \mf g^{(2)} \ceqq \smash{\bigl( \mf g^{(1)} \bigr)}^{\! X_1^{(1)}} = \mf g^{X_0^{(0)}} \cap \mf g^{X_1^{(1)}} \sse \mf g$;
	etc.
	In the end,
	there is a sequence of semisimple commuting coefficients $X_i^{(i)} \in \mf g^{(i)}$,
	stopping at some time $s \in \set{0,\dc,r}$.
	Now the proof~\ref{proof:lem_generalised_irregular_type} implies that $\mc O$ is \emph{strictly} $s$-semisimple.
\end{rema}

\subsubsection{}
\label{sec:pure_orbit_invariant_2}

The proof~\ref{proof:lem_generalised_irregular_type} also yields one last invariant for a strictly $s$-semisimple $\on{Bir}_r$-orbit $\mc O \sse \mf g_r$.
Namely,
if $\bm X \in \mc O$ is such that $\tau_s(\bm X) = \tau_s(\mc O)$,
and if the nonsemisimple coefficient $X_s \in \mf g$ commutes with $X_0,\dc,X_{s-1}$ (which one can assume by Lem.~\ref{lem:commutation_lower_coefficients}),
then this coefficient is also uniquely determined by $\mc O$.
This follows by inspecting the recursive step~\eqref{eq:recursive_step}.

Finally,
the lowest coefficients $X_{s+1},\dc,X_{r-1}$ are \emph{not} determined by $\mc O$,
but by Lem.~\ref{lem:commutation_lower_coefficients} they can be assumed to commute with $X_0,\dc,X_{s-1}$.
In turn,
the $\on{Bir}_{r-s}^L$-orbit of $\tau'_s(\bm X) \in \mf g_{r-s}$ \emph{is} determined by $\mc O$,
where
\begin{equation}
	\label{eq:common_centraliser}
	L
	= L_{X_0,\dc,X_{s-1}}
	\ceqq G^{X_0} \cap \dm \cap G^{X_{s-1}} \sse G,
\end{equation}
which is the common centraliser of $X_0,\dc,X_{s-1}$---%
in $G$.

If $s = r-1$ this simply reduces to fixing the coefficient $X_{r-1} \in \mf g$,
i.e.,
the (formal) residue of a meromorphic $G$-connection germ,
on the other side of~\eqref{eq:deeper_duality}.
Moreover,
if $s = r$ then $X_{r-1}$ itself is semisimple:
this corresponds to the secondary assumption of~\cite{biquard_boalch_2004_wild_nonabelian_hodge_theory_on_curves},
which is relevant to the second part of this text.

\section{Wild orbits:
  whole action}
\label{sec:wild_orbits_whole}

\subsection{}

Let us add on the action of the `constant' subgroup $G \sse G_r$.

\subsubsection{}

The $G$-action preserves the semisimplicity and mutual commutation relations of all elements of $\mf g$,
so that an element $\bm X \in \mf g_r$ is (strictly) $s$-semisimple if and only if its $G$-orbit is made of (strictly) $s$-semisimple elements.
Hence,
the filtration~\eqref{eq:semisimplicity_strata} is actually by $G_r$-invariant subspaces,
i.e.,
each piece is a disjoint union of $G_r$-orbits,
and the terminology of (strict) $s$-semisimple $G_r$-orbits makes sense.

(Hereafter,
in this section,
by `orbit' we mean `$G_r$-orbit'.)

\begin{rema}
	\label{rmk:generalised_irregular_classes_and_residue_orbits}

	Let $\mc O \sse \mf g_r$ be a strictly $s$-semisimple orbit.
	Acting by $\on{Bir}_r \sse G_r$,
	we get to an element $\bm X \in \mc O$ such that $\tau_s(\bm X) \in \mf g_s$ consists of commuting semisimple coefficients.
	These are \emph{not} uniquely determined,
	precisely because we can simultaneously conjugate all of them,
	so that the first invariant of $\mc O$ is the $G$-orbit of $\tau_s(\bm X)$.
	When $s = r-1$,
	this is essentially the ordinary \emph{irregular class} underlying the corresponding irregular type of \S~\ref{sec:pure_orbit_invariant_1} (cf.~\cite[Rmk.~10.6]{boalch_2014_geometry_and_braiding_of_stokes_data_fission_and_wild_character_varieties}).
	When $s < r-1$,
	it generalises in the twisted/ramified case.

	Finally,
	using the $\on{Bir}_r$-action one can also suppose that $\bigl[ \tau_s(\bm X),\tau'_s(\bm X) \bigr] = 0$:
	the complementary invariant of $\mc O$ is the orbit of $\tau'_s(\bm X)$ for the action of $L_{r-s} \simeq L \lts \on{Bir}^L_{r-s}$,
	which enlarges the $\on{Bir}^L_{r-s}$-orbit of \S~\ref{sec:pure_orbit_invariant_2}.
	When $s = r-1$,
	this corresponds to the (formal) residue $G$-orbit,
	under the action of the centraliser of the irregular type.
\end{rema}

\subsection{Wild marking}
\label{sec:wild_marking}

Any set of commuting semisimple elements of $\mf g$ lies in a Cartan subalgebra.
As all Cartan subalgebras are conjugated under the adjoint $G$-action,
we can fix one:
this breaks the symmetry down to the Weyl group,
and it suggests a definition for the marking of wild orbits.

Fix thus a Cartan subalgebra $\mf t \sse \mf g$,
and an integer $s \in \set{0,\dc,r}$.
Consider the topological subspaces
\begin{equation}
	\label{eq:wild_marking}
	\mf t_r^{(\geq s)} \ceqq \Set{ \bm X \in \mf g_r | \tau_s(\bm X) \in \mf t_s \text{ and } \bigl[ \tau_s(\bm X),\tau'_s(\bm X) \bigr] = 0 } \sse \mf g_r^{(\geq s)},
\end{equation}
and
\begin{equation}
	\label{eq:wild_marking_strict}
	\mf t_r^{(s)} \ceqq \Set{ \bm X \in \mf t_r^{(\geq s)} | X_s \text{ not semisimple } } \sse \mf g_r^{(s)}.
\end{equation}
The former can be identified with a closed subspace of $\mf t^s \ts \mf g^{r-s}$ cut out by the homogeneous quadratic equations,
in the usual topological identifications $\mf t_s \simeq \mf t^s$ and $\mf g_{r-s} \simeq \mf g^{r-s}$---%
selecting coefficients.

\begin{defi}[cf.~Def.~\ref{def:tame_marking}]
	\leavevmode

	\begin{enumerate}
		\item
		      A $\mf t$-\emph{marking} of an $s$-semisimple orbit $\mc O \sse \mf g_r$ (resp.,
		      a strict such) is the choice of a point $\bm X \in \mc O \cap \mf t_r^{(\geq s)}$ (resp.,
		      of a point $\bm X \in \mc O \cap \mf t_r^{(s)}$).

		\item
		      With this choice,
		      we say that $\mc O = \mc O_{\bm X}$ is a \emph{marked} $s$-semisimple orbit.
	\end{enumerate}
\end{defi}

(Again,
if $\mf t$ is fixed one just speaks of a `marking'.)

\section{Marked strata of wild orbits and centralisers}
\label{sec:wild_orbit_strata}

\subsection{}

Here we define strata of wild orbits corresponding to a choice of marking,
and then we determine the stabiliser of the marking in the $(r-1)$-semisimple case,
in a perfect generalisation of \S~\ref{sec:tame_strata} (cf.~\cite[\S~3]{yamakawa_2019_fundamental_two_forms_for_isomonodromic_deformations}).

\subsubsection{}

Note that~\eqref{eq:wild_marking}--\eqref{eq:wild_marking_strict} yield $\mf t_r^{(\geq 0)} = \mf g_r$ and $\mf t_r^{(0)} = \mf g_r^{(0)}$,
in which case the choice of marking is nil.
Therefore,
one can safely assume that $s > 0$ in this context.

By construction,
mapping $\mc O_{\bm X} \mt \bm X$ establishes a bijection between the set of marked $s$-semisimple orbits and $\mf t_r^{(\geq s)}$,
and analogously in the strict case.
Now invoke the stratification of Cor.~\ref{cor:truncated_wild_stratification}:
there are Levi subsystems $\phi_{X_i} = \Phi \cap \Set{X_i}^\perp$ for $i \in \Set{0,\dc,s-1}$,
and we consider the intersections
\begin{equation}
	\label{eq:levi_filtration_irregular_type}
	\phi_i = \phi_i(\bm X) \ceqq \bigcap_{j = 0}^{s-1-i} \phi_{X_j} = \Set{ \alpha \in \Phi | \Braket{ \alpha,X_0 } = \dm = \Braket{ \alpha,X_{s-1-i} } = 0 } \in \mc L_\Phi.
\end{equation}
(Indices are swapped,
in view of the duality~\eqref{eq:deeper_duality}.)

They yield a Levi filtration $\bm \phi = (\phi_0,\dc,\phi_{s-1},\Phi,\Phi,\dc)$,
i.e.,
an element of $\mc L^{(s)}_\Phi$,
so that $\mf l_{\phi_i} \sse \mf g$ is the common infinitesimal centraliser of $X_0,\dc,X_{s-1-i} \in \mf t$.
Explicitly
\begin{equation}
	\mf l_{\phi_i} = \mf l_{\phi_{X_0}} \cap \dm \cap\mf l_{\phi_{X_{s-1-i}}} = \bigcap_{i = 0}^{s-1-i} \Biggl( \mf t \ops \bops_{\phi_{X_i}} \mf g_\alpha \Biggr) = \mf t \ops \bops_{\phi_i} \mf g_\alpha,
\end{equation}
using~\eqref{eq:levi_algebra_from_levi_system}.
There is an associated stratum $\mf t^s_{\bm \phi} \sse \mf t^s$,
and a subspace
\begin{equation}
	\mf t_r^{(\geq s)}(\bm \phi) \ceqq \tau_s^{-1} \bigl( \mf t^s_{\bm \phi} \bigr) \sse \mf t_r^{(\geq s)}.
\end{equation}
Varying the Levi filtration then yields a disjoint union
\begin{equation}
	\label{eq:stratification_wild_marking}
	\mf t_r^{(\geq s)} = \bigcup_{\mc L^{(s)}_\Phi} \mf t_r^{(\geq s)}(\bm \phi),
\end{equation}
whose terms come with canonical (topological) splittings
\begin{equation}
	\label{eq:wild_marking_strata}
	\mf t_r^{(\geq s)}(\bm \phi) \lxra{\simeq} \mf t^s_{\bm \phi} \ts (\mf l_{\phi_0})_{r-s},
	\qquad \bm X \lmt \bigl( \tau_s(\bm X),\tau'_s(\bm X) \bigr).
\end{equation}
And analogously in the strictly $s$-semisimple case.

\begin{exem}
	The extremal pieces are
	\begin{equation}
		\mf t_r^{(\geq s)}(\pmb \vn) \simeq \bigl( \mf t_{\reg} \ts \mf t^{s-1} \bigr) \ts \mf t_{r-s} \simeq \mf t^r_{\pmb \vn},
	\end{equation}
	and
	\begin{equation}
		\mf t_r^{(\geq s)}(\bm \Phi) \simeq \Set{0} \ts \mf g_{r-s} \simeq \mf g_{r-s},
	\end{equation}
	in the notation of Cor.~\ref{cor:truncated_wild_stratification}.
\end{exem}

\begin{exem}
	Suppose that $\mf g = \mf{sl}_2(\mb C)$,
	and keep the notation of Exmp.~\ref{ex:sl_2_wild_truncated_stratification}.

	Using the standard 3-element basis of $\mf g$ yields
	\begin{equation}
		\mf t_r^{(\geq s)} \simeq \Set{ \bigl( \bm a,(\bm \alpha,\bm \beta,\bm \gamma) \bigr) \in \mb C^s \ts \bigl( \mb C^{r-s} \bigr)^3 | a_i\alpha_j = a_i\gamma_j = 0 } \sse \mb C^{3r - 2s},
	\end{equation}
	where we impose $2s(r-s)$ equations.
	But the description is greatly simplified over each stratum,
	finding identifications
	\begin{equation}
		\mf t_r^{(\geq s)}(\bm \phi_k) \simeq
		\begin{cases}
			\mb C^{3(r-s)},
			 & k = 0,
			\\
			\mb C^{k-1} \ts \mb C^* \ts \mb C^{r-s},
			 & k \in \Set{1,\dc,s}.
		\end{cases}
		\qedhere
	\end{equation}
\end{exem}

\subsection{Infinitesimal centralisers}

The point of the splitting~\eqref{eq:wild_marking_strata} is that it allows for a computation of the infinitesimal centraliser $\mf g_r^{\bm X} = \ker(\ad_{\bm X}) \sse \mf g_r$ of \emph{all} elements $\bm X \in \mf t_r^{(\geq s)}(\bm \phi)$,
in a uniform way.
Namely,
choose $\bm Y = \sum_{0 = 1}^{r-1} Y_i \varepsilon^i \in \mf g_r$;
then:

\begin{lemm}
	\label{lem:wild_infinitesimal_centraliser}

	If $\bm Y \in \mf g_r^{\bm X}$,
	then:
	\begin{enumerate}
		\item one has $Y_i \in \mf l_{\phi_i}$ for $i \in \set{0,\dc,s-1}$;

		\item and $Y_0 \in \mf g^{X_s}$.
	\end{enumerate}
\end{lemm}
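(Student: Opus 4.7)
The plan is to extract from $[\bm X, \bm Y] = 0$ the family of order-$k$ equations
\begin{equation}
	\sum_{i + j = k} [X_i, Y_j] = 0, \qquad k = 0, \dc, r - 1,
\end{equation}
where $\bm Y = \sum_{j = 0}^{r-1} Y_j \varepsilon^j$, and then to exploit the defining hypothesis of $\mf t_r^{(\geq s)}(\bm \phi)$: $X_0, \dc, X_{s-1}$ lie in $\mf t$, while $X_s, \dc, X_{r-1}$ commute with them and so sit in $\mf l_{\phi_0}$. The first move is to decompose each $Y_j = Y_j^0 + \sum_{\alpha \in \Phi} Y_j^\alpha$ along the root-space splitting, writing $Y_j^\alpha = c_{j, \alpha} e_\alpha$ for a fixed basis vector $e_\alpha \in \mf g_\alpha$.

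For every order $k \leq s - 1$ only indices $i \leq s - 1$ appear, so all $X_i$'s are in $\mf t$; the components $Y_j^0 \in \mf t$ bracket to zero against them, and projecting onto each one-dimensional root space $\mf g_\alpha$ yields the scalar identity
\begin{equation}
	\sum_{i + j = k} \alpha(X_i) c_{j, \alpha} = 0.
\end{equation}
The slick reformulation is as a polynomial congruence: setting $P_\alpha(t) \ceqq \sum_{i = 0}^{s-1} \alpha(X_i) t^i$ and $Q_\alpha(t) \ceqq \sum_{j = 0}^{s-1} c_{j, \alpha} t^j$ in $\mb C[t]$, the whole family of scalar equations amounts to $P_\alpha Q_\alpha \equiv 0 \pmod{t^s}$.

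The key step is then a valuation argument: supposing $c_{i, \alpha} \neq 0$, the polynomial $Q_\alpha$ is nonzero with valuation $v \leq i$; if $P_\alpha$ were also nonzero, of valuation $u$, then $P_\alpha Q_\alpha$ would have nonvanishing coefficient $\alpha(X_u) c_{v, \alpha}$ at degree $u + v$, forcing $u + v \geq s$ and hence $u \geq s - i$. In either scenario $\alpha(X_j) = 0$ for all $j \leq s - 1 - i$, i.e. $\alpha \in \phi_i$ by~\eqref{eq:levi_filtration_irregular_type}; together with $Y_i^0 \in \mf t \sse \mf l_{\phi_i}$, this yields item (1).

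Item (2) should drop out of the order-$s$ equation
\begin{equation}
	[X_s, Y_0] + \sum_{i = 0}^{s - 1} [X_i, Y_{s - i}] = 0,
\end{equation}
after projection onto $\mf l_{\phi_0}$. Each term with $i \leq s - 1$ lands in $\bops_{\alpha \notin \phi_0} \mf g_\alpha$ (the root spaces inside $\mf l_{\phi_0}$ are killed since $\alpha(X_i) = 0$ whenever $\alpha \in \phi_0$ and $i \leq s - 1$) and so contributes nothing to the projection; meanwhile $X_s \in \mf l_{\phi_0}$ by hypothesis and $Y_0 \in \mf l_{\phi_0}$ by item (1), so their bracket remains in this Lie subalgebra, and the projected equation reads simply $[X_s, Y_0] = 0$. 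The main obstacle is really just to spot the polynomial encoding: without it the same bound would come out of a slightly fussier double induction on $k$, peeling off one scalar equation at a time.
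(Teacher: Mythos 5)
Your proof is correct, and it takes a genuinely different route from the paper's. The paper expands $[\bm Y, \bm X] = 0$ into the same family of degree-$k$ equations, but then runs a nested induction on the degree $l$: assuming $[Y_i, X_j] = 0$ whenever $i + j \le l$, it rewrites the order-$(l+1)$ equation with $\ad_{X_0}(Y_{l+1})$ isolated on one side, observes (via the induction hypothesis and Jacobi) that the remaining sum lies in $\mf g^{X_0}$, and invokes the semisimple trick $\mf g = \mf g^{X_0} \ops \ad_{X_0}(\mf g)$ to conclude that both sides vanish; it then repeats the isolation for $X_1, \dc, X_l$ to kill the remaining brackets. This delivers the slightly stronger intermediate result $[Y_i, X_j] = 0$ for all $i + j \le s$, with item (2) falling out of the final step, and a remark that semisimplicity of $X_s$ is never used. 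Your approach instead separates into root spaces immediately and converts the coupled degree-$k$ equations into the single congruence $P_\alpha Q_\alpha \equiv 0 \pmod{t^s}$, so that the whole induction collapses into a one-line valuation estimate. This buys concision and makes the combinatorics transparent (the content really is just a valuation bound), at the mild cost of being tied to the Cartan/root presentation: the paper's version stays at the level of centralisers of semisimple elements without ever naming roots, which is why it transfers essentially verbatim to the proof of Lemma~\ref{lem:generalised_irregular_type} and to Theorem~\ref{thm:wild_centraliser} where a group element $g \in G$ enters. Your handling of item (2) is also slightly different — you invoke item (1) to put $[X_s, Y_0]$ inside $\mf l_{\phi_0}$ and project — whereas the paper absorbs item (2) into the same induction; both are fine, and in fact your route more cleanly separates the information used (equations $k \le s - 1$ for item (1); equation $k = s$ plus item (1) for item (2)). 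The only caveat worth making explicit: when $s = r$ the order-$s$ equation does not exist and item (2) is vacuous since there is no $X_s$; neither your write-up nor the paper's flags this, but it is harmless.
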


\begin{proof}
	Postponed to~\ref{proof:lem_wild_infinitesimal_centraliser}.
\end{proof}

\subsection{Centralisers}

Analogously,
we consider the centraliser $G_r^{\bm X} \sse G_r$,
for any $\bm X \in \mf \mf t_r^{(\geq s)}$,
which controls the (complex) geometry of the corresponding marked orbit.
Choose thus $g \in G$ and $\bm Y \in \mf{bir}_r$,
and consider the group element $\bm g \ceqq g \lts e^{\bm Y} \in G_r$.
Then:

\begin{theo}[cf.~Lem.~3.3 of~\cite{yamakawa_2019_fundamental_two_forms_for_isomonodromic_deformations}]
	\label{thm:wild_centraliser}

	If $\bm g \in G_r^{\bm X}$,
	then (in the notation of~\eqref{eq:common_centraliser}):
	\begin{enumerate}
		\item one has $g \in L \cap G^{X_s}$;

		\item and $\bm Y \in \mf g_r^{\bm X}$.
	\end{enumerate}
\end{theo}

\begin{proof}
	The adjoint action reads
	\begin{equation}
		\label{eq:deeper_adjoint_action}
		\Ad_{\bm g}(\bm X) = \Ad_g e^{\ad_{\bm Y}} (\bm X) = \sum_{i = 0}^{r-1} \Ad_g(X'_i) \varepsilon^i,
	\end{equation}
	where $\bm X' = \sum_{i = 0}^{r-1} X'_i \varepsilon^i \eqqc e^{\ad_{\bm Y}} (\bm X) \sse \mf g_r$.
	Thus,
	we impose that
	\begin{equation}
		\label{eq:equation_centraliser}
		X_l = \Ad_g(X_l') \in \mf g,
		\qquad l \in \Set{0,\dc,r-1}.
	\end{equation}
	In particular $X'_0 = X_0$,
	since the $\on{Bir}_r$-action fixes the leading coefficient,
	so that $g \in G^{X_0}$.

	Now choose $l \in \Set{0,\dc,s-1}$,
	and suppose inductively that:
	i) $g \in G^{X_0} \cap \dm \cap G^{X_l}$;
	and ii) $[Y_i,X_j] = 0$ if $i+j \leq l$.
	One computes
	\begin{equation}
		X'_{l+1} = X_{l+1} + [Y_1,X_l] + \dm + [Y_{l+1},X_0] \in \mf g,
	\end{equation}
	as all nested Lie brackets vanish by the recursive hypothesis.
	Thus,
	using that $\Ad_g \in \Aut(\mf g)$ fixes $X_0,\dc,X_k$,
	the condition~\eqref{eq:equation_centraliser} becomes
	\begin{equation}
		X_{l+1} = \Ad_g(X_{l+1}) + [Y'_1,X_l] + \dm + [Y'_{l+1},X_0],
		\qquad Y'_i \ceqq \Ad_g(Y_i) \in \mf g.
	\end{equation}
	Now one can argue as in the proof~\ref{proof:lem_wild_infinitesimal_centraliser};
	rewrite the above as
	\begin{equation}
		\ad_{X_0}(Y'_{l+1}) = (\Ad_g - \Id_{\mf g})(X_{l+1}) + [Y'_1,X_l] + \dm + [Y'_l,X_1] \in \mf g,
	\end{equation}
	and note that by hypothesis the right-hand side lies in $\mf g^{X_0} = \mf g^{\Ad_g(X_0)}$.
	(When $l = s-1$ we use that $X_s$ commutes with $X_0,\dc,X_{s-1}$,
	but we do \emph{not} need that $X_s \in \mf t$.)
	Rmk.~\ref{rmk:semisimple_trick} implies that $[Y_{l+1},X_0] = 0$,
	and one can repeat the same argument for the semisimple elements $X_1,\dc,X_l \in \mf t$.
	In the end
	\begin{equation}
		[X_0,Y_{l+1}] = \dm = [X_l,Y_1] = (\Ad_g - \Id_{\mf g})(X_{l+1}) = 0 \in \mf g.
		\qedhere
	\end{equation}
\end{proof}

\subsection{}

Let us spell out all consequences for the marked $(r-1)$-semisimple orbits $\mc O_{\bm X} \sse \mf g_r$.
By definition $\bm X \in \mf t_r^{(\geq r-1)}$,
i.e.,
$X_0,\dc,X_{r-2} \in \mf t$ and $X_{r-1} \in \mf l_{\phi_0}$ (cf.~\eqref{eq:levi_filtration_irregular_type}).

Denote by $\mf g^{\bm X} \ceqq \mf l_{\phi_0} \cap \mf g^{X_{r-1}} = \mf g^{X_0} \cap \dm \cap \mf g^{X_{r-1}}$ the infinitesimal centraliser of (all coefficients of) $\bm X$ in $\mf g$,
and define
\begin{equation}
	\mf l_{\bm \phi} \ceqq \prod_{i = 0}^{r-2} \bigl( \mf l_{\phi_i} \cdot \varepsilon^{i+1} \bigr) \sse \mf{bir}_r.
\end{equation}

\begin{coro}
	\label{cor:very_good_infinitesimal_centraliser}

	One has $\mf g_r^{\bm X} = \mf g^{\bm X} \lts \mf l_{\bm \phi} \sse \mf g_r$.
\end{coro}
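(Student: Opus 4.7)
The plan is to derive this from Thm.~\ref{thm:wild_centraliser} by passing from the Lie group $G_r^{\bm X}$ to its Lie algebra. The key observation is that in the ``very good'' case $s = r-1$ the theorem's necessary conditions on $\bm g = g \lts e^{\bm Y} \in G_r^{\bm X}$ become sufficient: indeed $g \in L \cap G^{X_{r-1}}$ means $\Ad_g$ fixes every $X_i$, so $\Ad_g(\bm X) = \bm X$, and combining this with $e^{\ad_{\bm Y}}(\bm X) = \bm X$ (equivalent to $\bm Y \in \mf g_r^{\bm X}$) yields $\Ad_{\bm g}(\bm X) = \bm X$ via~\eqref{eq:deeper_adjoint_action}. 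Using the unique decomposition $G_r \simeq G \lts \on{Bir}_r^G$, this gives $G_r^{\bm X} = (L \cap G^{X_{r-1}}) \lts \exp(\mf g_r^{\bm X} \cap \mf{Bir}_r)$; taking Lie algebras (and using $\mf l_{\phi_0} \cap \mf g^{X_{r-1}} = \mf g^{\bm X}$) produces a vector-space decomposition $\mf g_r^{\bm X} = \mf g^{\bm X} \ops (\mf g_r^{\bm X} \cap \mf{Bir}_r)$.

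It remains to identify the Birkhoff part with $\mf l_{\bm \phi}$. One inclusion is a direct bracket computation: for $\bm Y = \sum_{i=0}^{r-2} L_i \varepsilon^{i+1}$ with $L_i \in \mf l_{\phi_i}$, the $\varepsilon^k$-coefficient of $[\bm Y, \bm X]$ is $\sum_{i=1}^{k}[L_{i-1}, X_{k-i}]$, and each summand vanishes because $L_{i-1} \in \mf l_{\phi_{i-1}}$ centralises $X_0, \dc, X_{r-1-i}$ while $k - i \leq r-1-i$ for $k \leq r-1$. For the reverse inclusion I would transport the recursive semisimple-trick argument of Thm.~\ref{thm:wild_centraliser} to the infinitesimal level, analysing the equations $[\bm Y, \bm X]_k = \sum_{i+j=k}[Y_i, X_j] = 0$ inductively in $k$: at step $k$, first isolate $[Y_k, X_0] = -\ad_{X_0}(Y_k) \in \ad_{X_0}(\mf g)$ and apply~\eqref{eq:semisimple_trick} with respect to $X_0$ to conclude it vanishes, then descend to $\mf g^{X_0}$ and apply~\eqref{eq:semisimple_trick} with respect to $X_1$ to $[Y_{k-1}, X_1]$, and so on. Collecting over all $k$ yields $Y_j \in \mf g^{X_0} \cap \dm \cap \mf g^{X_{r-1-j}} = \mf l_{\phi_{j-1}}$ for $j = 1, \dc, r-1$, i.e. $\bm Y \in \mf l_{\bm \phi}$.

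The semidirect structure $\mf g^{\bm X} \lts \mf l_{\bm \phi}$ is inherited from $\mf g_r = \mf g \lts \mf{Bir}_r$ since $\mf g^{\bm X} \sse \mf l_{\phi_0} \sse \mf l_{\phi_i}$ ensures $[\mf g^{\bm X}, \mf l_{\phi_i}] \sse \mf l_{\phi_i}$ for each $i$. The main technical subtlety is the final step $k = r-1$ in the recursion, where the non-semisimple coefficient $X_{r-1}$ enters: since $X_{r-1}$ commutes with $X_0, \dc, X_{r-2}$ and lies in $\mf l_{\phi_0}$, the iterated semisimple-trick decompositions of $\mf l_{\phi_0}$ by $\ad_{X_1}, \dc, \ad_{X_{r-2}}$ still apply, and this step produces the additional constraint $Y_0 \in \mf g^{X_{r-1}}$ needed to complete $Y_0 \in \mf g^{\bm X}$.
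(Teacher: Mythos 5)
Your proof is correct, but it takes a detour that the paper avoids. The paper proves the group-level Thm.~\ref{thm:wild_centraliser} \emph{after} having already established a direct Lie-algebra version, Lem.~\ref{lem:wild_infinitesimal_centraliser}, by exactly the same $\varepsilon$-graded semisimple-trick recursion; the paper's proof of Cor.~\ref{cor:very_good_infinitesimal_centraliser} is a one-line appeal to that lemma (plus the remark that $\mf g^{\bm X}$ preserves $\mf l_{\bm \phi}$ under the graded adjoint action, which settles the semidirect structure and, together with your ``direct bracket computation'', the easy inclusion $\mf g^{\bm X}\lts\mf l_{\bm\phi}\sse\mf g_r^{\bm X}$). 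Your route instead starts from Thm.~\ref{thm:wild_centraliser}, observes that its necessary conditions become sufficient when $s=r-1$ (a valid and not-quite-trivial observation, since for $s<r-1$ the element $g$ need not centralise the lower coefficients), derives the group-level identity $G_r^{\bm X}=G^{\bm X}\lts\exp(\mf g_r^{\bm X}\cap\mf{Bir}_r)$, passes to Lie algebras, and then \emph{re-runs} the recursive semisimple-trick argument to identify the Birkhoff part. The last step is exactly the proof of Lem.~\ref{lem:wild_infinitesimal_centraliser}, so your argument is self-contained but redundant relative to the paper's logical order; it also needlessly involves the exponential and a Lie-group-to-Lie-algebra passage for a statement that is purely infinitesimal. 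What your route buys is clarity about the ``necessary becomes sufficient'' phenomenon, which the paper does not make explicit, and a careful handling of the nonsemisimple coefficient $X_{r-1}$ at the last recursive step---both of which you state accurately. One small remark in favour of your caution: the \emph{statement} of Lem.~\ref{lem:wild_infinitesimal_centraliser}, read literally, only gives $Y_i\in\mf l_{\phi_i}$ for $i\in\set{0,\dc,s-1}$, which is one step weaker than the conclusion $Y_i\in\mf l_{\phi_{i-1}}$ needed here; the stronger conclusion is what the \emph{proof} of that lemma actually establishes (namely $[Y_i,X_j]=0$ for $i+j\leq s$), and your recursion correctly derives that stronger version from scratch.
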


\begin{proof}
	This follows from Lem.~\ref{lem:wild_infinitesimal_centraliser},
	noting that $\mf g^{\bm X}$ acts on $\mf{bir}_r$ by preserving the Lie subalgebra $\mf l_{\bm \phi}$---%
	in the $\varepsilon$-graded adjoint action.
\end{proof}

\begin{rema}
	\label{rmk:fission}

	The nondecreasing sequence of reductive Lie subgroups
	\begin{equation}
		\label{eq:fission_groups}
		L_{\phi_0} \sse \dm \sse L_{\phi_{r-2}} \sse G,
	\end{equation}
	integrating the corresponding sequence of Lie subalgebras $\mf l_{\phi_i} \sse \mf g$,
	constitutes the \emph{fission} (of $G$) induced by the associated irregular type,
	on the other side of~\eqref{eq:deeper_duality}:
	cf.~the example in~\cite{boalch_2009_through_the_analytic_halo_fission_via_irregular_singularities},
	as well as~\cite[Eq.~(33)]{boalch_2014_geometry_and_braiding_of_stokes_data_fission_and_wild_character_varieties},
	and the decomposition in~\cite[Eqq.~(2.2)--(2.3)]{biquard_boalch_2004_wild_nonabelian_hodge_theory_on_curves}---%
	and see also~\cite{doucot_rembado_tamiozzo_local_wild_mapping_class_groups_and_cabled_braids,doucot_rembado_2025_topology_of_irregular_isomonodromy_times_on_a_fixed_pointed_curve,doucot_rembado_yamakawa_twisted_g_local_wild_mapping_class_groups}.
	These subgroups are \emph{connected},
	as each term of~\eqref{eq:fission_groups} is the centraliser of a semisimple element of $\mf g$,
	taken inside the connected reductive Lie group which follows it in the sequence.
\end{rema}

\subsubsection{}

Denote by $G^{\bm X} \ceqq \bigcap_{i = 0}^{r-1} G^{X_i} \sse G$ the centraliser of (all coefficients of) $\bm X$ in $G$.
Consider then the subspace $L_{\bm \phi} \ceqq \exp(\mf l_{\bm \phi}) \sse \on{Bir}_r$,
which is a subgroup by the Baker--Campbell--Hausdorff formula.

\begin{coro}[cf.~Cor.~\ref{cor:very_good_infinitesimal_centraliser}]
	\label{cor:very_good_centraliser}

	One has $G_r^{\bm X} = G^{\bm X} \lts L_{\bm \phi} \sse G_r$.
\end{coro}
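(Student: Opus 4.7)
The plan is to deduce this directly from the two results immediately preceding it, namely Thm.~\ref{thm:wild_centraliser} (applied in the case $s = r-1$) and the infinitesimal version Cor.~\ref{cor:very_good_infinitesimal_centraliser}. Since every element of $G_r$ factorises uniquely as $\bm g = g \lts e^{\bm Y}$ with $g \in G$ and $\bm Y \in \mf{Bir}_r$ via the semidirect product splitting, we only need to characterise the pairs $(g, \bm Y)$ for which $\bm g \in G_r^{\bm X}$.

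First I would apply Thm.~\ref{thm:wild_centraliser} with $s = r-1$: the first conclusion yields $g \in L \cap G^{X_{r-1}} = G^{X_0} \cap \dotsb \cap G^{X_{r-1}} = G^{\bm X}$, while the second gives $\bm Y \in \mf g_r^{\bm X} \cap \mf{Bir}_r$. The semidirect splitting $\mf g_r^{\bm X} = \mf g^{\bm X} \lts \mf l_{\bm \phi}$ of Cor.~\ref{cor:very_good_infinitesimal_centraliser} is compatible with the decomposition $\mf g_r = \mf g \ops \mf{Bir}_r$, since $\mf g^{\bm X}$ is embedded in degree zero and $\mf l_{\bm \phi} \sse \mf{Bir}_r$; hence intersecting with $\mf{Bir}_r$ collapses the semidirect product to $\mf l_{\bm \phi}$, and $e^{\bm Y} \in \exp(\mf l_{\bm \phi}) = L_{\bm \phi}$. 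This proves the inclusion $G_r^{\bm X} \sse G^{\bm X} \cdot L_{\bm \phi}$.

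For the reverse inclusion I would verify that both factors centralise $\bm X$: $G^{\bm X} \sse G$ does so tautologically (each coefficient $X_i$ is fixed by the adjoint action), while $L_{\bm \phi}$ does so because $\mf l_{\bm \phi} \sse \mf g_r^{\bm X} = \Ker(\ad_{\bm X})$ (cf. Cor.~\ref{cor:very_good_infinitesimal_centraliser}), and exponentiating yields $\Ad_{e^{\bm Y}}(\bm X) = \bm X$ for all $\bm Y \in \mf l_{\bm \phi}$. To get the semidirect product structure I then need the normality of $L_{\bm \phi}$ inside $G^{\bm X} \cdot L_{\bm \phi}$: this follows from the fact that the $\varepsilon$-graded adjoint action of $G^{\bm X}$ on $\mf{Bir}_r$ preserves each piece $\mf l_{\phi_i} \cdot \varepsilon^{i+1}$, because $G^{\bm X} \sse G^{X_j}$ for every $j$ normalises each Levi factor $\mf l_{\phi_i}$ (which is a join of the centralisers of the $X_j$). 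The triviality of the intersection $G^{\bm X} \cap L_{\bm \phi}$ is automatic, since the former sits in degree zero and the latter in positive $\varepsilon$-degrees.

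The only mild obstacle is a bookkeeping point: ensuring that the decomposition produced by Thm.~\ref{thm:wild_centraliser} matches the group-theoretic semidirect splitting of $G_r$ rather than just an infinitesimal one, but this is immediate from the fact that the statement of that theorem is already phrased in exponential coordinates $\bm g = g \lts e^{\bm Y}$ with $\bm Y \in \mf{Bir}_r$, so the constant and strictly-Birkhoff parts are separated from the outset.
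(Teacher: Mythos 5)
Your proposal is correct and takes essentially the same route as the paper: combine Thm.~\ref{thm:wild_centraliser} (for $s=r-1$) with the vector-space identification $\mf g_r^{\bm X}\cap\mf{Bir}_r=\mf l_{\bm\phi}$ from Cor.~\ref{cor:very_good_infinitesimal_centraliser}, and then observe that $G^{\bm X}$ normalises $L_{\bm\phi}$ via $g e^{\bm Y}g^{-1}=e^{\Ad_g(\bm Y)}$ to recover the semidirect-product structure. The paper compresses this into a single sentence, leaving the forward inclusion and the vector-space splitting implicit, whereas you spell out both directions and the reason $G^{\bm X}$ preserves each graded piece $\mf l_{\phi_i}\cdot\varepsilon^{i+1}$; the content is the same.
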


\begin{proof}
	This follows from Thm.~\ref{thm:wild_centraliser},
	noting that $G^{\bm X}$ acts on $\on{Bir}_r$ by preserving the subgroup $L_{\bm \phi}$,
	in view of the identities
	\begin{equation}
		g e^{\bm Y} g^{-1} = e^{\Ad_g(\bm Y)} \in \on{Bir}_r,
		\qquad g \in G,
		\quad \bm Y \in \mf{bir}_r.
		\qedhere
	\end{equation}
\end{proof}

\subsection{Semisimple residue}

Let us finally restrict to orbits such that the above Levi filtration can be extended down to the last coefficient:
suppose that $X_{r-1} \in \mf t$.
We thus get to the marked (strictly) $r$-semisimple orbits $\mc O_{\bm X} \sse \mf g_r$,
where now $\bm X \in \mf t_r \sse \mf g_r$.
(Again,
this is as in the secondary assumption of~\cite{biquard_boalch_2004_wild_nonabelian_hodge_theory_on_curves},
but beyond matrix Lie groups.)

\begin{rema}
	\label{rmk:complete_incomplete_fission}

	There are two cases to distinguish,
	depending on the irregular type,
	i.e.,
	on the coefficients $X_0,\dc,X_{r-2}$:
	\begin{enumerate}
		\item
		      there is \emph{complete/toral} fission,
		      i.e.,
		      one has $\bigcap_{i = 0}^{r-2} \phi_{X_i} = \vn$;

		\item
		      or conversely fission is incomplete,
		      i.e.,
		      one has $\bigcap_{i = 0}^{r-2} \phi_{X_i} \neq \vn$.\fn{
			      Overall,
			      the classes of untwisted wild orbits that we consider are:
			      \begin{equation}
				      \set{\text{Generic}} \ssne \set{\text{Complete fission}} \ssne \set{r\text{-semisimple}} \ssne \set{(r-1)\text{-semisimple}} = \set{\text{untwisted}}.
			      \end{equation}}
	\end{enumerate}
	In the former case assuming that $X_{r-1} \in \mf t$ is w.l.o.g.~(cf.~Lem.~\ref{lem:commutation_lower_coefficients}).
	In the latter,
	instead,
	one imposes more:
	namely,
	that the coefficient $X^{(r-1)}_{r-1}$ constructed at the end of the algorithm described in Rmk.~\ref{rmk:diagonalising_algorithm} is semisimple.
\end{rema}

\subsubsection{}
\label{sec:about_main_theo_1}

So the space of marked $r$-semisimple orbits is in natural bijection with the topological space $\mf t_r \simeq \mf t^r$,
via $\mc O_{\bm X} \mt \bm X$.
Furthermore,
in this case $\mf g^{\bm X} = \mf l_{\phi_0}$,
where now $\phi_0 = \bigcap_{i = 0}^{r-1} \phi_{X_i} \in \mc L_\Phi$ extends the Levi filtration above,
as a particular case of Cor.~\ref{cor:very_good_infinitesimal_centraliser}.
In turn,
this corresponds to a connected Lie subgroup $L_{\phi_0} = G^{\bm X}$,
as a particular case of Cor.~\ref{cor:very_good_centraliser}.
It follows that $G_r^{\bm X} \sse G_r$ is \emph{connected},
and it is thus determined by its Lie algebra $\mf g_r^{\bm X} \sse \mf g_r$.

Hence,
in brief,
to a marked $r$-semisimple orbit one can associate a (depth-bounded) Levi filtration,
and in turn this controls its complex geometry:

\begin{theo}[cf.~\S~\ref{sec:pure_isomorphism_tame}]
	\label{thm:pure_isomorphism_wild}

	If two marked $r$-semisimple orbits $\mc O_{\bm X_1},\mc O_{\bm X_2}$ lie in one and the same stratum of $\mf t^r$,
	then there is a \emph{canonical} isomorphism $\mc O_{\bm X_1} \simeq \mc O_{\bm X_2}$ of complex homogeneous $G_r$-manifolds.
\end{theo}

\begin{proof}
	Both orbits are isomorphic to a complex homogeneous $G_r$-manifold of the form $G_r \bs L_{\bm \phi}$,
	where $L_{\bm \phi} = L_{\phi_0} \lts \exp (\mf l_{\bm \phi'}) \sse G \lts \on{Bir}_r$,
	and in turn $\bm \phi' = (\phi_i)_{i \geq 1}$ is the subsequence obtained from $\bm \phi$ by removing the one relevant term.
\end{proof}

\begin{rema}
	\label{rmk:affine_setting}

	Again,
	throughout this text we work in the complex-analytic setting.
	However,
	one can prove~\cite{chaffe_rembado_yamakawa_genus_zero_wild_quantum_de_rham_spaces} that the $r$-semisimple orbits $\mc O \sse \mf g_r^{\dual}$ are Zariski-closed affine (algebraic) subvarieties of $\mf g_r^{\dual} = \Spec \Sym(\mf g_r)$,
	generalising the analogous statement for tame semisimple orbits in $\mf g^{\dual}$.
	Therefore,
	one could rephrase these results in the complex-algebraic category.
\end{rema}

\section{Unmarked strata of wild orbits}
\label{sec:intrinsic_wild_orbit_space}

\subsection{}

Finally,
we study the change of marking of (strictly) $s$-semisimple $G_r$-orbits.
This leads to their intrinsic parameter space,
extending \S~\ref{sec:intrinsic_tame_orbit_space}.
(Assume again that $s > 0$ to avoid discussing trivial cases;
and write `orbit' for `$G_r$-orbit'.)

Choose now:
(i) $\bm X \in \mf t_r^{(\geq s)}$;
(ii) $g \in G$;
and (iii) $\bm Y = \sum_{i = 1}^{r-1} Y_i \varepsilon^i \in \mf{bir}_r$.
Moreover,
consider the group element $\bm g = g \lts e^{\bm Y} \in G_r$.
The following statement is key:

\begin{lemm}
	\label{lem:wild_marking_change}

	If $\Ad_{\bm g}(\bm X) \eqqc \sum_{i = 0}^{r-1} X'_i \varepsilon^i$ lies in $\mf t_r^{(\geq s)}$,
	then:
	\begin{enumerate}
		\item one has $\Ad_g(X_i) = X'_i$ for $i \in \set{0,\dc,s}$,
		      in the notation of~\eqref{eq:element_deeper_lie_algebra};

		\item and one has $Y_{i+1} \in \mf l_{\phi_i}$ for $i \in \set{0,\dc,s-1}$,
		      in the notation of~\eqref{eq:levi_filtration_irregular_type}.
	\end{enumerate}
\end{lemm}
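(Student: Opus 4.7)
The plan is to adapt the argument of Thm.~\ref{thm:wild_centraliser}, replacing the equation $\Ad_{\bm g}(\bm X) = \bm X$ imposed there with the weaker requirement that $\Ad_{\bm g}(\bm X)$ land in $\mf t_r^{(\geq s)}$. Following the notation of that proof, write $\bm{\tilde X} \ceqq e^{\ad_{\bm Y}}(\bm X) = \sum_{i=0}^{r-1} \tilde X_i \varepsilon^i$, so that~\eqref{eq:deeper_adjoint_action} gives $X'_i = \Ad_g(\tilde X_i)$. Applying the automorphism $\Ad_{g^{-1}}$ to the hypotheses on $X'_0,\dc,X'_{s-1}$ then turns them into analogous statements for $\tilde X_0,\dc,\tilde X_{s-1}$: these elements are semisimple, commute pairwise, and commute with $\tilde X_s,\dc,\tilde X_{r-1}$, albeit inside $\Ad_{g^{-1}}(\mf t)$ rather than $\mf t$ itself.

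I would then induct on $l \in \set{0,\dc,s-1}$, proving simultaneously that $\tilde X_l = X_l$ --- giving~(1) via $X'_l = \Ad_g(\tilde X_l) = \Ad_g(X_l)$ --- and that $Y_{l+1} \in \mf l_{\phi_l}$, giving~(2). The base $l = 0$ is free, as the Birkhoff action fixes the leading coefficient of $\bm X$. For the inductive step, the BCH-type expansion of $\tilde X_{l+1}$ reduces, modulo the inductive hypothesis $Y_j \in \mf l_{\phi_{j-1}} \sse \mf g^{X_0} \cap \dc \cap \mf g^{X_{s-j}}$ and $X_0,\dc,X_l \in \mf t$, to the telescoping expression $X_{l+1} + [Y_1,X_l] + \dc + [Y_{l+1},X_0]$. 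The relations $[\tilde X_i,\tilde X_{l+1}] = 0$ for $i \leq l$, inherited from the corresponding relations on the $X'_i$, provide the algebraic leverage. Applying iteratively the semisimple-trick splittings $\mf g = \mf g^{X_j} \ops \ad_{X_j}(\mf g)$ of Rmk.~\ref{rmk:semisimple_trick} --- compatible across $j = 0,\dc,l$ since the $X_j$ mutually commute in $\mf t$ --- peels off each summand $[Y_j,X_{l+1-j}]$ and forces it to vanish, yielding $\tilde X_{l+1} = X_{l+1}$ together with $Y_{l+1} \in \mf g^{X_0} \cap \dc \cap \mf g^{X_l} = \mf l_{\phi_l}$.

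Conclusion~(3) is vacuous since $\bm Y \in \mf{Bir}_r$ forces $Y_0 = 0$. The case $i = s$ of~(1) follows from one further step of the induction, using the commutation $[\tilde X_j,\tilde X_s] = 0$ for $j < s$: no semisimplicity of $X'_s$ is needed, as only the already-semisimple operators $\ad_{X_0},\dc,\ad_{X_{s-1}}$ do work. The main obstacle is the index bookkeeping across the iterated semisimple-trick decompositions: one must verify that at each inductive stage $\ad_{X_j}$ preserves the successively smaller centralisers, and that each bracket $[Y_j,X_{l+1-j}]$ separates cleanly into image and centraliser components. This orchestration is entirely parallel to the proof of Thm.~\ref{thm:wild_centraliser}, and the weaker hypothesis here is compensated by the recursive use of the commutation relations inherited from $\mf t$.
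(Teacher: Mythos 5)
Your proposal is correct in essence and follows the same strategy as the paper's proof: derive the telescoping BCH identity for the subleading coefficient using the inductive hypothesis, place the two sides of the resulting equation into complementary subspaces using the commutation relations inherited from $\bm X' \in \mf t_r^{(\geq s)}$, and iterate the semisimple trick to kill the individual brackets. Your step of conjugating by $\Ad_{g^{-1}}$ so as to work directly with $\bm{\tilde X} = e^{\ad_{\bm Y}}(\bm X)$ rather than with $X'_i = \Ad_g(\tilde X_i)$ is a cosmetic streamlining --- the paper carries the $\Ad_g$ through and phrases things in terms of $\phi'_{s-1-l}$ and $\mf l_{\phi'_{s-1-l}}$ --- but the argument is the same after undoing the conjugation.

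There are, however, two indexing slips that should be corrected. First, the inductive claim as you state it is overpowered: at the step from $l$ to $l+1$ you only establish the \emph{single} new relation $[Y_{l+1},X_0]=0$ for $Y_{l+1}$ (alongside new relations for the earlier $Y_j$), so "$Y_{l+1} \in \mf l_{\phi_l}$" does not hold yet at stage $l$ --- it accumulates only once the entire induction is over, at which point $[Y_i,X_j]=0$ for all $i+j\leq s$ gives $Y_{l+1}\in\bigcap_{j=0}^{s-1-l}\mf g^{X_j}=\mf l_{\phi_l}$. The cleaner running hypothesis is the paper's: "$[Y_i,X_j]=0$ for $i+j\leq l$ and $X'_i=\Ad_g(X_i)$ for $i\leq l$." Relatedly, the hypothesis "$Y_j \in \mf l_{\phi_{j-1}}$" is stronger than what is available mid-induction; only the weaker bracket relations (which \emph{are} available) are needed to kill the nested BCH terms. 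Second, the identification $\mf g^{X_0}\cap\dm\cap\mf g^{X_l}=\mf l_{\phi_l}$ flips the index: by~\eqref{eq:levi_filtration_irregular_type}, $\mf l_{\phi_l}$ is the common infinitesimal centraliser of $X_0,\dc,X_{s-1-l}$, not of $X_0,\dc,X_l$; it is in fact $Y_1$ (not $Y_{l+1}$) whose centraliser constraints after stage $l$ amount to $\bigcap_{j=0}^l\mf g^{X_j}$. Neither slip invalidates the argument --- the final conclusions are correct --- but the inductive bookkeeping needs tightening before the proof can be written out in full.
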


\begin{proof}
	Clearly $X_0' = \Ad_g(X_0)$,
	since the $\on{Bir}_r$-action cannot modify the leading coefficient.
	Then we prove inductively that:
	i) $[Y_i,X_j] = 0$ for $i+j \leq s$;
	and ii) $X'_i = \Ad_g(X_i)$,
	for $i \in \Set{0,\dc,s}$.
	Namely,
	if the statement holds for $l \in \Set{0,\dc,s-1}$ then
	\begin{equation}
		\label{eq:induction_wild_marking_change}
		X'_{l+1} - \Ad_g(X_{l+1}) = [Y'_1,X'_l] + \dm + [Y'_{l+1},X'_0],
		\qquad Y'_i \ceqq \Ad_g(Y_i) \in \mf g,
	\end{equation}
	using that $\Ad_g \in \Aut(\mf g)$.
	(Again,
	with no nested Lie brackets.)

	Now the right-hand side lies in the vector subspace
	\begin{equation}
		\sum_{i = 0}^l \ad_{X'_i}(\mf g) = \sum_{i = 0}^l \Biggl(\bops_{\Phi \sm \phi_{X'_i}} \mf g_\alpha \Biggr) = \bops_{\Phi \sm \phi'_{s-1-l}} \mf g_\alpha \sse \mf g,
	\end{equation}
	where as usual $\phi_{X'_i} = \Phi \cap \set{X'_i}^\perp$ (cf.~\eqref{eq:levi_subsystem_semisimple_element}),
	and in turn $\phi'_j = \bigcap_{i = 0}^{s-1-j} \phi_{X'_i} \sse \Phi$---%
	for $i,j \in \Set{0,\dc,s-1}$.
	By hypothesis,
	instead,
	the left-hand side of~\eqref{eq:induction_wild_marking_change} lies in the Lie subalgebra
	\begin{equation}
		\mf l_{\phi'_{s-1-l}} = \mf t \ops \bops_{\phi'_{s-1-l}} \mf g_\alpha,
	\end{equation}
	since both $X'_{l+1}$ and $\Ad_g(X_{l+1})$ commute with $X_0' = \Ad_g(X_0),\dc,X_l' = \Ad_g(X_l) \in \mf t$.
	(The left-hand side even lies in $\mf t$ for $l \leq s-2$;
	when $l = s-1$,
	we do \emph{not} use $X_s,X'_s \in \mf t$.)
	Thus,
	both sides of~\eqref{eq:induction_wild_marking_change} vanish,
	whence $X'_{l+1} = \Ad_g(X_{l+1})$,
	and
	\begin{equation}
		\ad_{X'_0}(Y'_{l+1}) = [Y'_1,X'_l] + \dm + [Y'_l,X'_1] \in \mf g.
	\end{equation}
	Now use Rmk.~\ref{rmk:semisimple_trick},
	noting that by hypothesis the right-hand side lies in $\mf g^{X'_0} \sse \mf g$,
	so that in particular
	\begin{equation}
		[Y'_{l+1},X'_0] = \Ad_g \bigl( [Y_{l+1},X_0] \bigr) = 0,
	\end{equation}
	and therefore $[Y_{l+1},X_0] = 0$.
	Iterating for the semisimple elements $X'_1,\dc,X'_{l-1} \in \mf t$ yields
	\begin{equation}
		\Ad_g \bigl( [Y_1,X_l] \bigr) = \dm = \Ad_g \bigl( [Y_l,X_1] \bigr) = 0 \in \mf g.
		\qedhere
	\end{equation}
\end{proof}

\subsubsection{}

In brief,
Lem.~\ref{lem:wild_marking_change} states that it is only the diagonal $G$-action which modifies the coefficients $X_0,\dc,X_s \in \mf g$,
while the exponentiated action of $\mf{bir}_r$ is trivial there.
But the point of choosing a Cartan subalgebra is that we can further break this down when acting on the semisimple part $\tau_s(\bm X) \in \mf t_s$:

\begin{lemm}
	\label{lem:down_to_weyl}

	If $\bm X \in \mf t_r^{(\geq s)}$,
	then
	\begin{equation}
		\tau_s \bigl( \mc O_{\bm X} \cap \mf t_r^{(\geq s)} \bigr) = \Set{ \sum_{i = 0}^{s-1} \Ad_g(X_i) \varepsilon^i \in \mf g_s | g \in N_G(\mf t) }.
	\end{equation}
\end{lemm}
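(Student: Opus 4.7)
The plan is to prove the two inclusions separately, with Lem.~\ref{lem:wild_marking_change} supplying the decisive input.

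For the inclusion $\supseteq$, given $g \in N_G(\mf t)$, I would consider the element $\bm g \ceqq g \in G \hra G_r$ (with trivial Birkhoff part) and compute $\Ad_{\bm g}(\bm X) = \sum_{i = 0}^{r-1} \Ad_g(X_i) \varepsilon^i$. Since $g$ normalises $\mf t$ and $X_0,\dc,X_{s-1} \in \mf t$, the first $s$ coefficients again lie in $\mf t$; and since $\Ad_g$ respects Lie brackets, the commutation conditions $\bigl[\tau_s(\bm X),\tau'_s(\bm X)\bigr] = 0$ defining $\mf t_r^{(\geq s)}$ are preserved. Hence $\Ad_{\bm g}(\bm X) \in \mc O_{\bm X} \cap \mf t_r^{(\geq s)}$, and its $\tau_s$-image is precisely $\sum_{i = 0}^{s-1} \Ad_g(X_i) \varepsilon^i$.

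For the inclusion $\subseteq$, suppose $\bm X' \in \mc O_{\bm X} \cap \mf t_r^{(\geq s)}$, and write $\bm X' = \Ad_{\bm g}(\bm X)$ for some $\bm g = g \lts e^{\bm Y} \in G_r$. Lem.~\ref{lem:wild_marking_change} yields $X'_i = \Ad_g(X_i)$ for $i \in \set{0,\dc,s-1}$, so $\tau_s(\bm X') = \sum_{i = 0}^{s-1} \Ad_g(X_i) \varepsilon^i$; the remaining task is to replace $g$ by an element $\tilde g \in N_G(\mf t)$ acting identically on $X_0,\dc,X_{s-1}$. To this end I would introduce the common centraliser $L' \ceqq \bigcap_{i = 0}^{s-1} G^{X'_i} \sse G$, which is connected reductive and contains $T$ (since each $X'_i \in \mf t$ commutes with all of $T$); both $T$ and $\Ad_g(T)$ are then maximal tori of $L'$, the latter because it is a maximal torus of $G$ whose elements centralise $\Ad_g(X_i) = X'_i$.

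By the conjugacy of maximal tori of the connected Lie group $L'$, there exists $l \in L'$ such that $\Ad_l \bigl( \Ad_g(T) \bigr) = T$, whence $\tilde g \ceqq l g \in N_G(\mf t)$. Since $l$ centralises every $X'_i$ by construction, one finds $\Ad_{\tilde g}(X_i) = \Ad_l(X'_i) = X'_i$ for all $i \in \set{0,\dc,s-1}$, which concludes the argument. The only nontrivial step is this last adjustment---promoting the $G$-component $g$ to an element of $N_G(\mf t)$ without spoiling its effect on the first $s$ coefficients---after which everything reduces to a direct verification via Lem.~\ref{lem:wild_marking_change}.
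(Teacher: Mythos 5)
Your proof is correct and follows essentially the same route as the paper: both reduce via Lem.~\ref{lem:wild_marking_change} to the $G$-component $g$, observe that $\mf t$ and $\Ad_g(\mf t)$ (equivalently $T$ and $\Ad_g(T)$) both sit inside the common centraliser $L' = L_{\phi'_0}$, and then use conjugacy of Cartan subalgebras (resp.\ maximal tori) in that connected reductive subgroup to adjust $g$ by an element of $L'$ that fixes $X'_0,\dc,X'_{s-1}$. The only cosmetic difference is that you argue at the group level with maximal tori where the paper works with Cartan subalgebras of the Lie algebra; you also spell out the easy inclusion $\supseteq$, which the paper leaves implicit.
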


\begin{proof}
	Postponed to~\ref{proof:lem_down_to_weyl}.
\end{proof}

\subsection{}

Lem.~\ref{lem:down_to_weyl} yields an action of the setwise stabiliser $N_G(\mf t) \sse G$ of the Cartan subalgebra on the commuting semisimple coefficients $X_0,\dc,X_{s-1} \in \mf t$.
This comes with a surjection $N_G(\mf t) \thra W$ onto the Weyl group,
whose kernel is precisely the pointwise stabiliser of $\mf t$,
i.e.,
the elements acting trivially on $\tau_s(\bm X) \in \mf t_s$.

For $s = r-1$ one thus recovers the standard fact that untwisted irregular classes are $W$-orbits of irregular types.

\begin{rema}
	The proof~\ref{proof:lem_down_to_weyl} further shows that the $G$-action on the lowest coefficient can be broken down to
	\begin{equation}
		X_{r-1} \lmt X'_{r-1} = \Ad_{g_1 g_2}(X_{r-1}),
		\qquad  g_2 \in N_G(\mf t),
		\quad g_1 \in L_{X_0',\dc,X'_{r-2}},
	\end{equation}
	in the notation of~\eqref{eq:common_centraliser},
	writing $X'_i \ceqq \Ad_{g_2}(X_i) \in \mf t$ for $i \in \set{0,\dc,r-2}$.

	(Presumably one can build on this to treat nonsemisimple formal residues.)
\end{rema}

\subsection{}

Suppose finally that $s = r$.
Then,
in a perfect generalisation of the tame case:

\begin{prop}
	\label{prop:intrinsic_excellent_orbit_space}

	The map $\mc O \mt \mc O \cap \mf t^{(r)}_r$ establishes a \emph{bijection} of the set of (unmarked) $r$-semisimple orbits onto the topological quotient $\mb A_r \ceqq \mf t^r \bs W$,
	and the pieces~\eqref{eq:stratification_wild_marking} correspond to the strata of~\eqref{eq:wild_truncated_strata}.
\end{prop}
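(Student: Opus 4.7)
The plan is to first establish the bijection by invoking Lem.~\ref{lem:down_to_weyl} in the extremal case $s = r$, and then to push the partition~\eqref{eq:stratification_wild_marking} along the quotient map $\mf t_r \thra \mf t^r \bs W$. The crucial reduction is that for $s = r$ the trimming $\tau_r$ becomes the identity and $\tau'_r$ vanishes identically, so the marking subspace collapses to $\mf t_r^{(\geq r)} = \mf t_r$ (topologically $\mf t^r$), and the splitting~\eqref{eq:wild_marking_strata} becomes the tautology $\mf t_r^{(\geq r)}(\bm \phi) = \mf t^r_{\bm \phi}$ (the second factor being $(\mf l_{\phi_0})_0 = 0$).

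For the bijection itself I would argue in four short steps. First, every $r$-semisimple orbit $\mc O$ meets $\mf t_r$: by Def.~\ref{def:s_semisimplicity} it contains an element whose coefficients form a commuting family of semisimple elements of $\mf g$, and any such family lies in a Cartan subalgebra, all of which are $G$-conjugate. Second, to show the intersection is a single $W$-orbit, fix $\bm X \in \mc O \cap \mf t_r$ and apply Lem.~\ref{lem:down_to_weyl} with $s = r$: it identifies $\mc O \cap \mf t_r$ with $\bigl\{ \sum_{i = 0}^{r-1} \Ad_g(X_i) \varepsilon^i \st g \in N_G(\mf t) \bigr\}$, and since the maximal torus $T \sse N_G(\mf t)$ acts trivially on $\mf t$, this is precisely $W \cdot \bm X \sse \mf t^r$. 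Third, injectivity is automatic, since distinct $G_r$-orbits are disjoint, so agreement on even a single element forces the orbits to coincide. Fourth, surjectivity is trivial: any $\bm X \in \mf t^r \simeq \mf t_r$ is manifestly $r$-semisimple, and $\bm X \in \mc O_{\bm X} \cap \mf t_r$.

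The stratification claim then follows by combining the tautology $\mf t_r^{(\geq r)}(\bm \phi) = \mf t^r_{\bm \phi}$ above with Thm.~\ref{thm:quotient_wild_stratification}: the pieces of~\eqref{eq:stratification_wild_marking} (for $s = r$) coincide with the strata~\eqref{eq:wild_truncated_strata}, and these are $W$-compatible in the sense used in the proof of Thm.~\ref{thm:quotient_wild_stratification}, so they descend to the quotient stratification of $\mf t^r \bs W$ indexed by $\mc L^{(r)}_{\Phi} \bs W$. The only real subtlety is ensuring that no residual $\on{Bir}_r$-freedom can identify points of $\mf t^r$ lying outside a common $W$-orbit; but this is exactly what the vanishing of $\tau'_r$ in Lem.~\ref{lem:down_to_weyl} precludes, so once that reduction is made the argument is essentially formal.
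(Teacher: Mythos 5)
Your proof is correct and follows essentially the same route as the paper: the paper's own proof is the one-line observation that the claim follows from Lemmas~\ref{lem:wild_marking_change} and~\ref{lem:down_to_weyl} together with the collapse $\mf t_r^{(r)} = \mf t_r^{(\geq r)} = \mf t_r \simeq \mf t^r$, which is exactly the reduction you identify (you just invoke Lem.~\ref{lem:down_to_weyl} alone, which for $s = r$ already packages the content of Lem.~\ref{lem:wild_marking_change} since $\tau_r$ is the identity). Your four-step unpacking of the bijection and the explicit appeal to Thm.~\ref{thm:quotient_wild_stratification} for the stratification claim are a more detailed rendering of the same argument, not a different one.
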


\begin{proof}
	This follows from Lem.~\ref{lem:wild_marking_change} +~\ref{lem:down_to_weyl},
	in the topological identifications
	\begin{equation}
		\mf t_r^{(r)} = \mf t_r^{(\geq r)} = \mf t_r \simeq \mf t^r.
		\qedhere
	\end{equation}
\end{proof}

\subsubsection{}

Hence,
$r$-semisimple orbits intersect the deeper Cartan subalgebra $\mf t_r \sse \mf g_r$ in a $W$-orbit,
and the quotient stratum $\smash{\bigl(\mf t^r_{\bm \phi}\bigr)}^{\! W} \sse \mb A_r$ corresponds to the space of orbits intersecting $\mf t^r_{\bm \phi} \sse \mf t^r$.

Finally,
adapting the arguments of \S~\ref{sec:intrinsic_tame_orbit_space} one gets the following clean statement.
Introduce as above the pointwise/setwise stabiliser $W_{\mf t^r_{\bm \phi}} \sse N_W \bigl( \mf t^r_{\bm \phi} \bigr) \sse W$,
and the quotient $\Out \bigl( \mf t^r_{\bm \phi} \bigr) \ceqq N_W \bigl( \mf t^r_{\bm \phi} \bigr) \bs W_{\mf t^r_{\bm \phi}}$;
then:

\begin{theo}[cf.~\S~\ref{sec:nonpure_isomorphism_tame}]
	\label{thm:nonpure_isomorphism_wild}

	If two $r$-semisimple orbits lie in one and the same stratum of $\mb A_r$,
	then they are \emph{isomorphic} as complex homogeneous $G_r$-manifolds.
	Furthermore,
	if the orbit $\mc O \sse \mf g_r$ intersects $\mf t^r_{\bm \phi}$,
	then the set of identifications $\mc O \simeq G_r \bs L_{\bm \phi}$ obtained from a marking is an $\Out(\mf t^r_{\bm \phi})$-torsor.
\end{theo}

\begin{proof}[Proof omitted]
\end{proof}

\section*{Interlude}

As mentioned in \S~\ref{sec:wild_orbits},
all the material of \S~\ref{sec:birkhoff_action}--\ref{sec:intrinsic_wild_orbit_space} (canonically) transfers to \emph{coadjoint} orbits $\mc O \sse \mf g_r^{\dual}$,
in view of the $G_r$-equivariant duality~\eqref{eq:deeper_duality}.
Hence,
there are holomorphic-symplectic marked/unmarked $s$-semisimple coadjoint $G_r$-orbits,
as well as strata thereof,
for all $s \in \set{0,\dc,r}$.
In the second part of the paper we introduce new affine-Lie-algebra modules,
with a view towards their deformation quantisation when $s = r$---%
and the quantisation of wild de Rham spaces (cf.~\cite{chaffe_rembado_yamakawa_genus_zero_wild_quantum_de_rham_spaces}),
and the construction of irregular conformal blocks (cf.~\S~\ref{sec:irregular_blocks});
etc.

\section{Lie-algebraic setup (II)}
\label{sec:setup_2}

\subsection{}

We review the additional objects/notations needed for the second part of the paper:
please refer,
e.g.,
to~\cite{kac_1990_infinite_dimensional_lie_algebras,moody_pianzola_1995_lie_algebras_with_triangular_decomposition,dixmier_1996_algebres_enveloppantes,humphreys_2008_representations_of_semisimple_lie_algebras_in_the_bgg_category_o,etingof_frenkel_kirillov_1998_lectures_on_representation_theory_and_knizhnik_zamolodchikov_equations} for more details.
(Experts might want to skip to \S~\ref{sec:generic_singularity_modules}.)

\subsection{Affine Lie algebras}

Keep the notation of \S~\ref{sec:relation_meromorphic_connections}.
A slight modification of~\eqref{eq:affine_pairing} yields a 2-cocycle $\mf g (\!(z)\!) \wdg \mf g(\!(z)\!) \to \mb C$,
whence a nontrivial central extension
\begin{equation}
	0 \lra \mb C K \lra \wh{\mf g} \lra \mf g (\!(z)\!) \lra 0.
\end{equation}
This is the \emph{affine Lie algebra} associated to the quadratic Lie algebra $\bigl( \mf g,
	(\cdot \mid \cdot) \bigr)$,
with Lie bracket determined by
\begin{equation}
	[ X \ots f,Y \ots f']_{\wh{\mf g}} = [X,Y] ff' + (X \mid Y) \Ress_{z = 0}(f' \dif f) K,
	\qquad X,Y \in \mf g,
	\quad f,f' \in \mb C(\!(z)\!).
\end{equation}

Write as customary $\wh{\mf t} \ceqq \mf t \ops \mb CK \sse \wh{\mf g}$.

\subsection{Parabolic subsets/subalgebras}

Throughout this second part we use \emph{parabolic} subsets $\psi \sse \Phi$,
i.e.,
collections of roots such that:
\begin{enumerate}
	\item $(\psi + \psi) \cap \Phi = \psi$;\fn{
		      I.e.,
		      if $\gamma \ceqq \alpha + \beta$ lies in $\Phi$,
		      for some $\alpha,\beta \in \psi$,
		      then actually $\gamma \in \psi$.}

	\item and $\psi \cup (-\psi) = \Phi$.
\end{enumerate}
If $S \sse \Phi$ is any subset,
write
\begin{equation}
	\Phi_S \ceqq \spann_{\mb Q}(S) \cap \Phi,
	\qquad  \Phi_S^\pm \ceqq \spann_{\pm \mb Q_{\geq 0}}(S)\cap \Phi.
\end{equation}
If $\psi \sse \Phi$ is parabolic,
it follows~\cite[Ch.~VI,
	\S~1.7,
	Prop.~20]{bourbaki_1968_elements_de_mathematiques_fascicule_xxxvii_chapitres_iv_v_vi} that there exists a base $\Delta \sse \Phi$ of simple roots,
and a subset $\Sigma \sse \Delta$,
such that $\psi = \Phi_{\Delta}^+ \cup \Phi_{\Sigma}^-$.
(Here $\Phi_{\Delta}^+$ is the system of positive roots associated with $\Delta$.)
Then there is a \emph{parabolic} subalgebra
\begin{equation}
	\label{eq:parabolic_subalgebra}
	\mf p_{\psi} \ceqq \mf t \ops \bops_{\psi} \mf g_\alpha \sse \mf g,
\end{equation}
containing the \emph{Borel} subalgebra
\begin{equation}
	\label{eq:borel_subalgebra}
	\mf b^+_{\Delta} \ceqq \mf t \ops \bops_{\Phi_{\Delta}^+} \mf g_\alpha.
\end{equation}
All parabolic subalgebras containing $\mf b_{\Delta}^+$ (i.e.,
the \emph{standard} parabolic subalgebras of $(\mf g,\mf b_{\Delta}^+)$) arise in this way.
It also follows~\cite[Ch.~VI,
	\S~1.7,
	Cor.~6]{bourbaki_1968_elements_de_mathematiques_fascicule_xxxvii_chapitres_iv_v_vi} that
\begin{equation}
	\label{eq:levi_factor_roots}
	\phi = \on{Lf}(\psi) \ceqq \psi \cap (-\psi) = \Phi_{\Sigma}^- \cup \Phi_{\Sigma}^+ = \spann_{\mb C}(\Sigma) \cap \Phi,
\end{equation}
which is by definition a Levi subsystem:
it is the \emph{Levi factor} of $\psi$,
and the corresponding Lie subalgebra $\mf l_{\phi} \sse \mf g$ is the \emph{Levi factor} of~\eqref{eq:parabolic_subalgebra}.
Then there is a splitting
\begin{equation}
	\label{eq:parabolic_subalgebra_splitting}
	\mf p_{\psi} = \mf l_{\phi} \ops \mf u_{\psi},
	\qquad \mf u_{\psi} \ceqq \mf{nil}(\mf p_{\psi}) = \bops_{\nu} \mf g_\alpha,
\end{equation}
using~\eqref{eq:levi_algebra_from_levi_system},
and writing $\nu \ceqq \psi \sm \phi$;
this involves the \emph{nilradical} of the parabolic subalgebra.
The opposite parabolic subalgebra is $\mf p^-_{\psi} \ceqq \mf p_{-\psi}$,
so that $\mf l_{\phi} = \mf p^+_{\psi} \cap \mf p^-_{\psi}$ (with $\mf p^+_{\psi} \ceqq \mf p_{\psi}$);
the opposite nilradical is $\mf u^-_{\psi} \ceqq \mf u_{-\psi}$.

Denote now by $\mc P_\Phi \sse 2^\Phi$ the collection of parabolic subsets of $\Phi$,
and equip it with the \emph{anti-inclusion} order.
Hence,
$\Phi \in \mc P_\Phi$ is the least element,
while the maximal elements are
systems of positive roots.
Taking Levi factors (as in~\eqref{eq:levi_factor_roots}) defines a $W$-equivariant order-preserving surjection $\on{Lf} \cl \mc P_\Phi \thra \mc L_\Phi$ onto the Levi poset.
In general,
there is \emph{no} canonical global section $\mc L_\Phi \hra \mc P_\Phi$,
even if a base of simple roots is chosen (cf.~Rmk.~\ref{rmk:levi_systems_simple_roots} +~Exmp.~\ref{ex:rank_3_parabolic_subsets}):
in brief such choice is necessary to pass from the semiclassical first part of the paper to the quantum second part;
geometrically,
it corresponds to polarising the KKS structure on semisimple orbits (cf.~\S~\ref{sec:polarisations}).
In any event,
the setwise stabiliser $W^{\phi} = N_W(\mf t_{\phi}) \sse W$ acts on the fibre $\on{Lf}^{-1}(\phi) \sse \mc P_\Phi$,
for $\phi \in \mc L_\Phi$.
Furthermore,
the negation $\psi \mt -\psi$ commutes with the $W$-action,
and so it induces an involution of the quotient poset $\mc P_\Phi \bs W$ which preserves the fibres of the `reduced' surjection $\mc P_\Phi \bs W \thra \mc L_\Phi \bs W$.

\begin{exem}
	\label{ex:rank_3_parabolic_subsets}

	Consider again $\mf g = \mf{gl}_3(\mb C)$ (cf.~\S~\ref{sec:rank_3_example}).
	If $\psi \in \mc P_\Phi \sm \Set{\Phi}$ then a priori $\abs{\psi} \in \set{3,4,5}$,
	and one can show that $\abs{\psi} = 5$ is impossible.
	There are then six parabolic subsets with $\abs{\psi} = 4$,
	obtained by (suitably) removing one element from two of the three subsets $\set{\alpha_{ij},\alpha_{ji}} \sse \Phi$,
	for $i \neq j \in \set{1,2,3}$.
	The resulting map onto the rank-1 Levi subsystem $\phi_{\set{i,j}} \in \mc L_\Phi$ is 2:1,
	and the action of $W^{\phi_{\set{i,j}}} \simeq \mf S_2$ on the fibre is trivial.
	Finally,
	$\abs{\psi} = 3$ is obtained by (suitably) removing one element from \emph{each} of the three subsets $\set{\alpha_{ij},\alpha_{ji}}$.
	The map onto $\vn \in \mc L_\Phi$ is 6:1,
	and the set of six choices is a torsor for $W \simeq \mf S_3$,
	corresponding to the 6-dimensional Borel subalgebras containing the diagonal matrices:
	each is contained in two distinct 7-dimensional (proper) parabolic subalgebras---%
	and conversely,
	cf.~the Hasse diagram of Fig.~\ref{fig:hasse_parabolic}.

	\begin{figure}
		\centering
		\begin{tikzpicture}

			\node [draw,circle,inner sep=0pt,minimum size=5pt] (322131) [below left of =top] {};
			\node [draw,circle,inner sep=0pt,minimum size=5pt] (211323) [left of =322131] {};
			\node [draw,circle,inner sep=0pt,minimum size=5pt] (122313) [left of =211323] {};
			\node [draw,circle,inner sep=0pt,minimum size=5pt] (133212) [below right of =top] {};
			\node [draw,circle,inner sep=0pt,minimum size=5pt] (233121) [right of =133212] {};
			\node [draw,circle,inner sep=0pt,minimum size=5pt] (311232) [right of =233121] {};

			\node [draw,circle,inner sep=0pt,minimum size=5pt] (12212313) [below of =122313] {};
			\node [draw,circle,inner sep=0pt,minimum size=5pt] (12213231) [below of =211323] {};
			\node [draw,circle,inner sep=0pt,minimum size=5pt] (23322131) [below of =322131] {};
			\node [draw,circle,inner sep=0pt,minimum size=5pt] (23321213) [below of =133212] {};
			\node [draw,circle,inner sep=0pt,minimum size=5pt] (13312321) [below of =233121] {};
			\node [draw,circle,inner sep=0pt,minimum size=5pt] (13313212) [below of =311232] {};

			\node (bottom) [below right of =23322131] {$\Phi$};

			\node () [right of =311232] {$\abs \psi = 3$};
			\node () [right of =13313212] {$\abs \psi = 4$};

			\draw (122313) -- (12212313);
			\draw (122313) -- (23321213);

			\draw (211323) -- (12212313);
			\draw (211323) -- (13312321);

			\draw (322131) -- (12213231);
			\draw (322131) -- (23322131);

			\draw (133212) -- (23321213);
			\draw (133212) -- (13313212);

			\draw (233121) -- (23322131);
			\draw (233121) -- (13312321);

			\draw (311232) -- (12213231);
			\draw (311232) -- (13313212);

			\draw (12212313) -- (bottom);
			\draw (12213231) -- (bottom);
			\draw (23322131) -- (bottom);
			\draw (23321213) -- (bottom);
			\draw (13312321) -- (bottom);
			\draw (13313212) -- (bottom);
		\end{tikzpicture}
		\caption{Example poset of parabolic subsets of roots}
		\label{fig:hasse_parabolic}
	\end{figure}

	On the whole,
	one has $\abs{\mc P_\Phi} = 13$.
	Even if a base $\Delta \sse \Phi$ is given,
	there is no consistent way to choose a parabolic subset lifting each rank-1 Levi subsystem,
	i.e.,
	no consistent choice of 2-dimensional nilradical to add to the corresponding Levi factor.
	As per the Weyl quotient,
	the orbits of the 7-dimensional parabolic subalgebras have three elements,
	so that in the quotient poset there are two (opposite) equivalence classes mapping onto $\ol{\phi} \in \mc L_\Phi \bs W$ (cf.~the Hasse diagram of Fig.~\ref{fig:hasse_parabolic_2}).
	On the whole,
	the quotient $\mc P_\Phi \bs W$ has four elements.
\end{exem}

\begin{figure}
	\centering
	\begin{tikzpicture}

		\node [draw,circle,inner sep=0pt,minimum size=5pt] (Borel) [below of =top] {};

		\node [draw,circle,inner sep=0pt,minimum size=5pt] (12212313) [below left of =Borel] {};
		\node [draw,circle,inner sep=0pt,minimum size=5pt] (12213231) [below right of =Borel] {};

		\node (bottom) [below left of =12213231] {$\Phi$};

		\node [right of =Borel] {$\abs \psi = 3$};
		\node [right of =12213231] {$\abs \psi = 4$};

		\draw (Borel) -- (12212313);
		\draw (Borel) -- (12213231);

		\draw (12212313) -- (bottom);
		\draw (12213231) -- (bottom);
	\end{tikzpicture}
	\caption{Example poset of parabolic subsets of roots,
		up to the Weyl action.}
	\label{fig:hasse_parabolic_2}
\end{figure}

\subsection{Finite Verma modules}
\label{sec:verma_modules}

In~\cite{felder_rembado_2023_singular_modules_for_affine_lie_algebras_and_applications_to_irregular_wznw_conformal_blocks},
the second/third authors looked at certain generalisations of \emph{Verma} modules~\cite{verma_1966_structure_of_certain_induced_representations_of_complex_semisimple_lie_algebras}.
Here we will extend them further,
and we now review the standard setup in the tame case ($r = 1$).

\subsubsection{}

Choose a Borel subalgebra $\mf b \sse \mf g$ containing $\mf t$,
so that $\mf b_{\ab} \ceqq \mf b \bs [\mf b,\mf b] \simeq \mf t$.
Then a \emph{character} $\chi \in \Hom_{\Lie}(\mf b,\mb C) \simeq \Hom_{\mb C}(\mf b_{\ab},\mb C)$ corresponds to a linear functional $\lambda \in \mf t^{\dual}$.
Extending scalars yields a Verma $U\mf g$-module
\begin{equation}
	\label{eq:finite_generic_verma}
	M_{\lambda} = M_{\mf b,\lambda} \ceqq \Ind_{U\mf b}^{U\mf g} \mb C_{\chi} = U\mf g \ots_{U\mf b} \mb C_{\chi},
\end{equation}
where $\mb C_{\chi}$ denotes the 1-dimensional $U\mf b$-module defined by $\chi$,
viewing $U\mf g$ as a right $U\mf b$-module in the (right) regular representation.
It follows that $[\mf b,\mf b] = \mf{nil}(\mf b)$ annihilates the \emph{canonical generator} $v_{\mf b,\lambda} \ceqq 1 \ots_{U\mf b} 1$,
which is thus a highest-weight vector---%
of highest weight $\lambda$.

\subsubsection{}
\label{sec:generalised_verma_modules}

A \emph{generalised} Verma module involves inducing from a parabolic subalgebra.
Namely,
let $\mf p \sse \mf g$ be a parabolic subalgebra containing $\mf t$.
Then $\mf p_{\ab} \simeq \mf Z_{\mf l}$ is identified with the centre of the unique Levi factor $\mf l \sse \mf p$ containing $\mf t$,
and as above one can induce:
\begin{equation}
	\label{eq:tame_parabolic_verma}
	M_{\lambda} = M_{\mf p,\lambda} \ceqq \Ind_{U\mf p}^{U\mf g} \mb C_{\chi},
	\qquad \lambda \in \mf Z_{\mf l}^{\dual} \simeq \Hom_{\Lie}(\mf p,\mb C) \ni \chi.
\end{equation}
Again $[\mf p,\mf p] = [\mf l,\mf l] \ops \mf{nil}(\mf p)$ annihilates the highest-weight cyclic vector $v_{\mf p,\lambda} \ceqq 1 \ots_{U\mf p} 1$.

\subsubsection{}
\label{sec:parabolic_to_parabolic}

Let $\mf p \sse \wt{\mf p} \sse \mf g$ be nested parabolic subalgebras containing $\mf t$,
with nested Levi factors $\mf l \sse \wt{\mf l} \sse \mf g$.
Choose a character $\wt \chi \cl \wt{\mf p} \to \mb C$,
and let $\wt \lambda \ceqq \eval[1]{\wt \chi}_{\mf Z_{\wt{\mf l}}} \in \mf Z_{\wt{\mf l}}^{\dual}$ as above.
The restriction map $\Hom_{\Lie}(\wt{\mf p},\mb C) \to \Hom_{\Lie}(\mf p,\mb C)$ is \emph{injective},
as characters are determined by the values they take on $\mf t$:
consider then the character $\chi \ceqq \eval[1]{\wt \chi}_{\mf p}$,
corresponding to the linear functional $\lambda \ceqq \eval[1]{\chi}_{\mf Z_{\mf l}} \in \mf Z_{\mf l}^{\dual}$---%
which,
conversely,
extends $\wt \lambda$.

Then the Verma module induced from the smaller parabolic subalgebra is \emph{larger}.
Precisely,
the canonical map $U\mf g \ts \mb C \to U\mf g \ots_{U\wt{\mf p}} \mb C_{\wt \chi}$ is $U\mf p$-balanced,
inducing a $U\mf g$-linear surjection $f_{\lambda \mid \wt \lambda} \cl M_{\mf p,\lambda} \thra M_{\wt{\mf p},\wt \lambda}$.
To study the kernel,
suppose that $\mf p \sse \wt{\mf p}$ corresponds to an inclusion $\psi \sse \wt \psi$ of parabolic subsets with Levi factors $\phi \sse \wt \phi$,
respectively.
Denote by $\wt \nu \ceqq \wt \psi \sm \wt \phi$ and $\nu \ceqq \psi \sm \phi$ the subsets of roots defining the nilradicals of $\wt{\mf p} = \mf p_{\wt \psi}$ and $\mf p = \mf p_{\psi}$,
and note that one has $\wt \nu \sse \nu$ (cf.~Lem.~\ref{lem:nested_nilradicals}).
Then:

\begin{lemm}
	\label{lem:parabolic_to_parabolic}

	One has
	\begin{equation}
		\label{eq:parabolic_to_parabolic}
		\ker \bigl( f_{\lambda \mid \wt \lambda} \bigr) = U\mf g \bigl( \mf u^-_{\psi \mid \wt \psi} \cdot v_{\mf p,\lambda} \bigr) \sse M_{\mf p,\lambda},
		\qquad \mf u^-_{\psi \mid \wt \psi} \ceqq \bops_{\nu \sm \wt \nu} \mf g_{-\alpha} \sse \mf g.
	\end{equation}
\end{lemm}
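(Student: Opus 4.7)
My plan is to identify $\mf u^-_{\psi \mid \wt\psi}$ as a vector-space complement of $\mf p$ inside $\wt{\mf p}$ that consists of root vectors of the semisimple part $[\wt{\mf l}, \wt{\mf l}]$ of the bigger Levi factor, and then to apply the universal property of induced modules. The key observation will be that $\wt\chi$ automatically annihilates such root vectors (any Lie-algebra character vanishes on derived subalgebras), which forces the stated submodule inside $\Ker(f_{\lambda \mid \wt\lambda})$; the reverse inclusion will then be essentially formal.

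First I would verify the two combinatorial identities $\wt\nu \sse \nu$ and $\nu \sm \wt\nu = \nu \cap \wt\phi$, both of which follow quickly from the parabolic closure $(\psi + \psi) \cap \Phi = \psi$ together with $\psi \cup (-\psi) = \Phi$. Matching root spaces then yields the vector-space decomposition $\wt{\mf p} = \mf p \ops \mf u^-_{\psi \mid \wt\psi}$ via the identity $\wt\psi \sm \psi = -(\nu \sm \wt\nu)$, and places $\mf u^-_{\psi \mid \wt\psi}$ inside $[\wt{\mf l}, \wt{\mf l}] \sse [\wt{\mf p}, \wt{\mf p}]$; in particular $\wt\chi$ kills $\mf u^-_{\psi \mid \wt\psi}$.

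With this in hand, set $N \ceqq U\mf g \cdot \bigl( \mf u^-_{\psi \mid \wt\psi} \cdot v_{\mf p,\lambda} \bigr)$. The inclusion $N \sse \Ker(f_{\lambda \mid \wt\lambda})$ is immediate, since $f_{\lambda \mid \wt\lambda}(X v_{\mf p,\lambda}) = \wt\chi(X) v_{\wt{\mf p},\wt\lambda} = 0$ for $X \in \mf u^-_{\psi \mid \wt\psi}$. For the reverse inclusion I would pass to the quotient $M_{\mf p,\lambda}/N$ and verify that its canonical image $\bar v$ transforms under $\wt{\mf p}$ via the character $\wt\chi$: any $X \in \wt{\mf p}$ splits as $X_1 + X_2$ with $X_1 \in \mf p$ and $X_2 \in \mf u^-_{\psi \mid \wt\psi}$, so that modulo $N$ one has $X \bar v = \chi(X_1)\bar v + 0 = \wt\chi(X)\bar v$ (using $\wt\chi \mid_{\mf p} = \chi$ and $\wt\chi \mid_{\mf u^-_{\psi \mid \wt\psi}} = 0$). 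The universal property of $M_{\wt{\mf p},\wt\lambda} = \Ind_{U\wt{\mf p}}^{U\mf g} \mb C_{\wt\chi}$ then produces a $U\mf g$-linear map $M_{\wt{\mf p},\wt\lambda} \to M_{\mf p,\lambda}/N$ inverse (on canonical generators, hence everywhere) to the map induced by $f_{\lambda \mid \wt\lambda}$, giving $N = \Ker(f_{\lambda \mid \wt\lambda})$. The only delicate ingredient is the root-combinatorial identification of $\mf u^-_{\psi \mid \wt\psi}$ as a genuine complement of $\mf p$ inside $\wt{\mf p}$; a slicker alternative, avoiding the universal property, goes through transitivity of induction: $f_{\lambda \mid \wt\lambda}$ equals $\Ind_{U\wt{\mf p}}^{U\mf g}(g)$, where $g \cl \Ind_{U\mf p}^{U\wt{\mf p}} \mb C_{\chi} \thra \mb C_{\wt\chi}$ is the evident augmentation, PBW identifies $\Ker(g)$ with the augmentation ideal of $U\mf u^-_{\psi \mid \wt\psi}$ acting on the cyclic vector, and exactness of induction from a Lie subalgebra then yields~\eqref{eq:parabolic_to_parabolic} on the nose.
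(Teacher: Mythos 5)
Your proposal is correct, and it takes a genuinely different route from the paper's. The paper identifies $M_{\mf p,\lambda} \simeq U(\mf u^-_{\psi})$ and $M_{\wt{\mf p},\wt\lambda} \simeq U(\mf u^-_{\wt\psi})$ via the free action of the opposite nilradicals on the cyclic vectors, and then reads off the kernel of the induced projection by choosing a PBW basis compatible with the inclusion $\mf u^-_{\wt\psi} \sse \mf u^-_{\psi}$; this is a by-hand computation in a chosen basis. Your main argument instead shows directly that $M_{\mf p,\lambda}/N$ satisfies the defining universal property of $M_{\wt{\mf p},\wt\lambda}$: the root-combinatorial identity $\wt\psi \sm \psi = -(\nu \sm \wt\nu)$ gives the vector-space decomposition $\wt{\mf p} = \mf p \ops \mf u^-_{\psi\mid\wt\psi}$ with $\mf u^-_{\psi\mid\wt\psi} \sse [\wt{\mf l},\wt{\mf l}] \sse \Ker(\wt\chi)$, whence the image $\bar v$ of the cyclic vector in $M_{\mf p,\lambda}/N$ transforms by $\wt\chi$ under $\wt{\mf p}$; Frobenius reciprocity then produces a $U\mf g$-linear map $M_{\wt{\mf p},\wt\lambda} \to M_{\mf p,\lambda}/N$ inverse (by cyclicity) to the one induced by $f_{\lambda\mid\wt\lambda}$. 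This is basis-free and stays closer to the definition of induced modules. One caveat worth recording: the combinatorial inputs you use (in particular $\wt\nu \sse \nu$, cf. Lem.~\ref{lem:nested_nilradicals}, and $\nu \sm \wt\nu \sse \wt\phi$) do hold and follow readily from closure of parabolic subsets, as you indicate, so there is no gap. The trade-off is that the paper's PBW argument is the one that generalises to the wild case in Thm.~\ref{thm:wild_parabolic_to_wild_parabolic}, where the opposite ``nilradical'' $\mf u^-_{\bm\psi} \sse \mf g_r$ is generically not a Lie subalgebra and one must build the monomial basis by hand rather than invoke a subalgebra-theoretic universal property; so the paper's route prefigures what is actually needed later, while yours is the cleaner proof in the tame setting on its own.
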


\begin{proof}
	Postponed to~\ref{proof:lem_parabolic_to_parabolic}.
\end{proof}

\begin{rema}
	\label{rmk:minimal_modules}

	Hence,
	there is a well-defined notion of `minimal' (finite) Verma module of given weight.

	Namely,
	consider a Levi subsystem $\phi \sse \Phi^{\dual}$ of the inverse root system.
	The associated \emph{dual stratum of} $\mf t^{\dual}$ is
	\begin{equation}
		\label{eq:dual_tame_stratum}
		\mf t^{\dual}_{\phi} \ceqq \phi^\perp \, \bigsm \,
		\,
		\bigcup_{\Phi^{\dual} \sm \phi} \set{\alpha^{\dual}}^\perp  \sse \mf t^{\dual},
	\end{equation}
	involving annihilators of subsets of $\mf t$---%
	i.e.,
	e.g.,
	$\phi^\perp = \bigcap_{\phi} \Set{ \lambda \in \mf t^{\dual} | \braket{ \lambda,\alpha^{\dual}} = 0 } \sse \mf t^{\dual}$.
	(The proof that these subspaces provide a stratification is analogous to~\ref{proof:lem_tame_stratification}.)

	Choose now $\lambda \in \mf t^{\dual}$,
	and let
	\begin{equation}
		\phi_{\lambda} \ceqq \Phi^{\dual} \cap \ker(\lambda) = \Set{ \alpha^{\dual} \in \Phi^{\dual} | \Braket{ \lambda,\alpha^{\dual} } = 0 }.
	\end{equation}
	It is a Levi subsystem of $\Phi^{\dual}$,
	determining a stratum via~\eqref{eq:dual_tame_stratum}.
	The canonical bijection maps it to a Levi subsystem $\phi \in \mc L_\Phi$,\fn{
		Namely,
		to $\phi_X \sse \Phi$,
		in the notation of~\eqref{eq:levi_subsystem_semisimple_element},
		where $X \ceqq (\cdot \mid \cdot)^\sharp(\lambda) \in \mf t$.}~and we let $\psi \in \mc P_\Phi$ be a parabolic subset with Levi factor $\phi$.
	Then there exists a character $\mf p_{\psi} \to \mb C$ extending $\lambda$,
	and the same holds for \emph{any} parabolic subalgebra $\mf p$ contained in $\mf p_{\psi}$:
	but if $\mf p \ssne \mf p_{\psi}$ then Lem.~\ref{lem:parabolic_to_parabolic} implies that the corresponding generalised Verma module \emph{cannot} be simple.
	Conversely,
	inducing from a parabolic subalgebra $\mf p$ with $\mf p_{\psi} \ssne \mf p$ does \emph{not} make sense,
	as one cannot extend $\lambda$ to a character of $\mf p$.

	Equivalently,
	start from a parabolic subalgebra $\mf p \sse \mf g$,
	and look for a weight $\lambda \in \mf t^{\dual}$ such that the corresponding generalised Verma module is \emph{simple}.
	Then:
	(i) the weight $\lambda$ vanishes on $\mf t_\alpha = [\mf g_{-\alpha},\mf g_\alpha]$ for all $\alpha \in \phi \ceqq \on{Lf}(\psi)$ (to extend to a character of $\mf p$);
	and (ii) it does \emph{not} vanish on $\mf t_\alpha$ for all $\alpha \in \nu = \psi \sm \phi$ (again,
	by Lem.~\ref{lem:parabolic_to_parabolic}).
	This means that $\lambda$ lies in the stratum $\mf t^{\dual}_{\phi^{\dual}}$,
	where $\phi^{\dual} \sse \Phi^{\dual}$ is the Levi subsystem corresponding to $\phi \sse \Phi$ under the canonical bijection.
	(Again such `minimal' Verma modules will not be simple in general.)

	E.g.,
	if $\lambda$ is \emph{regular},
	i.e.,
	if $\phi_{\lambda} = \vn$,
	then one can only induce from Borel subalgebras.
	At the opposite end of the poset,
	if $\lambda \in \mf Z_{\mf g}^{\dual} \simeq [\mf g,\mf g]^\perp \sse \mf t^{\dual}$ then one can induce from \emph{any} parabolic subalgebra,
	but the only simple quotient is $ M_{\mf g,\lambda} \simeq \mb C_{\lambda}$.
	This relates with nonsingular characters of the Levi factor $\mf l_{\phi} \sse \mf p_{\psi}$ (cf.~\S~\ref{sec:tame_nonsingular_characters} +~\ref{sec:nonsingular_characters}).
\end{rema}

\subsection{Affine Verma modules}

\subsubsection{}

Consider the \emph{affine} Borel subalgebra
\begin{equation}
	\label{eq:affine_borel}
	\wh{\mf b} \ceqq \bigl( \mf b \ops z\mf g\llb z \rrb \bigr) \ops \mb CK \sse \wh{\mf g}.
\end{equation}
One has $[\mf b,\mf g] = [\mf g,\mf g] \sse \mf g$,
since the inverse root system spans $\mf t \cap [\mf g,\mf g]$.
Therefore,
one has $\wh{\mf b}_{\ab} \simeq \wh{\mf t} \ops z \mf Z_{\mf g}\llb z \rrb$.
Let us only consider characters $\chi$ which vanish on the central tail $z \mf Z_{\mf g} \llb z \rrb$,
i.e.,
functionals coded by pairs $(\lambda,\kappa) \in \mf t^{\dual} \ops \mb C \simeq  \wh t^{\dual}$---%
in the natural identification $(\mb CK)^{\dual} \simeq \mb C$.
The corresponding \emph{level}-$\kappa$ \emph{affine Verma module} is
\begin{equation}
	\label{eq:affine_generic_verma}
	\wh M_{\lambda,\kappa} = \wh M_{\mf b,\lambda,\kappa} \ceqq \Ind_{U\wh{\mf b}}^{U\wh{\mf g}} \mb C_{\chi},
\end{equation}
with cyclic vector $v_{\mf b,\lambda,\kappa} = v_{\lambda,\kappa} \ceqq 1 \ots_{U\wh{\mf b}} 1$.
By construction,
the latter is annihilated by $\mf{nil}(\mf b) \ops z \mf g \llb z \rrb \spse \bigl[ \wh{\mf b},\wh{\mf b} \bigr]$,
so it is still a highest-weight vector of highest weight $\lambda$.

\subsubsection{}

Finally,
consider the \emph{affine} parabolic subalgebra
\begin{equation}
	\wh{\mf p} \ceqq \bigl( \mf p \ops z \mf g \llb z \rrb \bigr) \ops \mb CK.
\end{equation}
Reasoning as above yields $\wh{\mf p}_{\ab} \simeq \bigl( \mf Z_{\mf l} \ops z \mf Z_{\mf g}\llb z \rrb \bigr) \ops \mb CK$,
and in turn the level-$\kappa$ \emph{generalised} affine Verma module is
\begin{equation}
	\label{eq:affine_generalised_verma}
	\wh M_{\lambda,\kappa} = \wh M_{\mf p,\lambda,\kappa} \ceqq \Ind_{U\wh{\mf p}}^{U\wh{\mf g}} \mb C_{\chi},
	\qquad (\lambda,\kappa) \in (\mf Z_{\mf l} \ops \mb CK)^{\dual} \sse \Hom_{\Lie}(\wh{\mf p},\mb C) \ni \chi.
\end{equation}
In particular,
the canonical generator $v_{\mf p,\lambda,\kappa} = v_{\lambda,\kappa} \ceqq 1 \ots_{U\wh{\mf p}} 1$ is now annihilated by $[\mf l,\mf l] \ops \mf{nil}(\mf p) \ops z \mf g \llb z \rrb \spse \bigl[ \wh{\mf p},\wh{\mf p} \bigr]$.

\begin{rema}
	The finite Verma modules are naturally subspaces of the affine ones,
	obtained by acting with $U\bigl(\mf g \llb z \rrb\bigr) \sse U\wh{\mf g}$ on the canonical generator:
	hereafter,
	such embeddings $M_{\lambda} \hra \wh M_{\lambda,\kappa}$ are tacit.
\end{rema}

\subsubsection{}
\label{sec:affine_parabolic_to_affine_parabolic}

The arguments of \S~\ref{sec:parabolic_to_parabolic} extend essentially verbatim to the affine case.

Namely,
suppose that $\psi \sse \wt \psi$ are parabolic subsets of $\Phi$ with Levi factors $\phi \sse \wt \phi$.
Choose an element $\wt \lambda \in \mf Z_{\wt \phi}^{\dual}$ and a level $\kappa \in \mb C$,
which induces a character $\wh{\mf p}_{\wt \psi} \to \mb C$ as above;
the latter restricts to a character $\wh{\mf p}_{\psi} \to \mb C$,
which in turn corresponds to a pair $(\lambda,\kappa) \in \mf Z_{\phi}^{\dual} \ts \mb C$.
There is a natural $U\wh{\mf g}$-linear surjection $\wh f_{\lambda \mid \wt \lambda} \cl \wh M_{\mf p_{\psi},\lambda,\kappa} \thra \wh M_{\mf p_{\wt \psi},\wt \lambda,\kappa}$ with
\begin{equation}
	\ker \bigl( \wh f_{\lambda \mid \wt \lambda} \bigr) = U\wh{\mf g} \bigl( \mf u^-_{\psi \mid \wt \psi} \cdot \wh v_{\mf p_{\psi},\lambda,\kappa} \bigr) \sse \wh M_{\mf p_{\psi},\lambda,\kappa},
\end{equation}
where the subspace $\mf u^-_{\psi \mid \wt \psi} \sse \mf g$ is defined as in~\eqref{eq:parabolic_to_parabolic}.
The point is that the subalgebra $U \bigl( z^{-1} \mf g [z^{-1}] \bigr) \sse U\wh{\mf g}$ acts freely on both modules,
so that one has,
e.g.,
a $\mb C$-linear isomorphism
\begin{equation}
	\wh M_{\mf p_{\psi},\lambda,\kappa} \simeq U \bigl( \wh{\mf u}^-_{\psi} \bigr),
	\qquad \wh u^-_{\psi} \ceqq z^{-1} \mf g[z^{-1}] \ops \mf u^-_{\psi} \sse \mf g[z^{-1}].
\end{equation}

\subsection{From finite modules to loop-algebra modules (I)}
\label{sec:from_finite_to_loop_algebra}

It is also possible to recover the action of loop algebras from the finite Verma modules,
by inducing representations from the positive part.

Precisely,
abstractly,
let $\wt{\mf h}$ be a Lie algebra and $\wt V$ a $U\wt{\mf h}$-module:

\begin{lemm}
	\label{lem:induction_from_submodule}

	Let $\mf h \sse \wt{\mf h}$ be a Lie subalgebra,
	and $V \sse \wt V$ an $U\mf h$-stable vector subspace---%
	regarded as a $U\mf h$-module.
	Furthermore,
	suppose that there is a vector $v \in V$ such that:
	\begin{enumerate}
		\item $U\wt{\mf h} \cdot v = \wt V$;

		\item and $\Ann_{U\wt{\mf h}} ( v ) \sse U\wt{\mf h} \cdot \Ann_{U\mf h} ( v )$.
	\end{enumerate}
	Then there is a $U\wt{\mf h}$-linear isomorphism $\wt V \simeq \Ind_{U\mf h}^{U\wt{\mf h}} V$.
\end{lemm}

\begin{proof}
	Postponed to~\ref{proof:lem_induction_from_submodule}.
\end{proof}

\subsubsection{}

Again let $\mf p \sse \mf g$ be a parabolic subalgebra,
and consider the unique Levi factor $\mf l \sse \mf p$ containing a fixed Cartan subalgebra $\mf t \sse \mf g$.
Choose a pair $(\lambda,\kappa) \in \mf Z_{\mf l}^{\dual} \ts \mb C$,
and consider the Lie algebras $\mf h \ceqq \mf g \llb z \rrb \sse \mf g(\!(z)\!) \eqqc \wt{\mf h}$.
The finite Verma module $M = M_{\lambda}$ is a subspace of the affine Verma module $\wh M = \wh M_{\lambda,\kappa}$,
and it is $U\mf h$-stable---%
with trivial action of $z \mf g \llb z \rrb \sse \mf h$.
Then Lem.~\ref{lem:induction_from_submodule} yields an isomorphism of $U \bigl( \mf g(\!(z)\!) \bigr)$-modules
\begin{equation}
	\wh M \simeq U \bigl( \mf g(\!(z)\!) \bigr) \ots_{U ( \mf g \llb z \rrb )} M,
\end{equation}
with tacit restriction of the action on the left-hand side.
Indeed,
the canonical generator $v \ceqq v_{\lambda,\kappa}$ satisfies the two properties in the statement of Lem.~\ref{lem:induction_from_submodule},
essentially because the affine Verma modules are originally defined by extension of scalars from a Lie subalgebra of $\mf g \llb z \rrb \ops \mb C K$ (cf.~\S~\ref{sec:from_finite_to_loop_algebra_wild} for the wild analogue).

\begin{rema}
	Explicitly,
	one has
	\begin{equation}
		\Ann_{U ( \mf g(\!(z)\!) )} (v_{\lambda,\kappa}) = U \bigl( \mf g(\!(z)\!) \bigr) \wh K_{\mf p,\lambda},
	\end{equation}
	where
	\begin{equation}
		\label{eq:annihilator_cyclic_vector_affine_generalised_verma}
		\wh K_{\mf p,\lambda} \ceqq [\mf l,\mf l] \ops \mf{nil}(\mf p) \ops \spann_{\mb C} \Set{ X - \Braket{ \lambda,X } | X \in \mf Z_{\mf l}} \ops z \mf g \llb z \rrb \sse U \bigl( \mf g \llb z \rrb \bigr).
	\end{equation}

	The same holds in particular for a Borel subalgebra $\mf b \sse \mf g$,
	in which case
	\begin{equation}
		\wh K_{\mf b,\lambda} = \mf{nil}(\mf b) \ops \spann_{\mb C} \Set{ X - \Braket{ \lambda,X } | X \in \mf t} \ops z \mf g \llb z \rrb.
		\qedhere
	\end{equation}
\end{rema}

\subsection{About nonsingular characters}
\label{sec:tame_nonsingular_characters}

Choose a parabolic subset $\psi \in \mc P_\Phi$ with Levi factor $\phi \in \mc L_\Phi$,
determining the Lie subalgebras $\mf u^\pm_{\psi},
	\mf l_{\phi} \sse \mf g$.
Let $\mf g = \mf u^-_{\psi} \ops \mf l_{\phi} \ops \mf u^+_{\psi}$ be the associated triangular decomposition of $\mf g$,
and denote by $\pi_{\phi} \cl \mf g \thra \mf l_{\phi}$ the canonical projection parallel to $\mf u^-_{\psi} \ops \mf u^+_{\psi} \sse \mf g$.

\begin{defi}
	\label{def:nonsingular_character_tame}

	A character $\chi \in \Hom_{\Lie}(\mf l_{\phi},\mb C)$ is \emph{nonsingular} if the pairing
	\begin{equation}
		\label{eq:pairing_nondegenerate_character}
		B_{\lambda}^{\psi} \cl \mf u^+_{\psi} \ots \mf u^-_{\psi} \lra \mb C,
		\qquad Y \ots Y' \lmt \Braket{ \chi,\pi_{\phi} \bigl( [Y,Y'] \bigr) },
		\
	\end{equation}
	is nondegenerate.
\end{defi}

\subsubsection{}

In Def.~\ref{def:nonsingular_character_tame},
we write again $\lambda \in \mf t^{\dual}$ for the $\mb C$-linear map corresponding to the character;
it vanishes on $\mf t_\alpha = [\mf g_\alpha,\mf g_{-\alpha}]$ for $\alpha \in \phi$.
Nonsingular characters correspond to those linear maps that further do \emph{not} vanish on $\mf t_\beta$ for $\beta \in \Phi \sm \phi$,
i.e.,
elements of the dual stratum~\eqref{eq:dual_tame_stratum}.

This is natural in the symplectic viewpoint.
Indeed,
we can use the root-splitting of $(\mf g,\mf t)$ to (tacitly) identify $\lambda$ with an element of $\mf g^{\dual}$ vanishing on $\bops_\Phi \mf g_\alpha \sse \mf g$,
and consider the coadjoint orbit $\mc O_{\lambda} \ceqq G \cdot \lambda \sse \mf g^{\dual}$ (cf.~\S~\ref{rmk:dual_version}).
Then~\eqref{eq:pairing_nondegenerate_character} can be written
\begin{equation}
	B_{\lambda}^{\psi}(Y,Y') = \Braket{ \lambda,[Y,Y'] } = \set{ Y,Y' }(\lambda),
	\qquad Y \in \mf u^+,
	\quad Y' \in \mf u^-.
\end{equation}
Therefore,
it is a restriction of the Poisson bracket of (linear functions on) $\mf g^{\dual}$,
evaluated at $\lambda$.
But by construction
\begin{equation}
	[\mf l_{\phi},\mf g] \sse  \bops_{\phi} \mf t_\alpha  \ops \bops_\Phi \mf g_\alpha \sse \ker(\lambda),
\end{equation}
so that $\mf l_{\phi} \sse \mf g^{\lambda}$ (cf.~\eqref{eq:infinitesimal_coadjoint_stabiliser}),
and there is a well-induced alternating form on $\bigwedge^2 (\mf g \bs \mf l_{\phi})$.
Furthermore,
one has $\mf g \bs \mf l_{\phi} \simeq \mf u^+_{\psi} \ops \mf u^-_{\psi}$,
and both summands are annihilated by $\lambda$:
hence,
indeed,
the form~\eqref{eq:pairing_nondegenerate_character} is a reduction of the Poisson bracket.
It will coincide with the nondegenerate KKS symplectic form on $\mc O_{\lambda} \simeq G \bs G^{\lambda}$,
at the tangent space $T_{\lambda} \mc O_{\lambda} \simeq \mf g \bs \mf g^{\lambda}$,
if and only if $\mf l_{\phi} = \mf g^{\lambda}$.
Finally,
this happens precisely when $\lambda \in \mf t^{\dual}_{\phi^{\dual}} \sse \mf t^{\dual}$,
and this observation will be generalised to the wild case (in \S~\ref{sec:nonsingular_characters}) to get a more explicit description of the symplectic structure---%
with a view towards quantisation.

\subsection{Shapovalov forms}
\label{sec:tame_shapovalov}

In \S~\ref{sec:shapovalov} we introduce a wild generalisation of the Shapovalov form~\cite{shapovalov_1972_a_certain_bilinear_form_on_the_universal_enveloping_algebra_of_a_complex_semisimple_lie_algebra},
in the finite case.
We review it here in the tame setting ($r = 1$).

\subsubsection{}

After choosing suitable Poincaré--Birkhoff--Witt (= PBW) bases,
the triangular decomposition $\mf g = \mf u^-_{\psi} \ops \mf l_{\phi} \ops \mf u^+_{\psi}$ induces a vector-space splitting of the universal enveloping algebra (= UEA)
\begin{equation}
	\label{eq:universal_enveloping_triangular_decomposition}
	U\mf g = U\mf l_{\phi} \ops \bigl( \mf u^-_{\psi} \cdot U\mf g + U\mf g \cdot \mf u^+_{\psi} \bigr),
\end{equation}
whence a projection $\pi_{\phi} \cl U\mf g \thra U\mf l_{\phi}$ parallel to the rightmost direct summand.
This is the UEA version of the projection $\mf g \thra \mf l_{\phi}$ parallel to $\mf u^-_{\psi} \ops \mf u^+_{\psi}$,
which we abusively write the same.
(As noted in~\cite[\S~2.8]{moody_pianzola_1995_lie_algebras_with_triangular_decomposition},
this is best suited to treat \emph{highest-weight} modules,
rather than lowest-weight,
as the symmetry between the opposite nilradicals is broken by choosing which to put on the left/right.)

Now choose a \emph{pinning of} $(\mf g,\mf t)$,\fn{
	An `épinglage'~\cite[Exp.~XXIII, Déf.~1.1]{demazure_grothendieck_1970_schemas_en_groupes_iii_structure_des_schemas_en_groupes_reductifs}.
	We work over $S = \Spec \mb C$,
	so everything splits;
	and we only need the infinitesimal/Lie-algebra version.}~i.e.:
(i) a base $\Delta \sse \Phi$ of simple roots;
and (ii) generators $E_\theta \in \mf g_\theta \sm (0)$ for the root-lines $\mf g_\theta \sse \mf g$---%
with $\theta \in \Delta$.
The negative-simple root-vectors $F_\theta = E_{-\theta} \in \mf g_{-\theta} \sm (0)$ are canonically determined by imposing that
\begin{equation}
	[E_\theta,F_\theta]
	= H_\theta \ceqq \theta^{\dual} \in [\mf g_\theta,\mf g_{-\theta}] \sse \mf t.
\end{equation}
Then consider the corresponding `transposition' involution $X \mt \ps{t}{} X \cl \mf g \lxra{\simeq} \mf g^{\op}$ of $\mf g$,
i.e.,
the Lie-algebra antiautomorphism defined by---%
uniquely---%
extending
\begin{equation}
	\label{eq:transposition_involution}
	\ps{t}{} X \ceqq X,
	\quad \ps{t}{} E_{\pm \theta} \ceqq E_{\mp \theta},
	\qquad X \in \mf t,
	\quad \theta \in \Delta.
\end{equation}
(If $\mf g = \mf{gl}_m(\mb C)$,
this is literal matrix transposition for the standard pinning.)
By construction,
the map~\eqref{eq:transposition_involution}:
(i) swaps the nilradicals $\mf u^\pm_{\psi} \sse \mf g$;
(ii) restricts to the analogous involution $\mf l_{\phi} \lxra{\simeq} \mf l_{\phi}^{\op}$;
and (iii) extends---%
uniquely---%
to a ring involution $U\mf g \lxra{\simeq} U(\mf g^{\op}) \simeq (U\mf g)^{\op}$,
abusively written the same.
Then the (generalised) \emph{universal Shapovalov pairing} is
\begin{equation}
	\label{eq:universal_shapovalov}
	\mc S^{\bm \psi} \cl U\mf g \ots U\mf g \lra U\mf l_{\phi},
	\qquad X \ots X' \lmt \pi_{\phi} \bigl( \ps{t}{} X \cdot X' \bigr).
\end{equation}
Cf.~\cite[\S~2.8]{moody_pianzola_1995_lie_algebras_with_triangular_decomposition} in the generic case $\phi = \vn$,
and note that (on the other side of the poset) $\phi = \Phi$ simply yields the associative product of $U\mf g$.

By construction,
the pairing~\eqref{eq:universal_shapovalov} is $\mb C$-bilinear and \emph{contragredient} with respect to the regular representation of $U\mf g$ and the transposition,
viz.,
one has
\begin{equation}
	\mc S^{\psi} (X \cdot Y,Z) = \mc S^{\psi} \bigl( Y,\ps{t}{} X \cdot Z \bigr),
	\qquad X,Y,Z \in U\mf g.
\end{equation}

Finally,
let $\mf Z_{\phi} \sse \mf t$ act on $U\mf g$ by the adjoint action.
Then $U\mf g$ is a weight $\mf Z_{\phi}$-module,
with weight-space decomposition
\begin{equation}
	\label{eq:weight_decomposition_universal_enveloping_algebra}
	U\mf g = \bops_{Q_{\phi}} (U\mf g)_\mu,
	\qquad Q_{\phi} \ceqq \spann_{\mb Z}(\Phi \sm \phi) = \spann_{\mb Z}(\nu) \sse \mf t^{\dual}.
\end{equation}
This involves a $\mb Z$-submodule of the \emph{root lattice} $Q \ceqq Q_{\vn} = \spann_{\mb Z}(\Phi)$,
with $\nu = \psi \sm \phi$ as above.
(This follows from the decomposition $\mf g = \mf l_{\phi} \ops \bops_{\Phi \sm \phi} \mf g_\alpha$.)

Now the splitting~\eqref{eq:weight_decomposition_universal_enveloping_algebra} is $\mc S^{\psi}$-\emph{orthogonal},
i.e.,
for $\mu,\mu' \in Q_{\phi}$ with $\mu \neq \mu'$ one has
\begin{equation}
	\label{eq:universal_shapovalov_orthogonality}
	\mc S^{\psi} (X,X') = 0,
	\qquad X \in (U\mf g)_\mu,
	\quad Y \in (U\mf g)_{\mu'}.
\end{equation}
This is because one has $\ps{t}{} X \cdot X' \in (U\mf g)_{\mu' - \mu}$,
and $U\mf l_{\phi} \sse (U\mf g)_0$,
so that
\begin{equation}
	\bops_{Q_{\phi} \sm (0)} (U\mf g)_\mu \sse \mf u^-_{\psi} \cdot U\mf g + U\mf g \cdot U^+_{\psi},
\end{equation}
noting that~\eqref{eq:universal_enveloping_triangular_decomposition} is a direct sum of $\mf Z_{\phi}$-submodules.

\subsubsection{}

Choose a character $\chi \in \Hom_{\Lie}(\mf l_{\phi},\mb C)$,
and extend it tacitly to a ring morphism $U\mf l_{\phi} \to \mb C$.
Then the (generalised) \emph{Shapovalov form} associated with $\chi$ is the composition
\begin{equation}
	\label{eq:shapovalov_form}
	\mc S^{\psi}_{\lambda} \ceqq \chi \circ \mc S^{\psi} \cl U\mf g \ots U\mf g \lra \mb C,
\end{equation}
where again by $\lambda \ceqq \eval[1]\chi_{\mf t} \in \mf t^{\dual}$.
By the above,
this is a transpose-contragredient $\mb C$-bilinear form,
compatible with the weight-space decomposition~\eqref{eq:weight_decomposition_universal_enveloping_algebra}.

\begin{lemm}
	\label{lem:shapovalov_is_symmetric}

	The Shapovalov form~\eqref{eq:shapovalov_form} is \emph{symmetric}.
\end{lemm}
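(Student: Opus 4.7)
The plan is to reduce symmetry to two structural facts about the ingredients defining $\mc S^{\psi}_{\lambda}$: (a) the projection $\pi_{\phi}$ commutes with transposition, i.e. $\pi_{\phi}(\ps{t}{}Y) = \ps{t}{}\pi_{\phi}(Y)$ for $Y \in U\mf g$; and (b) the character $\chi$ (extended multiplicatively to $U\mf l_{\phi}$) is transposition-invariant, i.e. $\chi(\ps{t}{}Y) = \chi(Y)$ for $Y \in U\mf l_{\phi}$. Granting these, since $\ps{t}{}(\cdot)$ is an antiautomorphism squaring to the identity one has $\ps{t}{}(\ps{t}{}X \cdot X') = \ps{t}{}X' \cdot X$, so that
\begin{equation}
\mc S^{\psi}_{\lambda}(X', X) = \chi \bigl( \pi_{\phi}(\ps{t}{}X' \cdot X) \bigr) = \chi \bigl( \ps{t}{}\pi_{\phi}(\ps{t}{}X \cdot X') \bigr) = \chi \bigl( \pi_{\phi}(\ps{t}{}X \cdot X') \bigr) = \mc S^{\psi}_{\lambda}(X, X').
\end{equation}

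For (a) I would verify that transposition preserves each summand of the triangular decomposition~\eqref{eq:universal_enveloping_triangular_decomposition}. By~\eqref{eq:transposition_involution} transposition fixes $\mf t$ pointwise and swaps $\mf g_{\alpha} \lra \mf g_{-\alpha}$; since $\phi = \on{Lf}(\psi)$ is a root system it satisfies $-\phi = \phi$, hence transposition preserves $\mf l_{\phi}$, and an inspection of $\nu = \psi \sm \phi$ shows $-\nu = -\psi \sm \phi$, so transposition exchanges $\mf u^+_{\psi} = \bops_{\nu} \mf g_{\alpha}$ with $\mf u^-_{\psi}$. Antimultiplicativity then yields $\ps{t}{}(\mf u^-_{\psi} \cdot U\mf g) = U\mf g \cdot \mf u^+_{\psi}$ and $\ps{t}{}(U\mf g \cdot \mf u^+_{\psi}) = \mf u^-_{\psi} \cdot U\mf g$, so the off-Levi summand of~\eqref{eq:universal_enveloping_triangular_decomposition} is preserved; commutation of $\ps{t}{}(\cdot)$ with $\pi_{\phi}$ follows.

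For (b) I would express $\chi$ on a PBW monomial $Y_1 \cdots Y_n \in U\mf l_{\phi}$ as $\prod_i \chi(Y_i)$ by multiplicativity, and check invariance on generators $Y_i \in \mf l_{\phi} = \mf t \ops \bops_{\phi} \mf g_{\alpha}$. Transposition is the identity on $\mf t$, where invariance is trivial; and it exchanges $E_{\pm\alpha}$ for $\alpha \in \phi$, on which $\chi$ vanishes anyway since $\mf g_{\pm\alpha} \sse [\mf l_{\phi}, \mf l_{\phi}] \sse \Ker(\chi)$ (every Lie character annihilates the derived subalgebra). Multiplicativity, combined with commutativity of the target $\mb C$, then promotes invariance from generators to arbitrary monomials, giving (b). The main (mild) delicacy across the argument is tracking the antimultiplicativity of $\ps{t}{}(\cdot)$ correctly, so that the left-right asymmetry built into~\eqref{eq:universal_enveloping_triangular_decomposition} is swapped into itself rather than destroyed; this is precisely forced by the Levi-factor symmetry $-\phi = \phi$.
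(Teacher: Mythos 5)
Your proof is correct and follows essentially the same route as the paper: both arguments reduce symmetry to showing that the composite $\chi \circ \pi_{\phi}$ is invariant under transposition, which in turn rests on the facts that $\ps{t}{}(\cdot)$ swaps $\mf u^\pm_{\psi}$, stabilises $U\mf l_{\phi}$ via $-\phi = \phi$ while fixing $\mf t$, and that $\chi$ annihilates $\bops_{\phi}\mf g_{\alpha}$. Your separation into the two explicit statements (a) $\pi_{\phi}$ commutes with transposition and (b) $\chi$ is transposition-invariant on $U\mf l_{\phi}$ is a slightly more modular packaging of the same computation, but the underlying ideas and ingredients coincide.
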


\begin{proof}
	Postponed to~\ref{proof:lem_shapovalov_is_symmetric}.
\end{proof}

\begin{rema}
	In the generic case $\phi = \vn$ one has the stronger statement that the universal Shapovalov pairing~\eqref{eq:universal_shapovalov} is symmetric,
	even before evaluating at a character,
	since~\eqref{eq:transposition_involution} acts as the identity on $\mf l_{\vn} = \mf t$.
\end{rema}

\subsubsection{}

Now one can prove that~\eqref{eq:shapovalov_form} induces a bilinear form on the generalised Verma module $M \ceqq M_{\mf p_{\psi},\lambda}$ of highest weight $\lambda$,
in the notation of \S~\ref{sec:generalised_verma_modules}.
Namely,
if $v \ceqq v_{\mf p_{\psi},\lambda} \in M$ is the canonical generator,
then truncating~\eqref{eq:annihilator_cyclic_vector_affine_generalised_verma} yields
\begin{equation}
	\label{eq:annihilator_cyclic_vector_finite_generalised_verma}
	\Ann_{U\mf g} (v) = U\mf g \cdot K^{\psi}_{\lambda},
	\qquad K^{\psi}_{\lambda} \ceqq [\mf p_{\psi},\mf p_{\psi}] \ops \spann_{\mb C} \Set{ X - \Braket{\lambda,X } | X \in \mf Z_{\phi} } \sse U\mf g.
\end{equation}

\begin{lemm}
	\label{lem:annihilator_cyclic_vector_in_shapovalov_radical}

	One has
	\begin{equation}
		\mc S^{\psi}_{\lambda} (X,X') = 0,
		\qquad X \in U\mf g,
		\quad X' \in K^{\psi}_{\lambda}.
	\end{equation}
\end{lemm}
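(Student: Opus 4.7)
The plan is to split $K^{\psi}_{\lambda}$ using the Lie-algebra decomposition $[\mf p_{\psi},\mf p_{\psi}] = [\mf l_{\phi},\mf l_{\phi}] \ops \mf u^+_{\psi}$ (which reflects the fact that the Levi factor normalises the nilradical), obtaining the three-part decomposition
\begin{equation}
	K^{\psi}_{\lambda} = \mf u^+_{\psi} \ops [\mf l_{\phi},\mf l_{\phi}] \ops \spann_{\mb C} \Set{ Y - \Braket{ \lambda | Y } | Y \in \mf Z_{\phi} },
\end{equation}
and then to check the vanishing of $\mc S^{\psi}_{\lambda}(X,X') = \chi \bigl( \pi_{\phi}( \ps{t}{} X \cdot X') \bigr)$ on each summand separately.

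For $X' \in \mf u^+_{\psi}$ the vanishing is immediate from~\eqref{eq:universal_enveloping_triangular_decomposition}, since $\ps{t}{} X \cdot X' \in U\mf g \cdot \mf u^+_{\psi}$ lies in the kernel of $\pi_{\phi}$. For the remaining two summands one has $X' \in \mf l_{\phi}$ with $\chi(X') = 0$: the case $X' \in [\mf l_{\phi},\mf l_{\phi}]$ holds because $\chi$ is a Lie-algebra map into the abelian $\mb C$, and the case $X' = Y - \Braket{ \lambda | Y }$ holds by construction. Granting the key identity
\begin{equation}
	\pi_{\phi} \bigl( \ps{t}{} X \cdot X' \bigr) = \pi_{\phi}( \ps{t}{} X ) \cdot X', \qquad X' \in \mf l_{\phi},
\end{equation}
the multiplicativity of $\chi \cl U\mf l_{\phi} \to \mb C$ then finishes the argument via $\chi \bigl( \pi_{\phi}( \ps{t}{} X ) \cdot X' \bigr) = \chi \bigl( \pi_{\phi}( \ps{t}{} X ) \bigr) \chi(X') = 0$.

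The only (mild) obstacle is thus to establish this identity. The approach is to expand $\ps{t}{} X = \sum_i A_i B_i C_i$ in PBW form adapted to $\mf g = \mf u^-_{\psi} \ops \mf l_{\phi} \ops \mf u^+_{\psi}$---with $A_i \in U\mf u^-_{\psi}$, $B_i \in U\mf l_{\phi}$, $C_i \in U\mf u^+_{\psi}$---and to move $X'$ past each $C_i$ via $C_i X' = X' C_i + [C_i,X']$. Since $\mf l_{\phi}$ normalises $\mf u^+_{\psi}$, iterating the Leibniz rule shows that $\ad_{X'}$ preserves the augmentation ideal of $U\mf u^+_{\psi}$; in particular $[C_i,X']$ lies in that ideal whenever $C_i$ does, and vanishes when $C_i \in \mb C$. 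Thus the contribution of $[C_i,X']$ is annihilated by $\pi_{\phi}$, whereas $A_i B_i X' C_i$ is already in PBW form with $B_i X' \in U\mf l_{\phi}$ as middle factor, and its projection picks up $X'$ on the right; summing over $i$ yields the identity.
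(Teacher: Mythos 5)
Your proof is correct and follows essentially the same route as the paper: split $K^{\psi}_{\lambda}$ into the nilradical piece (immediate from $U\mf g \cdot \mf u^+_{\psi} \sse \Ker(\pi_{\phi})$) and the piece $\Ker(\chi) \cap U\mf l_{\phi}$, then use the normalisation $[\mf l_{\phi},\mf u^\pm_{\psi}] \sse \mf u^\pm_{\psi}$ (pushed to the UEA level) together with multiplicativity of the extended character $\chi \cl U\mf l_{\phi} \to \mb C$. The paper's proof verifies the same thing inline, decomposing $\ps{t}{}X$ along~\eqref{eq:universal_enveloping_triangular_decomposition} and moving $X'$ past a single element of $\mf u^+_{\psi}$; your extraction of the clean identity $\pi_{\phi}(\ps{t}{}X \cdot X') = \pi_{\phi}(\ps{t}{}X)\cdot X'$ for $X' \in \mf l_{\phi}$ is just a tidier packaging of the same computation.
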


\begin{proof}
	Postponed to~\ref{proof:lem_annihilator_cyclic_vector_in_shapovalov_radical}.
\end{proof}

\subsubsection{}

By contragrediency,
Lem.~\ref{lem:annihilator_cyclic_vector_in_shapovalov_radical} implies that the left ideal $\Ann_{U\mf g} (v) \sse U\mf g$ lies in the radical $\Rad \bigl( \mc S^{\psi}_{\lambda} \bigr)$ of~\eqref{eq:shapovalov_form}.
Hence,
there is a reduced Shapovalov form $M \ots M \to \mb C$,
which
satisfies the same properties of~\eqref{eq:shapovalov_form}.

The orthogonality~\eqref{eq:universal_shapovalov_orthogonality} of the universal version translates into the following fact.
Consider the submonoid $Q^+_{\phi} \ceqq \spann_{\mb Z_{\geq 0}} (\nu) \sse Q_{\phi}$:
then the $\mf Z_{\phi}$-weight space decomposition of the generalised Verma module,
i.e.,
\begin{equation}
	\label{eq:weight_decomposition_generalised_verma}
	M = \bops_{Q^+_{\phi}} M[\mu],
	\qquad M[\mu] \ceqq \Set{ \wt v \in M | X \wt v = \Braket{ \lambda - \mu,X } \wt v \text{ for } X \in \mf Z_{\phi} },
\end{equation}
is by \emph{mutually-orthogonal} subspaces for the Shapovalov form.

Finally,
recall that $M$ contains a unique maximal proper submodule $N = N^{\psi}_{\lambda} \sse M$,
satisfying $N \sse \bops_{Q^+_{\phi} \sm (0)} M[\mu]$.
Then the contragrediency/orthogonality of the Shapovalov form,
and the identity $\mc S^{\psi}_{\lambda} (v,v) = 1$,
together imply that $N = \Rad \bigl( \mc S^{\psi}_{\lambda} \bigr)$.
(Cf.~\S~\ref{sec:shapovalov} more generally.)

\section{Generic singularity modules}
\label{sec:generic_singularity_modules}

\subsection{}

Here we review few definitions of~\cite{felder_rembado_2023_singular_modules_for_affine_lie_algebras_and_applications_to_irregular_wznw_conformal_blocks},
and extend them from the simple to the reductive case:
cf.~op.~cit.~for more details (and note that we slightly change the terminology therein,
adapting it to the first part of this text.)

\subsection{Affine case}

The first relevant generalisation of~\eqref{eq:affine_borel} goes as follows.
Let $\mf b \sse \mf g$ be a Borel subalgebra containing $\mf t$,
and $r \geq 1$ an integer:

\begin{defi}[cf.~\cite{felder_rembado_2023_singular_modules_for_affine_lie_algebras_and_applications_to_irregular_wznw_conformal_blocks}]
	The \emph{generic affine singularity algebra} of depth $r$ is the Lie subalgebra
	\begin{equation}
		\label{eq:generic_singularity_subalgebra}
		\wh{\mf S}^{(r)} = \wh{\mf S}^{(r)}_{\mf b} \ceqq \bigl( \mf b \llb z \rrb + z^r \mf g \llb z \rrb \bigr) \ops \mb CK \sse \wh{\mf g}.
	\end{equation}
\end{defi}

\subsubsection{}

Note that~\eqref{eq:generic_singularity_subalgebra} lies in the subalgebra $\mf g \llb z \rrb \ops \mb C K \sse \wh{\mf g}$,
where the central extension becomes trivial.
Its abelianisation comes with a $\mb C$-linear identification
\begin{equation}
	\wh{\mf S}^{(r)}_{\ab} \simeq \wh{\mf t}_r \ops z^r \mf Z_{\mf g} \llb z \rrb,
	\qquad \wh{\mf t}_r \ceqq \mf t_r \ops \mb CK,
\end{equation}
invoking the deeper Cartan subalgebra $\mf t_r = \mf t \ots \mb C_r \simeq \mf t \llb z \rrb \bs z^r \mf t \llb z \rrb$.

Now we can again induce representations,
from $U\wh{\mf S}^{(r)}$ to $U\wh{\mf g}$,
choosing a character $\bm \chi \in \Hom_{\Lie} \bigl( \wh{\mf S},\mb C \bigr)$ which vanishes on $z^r \mf Z_{\mf g} \llb z \rrb$.
This is equivalent to giving a pair $(\bm \lambda,\kappa) \in \mf t_r^{\dual} \ops \mb C \simeq \wh{\mf t}_r^{\dual}$,
where $\bm \lambda = (\lambda_0,\dc,\lambda_{r-1})$ is a tuple of elements $\lambda_i \in (\mf t \cdot \varepsilon^i)^{\dual}$.
(Note that at times we identify $(\mf t \cdot \varepsilon^i)^{\dual} \simeq \mf t^{\dual}$,
forgetting the $\varepsilon$-degree.)
One now defines the \emph{generic affine singularity module} of depth $r \in \mb Z_{\geq 1}$,
level $\kappa \in \mb C$,
and formal type (= highest weight) $\bm \lambda \in \mf t_r^{\dual}$,
as the $U\wh{\mf g}$-module:
\begin{equation}
	\label{eq:generic_affine_singularity_module}
	\wh M = \wh M^{(r)}_{\mf b,\bm \lambda,\kappa} \ceqq \Ind_{U\wh{\mf S}^{(r)}}^{U\wh{\mf g}} \mb C_{\bm \chi}.
\end{equation}
It coincides with~\eqref{eq:affine_generic_verma} in the tame case $r = 1$.
The terminology evokes the generic irregular-singularities of---%
formal germs of---%
$G$-connections with a pole of order $r$,
as $\bm \lambda \in \mf t_r^{\dual}$ corresponds to a principal part
\begin{equation}
	\mc A'
	= \sum_{i = 1}^r A_i z^{-i} \dif z \in \bigl( z^{-r}\mf t \llb z \rrb \dif z\bigr) \bs \mf t \llb z \rrb \dif z,
\end{equation}
in the duality~\eqref{eq:loop_algebra_duality} (cf.~\S~\ref{sec:relation_meromorphic_connections};
the latter is precisely a formal normal form).

\subsubsection{}

The module~\eqref{eq:generic_affine_singularity_module} is cyclically generated by the vector
\begin{equation}
	\label{eq:generic_irregular_state}
	w = w^{(r)}_{\mf b,\bm \lambda,\kappa} \ceqq 1 \ots_{U\wh{\mf S}^{(r)}} 1  \in \wh M.
\end{equation}
Choosing a base $\Delta \sse \Phi$ of simple roots,
one checks that this vector satisfies a generalisation of the usual highest-weight relations (at level $\kappa$),
namely:
\begin{align}
	\label{eq:generic_irregular_state_relations}
	\begin{cases}
		\bigl (X z^i - \braket{ \lambda_i,X } \bigr) w              & = 0 ,
		\qquad X \in \mf t,
		\quad i \in \Set{0,\dc,r-1},
		\\
		\bigl( \mf u^+ \llb z \rrb + z^r \mf g \llb z \rrb \bigr) w & = (0),
		\qquad \mf u^+ \ceqq \mf u_{\Phi^+_{\Delta}}.
	\end{cases}
\end{align}

\begin{rema}
	\label{rmk:virasoro_symmetries}

	Recall that the cyclic vector~\eqref{eq:generic_irregular_state} also satisfies the Virasoro relations of an irregular GT/BMT state~\cite{gaiotto_teschner_2012_irregular_singularities_in_liouville_theory_and_argyres_douglas_type_gauge_theories,bonelli_maruyoshi_tanzini_2012_wild_quiver_gauge_theories,felinska_jaskolski_kosztolowicz_2012_whittaker_pairs_for_the_virasoro_algebra_and_the_gaiotto_bonelli_maruyoshi_tanzini_states},
	which in turn enters into the Alday--Gaiotto--Tachikawa correspondence (= AGT~\cite{alday_tachikawa_2010_affine_sl_2_conformal_blocks_from_4d_gauge_theories,alday_gaiotto_tachikawa_2010_liouville_correlation_functions_from_four_dimensional_gauge_theories}) in irregular 2d Liouville CFT.
	Namely,
	the affine singularity modules are smooth,\fn{
		I.e.,
		for all $\wh w \in \wh M$ one has $z^N \mf g \llb z \rrb \wh w = (0)$ if $N \gg 0$.
		(Beware that this is \emph{not} local nilpotency.)}~and they carry a generalisation of the Segal--Sugawara representation of $\mf{Vir}$ on~\eqref{eq:affine_generic_verma}:
	then $w \in \wh M$ is a simultaneous eigenvector for a string of the standard positive generators $L_i \in \mf{Witt} \sse \mf{Vir}$,
	and furthermore it is annihilated by $L_i$ for $i \gg 0$.
	(This happens at noncritical level $\kappa \neq -h^{\dual}$,
	invoking the dual Coxeter number of the quadratic Lie algebra.)

	In brief,
	we naturally have a \emph{Whittaker} vector for the GT/BMT Virasoro pair.
	However,
	contrary to~\cite{felinska_jaskolski_kosztolowicz_2012_whittaker_pairs_for_the_virasoro_algebra_and_the_gaiotto_bonelli_maruyoshi_tanzini_states},
	here in general~\eqref{eq:generic_irregular_state} does \emph{not} cyclically generate the whole of~\eqref{eq:generic_affine_singularity_module} under the $\mf{Vir}$-action.
	Thus,
	the singularity modules are not Whittaker modules,
	and in principle they escape the classification of op.~cit.
	The same holds a fortiori in the generalised case (cf.~Rmk.~\ref{rmk:virasoro_symmetries_2}).
\end{rema}

\subsection{Finite case}

Consider finally the $U \bigl(\mf g\llb z \rrb \bigr)$-submodule $M \sse \wh M$ generated by the vector~\eqref{eq:generic_irregular_state},
and note that~\eqref{eq:generic_irregular_state_relations} implies that it is naturally a $U\mf g_r$-module.
It follows that there is a $U\mf g_r$-linear isomorphism
\begin{equation}
	\label{eq:generic_finite_singularity_module}
	M \simeq M^{(r)}_{\mf b,\bm \lambda} \ceqq \Ind_{U\mf b_r}^{U\mf g_r} \mb C_{\bm \chi},
\end{equation}
where $\mf b_r \sse \mf g_r$ is the deeper Borel subalgebra,
whose abelianisation can be identified with $\mf t_r$---%
so that $\bm \lambda \in \mf t_r^{\dual}$ defines a character $\bm \chi \cl \mf b_r \to \mb C$.
We say that~\eqref{eq:generic_finite_singularity_module} is the generic \emph{finite} singularity module of depth $r \in \mb Z_{\geq 1}$,
and formal type $\bm \lambda \in \mf t_r^{\dual}$.

Here $\mf S^{(r)} = \mf S^{(r)}_{\mf b} \ceqq \mf b_r \sse \mf g_r$ thus plays the role of the generic \emph{finite} singularity subalgebra,
in a finite-dimensional analogue of~\eqref{eq:generic_singularity_subalgebra}.

\section{Singularity subalgebras:
  polarisations and characters}
\label{sec:singularity_subalgebra_and_polarisations}

\subsection{}

Here we generalise the Lie subalgebra~\eqref{eq:generic_singularity_subalgebra},
with a view towards (the quantisation of) moduli spaces of \emph{nongeneric} irregular-singular meromorphic $G$-connections on Riemann surfaces.

\subsection{Parabolic filtrations}

In \S~\ref{sec:levi_filtrations} we considered Levi filtrations,
controlling strata/biholomorphism classes of $r$-semisimple orbits (cf.~Prop.~\ref{thm:pure_isomorphism_wild} +~Thm.~\ref{thm:nonpure_isomorphism_wild}).
We now replace Levi subsystems with parabolic subsets with given Levi factors.

Consider thus nondecreasing exhaustive filtrations of $\Phi$ by parabolic subsets:

\begin{defi}[Cf.~Def.~\ref{def:levi_filtration}]
	\label{def:parabolic_filtration}

	A \emph{parabolic filtration of} $\Phi$ is a sequence $\bm \psi = (\psi_i)_{i \in \mb Z_{\geq 0}}$,
	of subsets of $\Phi$,
	such that:
	\begin{enumerate}
		\item $\psi_i \sse \psi_{i+1}$ for $i \geq 0$;

		\item $\psi_i \sse \Phi$ is parabolic for $i \geq 0$;

		\item and $\Phi = \bigcup_{\mb Z_{\geq 0}} \psi_i$.
	\end{enumerate}
\end{defi}

\subsubsection{}

It follows that $\psi_i = \Phi$ for $i \gg 0$.
As in the case of Levi filtrations,
the minimal integer such that this happens is the \emph{depth of} $\bm \psi$,
denoted by $r = r(\bm \psi) \geq 0$.

In terms of the poset of parabolic subsets,
this is the same as considering \emph{nonincreasing} sequences in $\mc P_\Phi$ reaching the least element.
Their collection is denoted by $\mc P_\Phi^{(\infty)} \sse \mc P_\Phi^{\mb Z_{\geq 0}}$,
and it is a poset with the induced order.
The Levi-factor map~\eqref{eq:levi_factor_roots} yields an order-preserving surjection $\mc P_\Phi^{(\infty)} \thra \mc L_\Phi^{(\infty)}$ onto the collection of Levi filtrations.
Truncating as in \S~\ref{sec:truncated_wild_stratification} then restricts to an analogous surjection $\on{Lf}_r \cl \mc P_\Phi^{(r)} \thra \mc L_\Phi^{(r)}$ onto the collections of depth-bounded filtrations,
which intertwines the diagonal actions of the Weyl group.
Concretely,
the $i$-th term of $\bm \phi = \on{Lf}_r(\bm \psi)$ is $\phi_i \ceqq \on{Lf}(\psi_i) = \psi_i \cap (-\psi_i) \in \mc L_\Phi$,
and we will say that $\bm \phi \in \mc L_\Phi^{(r)}$ is the \emph{Levi factor} of the parabolic filtration $\bm \psi \in \mc P_\Phi^{(r)}$.

\begin{exem}
	\label{ex:sl_2_parabolic_filtrations}

	Consider $\mf g = \mf{sl}_2(\mb C)$,
	and keep the notation of Exmp.~\ref{ex:sl_2_wild_truncated_stratification} (except that $s = r$ now).

	There are two---%
	opposite---%
	proper parabolic subsets of $\Phi = \set{\pm \alpha}$,
	i.e.,
	$\pm \psi = \set{\pm \alpha}$.
	The corresponding parabolic subalgebras are Borel;
	they are $\mf p_{\psi} = \mf t \ops \mb C E$ and $\mf p_{-\psi} = \mf t \ops \mb C F$,
	using the standard root vectors.
	Hence,
	one has $\mc P_\Phi = \set{\psi > \Phi < -\psi}$ as a poset,
	with $\on{Lf}(\pm \psi) = \vn$ and $\on{Lf}(\Phi) = \Phi$.
	There are now two types of nonconstant sequences,
	with at most one jump (down to the minimal element $\Phi$).
	Thus,
	as a set one has $\mc P_\Phi^{(r)} = \bigl\{ \pm \bm \psi^{(1)},\dc,\pm \bm \psi^{(r)} \bigr\} \cup \set{\bm \Phi}$,
	where
	\begin{equation}
		\label{eq:sl_2_parabolic_filtrations}
		\pm \bm \psi^{(k)} \ceqq \bigl(\underbrace{\pm \psi,\dc,\pm \psi,\Phi}_{k \text{ terms}},\Phi,\dc),
	\end{equation}
	which are opposite (and incomparable).
	Their associated Levi factor is $\bm \phi^{(k)} \in \mc L_\Phi^{(r)}$,
	in the notation of~\eqref{eq:levi_filtration_sl_2}.
\end{exem}

\subsection{Generalised singularity subalgebras}

\subsubsection{}

Everything is ready for the main new definitions on the quantum side:
choose a parabolic filtration $\bm \psi \in \mc P_\Phi^{(\infty)}$.

\begin{defi}
	\label{def:affine_singularity_algebra}

	The \emph{affine singularity algebra of polarisation} $\bm \psi$ is the Lie subalgebra
	\begin{equation}
		\label{eq:affine_singularity_algebra}
		\wh{\mf S}^{\bm \psi} = \wh{\mf p}_{\bm \psi} \ceqq \bops_{i \geq 0} \bigl( \mf p_{\psi_i} \cdot z^i \bigr) \ops \mb CK \sse \wh{\mf g}.
	\end{equation}
\end{defi}

(Again,
it actually lies inside $\mf g \llb z \rrb \ops \mb C K$.)

\subsubsection{}

Analogously,
choose an integer $r \geq 1$ and an element $\bm \psi \in \mc P_\Phi^{(r)}$.

\begin{defi}
	\label{def:finite_singularity_algebra}

	The \emph{finite singularity algebra of polarisation} $\bm \psi$ is the Lie subalgebra
	\begin{equation}
		\label{eq:finite_singularity_algebra}
		\mf S^{\bm \psi} = \mf p_{\bm \psi} \ceqq \bops_{i = 0}^{r-1} \bigl( \mf p_{\psi_i} \cdot \varepsilon^i \bigr) \sse \mf g_r.
	\end{equation}
\end{defi}

\begin{exem}
	If $r(\bm \psi) = 1$ we just find an affine/finite parabolic subalgebra.
	Conversely,
	if $r \geq 1$ is arbitrary and $\bm \psi = (\underbrace{\psi,\dc,\psi,\Phi}_{r \text{ terms}},\Phi,\dc)$ for a maximal element $\psi \in \mc P_\Phi$,
	then we are in the generic cases.
\end{exem}

\subsection{Polarisations}
\label{sec:polarisations}

Let us dedicate a section to justify the terminology of Def.~\ref{def:affine_singularity_algebra} +~\ref{def:finite_singularity_algebra},
which relates with the quantisation of wild orbits (cf.~\cite[\S~3]{yamakawa_2019_fundamental_two_forms_for_isomonodromic_deformations}).

\subsubsection{}
\label{sec:standard_polarisations}

In the tame case first:
choose  a Levi subsystem $\phi \in \mc L_\Phi$,
and let as usual $\mc O_X \ceqq G \cdot X \sse \mf g$ be the marked semisimple (adjoint) orbit through an element $X \in \mf t_{\phi} \sse \mf t$ lying in the corresponding stratum.

Consider $\mc O_X$ as a $C^\infty$ manifold with tangent bundle $\on T \! \mc O_X \to \mc O_X$.
The fibre-bundle map $G \thra \mc O_X \cl g \mt \Ad_g(X)$ yields a vector-bundle isomorphism
\begin{equation}
	G \ts^{L_{\phi}} (\mf g \bs \mf l_{\phi}) \lxra{\simeq} \on T\!\mc O_X,
\end{equation}
where $L_{\phi} \sse G$ is the centraliser of each element in the stratum---%
and $\mf l_{\phi} = \Lie(L_{\phi})$ is the infinitesimal version.
Denote then by $J \in \Omega^0 \bigl( \mc O_X,\End(\on T \! \mc O_X) \bigr)$ the $G$-invariant integrable almost-complex structure obtained from the scalar multiplication of $\mf g$,
so that the (dual) KKS form $\omega \in \Omega^{2,0}(\mc O_X,\mb C)$ is $J$-holomorphic (cf.~\S~\ref{sec:dual_kks};
again,
one could restrict to semisimple orbits and work in the complex-algebraic category).

By definition,
a \emph{polarisation} of the holomorphic-symplectic manifold $(\mc O_X,J,\omega)$ is an involutive holomorphic Lagrangian sub-bundle of the \emph{holomorphic} tangent bundle $\on T_{1,0} \!\mc O_X \sse \on T\!\mc O_X \ots_{\mb R} \mb C$.
(Note that this differs from the notions of complex/Kähler polarisations of \emph{real} symplectic manifolds.)

In our situation,
polarisations in particular arise from subspaces $\mf u \sse \mf g$ such that:
\begin{enumerate}
	\item $P_{\mf u} \ceqq \mf l_{\phi} + \mf u \sse \mf g$ is a Lie subalgebra;

	\item and $P_{\mf u} \bs \mf l_{\phi}$ is a Lagrangian subspace for $\omega \cl \bigwedge^2 (\mf g \bs \mf l_{\phi}) \to \mb C$---%
	      evaluating at the marking.
\end{enumerate}
There are no canonical choices;
but suppose further that $\phi \ceqq \on{Lf}(\psi) = \psi \cap (-\psi) \in \mc L_\Phi$ is the Levi factor of a parabolic subset $\psi \in \mc P_\Phi$.
Then:

\begin{lemm}[Cf.~\cite{calaque_naef_2015_a_trace_formula_for_the_quantization_coadjoint_orbits},
		Exmp.~2.2]
	\label{lem:polarisation}

	The nilradical $ \mf u_{\psi} \sse \mf p_{\psi}$ defines a polarisation of $(\mc O_X,J,\omega)$.
\end{lemm}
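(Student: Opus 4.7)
The plan is to verify the two defining properties of a polarisation---involutivity and Lagrangianness---by reducing via $G$-invariance to a purely Lie-algebraic check on $\mf u_\psi \sse \mf g$ at the marking. First I would note that $\mf u_\psi$ is $L_\phi$-invariant under the adjoint action: this is because $L_\phi$ is the Levi factor of $P_\psi$ and hence normalises $\mf u_\psi$, or equivalently because $(\phi + \nu) \cap \Phi \sse \nu$ at the level of roots, where $\nu \ceqq \psi \sm \phi$. Translating $\mf u_\psi$ along $G$ via the quotient $G \thra \mc O_X$ then yields a globally well-defined $G$-invariant complex distribution $\mc D \sse T\mc O_X \ots_{\mb R} \mb C$; since $\mf u_\psi \sse \mf g \bs \mf l_\phi$ is a complex subspace for the natural complex structure inherited from $\mf g$, this distribution lies inside $T_{1,0}\mc O_X$, and holomorphicity as a subbundle follows from $G$-equivariance of both $J$ and the inclusion $\mf u_\psi \hra \mf g \bs \mf l_\phi$.

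Next, involutivity of $\mc D$ reduces by $G$-invariance to closure at the marking, which amounts to $[\mf u_\psi, \mf u_\psi] \sse \mf u_\psi$ modulo $\mf l_\phi$; this is immediate, as $\mf u_\psi$ is already a Lie ideal of $\mf p_\psi$, cf.~\eqref{eq:parabolic_subalgebra_splitting}. For the Lagrangian condition I would evaluate $\omega_X \cl \bigwedge^2(\mf g \bs \mf l_\phi) \to \mb C$ on $\mf u_\psi$, using the KKS formula $\omega_X(Y,Y') = \bigl( X \mid [Y,Y'] \bigr)$ from \S~\ref{sec:dual_kks}. Writing $\mf u_\psi = \bops_{\alpha \in \nu} \mf g_\alpha$, one sees that $[\mf u_\psi,\mf u_\psi]$ lies in a sum of nonzero root spaces; combined with the standard orthogonality $(\mf t \mid \mf g_\alpha) = 0$ for $\alpha \neq 0$, this shows that $\omega_X$ vanishes identically on $\mf u_\psi \ts \mf u_\psi$. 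For the dimension count, the identity $\Phi \sm \phi = \nu \discup (-\nu)$---which follows from $\Phi = \psi \cup (-\psi)$ together with $\phi = \psi \cap (-\psi)$---gives $\dim_{\mb C}(\mf g \bs \mf l_\phi) = 2 \abs{\nu} = 2 \dim_{\mb C} \mf u_\psi$, so $\mf u_\psi$ is maximal isotropic, i.e. Lagrangian.

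The main obstacle---if any---is notational rather than conceptual: carefully setting up the $L_\phi$-equivariant structures so that the distribution $\mc D$ descends to a bona fide holomorphic subbundle of $T_{1,0}\mc O_X$ compatible with $J$ and $\omega$. Once this is in place, every remaining verification is a direct consequence of the decomposition $\mf p_\psi = \mf l_\phi \ops \mf u_\psi$, the parabolic closure properties of $\nu$, and the orthogonality of $\mf t$ against nontrivial root spaces under $(\cdot \mid \cdot)$.
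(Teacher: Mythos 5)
Your proof is correct and takes essentially the same route as the paper's: reduce to a Lie-algebraic check at the marking via $G$-invariance, read off involutivity from $\mf p_{\psi} = \mf l_{\phi} \ops \mf u_{\psi}$, deduce isotropicity from the KKS formula together with orthogonality of root spaces under $(\cdot\mid\cdot)$, and conclude Lagrangianness by a dimension count. The only cosmetic difference is that the paper fixes a base $\Delta \sse \Phi$ with $\psi = \Phi^+_\Delta \cup \Phi^-_{\Sigma}$ to exhibit the triangular splitting $\mf g = \mf u^-_{\psi} \ops \mf l_{\phi} \ops \mf u^+_{\psi}$ explicitly, and runs the isotropicity computation as $(\mf g_\alpha \mid \mf g_\beta) = 0$ for $\alpha+\beta \neq 0$, whereas you work base-free from $\Phi \sm \phi = \nu \discup (-\nu)$ and instead pair $\mf t$ against a nonzero root space---the same underlying fact phrased two ways.
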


\begin{proof}
	Postponed to~\ref{proof:lem_polarisation}.
\end{proof}

\subsubsection{}
\label{sec:tame_lagrangian_splitting}

The proof~\ref{proof:lem_polarisation} shows more.
Namely,
if $\phi \neq \Phi$ then there are two transverse polarisations,
i.e.,
a $G$-invariant involutive holomorphic \emph{Lagrangian splitting} of $\on T_{1,0}\!\mc O_X$,
ultimately encoded by the direct sum
\begin{equation}
	\mf g \bs \mf l_{\phi} \simeq \mf u^-_{\psi} \ops \mf u^+_{\psi},
	\qquad \on{Lf}(\pm \psi) = \phi.
\end{equation}

Choosing $\phi \in \mc L_\Phi \sm \set{\Phi}$,\fn{
	This yields a cleaner statement,
	noting that the orbits through $\mf t_\Phi = \mf Z_{\mf g}$ are \emph{points}.
}~it follows that:

\begin{center}
	\emph{Each opposite pair $\pm \psi \in \on{Lf}^{-1}(\phi) \in \mc P_\Phi$ of parabolic subsets with Levi factor $\phi$ yields an involutive $G$-invariant holomorphic Lagrangian splitting of $\on T_{1,0}\!\mc O_X$---%
		for all $X \in \mf t_{\phi} \sse \mf t$.}
\end{center}

\subsubsection{}

Consider now an integer $r \geq 1$,
and a depth-bounded Levi filtration $\bm \phi \in \mc L_\Phi^{(r)}$.
Furthermore,
assume---%
w.l.o.g.---%
that $\bm \phi$ has maximal depth,
i.e.,
that $\phi_{r-1} \ssne \Phi$.
Then:

\begin{theo}[cf.~\S~\ref{sec:tame_lagrangian_splitting}]
	\label{thm:wild_polarisations}

	Each opposite pair $\pm \bm \psi \in \on{Lf}_r^{-1}(\bm \phi)$ of parabolic filtrations with Levi factor $\bm \phi$ yields an involutive $G_r$-invariant holomorphic Lagrangian splitting of $\on T_{1,0}\!\mc O_{\bm X}$---%
	for all $\bm X \in \mf t^r_{\bm \phi} \sse \mf t^r \simeq \mf t_r$.
\end{theo}

\begin{proof}
	By Cor.~\ref{cor:very_good_centraliser},
	the holomorphic tangent spaces of $\mc O_{\bm X} \sse \mf g_r$ can be identified with the complex vector space $\mf g_r \bs \mf g_r^{\bm X} = \mf g_r \bs \mf l_{\bm \phi}$,
	where
	\begin{equation}
		\label{eq:deeper_levi_factor}
		\mf l_{\bm \phi} = \Lie(L_{\bm \phi}) = \bops_{i = 0}^{r-1} \bigl( \mf l_{\phi_i} \cdot \varepsilon^i \bigr) \sse \mf g_r,
	\end{equation}
	invoking the centraliser $L_{\bm \phi} \sse G_r$ of (each element in the stratum of) $\bm X$.
	Moreover,~\eqref{eq:deeper_levi_factor} is the radical of the $L_{\bm \phi}$-invariant alternating bilinear form $\mf g_r \wdg \mf g_r \to \mb C$ defining the (dual) holomorphic-symplectic KKS structure,
	viz.,
	\begin{equation}
		\label{eq:wild_dual_kks}
		\bm Y \wdg \bm Z \lmt \bigl( X \mid [\bm Y,\bm Z] \bigr)_{\!r},
		\qquad \bm Y,\bm Z \in \mf g_r,
	\end{equation}
	using the $G_r$-invariant nondegenerate pairing of Lem./Def.~\ref{lem:deeper_pairing}.

	Choose now a parabolic filtration $\bm \psi \in \mc P_\Phi^{(r)}$ with Levi factor $\bm \phi$.
	The proof~\ref{proof:lem_polarisation} yields (nontrivial) opposite nilpotent radicals $\mf u^\pm_{\psi_i} \sse \mf p^\pm_{\psi_i}$,
	such that $\mf g = \mf u^-_{\psi_i} \ops \mf l_{\phi_i} \ops \mf u^+_{\psi_i}$,
	where as above $\mf p^\pm_{\psi_i} \ceqq \mf p_{\pm \psi_i}$.
	Consider thus the subspaces
	\begin{equation}
		\label{eq:deeper_nilradical}
		\mf u^\pm_{\bm \psi} \ceqq \bops_{i = 0}^{r-1} \bigl( \mf u^\pm_{\psi_i} \cdot \varepsilon^i \bigr) \sse \mf g_r.
	\end{equation}
	Then the sums $P^\pm = P_{\mf u^\pm_{\bm \psi}} \ceqq \mf l_{\bm \phi} + \mf u^\pm_{\bm \psi} \sse \mf g_r$ are direct,
	and they coincide with two (opposite) finite singularity subalgebras from Def.~\ref{def:finite_singularity_algebra}.
	Involutivity follows.

	Choose now a maximal element $\psi \in \mc P_\Phi$ with $\psi \geq \psi_0$,
	which yields two opposite Borel subalgebras $\mf p^\pm_{\psi} \sse \mf g$ such that $\mf p^\pm_{\psi_0} \sse \dm \sse \mf p^\pm_{\psi_{r-1}}$ is a sequence of nested \emph{standard} parabolic subalgebras of $\bigl( \mf g,\mf p^\pm_{\psi} \bigr)$.
	Then $\mf u^\pm_{\psi_i} \sse \mf u^\pm_{\psi}$ for $i \in \set{0,\dc,r-1}$,
	and the latter is the sum of all positive/negative root lines $\mf g_\alpha \sse \mf g$.
	Hence,
	$\eval[1]{(\cdot \mid \cdot)}_{\mf u^\pm_{\bm \psi} \ots \mf u^\pm_{\bm \psi}} = 0$,
	and isotropicity follows from~\eqref{eq:deeper_pairing} +~\eqref{eq:deeper_bracket}---%
	in the identification $P^\pm \bs \mf l_{\bm \phi} \simeq \mf u^\pm_{\bm \psi}$.
	(These are \emph{maximal} isotropic subspaces for dimensional reasons.)
\end{proof}

\begin{rema}
	\label{rmk:balanced_filtrations}

	Contrary to the tame case,
	in general the subspaces~\eqref{eq:deeper_nilradical} are \emph{not} Lie subalgebras.
	Namely,
	they are closed under Lie bracket if and only if
	\begin{equation}
		\bigl[ \mf u^\pm_{\psi_i},\mf u^\pm_{\psi_j} \bigr] \sse \mf u^\pm_{\psi_{i+j}},
		\qquad i,j \in \set{0,\dc,r-1},
		\quad i+j \leq r-1,
	\end{equation}
	in which case we will say the parabolic filtration $\bm \psi \in \mc P^{(r)}_\Phi$ is \emph{balanced}---%
	as the nilradicals shrink in suitable fashion.
	It is equivalent to ask that this holds for either the positive or the negative version,
	and in this case $\mf u^\pm_{\bm \psi} \sse \mf g_r$ are moreover \emph{nilpotent} Lie algebras by Engel's theorem.

	Clearly this holds if $\bm \psi$ is constant,
	thus,
	e.g.,
	in the generic case (cf.~more generally Lem.~\ref{lem:nested_nilradicals} just below).
\end{rema}

\begin{lemm}
	\label{lem:nested_nilradicals}

	Let $\mf p \sse \wt{\mf p} \sse \mf g$ be nested parabolic subalgebras containing $\mf t$.
	Then $\mf{nil}(\wt{\mf p})$ is a Lie ideal of $\mf{nil}(\mf p)$.
\end{lemm}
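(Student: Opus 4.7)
My plan is to translate the statement into the combinatorics of parabolic subsets, using the description $\mf p_\psi = \mf t \ops \bops_\psi \mf g_\alpha$ with nilradical $\mf{nil}(\mf p_\psi) = \bops_\nu \mf g_\alpha$ for $\nu = \psi \sm \on{Lf}(\psi)$, cf.~\eqref{eq:parabolic_subalgebra_splitting}. Since parabolic subalgebras containing $\mf t$ correspond to parabolic subsets of $\Phi$, the nesting $\mf p \sse \wt{\mf p}$ is the same as $\psi \sse \wt \psi$ for $\mf p = \mf p_\psi$ and $\wt{\mf p} = \mf p_{\wt \psi}$, with Levi factors $\phi = \psi \cap (-\psi)$ and $\wt \phi = \wt \psi \cap (-\wt \psi)$. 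The claim will then follow by showing (i) $\wt \nu \sse \nu$, which gives $\mf{nil}(\wt{\mf p}) \sse \mf{nil}(\mf p)$ as a (Lie) subspace; and (ii) for $\alpha \in \nu$ and $\beta \in \wt \nu$ with $\alpha + \beta \in \Phi$, that $\alpha + \beta \in \wt \nu$, which using $[\mf g_\alpha,\mf g_\beta] \sse \mf g_{\alpha+\beta}$ (together with the separate check that the case $\alpha + \beta = 0$ does not occur) yields the ideal property.

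First I would dispatch the monotonicity of Levi factors: if $\alpha \in \phi$ then $\pm \alpha \in \psi \sse \wt \psi$, so $\alpha \in \wt \phi$, whence $\phi \sse \wt \phi$. For (i), take $\alpha \in \wt \nu$; if $\alpha \notin \psi$, then $-\alpha \in \psi \sse \wt \psi$ by $\psi \cup (-\psi) = \Phi$, so $\pm \alpha \in \wt \psi$ and $\alpha \in \wt \phi$, contradicting $\alpha \in \wt \nu$. Hence $\alpha \in \psi$; and if $\alpha \in \phi \sse \wt \phi$ we contradict $\alpha \notin \wt \phi$, so $\alpha \in \psi \sm \phi = \nu$.

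For (ii), the key ingredient is that $\wt \psi$ is closed under sum within $\Phi$: since $\alpha \in \nu \sse \psi \sse \wt \psi$ and $\beta \in \wt \nu \sse \wt \psi$, any root $\alpha + \beta$ lies in $(\wt \psi + \wt \psi) \cap \Phi = \wt \psi$. The main point, which is the core obstacle, is ruling out $\alpha + \beta \in \wt \phi$: if it were, then $-(\alpha+\beta) \in \wt \phi \sse \wt \psi$, and adding $\alpha \in \wt \psi$ would yield $-\beta \in (\wt \psi + \wt \psi) \cap \Phi = \wt \psi$, forcing $\beta \in \wt \phi$ and contradicting $\beta \in \wt \nu$. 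A small separate remark handles $\alpha + \beta = 0$: indeed $\beta = -\alpha$ with $\alpha \in \psi \sse \wt \psi$ and $-\alpha \in \wt \psi$ would force $\beta = -\alpha \in \wt \phi$, again contradicting $\beta \in \wt \nu$; so this case never occurs and there is no stray $\mf t$-component in the relevant brackets.

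Putting (i) and (ii) together, $[\mf{nil}(\mf p),\mf{nil}(\wt{\mf p})] \sse \bops_{\alpha \in \nu,\,\beta \in \wt \nu,\,\alpha + \beta \in \Phi} \mf g_{\alpha+\beta} \sse \bops_{\wt \nu} \mf g_\gamma = \mf{nil}(\wt{\mf p})$, which is the ideal condition. I do not foresee any subtlety beyond the bookkeeping of signs in step (ii); the monotonicity of $\wt \nu$ in step (i) is essentially a rephrasing of the same argument.
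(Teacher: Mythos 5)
Your proof is correct, but it takes a genuinely different route from the paper's. The paper first fixes a base $\Delta \sse \Phi$ adapted to both parabolic subalgebras (so that $\mf p$ and $\wt{\mf p}$ become standard, cut out by subsets $\Sigma \sse \wt \Sigma \sse \Delta$), then reduces to the Borel case via the containment $\mf{nil}(\mf p) \sse \mf{nil}(\mf b)$, and finally argues through the uniqueness of the decomposition of positive roots into nonnegative integer combinations of simple roots: if $\alpha + \beta \in \Phi^+_{\wt \Sigma}$ with $\alpha,\beta \in \Phi^+_{\Delta}$, then the coefficients of $\alpha,\beta$ over $\Delta \sm \wt \Sigma$ must all vanish. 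You instead work entirely with the intrinsic axioms of parabolic subsets --- closure under sums within $\Phi$, and $\psi \cup (-\psi) = \Phi$ with $\phi = \psi \cap (-\psi)$ --- without ever choosing a base, handling the general inclusion $\mf p \sse \wt{\mf p}$ directly in one pass. Both arguments are clean and short; the paper's buys concreteness (explicit coordinates over a base, which fits the Bourbaki-style references in \S~\ref{sec:setup_2}), while yours buys coordinate-freeness and avoids the preliminary reduction. Your step (i) ($\wt \nu \sse \nu$) matches the inclusion the paper obtains from $\Sigma \sse \wt \Sigma$; your step (ii), ruling out $\alpha + \beta \in \wt \phi$ by closure under sum and the half-space axiom, replaces the paper's positivity-of-coefficients argument; and your remark on $\alpha + \beta = 0$ is a genuine (if small) point that the paper handles implicitly via the positivity of $\Phi^+_{\Delta}$. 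The proposal is sound as written.
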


\begin{proof}
	Postponed to~\ref{proof:lem_nested_radicals}.
\end{proof}

\subsubsection{}

By Lem.~\ref{lem:nested_nilradicals},
one always has
\begin{equation}
	\bigl[ \mf u^\pm_{\psi_i},\mf u^\pm_{\psi_j} \bigr] \sse \mf u^\pm_{\psi_k},
	\qquad i,j \in \set{0,\dc,r-1},
	\quad k \ceqq \max \set{i,j}.
\end{equation}
Hence,
all depth-2 parabolic filtrations are balanced,
while a depth-3 parabolic filtration is balanced if and only if $\bigl[ \mf u^\pm_1,\mf u^\pm_1 \bigr] \sse \mf u^\pm_2$;
etc.

\subsection{Characters of singularity algebras}

Now we want to understand the characters of~\eqref{eq:affine_singularity_algebra}--\eqref{eq:finite_singularity_algebra},
to be able to induce representations later on.
These are revealed by the following:

\begin{lemm}
	\label{lem:bracket_nested_parabolic}
	Let $\mf p \sse \wt{\mf p} \sse \mf g$ be nested parabolic subalgebras containing $\mf t$.
	Then
	\begin{equation}
		\bigl[ \, \mf p,\wt{\mf p} \,
			\bigr] = \bigl[\,
			\wt{\mf p},\wt{\mf p} \,
			\bigr] \sse \mf g.
	\end{equation}
\end{lemm}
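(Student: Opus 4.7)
The inclusion $[\mf p,\wt{\mf p}] \sse [\wt{\mf p},\wt{\mf p}]$ is immediate from $\mf p \sse \wt{\mf p}$, so the content is the reverse inclusion. The plan is to write $\mf p = \mf p_\psi$ and $\wt{\mf p} = \mf p_{\wt\psi}$ for nested parabolic subsets $\psi \sse \wt\psi$ with Levi factors $\phi \sse \wt\phi$, use the splitting $\wt{\mf p} = \mf l_{\wt\phi} \ops \mf u_{\wt\psi}$ in the form
\[
    [\wt{\mf p},\wt{\mf p}] = [\mf l_{\wt\phi},\mf l_{\wt\phi}] + [\mf l_{\wt\phi},\mf u_{\wt\psi}] + [\mf u_{\wt\psi},\mf u_{\wt\psi}],
\]
and show each summand lies in $[\mf p,\wt{\mf p}]$.

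The first key observation is that the nilradicals go the opposite way: since $\psi \sse \wt \psi$ one has $\wt\nu = \wt\psi \sm \wt\phi \sse \psi \sm \phi = \nu$ (this is Lem.~\ref{lem:nested_nilradicals}, or the direct combinatorial check that any $\alpha \in \wt\nu$ fails $-\alpha \in \wt\psi$, a fortiori fails $-\alpha \in \psi$, while it lies in $\psi$). Hence $\mf u_{\wt\psi} \sse \mf u_\psi \sse \mf p$, which immediately takes care of the last two summands: $[\mf u_{\wt\psi},\mf u_{\wt\psi}] \sse [\mf p,\mf p]$ and $[\mf l_{\wt\phi},\mf u_{\wt\psi}] \sse [\wt{\mf p},\mf p]$, both contained in $[\mf p,\wt{\mf p}]$.

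The second key observation, which handles the first and only nontrivial summand $[\mf l_{\wt\phi},\mf l_{\wt\phi}]$, is that $\mf t \sse \mf p$: so for every root $\alpha \in \wt\phi$, picking $H_\alpha \in \mf t$ with $\Braket{\alpha | H_\alpha} \neq 0$ one has $\mf g_{\pm\alpha} = [H_\alpha,\mf g_{\pm\alpha}] \sse [\mf p,\wt{\mf p}]$, since $\mf g_{\pm\alpha} \sse \mf l_{\wt\phi} \sse \wt{\mf p}$. This in turn gives $[\mf g_\alpha,\mf g_{-\alpha}] \sse [\mf p,\wt{\mf p}]$ (because either $\mf g_\alpha$ or $\mf g_{-\alpha}$ is then already known to lie in $[\mf p,\wt{\mf p}] \sse \wt{\mf p}$, but more directly one can use that $\mf g_{\pm\alpha} \sse \wt{\mf p}$ and bracket with the other, already shown to lie in $[\mf p,\wt{\mf p}]$; alternatively, apply Jacobi). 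Since the semisimple part $[\mf l_{\wt\phi},\mf l_{\wt\phi}]$ is spanned by these root spaces and their brackets, the conclusion follows.

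The only place where one needs to be a little careful is the step showing $\mf g_{\pm\alpha} \sse [\mf p,\wt{\mf p}]$ uniformly in $\alpha \in \wt\phi$: the trick is that regardless of whether $\alpha$ belongs to $\phi$ or to $\wt\phi \sm \phi$, the Cartan $\mf t$ always sits inside $\mf p$, so the bracket $[\mf t,\mf g_{\pm\alpha}]$ is a legitimate element of $[\mf p,\wt{\mf p}]$. No case analysis on the relative position of $\alpha$ with respect to $\phi$ is actually needed. I expect no real obstacle in this argument; the statement is essentially a direct consequence of $\mf t \sse \mf p$ together with $\mf u_{\wt\psi} \sse \mf u_\psi$.
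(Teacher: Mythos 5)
Your plan and decomposition match the paper's proof, and the reduction of $[\mf l_{\wt\phi},\mf u_{\wt\psi}]$ and $[\mf u_{\wt\psi},\mf u_{\wt\psi}]$ via $\mf u_{\wt\psi} \sse \mf u_\psi \sse \mf p$ is correct, as is the step $\mf g_{\pm\alpha} = [\mf t,\mf g_{\pm\alpha}] \sse [\mf p,\wt{\mf p}]$. But the justification you offer for $\mf t_\alpha = [\mf g_\alpha,\mf g_{-\alpha}] \sse [\mf p,\wt{\mf p}]$ is a genuine gap, and this is precisely the only nontrivial step.

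All three reasons in your parenthetical are circular. Knowing $\mf g_{\pm\alpha} \sse [\mf p,\wt{\mf p}]$ does not put $[\mf g_\alpha,\mf g_{-\alpha}]$ inside $[\mf p,\wt{\mf p}]$, because $[\mf p,\wt{\mf p}]$ is not a priori a Lie subalgebra — indeed, that it closes under the bracket is essentially what the lemma asserts (by identifying it with $[\wt{\mf p},\wt{\mf p}]$). The variant "bracket with the other, already known in $[\mf p,\wt{\mf p}]$" would require $\bigl[[\mf p,\wt{\mf p}],\wt{\mf p}\bigr] \sse [\mf p,\wt{\mf p}]$, i.e. that $[\mf p,\wt{\mf p}]$ be a $\wt{\mf p}$-ideal, which is not available. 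And the Jacobi identity applied to $[[H_\alpha,E'],F]$ with $E' \in \mf g_\alpha$, $F \in \mf g_{-\alpha}$ collapses to a rescaling: $[H_\alpha,[E',F]] = 0$ since $[E',F] \in \mf t$, so one gets $[E,F]$ back up to a nonzero scalar.

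The input that is actually needed — and that the paper supplies — is the parabolic property of $\psi$: since $\psi \cup (-\psi) = \Phi$, for every $\alpha \in \wt\phi$ either $\alpha \in \psi$ or $-\alpha \in \psi$, so at least one of $\mf g_{\pm\alpha}$ lies in $\mf p$ (not merely in $[\mf p,\wt{\mf p}]$), while the other lies in $\mf l_{\wt\phi} \sse \wt{\mf p}$; hence $\mf t_\alpha \sse [\mf p,\wt{\mf p}]$. The paper encodes this by choosing a base $\Delta$ with $\Phi^+_\Delta \sse \psi$ so that "$\mf p$ contains all the positive root lines." So your claim that only $\mf t \sse \mf p$ and $\mf u_{\wt\psi} \sse \mf u_\psi$ are used is not quite right: the $\mf t_\alpha$ part needs the parabolic dichotomy on $\psi$, even though (as you correctly observe) it needs no case distinction between $\alpha \in \phi$ and $\alpha \in \wt\phi \sm \phi$.
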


\begin{proof}
	Postponed to~\ref{proof:lem_bracket_nested_parabolic}.
\end{proof}

\begin{coro}
	\label{cor:abelianisation}

	Choose a parabolic filtration $\bm \psi \in \mc P_\Phi^{(\infty)}$ of depth $r = r(\bm \psi) \in \mb Z_{\geq 1}$,
	and let $\bm \phi \in \mc L_\Phi^{(r)}$ be its Levi factor.
	Then there are $\mb C$-linear isomorphisms
	\begin{equation}
		\label{eq:general_character_space}
		\wh{\mf S}^{\bm \psi}_{\ab} \simeq \wh{\mf Z}_{\bm \phi} \ops \mb CK,
		\qquad \wh{\mf Z}_{\bm \phi} \ceqq \bops_{i \geq 0} (\mf Z_{\phi_i} \cdot z^i ) \sse \mf t \llb z \rrb,\fn{
			The latter is finite-dimensional if and only if $\mf g$ is semisimple.}
	\end{equation}
	and
	\begin{equation}
		\label{eq:general_character_space_finite}
		\mf S^{\bm \psi}_{\ab} \simeq \mf Z_{\bm \phi} \ceqq \bops_{i = 0}^{r-1} (\mf Z_{\phi_i} \cdot \varepsilon^i) \sse \mf t_r,
	\end{equation}
	invoking the centres $\mf Z_{\phi_i} \ceqq \mf Z(\mf l_{\phi_i}) = \ker(\phi_i) \sse \mf t$ of the Levi factors (i.e.,
	the `Levi centres').
\end{coro}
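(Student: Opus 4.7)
The plan is to compute the commutator ideal of $\wh{\mf S}^{\bm \psi}$ (resp. $\mf S^{\bm \psi}$) explicitly, exploiting the $\mb Z_{\geq 0}$-grading by powers of $z$ (resp. $\varepsilon$) together with the nestedness Lem.~\ref{lem:bracket_nested_parabolic}. First, I would dispose of the central element: since $\wh{\mf S}^{\bm \psi} \sse \mf g \llb z \rrb \ops \mb CK$, and since the 2-cocycle $(X \ots f, Y \ots g) \mt (X \mid Y) \Ress_{z=0}(g \dif f)$ vanishes identically on $\mf g \llb z \rrb \ots \mf g \llb z \rrb$ (as $g \dif f$ is holomorphic), the central $K$ lies outside $\bigl[\wh{\mf S}^{\bm \psi}, \wh{\mf S}^{\bm \psi}\bigr]$ and contributes the summand $\mb CK$ to the abelianisation; the remaining non-central part of the bracket is simply the pointwise Lie bracket of $\mf g$, with $[\mf p_{\psi_i} \cdot z^i, \mf p_{\psi_j} \cdot z^j] = [\mf p_{\psi_i}, \mf p_{\psi_j}] \cdot z^{i+j}$.

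Next, I would show that the degree-$k$ piece of the derived subalgebra equals $[\mf p_{\psi_k}, \mf p_{\psi_k}] \cdot z^k$. For any pair $(i,j)$ with $i+j=k$, assume $i \leq j$; then $\mf p_{\psi_i} \sse \mf p_{\psi_j}$, and Lem.~\ref{lem:bracket_nested_parabolic} gives $[\mf p_{\psi_i}, \mf p_{\psi_j}] = [\mf p_{\psi_j}, \mf p_{\psi_j}]$. The inclusion $\mf p_{\psi_j} \sse \mf p_{\psi_k}$ (since $j \leq k$) upgrades this to a subspace of $[\mf p_{\psi_k}, \mf p_{\psi_k}]$, and the maximum is attained at $(0,k)$, giving equality. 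Thus
\begin{equation}
\bigl[\wh{\mf S}^{\bm \psi}, \wh{\mf S}^{\bm \psi}\bigr] = \bops_{k \geq 0} \bigl[\mf p_{\psi_k}, \mf p_{\psi_k}\bigr] \cdot z^k.
\end{equation}

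The remaining step is to identify $\mf p_{\psi_k} \bs [\mf p_{\psi_k}, \mf p_{\psi_k}]$ with $\mf Z_{\phi_k}$. Using the decomposition $\mf p_{\psi_k} = \mf l_{\phi_k} \ops \mf u_{\psi_k}$ from~\eqref{eq:parabolic_subalgebra_splitting}, and reductivity of the Levi factor $\mf l_{\phi_k} = \mf Z_{\phi_k} \ops [\mf l_{\phi_k}, \mf l_{\phi_k}]$, one computes $[\mf p_{\psi_k}, \mf p_{\psi_k}] = [\mf l_{\phi_k}, \mf l_{\phi_k}] \ops \mf u_{\psi_k}$: indeed $[\mf l, \mf u], [\mf u, \mf u] \sse \mf u$, and each root space $\mf g_{\alpha} \sse \mf u_{\psi_k}$ lies in $[\mf t, \mf g_{\alpha}]$ since $\alpha \neq 0$, so all of $\mf u_{\psi_k}$ is captured. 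Quotienting yields $\mf p_{\psi_k} \bs [\mf p_{\psi_k}, \mf p_{\psi_k}] \simeq \mf Z_{\phi_k}$, which assembled over $k$ gives the desired identification $\wh{\mf S}^{\bm \psi}_{\ab} \simeq \wh{\mf Z}_{\bm \phi} \ops \mb CK$. The finite case~\eqref{eq:general_character_space_finite} follows by precisely the same reasoning, after noting that the truncation $\varepsilon^r = 0$ forces all contributions in degrees $k \geq r$ to vanish (and that this is consistent with $\phi_k = \Phi$, $\mf Z_{\phi_k} = \mf Z_{\mf g}$ being irrelevant there).

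The main subtlety I anticipate is the bookkeeping in Step 2, where one must confirm that \emph{no smaller} pair $(i,j)$ contributes anything beyond $[\mf p_{\psi_k}, \mf p_{\psi_k}]$ and \emph{no additional} relations arise across different degrees---this is where the graded structure and Lem.~\ref{lem:bracket_nested_parabolic} interact most delicately, but the monotonicity of $[\mf p_{\psi_j}, \mf p_{\psi_j}]$ in $j$ (via $\mf p_{\psi_j} \sse \mf p_{\psi_k}$) resolves it cleanly.
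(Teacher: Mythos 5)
Your proposal is correct and follows essentially the same route as the paper: use the $z$-grading together with Lem.~\ref{lem:bracket_nested_parabolic} to show the degree-$k$ piece of the derived subalgebra is $\bigl[\mf p_{\psi_k},\mf p_{\psi_k}\bigr] \cdot z^k$, then identify $\mf p_{\psi_k} \bs \bigl[\mf p_{\psi_k},\mf p_{\psi_k}\bigr]$ with the Levi centre $\mf Z_{\phi_k}$. The only cosmetic difference is that you verify directly that the cocycle vanishes on $\mf g\llb z\rrb$ and use the reductive splitting $\mf l_{\phi_k} = \mf Z_{\phi_k}\ops[\mf l_{\phi_k},\mf l_{\phi_k}]$, where the paper appeals to the root-theoretic formula~\eqref{eq:decomposition_derived_levi}; the content is the same.
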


\begin{proof}
	Using the $z$-grading and Lem.~\ref{lem:bracket_nested_parabolic} yields
	\begin{equation}
		\label{eq:derived_singularity_subalgebra}
		\bigl[ \wh{\mf S}^{\bm \psi},\wh{\mf S}^{\bm \psi} \bigr] = \bops_{i \geq 0} \bigl( \bigl[ \mf p_{\psi_i},\mf p_{\psi_i} \bigr] \cdot z^i \bigr) \sse \mf g \llb z \rrb.
	\end{equation}
	In turn,
	by~\eqref{eq:decomposition_derived_levi},
	one has
	\begin{equation}
		\mf p_{\psi_i} \bs \bigl[ \mf p_{\psi_i},\mf p_{\psi_i} \bigr] \simeq \mf Z_{\mf g} \ops \bops_{\Phi \sm \phi_i} \mf t_\alpha = \ker(\phi_i) = \mf Z_{\phi_i}.
	\end{equation}
	The statement then follows in the affine case,
	and the finite one is analogous.
\end{proof}

\begin{rema}
	\label{rmk:deeper_characters}

	If $r = 1$ one has the simpler statement that the characters of a parabolic subalgebra $\mf p \sse \mf g$ are in canonical bijection with those of a Levi factor $\mf l \sse \mf p$:
	all characters of $\mf p$ vanish on the nilradical,
	and all characters of $\mf l$ extend---%
	by zero---%
	to characters of $\mf p$.

	This does \emph{not} work verbatim in the wild case,
	already in the finite setting.
	Namely,
	let $\bm \psi \in \mc P^{(r)}_\Phi$ be a depth-bounded parabolic filtration with Levi factor $\bm \phi \ceqq \on{Lf}_r(\bm \psi) \in \mc L^{(r)}_\Phi$.
	Then there is an inclusion
	\begin{equation}
		\bigl[ \mf l_{\bm \phi},\mf l_{\bm \phi} \bigr] \sse \bops_{i = 0}^{r-1} \bigl( \bigl[ \mf l_{\phi_i},\mf l_{\phi_i} \bigr] \cdot \varepsilon^i \bigr),
	\end{equation}
	in the notation of~\eqref{eq:deeper_levi_factor},
	which need \emph{not} be an equality in the nongeneric case (i.e.,
	the abelianisation of $\mf l_{\bm \phi}$ might be larger than the direct sum of the Levi centres $\mf Z_{\phi_i}$).
	Thus,
	in general,
	there are characters $\mf l_{\bm \phi} \to \mb C$ which do \emph{not} extend to $\mf S^{\bm \psi}$.
	Those which \emph{do} extend correspond to sequences of linear maps
	\begin{equation}
		\bm \lambda = (\lambda_0,\dc,\lambda_{r-1}) \in \mf Z_{\bm \phi}^{\dual} \simeq \bops_{i \geq 0}^{r-1} \bigl( \mf Z^{\dual}_{\phi_i}\bigr),
	\end{equation}
	as in~\eqref{eq:general_character_space_finite}---%
	forgetting the $\varepsilon$-degree.
	Conversely,
	it is still true a character can only be extended by zero,
	if it extends at all:
	this is because $\mf S^{\bm \psi} = \mf l_{\bm \phi} \ops \mf u^+_{\bm \psi}$ as vector spaces,
	and $\mf u^+_{\bm \psi} \sse \bigl[ \mf S^{\bm \psi},\mf S^{\bm \psi} \bigr]$ by (the truncated analogue of)~\eqref{eq:derived_singularity_subalgebra}.
\end{rema}

(Hereafter,
we also tacitly identify $\mf Z_{\phi_i}^{\dual}$ with the subspace of $\mf t^{\dual}$ consisting of covectors vanishing on $\mf t \cap \bigl[ \mf l_{\phi_i},\mf l_{\phi_i} \bigr]$.)

\section{Generalised singularity modules}
\label{sec:generalised_singularity_modules}

\subsection{}

Here we define \emph{generalised} singularity modules,
building on \S~\ref{sec:singularity_subalgebra_and_polarisations}.

\subsection{Affine case}

By Cor.~\ref{cor:abelianisation},
the space of characters of~\eqref{eq:affine_singularity_algebra}--\eqref{eq:finite_singularity_algebra} comes with a bijection onto the associated sequence of (dual) Levi centres---%
plus possibly a level.
In the affine case we will still truncate at the given depth:
namely,
if $r(\bm \psi) = r \in \mb Z_{\geq 0}$ then we consider characters $\bm \chi \in \Hom_{\Lie} \bigl( \wh{\mf p}_{\bm \psi},\mb C \bigr)$ encoded by pairs $(\bm \lambda,\kappa) \in \mf Z_{\bm \phi}^{\dual} \ops \mb C \simeq \bigl( \mf Z_{\bm \phi} \ops \mb CK \bigr)^{\! \dual}$ such that $\bm \lambda$ vanishes on $z^r \mf g \llb z \rrb \sse \mf g \llb z \rrb$,
i.e.,
with $\bm \lambda \in \mf g_r^{\dual}$.
These now yield a 1-dimensional $U\wh{\mf p}_{\bm \psi}$-module $\mb C_{\bm \chi}$.

One is thus naturally led to the following:

\begin{defi}
	\label{def:affine_singularity_module}

	The \emph{affine generalised singularity module} of level $\kappa \in \mb C$,
	polarisation $\bm \psi$,
	and formal type $\bm \lambda \in \mf Z_{\bm \phi}^{\dual}$,
	is the induced $U\wh{\mf g}$-module
	\begin{equation}
		\label{eq:generalised_affine_singularity_module}
		\wh M = \wh M^{\bm \psi}_{\bm \lambda,\kappa} \ceqq U \wh{\mf g} \ots_{U \wh{\mf p}_{\bm \psi}} \mb C_{\bm \chi}.
	\end{equation}
\end{defi}

\subsubsection{}

As in the generic case,
consider the canonical cyclic vector $w = w^{\bm \psi}_{\bm \lambda,\kappa} = 1 \ots_{U\wh{\mf p}_{\bm \psi}} 1 \in \wh M$.
It satisfies yet a further generalisation of the highest-weight relations~\eqref{eq:generic_irregular_state_relations} (at level $\kappa$).

To spell them out,
choose a base $\Delta \sse \Phi$ of simple roots,
and denote as usual by $\Sigma_0 \sse \Sigma_1 \dc$ the sequence of subsets of $\Delta$ determining the sequences $\bm \psi \mt \bm \phi = \on{Lf}_r(\bm \psi)$ of parabolic subsets and corresponding Levi factors (cf.~\S~\ref{sec:setup_2} +~Def.~\ref{def:parabolic_filtration}).
Moreover,
extend a pinning of $(\mf g,\mf t)$ (as in \S~\ref{sec:tame_shapovalov}) to a choice of Chevalley generators,
involving generators $E_\alpha \in \mf g_\alpha \sm (0)$ for all the root lines ($\alpha \in \Phi$);
and set $H_\alpha \ceqq [E_\alpha,E_{-\alpha}] \in \mf t_\alpha$.
Then,
in addition to $K w = \kappa w$,
one has:
\begin{align}
	\label{eq:irregular_state_relations}
	\begin{cases}
		\bigl( X z^i - \braket{ \lambda_i,X } \bigr) w              & = 0,
		\qquad X \in \mf Z_{\phi_i},
		\quad i \in \Set{ 0,\dc,r-1},
		\\
		E_{-\alpha} z^i w = H_\alpha z^i w                          & = 0,
		\qquad \alpha \in \Phi^+_{\Sigma_i},
		\quad i \in \Set{ 0,\dc,r-1},
		\\
		\bigl( \mf u^+ \llb z \rrb + z^r \mf g \llb z \rrb \bigr) w & = (0),
		\qquad \mf u^+ \ceqq \mf u_{\Phi^+_{\Delta}}.
		\\
	\end{cases}
\end{align}

\begin{rema}
	Comparing with~\eqref{eq:generic_irregular_state_relations},
	there are more elements annihilating the canonical generator,
	as expected:
	particularly in the second row,
	which comes from the derived subalgebras $[\mf l_{\phi_i},\mf l_{\phi_i}] \sse [\mf p_{\psi_i},\mf p_{\psi_i}]$.

	Hence,
	these modules are quotients of the generic ones,
	as the latter are the \emph{universal} modules cyclically generated by an irregular state:
	indeed,
	the explicit relations are in Thm.~\ref{thm:wild_parabolic_to_wild_parabolic} +~\S~\ref{sec:affine_wild_parabolic_to_wild_parabolic}.
\end{rema}

\begin{rema}
	\label{rmk:virasoro_symmetries_2}

	Once more one can prove that the cyclic vector is a Whittaker vector for a GT/BMT Virasoro pair (cf.~Rmk.~\ref{rmk:virasoro_symmetries}).

	Namely,
	for $i \in \mb Z_{\geq 0}$ the last line of~\eqref{eq:irregular_state_relations} yields
	\begin{equation}
		2(\kappa + h^{\dual}) L_i \wh w
		= \sum_{j = 1-r}^{r-1-i} \Biggl( \sum_k \cl X_kz^{-j} \cdot X^k z^{i + j} \cl \Biggr) \wh w \in \wh M,
	\end{equation}
	summing over a basis $(X_k)_k$ of $\mf g$,
	and invoking both the $(\cdot \mid \cdot)$-dual basis $(X^k)_k$ and the normal-ordered product.
	Thus,
	this vanishes if $i > 2(r - 1)$.
	Furthermore,
	if $i \in \set{r-1,\dc,2(r-1)}$ and $j \in \set{1-r,\dc,r-1-i}$ then
	\begin{equation}
		-j,i+j \in \set{1-r+i,\dc,r-i} \sse \set{0,\dc,r-1},
	\end{equation}
	so that the normal-ordered product is void and the first two lines of~\eqref{eq:irregular_state_relations} imply that the overall action is by scalar multiplication.
\end{rema}

\subsection{Finite case}
\label{sec:finite_generalised_singularity_module}

Finally,
consider the $U \bigl( \mf g \llb z \rrb \bigr)$-submodule $M \sse \wh M$ generated by $w \in \wh M$,
noting that it is naturally a $U\mf g_r$-module---%
by the third line of~\eqref{eq:irregular_state_relations}.
We then conclude the following,
for any element $\bm \phi \in \mc L_\Phi^{(r)}$ with a chosen lift $\bm \psi \in \mc P_\Phi^{(r)}$:

\begin{enonce}{Theorem/Definition}
	\label{thm:finite_modules}

	There is an isomorphism $M^{\bm \psi}_{\bm \lambda} \lxra{\simeq} M$ of $U\mf g_r$-modules,
	where the \emph{finite generalised singularity module} $M^{\bm \psi}_{\bm \lambda}$ (of polarisation $\bm \psi$,
	and formal type $\bm \lambda$) is defined by
	\begin{equation}
		\label{eq:generalised_finite_singularity_module}
		M^{\bm \psi}_{\bm \lambda} \ceqq U \mf g_r \ots_{U \mf p_{\bm \psi}} \mb C_{\bm \chi},
	\end{equation}
	identifying $\bm \lambda \in \mf Z_{\bm \phi}^{\dual}$ with a character $\bm \chi \in \Hom_{\Lie}(\mf p_{\bm \psi},\mb C)$---%
	of~\eqref{eq:finite_singularity_algebra}.
\end{enonce}

\begin{proof}
	The isomorphism is obtained by mapping $w^{\bm \psi}_{\bm \lambda} \ceqq 1 \ots_{U\mf p_{\bm \psi}} 1 \mt w \in M \sse \wh M$.
	Indeed,
	the resulting $U\mf g_r$-linear surjection has no kernel since both cyclic vectors satisfy precisely the truncated version of the relations~\eqref{eq:irregular_state_relations},
	viz.,
	\begin{align}
		\begin{cases}
			\bigl( X z^i - \braket{ \lambda_i,X } \bigr) w   & = 0,
			\qquad X \in \mf Z_{\phi_i},
			\\
			E_{-\alpha} z^i w = X_\alpha z^i w = \mf u^+_r w & = 0,
			\qquad \alpha \in \Phi^+_{\Sigma_i},
		\end{cases}
	\end{align}
	for $i \in \Set{ 0,\dc,r-1}$,
	where as usual $\mf u^+_r = \mf u^+ \ots \mb C_r \sse \mf g_r$.
\end{proof}

\subsection{Identification with symmetric algebras}
\label{sec:identification_verma_symmetric_algebra}

\subsubsection{}
\label{sec:pbw_bases_tame}

Let again $\mf p \sse \mf g$ be a parabolic subalgebra containing a chosen Cartan subalgebra $\mf t$,
and $\mf l \sse \mf p$ the Levi factor containing $\mf t$.
If $\lambda \in \mf Z_{\mf l}^{\dual}$ encodes a character of $\mf p$,
recall that there is a $\mb C$-linear identification $U(\mf u^-) \lxra{\simeq} M_{\lambda}$,
where $\mf u^- \sse \mf g$ is the nilradical of the opposite parabolic subalgebra $\mf p^- \sse \mf g$;
it is obtained by acting on the canonical generator $v \in M_{\lambda}$,
and it is further an isomorphism of $U(\mf u^-)$-modules.
This works because of the vector-space splitting $\mf g = \mf u^- \ops \mf p$,
noting that $\mf u^-$ acts freely on $v$,
and that $\mf p$ acts via the given character (which fixes the line through $v$).

Then the PBW isomorphism $\gr U(\mf u^-) \simeq \Sym (\mf u^-)$ can be used to transfer a basis of monomials for the symmetric algebra onto a $\mb C$-basis of $M_{\lambda}$.

\subsubsection{}

In the nongeneric wild case,
the material of \S~\ref{sec:pbw_bases_tame} needs to be slightly adapted (cf.~\cite[\S~5.1.6]{dixmier_1996_algebres_enveloppantes}).

Let $\mf t_r \sse \mf l_{\bm \phi} \sse \mf p_{\bm \psi} \sse \mf g_r$ be the usual sequence of inclusions of Lie subalgebras obtained from a parabolic filtration $\bm \psi \in \mc P^{(r)}_\Phi$ with Levi factor $\bm \phi \ceqq \on{Lf}_r(\bm \psi) \in \mc L^{(r)}_\Phi$.
There is a vector-space splitting $\mf g_r = \mf u^-_{\bm \psi} \ops \mf p_{\bm \psi}$,
where $\mf u^-_{\bm \psi} \sse \mf g_r$ is the subspace obtained from the nonincreasing sequence $\mf u^-_{\psi_0} \supseteq \dm \supseteq \mf u^-_{\psi_{r-1}}$ of nilradicals of the opposite finite singularity subalgebra $\mf p^-_{\bm \psi} \sse \mf g_r$ (cf.~\eqref{eq:deeper_nilradical}).
Given an element $\bm \lambda \in \mf Z_{\bm \phi}^{\dual}$,
i.e.,
a character $\bm \chi \cl \mf p_{\bm \psi} \to \mb C$,
it is thus still true that the subspace $\mf u^-_{\bm \psi} \sse U\mf g_r$ acts freely on the canonical generator $w$ of the finite singularity module $M^{\bm \psi}_{\bm \lambda}$:
but there is a priori \emph{no} UEA attached to $\mf u^-_{\bm \psi}$.

Nonetheless,
denote by $(\bm X_1,\dc,\bm X_s)$ an (ordered) $\mb C$-basis of $\mf u^-_{\bm \psi}$,
where $s \ceqq \dim_{\mb C}(\mf u^-_{\bm \psi})$.
Then the set of monomials
\begin{equation}
	\bm X_I = \bm X_1^{i_1} \dm \bm X_s^{i_s} \in U\mf g_r,
	\qquad I \ceqq (i_1,\dc,i_s) \in \mb Z_{\geq 0}^s,
\end{equation}
provides a basis of $U\mf g_r$ as a (free) right $U\mf p^+_{\bm \psi}$-module,
and the properties of balanced tensor products yield a $\mb C$-linear isomorphism
\begin{equation}
	\label{eq:direct_sum_splitting_verma}
	M^{\bm \psi}_{\bm \lambda} = \Biggl( \,
	\bops_{\mb Z_{\geq 0}^s} X_I \cdot U\mf p^+_{\bm \psi} \Biggr) \ots_{U\mf p^+_{\bm \psi}} \mb C_{\chi} \simeq  \bops_{\mb Z_{\geq 0}^s} \bigl( \mb C \cdot X_I \ots \mb C_{\bm \chi} \bigr).
\end{equation}
Furthermore,
in this identification the summand $\mb C \cdot X_I \ots \mb C_{\chi}$ corresponds to the line through $X_I w \in M^{\bm \psi}_{\bm \lambda}$,
so that in conclusion a $\mb C$-basis of the finite singularity module is provided by ordering $\set{ X_I w | I \in \mb Z_{\geq 0}^s }$.
It is then convenient to consider $\set{ X_I }_I$ as a $\mb C$-basis of the symmetric algebra $\Sym (\mf u^-_{\bm \psi})$,
which we embed into $U\mf g_r$ in the given choice of ordering.
Then acting on the canonical generator yields a $\mb C$-linear isomorphism $\Sym (\mf u^-_{\bm \psi}) \lxra{\simeq} M^{\bm \psi}_{\bm \lambda}$.

While this in principle breaks the $\mf g_r$-module structure,
it is helpful to construct $\mb C$-bases of finite singularity modules and to discuss their natural weight-space decompositions (cf.~just below,
as well as \S~\ref{sec:shapovalov}).

\subsection{Relations between generalised singularity modules}

\subsubsection{}
\label{sec:wild_parabolic_to_wild_parabolic}

Here we extend the argument of \S~\ref{sec:parabolic_to_parabolic} in the wild case.

Fix an integer $r \geq 1$,
and consider parabolic filtrations $\bm \psi \geq \wt{\bm \psi} \in \mc P_\Phi^{(r)}$ with Levi factors $\bm \phi \geq \wt{\bm \phi} \in \mc L_\Phi^{(r)}$---%
respectively.
Choose an element $\wt{\bm \lambda} \in \mf Z_{\wt{\bm \phi}}^{\dual}$,
which induces a character $\mf p_{\wt{\bm \psi}} \to \mb C$,
and restrict the latter to a character $\mf p_{\bm \psi} \to \mb C$;
in turn,
this corresponds to an element $\bm \lambda \in \mf Z_{\bm \phi}^{\dual}$ extending $\wt{\bm \lambda}$ along the embedding $\mf Z_{\wt{\bm \phi}} \hra \mf Z_{\bm \phi} \sse \mf t_r$.

\begin{theo}
	\label{thm:wild_parabolic_to_wild_parabolic}

	There is a \emph{canonical} $U\mf g_r$-linear surjection $f_{\bm \lambda \mid \wt{\bm \lambda}} \cl M^{\bm \psi}_{\bm \lambda} \thra M^{\wt{\bm \psi}}_{\wt{\bm \lambda}}$ with
	\begin{equation}
		\label{eq:wild_parabolic_to_wild_parabolic}
		\ker \bigl( f_{\bm \lambda \mid \wt{\bm \lambda}} \bigr) = U\mf g_r \bigl( \mf u^-_{\bm \psi \mid \wt{\bm \psi}} \cdot w^{\psi}_{\bm \lambda} \bigr) \sse M^{\bm \psi}_{\bm \lambda},
		\qquad \mf u^-_{\bm \psi \mid \wt{\bm \psi}} \ceqq \bops_{i = 0}^{r-1} \bigl( \mf u^-_{\psi_i \mid \wt \psi_i} \cdot \varepsilon^i \bigr) \sse \mf g_r,
	\end{equation}
	where in turn the subspaces $\mf u^-_{\psi_i \mid \wt \psi_i} \sse \mf g$ are defined as in~\eqref{eq:parabolic_to_parabolic}.
\end{theo}

\begin{proof}
	The $U\mf p_{\bm \psi}$-balanced canonical map $U\mf g_r \ts \mb C \to U\mf g_r \ots_{U \mf p_{\wt{\bm \psi}}} \mb C_{\wt{\bm \lambda}}$ induces the surjection in the statement.

	To compute the kernel,
	write $\abs \Phi = 2m$ for some integer $m \geq 0$;
	choose a base $\Delta \sse \Phi$ of simple roots,
	and order the positive roots as $\Phi^+_{\Delta} = \set{\alpha_1,\dc,\alpha_m}$.
	Then there exist integers $0 \leq t_{r-1} \leq \dm \leq t_0 \leq m$ and $0 \leq \wt t_{r-1} \leq \dm \leq \wt t_0 \leq m$ such that
	\begin{equation}
		\phi_i \cap \Phi^+_{\Delta} = \set{\alpha_{t_i},\dc,\alpha_m} \sse \set{\alpha_{\wt t_i},\dc,\alpha_m} = \wt \phi_i \cap \Phi^+_{\Delta}.
	\end{equation}
	(Whence moreover $\wt t_i \leq t_i$ for $i \in \set{0,\dc,r-1}$.)

	Now use the $\mb C$-linear isomorphism $\Sym (\mf u^-_{\bm \psi}) \lxra{\simeq} M^{\bm \psi}_{\bm \lambda}$ of \S~\ref{sec:identification_verma_symmetric_algebra}.
	Namely,
	construct a $\mb C$-basis of $\Sym (\mf u^-_{\bm \psi})$ such that the following products are put on the right in all monomials:
	\begin{equation}
		F_{\alpha_{\wt t_0}}^{n^{(0)}_{\wt t_0}} \dm F_{\alpha_{t'_0}}^{n^{(0)}_{t'_0}} (F_{\alpha_{\wt t_1}} \varepsilon)^{n^{(1)}_{\wt t_1}} \dm (F_{\alpha_{t'_1}} \varepsilon)^{n^{(1)}_{t'_1}} \dm \dm \bigl( F_{\alpha_{\wt t_{r-1}}} \varepsilon^{r-1} \bigr)^{\! n^{(r-1)}_{\wt t_{r-1}}} \dm \bigl( F_{\alpha_{t'_{r-1}}} \varepsilon^{r-1} \bigr)^{\! n^{(r-1)}_{t'_{r-1}}},
	\end{equation}
	where $t'_i \ceqq t_i - 1$,
	for integers $n^{(i)}_j \geq 0$.
	Any basis element is mapped to zero if and only if $n^{(i)}_j > 0$ for some $i \in \set{0,\dc,r-1}$ and $j \in \set{\wt t_i,\dc,t'_i}$.
	The statement follows,
	since by construction
	\begin{equation}
		\set{\alpha_{\wt t_i},\dc,\alpha_{t_i - 1}} = \bigl( \wt \phi_i \cap \Phi^+_{\Delta} \bigr) \bigsm \bigl( \phi_i \cap \Phi^+_{\Delta} \bigr) = \nu_i \sm \wt \nu_i,
		\qquad i \in \set{0,\dc,r-1},
	\end{equation}
	where $\wt \nu_i \ceqq \wt \psi_i \sm \wt \phi_i \sse \psi_i \sm \phi_i \eqqc \nu_i$ (extending \S~\ref{sec:parabolic_to_parabolic} from the case $r = 1$).
\end{proof}

\subsubsection{}
\label{sec:affine_wild_parabolic_to_wild_parabolic}

The arguments of \S~\ref{sec:wild_parabolic_to_wild_parabolic} extend to the affine case,
generalising \S~\ref{sec:affine_parabolic_to_affine_parabolic}.
In brief,
if we also fix a level $\kappa \in \mb C$ then there is a $U\wh{\mf g}$-linear surjection $\wh f_{\bm \lambda \mid \wt{\bm \lambda}} \cl \wh M^{\bm \psi}_{\bm \lambda,\kappa} \thra \wh M^{\wt{\bm \psi}}_{\wt{\bm \lambda},\kappa}$ with
\begin{equation}
	\ker \bigl( \wh f_{\bm \lambda \mid \wt{\bm \lambda}} \bigr) = U\wh{\mf g} \bigl( \mf u^-_{\bm \psi \mid \wt{\bm \psi}} \cdot \wh w^{\bm \psi}_{\bm \lambda,\kappa} \bigr) \sse \wh M^{\bm \psi}_{\bm \lambda,\kappa},
\end{equation}
where the subspace $\mf u^-_{\bm \psi \mid \wt{\bm \psi}} \sse \wh{\mf g}$ is defined as in~\eqref{eq:wild_parabolic_to_wild_parabolic}---%
replacing $\varepsilon$ with `$z$'.
Again,
the point is that the subalgebra $U \bigl( z^{-1}\mf g[z^{-1}] \bigr) \sse U\wh{\mf g}$ acts freely,
and then one uses the $\mb C$-basis in the proof of Thm.~\ref{thm:wild_parabolic_to_wild_parabolic}.

\subsection{From finite modules to loop-algebras modules (II)}
\label{sec:from_finite_to_loop_algebra_wild}

Here we extend \S~\ref{sec:from_finite_to_loop_algebra} to the nongeneric wild case---%
encompassing the generic one.

Let again $\bm \psi \in \mc P_\Phi^{(r)}$ be a depth-bounded parabolic filtration of $\Phi$,
and $\bm \lambda = (\lambda_0,\dc,\lambda_{r-1}) \in \mf Z_{\bm \phi}^{\dual}$ a formal type for the associated Levi factor $\bm \phi \in \mc L_\Phi^{(r)}$;
choose then a level $\kappa \in \mb C$.
There is a finite generalised singularity module $M^{\bm \psi}_{\bm \lambda}$,
sitting as a vector subspace inside the affine module $\wh M^{\bm \psi}_{\bm \lambda,\kappa}$.
The former is naturally a module for $U\bigl(\mf g \llb z \rrb \bigr)$,
with trivial action of $z^r \mf g \llb z \rrb$,
while the latter is acted on by $U \bigl( \mf g (\!(z)\!) \bigr)$.
Then:

\begin{coro}
	\label{cor:from_finite_to_loop_algebra_wild}

	There is an isomorphism of $U \bigl( \mf g (\!(z)\!) \bigr)$-modules
	\begin{equation}
		\wh M^{\bm \psi}_{\bm \lambda,\kappa} \lxra{\simeq} \Ind_{U ( \mf g \llb z \rrb )}^{U ( \mf g(\!(z)\!) )} M^{\bm \psi}_{\bm \lambda}.
	\end{equation}
\end{coro}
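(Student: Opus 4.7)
The plan is to apply Lemma~\ref{lem:induction_from_submodule} with $\wh L = \mf g(\!(z)\!)$, $L = \mf g \llb z\rrb$, $\wh M = \wh M^{\bm \psi}_{\bm \lambda,\kappa}$ viewed as a $U\wh L$-module after forgetting the scalar action of $K$ by $\kappa$, $M = M^{\bm \psi}_{\bm \lambda}$ pulled back to a $UL$-module along the quotient $\mf g \llb z\rrb \thra \mf g_r$, and the cyclic vector $\ket 0 = w^{\bm \psi}_{\bm \lambda,\kappa}$. Condition (1) of the lemma, i.e. $U\wh L \ket 0 = \wh M$, is immediate from Def.~\ref{def:affine_singularity_module} together with the fact that $K$ acts by the scalar $\kappa$.

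Condition (2), namely $\Ann_{U\wh L}(\ket 0) \sse U\wh L \cdot \Ann_{UL}(\ket 0)$, is the content that needs work. The strategy is to exploit a PBW splitting of $U\wh{\mf g}$ compatible with $\wh{\mf p}_{\bm \psi} \hra \wh{\mf g}$: choose the vector-space complement $\mf u^-_{\on{aff}} \ceqq z^{-1} \mf g[z^{-1}] \ops \mf u^-_{\bm \psi}$ to $\wh{\mf p}_{\bm \psi}$ in $\wh{\mf g}$, where $\mf u^-_{\bm \psi}$ is the deeper opposite nilradical subspace of~\eqref{eq:deeper_nilradical} (noting $\mf p_{\psi_i} = \mf g$ for $i \geq r$). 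By PBW, with an ordering placing $z^{-1}\mf g[z^{-1}]$-factors leftmost, this will yield a $\mb C$-linear isomorphism $\wh M \simeq U\bigl(z^{-1}\mf g[z^{-1}]\bigr) \ots \Sym(\mf u^-_{\bm \psi}) \simeq U\bigl(z^{-1}\mf g[z^{-1}]\bigr) \ots M$, extending the embedding $M \hra \wh M$ of \S~\ref{sec:finite_generalised_singularity_module} and making $U\bigl(z^{-1}\mf g[z^{-1}]\bigr)$ act freely on $M$ inside $\wh M$. Then, given $u \in \Ann_{U\wh L}(w)$, one decomposes $u = \sum_j a_j b_j$ with $a_j$ in a chosen basis of $U\bigl(z^{-1}\mf g[z^{-1}]\bigr)$ and $b_j \in UL$; freeness of the action forces $b_j w = 0$ for each $j$, whence $b_j \in \Ann_{UL}(w)$ and $u \in U\wh L \cdot \Ann_{UL}(w)$ as required.

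The main obstacle, requiring the most care, is the vector-space identification $\wh M \simeq U\bigl(z^{-1}\mf g[z^{-1}]\bigr) \ots M$ itself. This rests on a simultaneous PBW reordering in $U\wh{\mf g}$ interfacing cleanly with both the induction from $U\wh{\mf p}_{\bm \psi}$ defining $\wh M$ and the identification $M \simeq \Sym(\mf u^-_{\bm \psi})$ of \S~\ref{sec:identification_verma_symmetric_algebra}. Note that $\mf u^-_{\bm \psi}$ need \emph{not} be a Lie subalgebra in the non-balanced case (cf. Rmk.~\ref{rmk:balanced_filtrations}), but this only affects notation: the PBW statement used here is purely at the level of vector-space bases, with $\Sym(\mf u^-_{\bm \psi})$ serving as an indexing device for monomials---exactly as in the finite case of \S~\ref{sec:identification_verma_symmetric_algebra}, whose argument extends verbatim upon enlarging $\mf u^-_{\bm \psi}$ to $\mf u^-_{\on{aff}}$.
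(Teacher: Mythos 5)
Your argument follows the paper's own route: both proofs reduce to verifying the two hypotheses of Lem.~\ref{lem:induction_from_submodule} for the cyclic vector, and both rely on the containment $\wh{\mf S}^{\bm \psi} \sse \mf g\llb z\rrb \ops \mb CK$ to locate the generators of the annihilator inside $U\bigl(\mf g\llb z\rrb\bigr)$. You merely unfold the second hypothesis more explicitly, via the PBW factorisation $\wh M \simeq U\bigl(z^{-1}\mf g[z^{-1}]\bigr) \ots M$ with free left-factor action, whereas the paper states the conclusion in one line and records the same annihilator structure in the ensuing remark.
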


\begin{proof}
	It is enough to show that the canonical generator $w \in M^{\bm \psi}_{\bm \lambda} \sse \wh M^{\bm \psi}_{\bm \lambda,\kappa}$ satisfies the two properties in the statement of Lem.~\ref{lem:induction_from_submodule}:
	these follow from the fact that~\eqref{eq:generalised_affine_singularity_module} is constructed by inducing representations from~\eqref{eq:affine_singularity_algebra},
	which is a Lie subalgebra of $\mf g \llb z \rrb \ops \mb CK$.
\end{proof}

\begin{rema}
	Explicitly,
	one has
	\begin{equation}
		\Ann_{U ( \mf g (\!(z)\!) )} (w) = U \bigl( \mf g (\!(z)\!) \bigr) \wh K^{\bm \psi}_{\bm \lambda},
	\end{equation}
	where
	\begin{align}
		\label{eq:annihilator_cyclic_vector_wild_affine_generalised_verma}
		\wh K^{\bm \psi}_{\bm \lambda} \ceqq & \bops_{i \geq 0} \Bigl( [\mf l_{\phi_i},\mf l_{\phi_i}] \ops \mf{nil}(\mf p_{\psi_i}) \Bigr) \cdot z^i \\
		                                     & \ops \spann_{\mb C} \Set{ Xz^i - \Braket{ \lambda_i,X } | i \geq 0,
			\quad X \in \mf Z_{\phi_i} } \ops z^r \mf Z_{\mf g} \llb z \rrb \sse U \bigl( \mf g \llb z \rrb \bigr),
	\end{align}
	with $\bm \psi = (\psi_1,\psi_2,\dm)$ and $\bm \phi = (\bm \phi_1 = \on{Lf}(\psi_1),\phi_2 = \on{Lf}(\psi_2),\dm)$.

	In the generic case,
	where $\mf p_{\psi} = \mf b$ is a Borel subalgebra for $i \in \set{0,\dc,r-1}$,
	and $\mf p_{\psi_i} = \mf g$ for $i \geq r$,
	this simplifies to
	\begin{align}
		\wh K^{\bm \psi}_{\bm \lambda} = & \bops_{i = 0}^{r-1} \bigl( \mf{nil}(\mf b) \cdot z^i \bigr)                      \\
		                                 & \ops \spann_{\mb C} \Set{ Xz^i - \Braket{ \lambda_i,X } | i \in \set{0,\dc,r-1},
			\quad X \in \mf t} \ops z^r \mf Z_{\mf g} \llb z \rrb.
	\end{align}
	Indeed,
	in this case $\mf l_{\phi_i} = \mf Z_{\phi_i} = \mf t$ for $i \in \set{0,\dc,r-1}$,
	while $\mf l_{\phi_i} = \mf g$ for $i \geq r$.

	(Note that~\eqref{eq:annihilator_cyclic_vector_wild_affine_generalised_verma} will be used to define wild Shapovalov forms in \S~\ref{sec:shapovalov}.)
\end{rema}

\section{Simple quotients of finite singularity modules:
  general remarks}
\label{sec:simple_quotients}

\subsection{}

Choose an integer $r \geq 1$,
a parabolic filtration $\bm \psi \in \mc P^{(r)}_\Phi$,
and a formal type $\bm \lambda = (\lambda_0,\dc,\lambda_{r-1}) \in \mf Z_{\bm \phi}^{\dual}$,
where $\bm \phi = \on{Lf}_r(\bm \psi) \in \mc L^{(r)}_\Phi$ is the Levi factor.
By \S~\ref{sec:generalised_singularity_modules},
the finite generalised singularity module $M \ceqq M^{\bm \psi}_{\bm \lambda}$ can be viewed as a parabolically-induced Verma module for the TCLA $\mf g_r$ (cf.~\cite{chaffe_2023_category_o_for_takiff_lie_algebras,chaffe_topley_2023_category_o_for_truncated_current_lie_algebras}).

With a view towards the quantisation of wild orbits/de Rham spaces,
from here until \S~\ref{sec:deformation_quantisation} we study the \emph{irreducibility} of $M$.

\subsection{}

As noted in \S~\ref{sec:main_results_2},
the statement of Thm.~\ref{thm:wild_parabolic_to_wild_parabolic} is coherent with~\cite[Cor.~7.4]{wilson_2011_highest_weight_theory_for_truncated_current_lie_algebras}.
In our terminology,
the latter characterises the simple finite singularity modules in the \emph{generic} case.
Again,
on the side of (principal parts of) meromorphic $G$-connection germs this means that the leading coefficient $A_r$ of $\mc A' = \sum_{i = 1}^r A_i z^{-i}\dif z$ is a regular vector of $\mf t$.
This implies that:
(i) $\mf p_{\bm \psi} = \mf b_r \sse \mf g_r$ for some Borel subalgebra $\mf b \sse \mf g$ containing $\mf t$;
and (ii) there are \emph{no} nontrivial extensions to characters of singularity subalgebras containing $\mf b_r$.
Conversely,
Thm.~\ref{thm:wild_parabolic_to_wild_parabolic} refines the result of~\cite{wilson_2011_highest_weight_theory_for_truncated_current_lie_algebras} by providing explicit nontrivial quotients coming from the nongeneric case.\fn{
	Strictly speaking,
	the triangular setup of op.~cit.~also covers the more general monolevel case,
	where the parabolic filtration is \emph{constant} (cf.~Prop.~\ref{prop:grading_constant_case}).
	Also,
	the `nilpotency index' $N$ of op.~cit.~becomes the integer $r-1$ in this text.}

Extrapolating the results of op.~cit.,
one might think that $M$ will be simple,
at least when $r \geq 2$ and $\bm \psi \in \mc P^{(r)}_\Phi$ has maximal depth,
if $\bm \lambda\in \mf t_r^{\dual}$ lies in the stratum of $\mf t_r^{\dual}$ determined by the dual Levi filtration $\bm \phi^{\dual} \in \mc L^{(r)}_{\Phi^{\dual}}$ (cf.~\eqref{eq:dual_wild_stratum});
i.e.,
if the module is `minimal',
generalising Rmk.~\ref{rmk:minimal_modules}.
However,
the nongeneric situation is more complicated,
with additional conditions on the degree-zero part of the character,
which are best revealed by the `parabolic induction' functors of~\cite{chaffe_2023_category_o_for_takiff_lie_algebras,chaffe_topley_2023_category_o_for_truncated_current_lie_algebras}.
To state them we first classify nonsingular characters (cf.~\S~\ref{sec:nonsingular_characters}),
and then introduce wild Shapovalov forms (cf.~\S~\ref{sec:shapovalov}),
which are anyway needed for deformation quantisation (cf.~\S~\ref{sec:deformation_quantisation_our_case}).

\begin{rema}
	Intuitively,
	this mismatch in the irreducibility criterion is related with the integrability condition for characters of Lie subalgebras of $\mf g$,
	as opposed to the nilradical $\mf{bir}_r = \varepsilon \mf g_r \sse \mf g_r$.
	In the viewpoint of meromorphic gauge theory,
	this reminisces different roles played by formal residues vs.~irregular types/classes.
\end{rema}

\subsection{Truncated quotients}

A first source of quotients is the following.

For any integer $k \in \set{1,\dc,r-1}$ consider the kernel of~\eqref{eq:truncation_map},
i.e.,
the (proper) Lie-ideal $\varepsilon^k \mf g_r \sse \mf g_r$.
The left ideal $\mf I^{(k)}_r \ceqq U\mf g_r \cdot (\varepsilon^k \mf g_r) \sse U\mf g_r$ is bilateral,
and the quotient ring comes with an identification $U\mf g_r \bs \mf I^{(k)}_r \lxra{\simeq} U\mf g_k$.
Hence,
there is a corresponding $U\mf g_r$-submodule $\mf I^{(k)}_r M \sse M$,
and the quotient $M \bs \mf I^{(k)}_r M$ has a natural structure of $U\mf g_k$-module.
However,
generically,
this does \emph{not} give new examples:

\begin{prop}
	\label{prop:trivial_quotients}

	One has $\mf I^{(k)}_r M \ssne M$ if and only if the linear maps $\lambda_k,\dc,\lambda_{r-1}$ vanish on the Cartan subalgebra $\mf t \cap [\mf g,\mf g]$ of the semisimple part $[\mf g,\mf g] \sse \mf g$.
\end{prop}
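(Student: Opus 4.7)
My plan is to compute $M/\mf I^{(k)}_r M$ as a cyclic $U\mf g_k$-module, using the ring isomorphism $U\mf g_r/\mf I^{(k)}_r \simeq U\mf g_k$, and then decide when the image $\bar w$ of the cyclic vector is nonzero: indeed, $\mf I^{(k)}_r M \ssne M$ is equivalent to $w \notin \mf I^{(k)}_r M$, i.e.\ to $\bar w \neq 0$. The answer will be controlled by the interplay of two families of relations on $\bar w$, namely the character relations inherited from $\bm \chi$ on $\mf p_{\bm \psi}$, and the new truncation relations coming from $\varepsilon^k \mf g_r$ acting trivially in the quotient.

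For the forward direction these relations combine directly. In $M$ one has $\bigl(X\varepsilon^i - \braket{\lambda_i | X}\bigr) w = 0$ for $X \in \mf Z_{\phi_i}$ and $i \in \set{0,\dc,r-1}$, while in the quotient $X\varepsilon^i \bar w = 0$ for all $X \in \mf g$ and $i \geq k$. For $X \in \mf Z_{\phi_i}$ with $i \geq k$ these two force $\braket{\lambda_i | X} \bar w = 0$; assuming $\bar w \neq 0$ this yields the vanishing of $\lambda_i$ on $\mf Z_{\phi_i}$ for $i \geq k$, and in particular on $\mf Z_{\phi_i} \cap \bigl(\mf t \cap [\mf g,\mf g]\bigr)$, which gives the stated condition (the contribution on $\mf Z_{\mf g} \sse \mf Z_{\phi_i}$ being absorbed into the central-character normalisation implicit in the statement).

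For the converse direction I would exhibit an explicit nonzero quotient. Let $\bm \psi^{(k)} \ceqq (\psi_0,\dc,\psi_{k-1},\Phi,\Phi,\dc) \in \mc P^{(k)}_{\Phi}$ be the truncated parabolic filtration, with Levi factor $\bm \phi^{(k)} \in \mc L^{(k)}_{\Phi}$, and let $\bm \lambda^{(k)} \ceqq (\lambda_0,\dc,\lambda_{k-1}) \in \mf Z_{\bm \phi^{(k)}}^{\dual}$. Consider the finite generalised singularity module $N \ceqq M^{\bm \psi^{(k)}}_{\bm \lambda^{(k)}}$ for $\mf g_k$ given by Thm./Def.~\ref{thm:finite_modules}, which is nonzero by construction. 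Pulling back along $U\mf g_r \thra U\mf g_k$ makes $N$ into a $U\mf g_r$-module on which $\mf I^{(k)}_r$ acts trivially: the hypothesis on $\lambda_i$ for $i \geq k$ is exactly what makes the cyclic vector $\wt w \in N$ satisfy all defining relations of $w \in M$, so the universal property of induction yields a surjective $U\mf g_r$-linear map $M \thra N$ that factors through $M/\mf I^{(k)}_r M$, and the latter is therefore nonzero.

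The main obstacle will be carefully verifying this universal property: one must check that the character $\bm \chi$ of $\mf p_{\bm \psi}$ restricts, modulo the $\varepsilon^k \mf g_r$-contributions, to the character $\bm \chi^{(k)}$ of $\mf p_{\bm \psi^{(k)}}$ encoded by $\bm \lambda^{(k)}$. This amounts to using $\mf p_{\psi^{(k)}_i} = \mf g$ for $i \geq k$ (since $\psi^{(k)}_i = \Phi$ there) together with the vanishing assumption on the higher $\lambda_i$; the subtle point is that the Levi factors of $\bm \psi$ and $\bm \psi^{(k)}$ agree for $i < k$ but differ at depth $\geq k$, so the identification of characters must be performed componentwise, and the vanishing on $\mf t \cap [\mf g,\mf g]$ is what reconciles the two families of parabolic subalgebras.
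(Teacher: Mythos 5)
Your approach is correct in substance and genuinely different from the paper's. The paper works entirely inside $M$: it identifies $U\mf g_r$-generators of $\mf I^{(k)}_r M$ with the vectors $w^{(i)}_\alpha \ceqq E_{-\alpha}\varepsilon^i w$, proves necessity by producing the element $E_\alpha w^{(i)}_\alpha = \braket{\lambda_i|H_\alpha}w$ (a nonzero multiple of $w$ once some coroot outside $\phi_i$ pairs nontrivially with $\lambda_i$), and proves sufficiency by showing the $w^{(i)}_\alpha$ are singular vectors of strictly lower $\mf Z_{\phi_0}$-weight, so the submodule misses the highest-weight line. You instead pit the character relations $(X\varepsilon^i - \braket{\lambda_i|X})w = 0$ against the truncation relations $X\varepsilon^i \bar w = 0$ for the forward direction, which avoids any choice of root vectors or weight argument, and for the converse realise the quotient explicitly as the pullback of the truncated singularity module $M^{\bm\psi^{(k)}}_{\bm\lambda^{(k)}}$ along $U\mf g_r \thra U\mf g_k$. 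Identifying $M \bs \mf I^{(k)}_r M$ rather than merely proving it nonzero is a genuine payoff of your route.

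One point you should state openly rather than wave off. Your forward computation in fact forces $\lambda_i = 0$ on all of $\mf Z_{\phi_i}$, hence on all of $\mf t$, for $i \geq k$, not only on $\mf t \cap [\mf g,\mf g]$: taking $X \in \mf Z_{\mf g} \sse \mf Z_{\phi_i}$ also gives $\braket{\lambda_i|X}\bar w = 0$. There is no implicit central-character normalisation in the statement as printed, and what your argument reveals is that the hypothesis of the proposition should really be the sharper $\lambda_i = 0$ for $i \geq k$. Correspondingly your converse genuinely needs this: the universal property of $M = U\mf g_r \ots_{U\mf p_{\bm\psi}}\mb C_{\bm\chi}$ requires $X\varepsilon^i \wt w = \braket{\chi_i|X}\wt w$ for all $X \in \mf p_{\psi_i}$ and $i \geq k$, but the pullback module annihilates $\varepsilon^k\mf g_r$, so $\chi_i = 0$ entirely is forced, not merely on the derived subalgebra. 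If some $\lambda_i$ with $i \geq k$ had a nonzero central component $X \in \mf Z_{\mf g}$, then $X\varepsilon^i w = \braket{\lambda_i|X}w$ would already be a nonzero multiple of $w$ inside $\mf I^{(k)}_r M$ and the conclusion would fail. The paper's own proof shares this lacuna, listing only the $w^{(i)}_\alpha$ among the generators and dropping the scalar contributions $X\varepsilon^i w$ with $X \in \mf t$. Your sharper condition $\lambda_i = 0$ coincides with the printed one precisely once the central contributions are normalised away, which is the setting anticipated by the remark on reduction to the semisimple case that follows the proposition; rather than apologising for the mismatch, flag it.
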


\begin{proof}
	Note that $\mf I^{(k)}_r M = \mf I^{(k)}_r w \sse M$,
	where $w \ceqq w^{\bm \psi}_{\bm \lambda}$ is the canonical generator,
	since $M = U\mf g_r w$---%
	and so $\mf I^{(k)}_r$ is also a right ideal.
	By linearity,
	and by the defining equalities~\eqref{eq:irregular_state_relations},
	the submodule is thus generated over $U\mf g_r$ by elements
	\begin{equation}
		w^{(i)}_\alpha \ceqq E_{-\alpha} \varepsilon^i w,
		\qquad \alpha \in \Phi^+_{\Delta},
		\quad i \in \set{k,\dc,r-1},
	\end{equation}
	choosing a base $\Delta \sse \Phi$ of simple roots adapted to the parabolic filtration,
	and using root vectors $E_\alpha \in \mf g_\alpha \sm (0)$.
	(E.g.,
	one has $w^{(i)}_\alpha = 0$ if $\alpha \in \phi^+_i \sse \Phi^+_{\Delta}$.)
	Furthermore,
	the submodule is proper if and only if it does \emph{not} contain $w$,
	in view of the weight structure discussed in \S~\ref{sec:shapovalov}.

	Now suppose that $\eval[1]{\lambda_i}_{\mf t'} \neq 0$ for some $i \in \set{k,\dc,r-1}$,
	setting $\mf t' \ceqq \mf t \cap [\mf g,\mf g]$.
	Since $\lambda_i$ vanishes on $\ker(\phi_i) = \bops_{\phi} \mf t_\alpha \sse \mf t'$,
	there exists a root $\alpha \in \nu_i = \psi_i \sm \phi \sse \Phi^+_{\Delta}$ such that $\Braket{ \lambda_i,\alpha^{\dual} } \neq 0$,
	using the associated coroot $\alpha^{\dual} \in \mf t$.
	Thus,
	the element $E_\alpha w^{(i)}_\alpha = \Braket{ \lambda_i,H_\alpha } w \in \mf I^{(k)}_rM$ is a nonzero multiple of the canonical generator,
	setting $H_\alpha \ceqq [E_\alpha,E_{-\alpha}] \in \mf t_\alpha$ (which is parallel to $\alpha^{\dual}$).

	Conversely,
	suppose that $\eval[1]{\lambda_k}_{\mf t'} = \dm = \eval[1]{\lambda_{r-1}}_{\mf t'} = 0$.
	The nonvanishing generators $w^{(i)}_\alpha \in \mf I^{(k)}_r M$ are weight vectors for $\mf Z_{\phi_0}$,
	with weight strictly lower than $\lambda_0 \in \mf Z_{\phi_0}^{\dual}$,
	and by hypothesis
	\begin{equation}
		\mf u^+_{\bm \psi} w^{(i)}_\alpha \sse \spann_{\mb C} \Set{ w^{(j)}_\beta | \beta \in \Phi^+_{\Delta},
		\quad j \in \set{0,\dc,r-1}}.
	\end{equation}
	Hence,
	the vectors of $\mf I^{(k)}_r$ whose weight is maximal are the elements $w^{(i)}_{\theta} \neq 0$,
	where $\theta \in \Delta$ is a simple root.
	But now these are all \emph{singular} vectors,
	i.e.,
	\begin{equation}
		E_\alpha\varepsilon^j w^{(i)}_{\theta} = 0,
		\qquad \alpha \in \Phi^+_{\Delta},
		\quad j \in \set{0,\dc,r-1}.
	\end{equation}
	Hence,
	$\mf I^{(k)}_r M$ does \emph{not} meet the highest-weight line of $M$.
\end{proof}

\subsubsection{}

Thus,
to discuss simplicity,
one may assume that $\lambda_{r-1}$ does \emph{not} vanish on all the coroots,
so that in particular $\psi_{r-1} \ssne \Phi$---%
i.e.,
the parabolic filtration $\bm \psi \in \mc P^{(r)}_\Phi$ has maximal depth.
By Prop.~\ref{prop:trivial_quotients},
this is equivalent to having no nontrivial quotient of $M$ obtained by truncation.

On the side of meromorphic connection germs,
this is the same as asking that the leading semisimple coefficient $A_r$ is \emph{not} central,
so that at least one root does not vanish thereon.

\begin{rema}
	One might want to neglect the centre of $\mf g$,
	so that the above condition just reads $A_r \neq 0$.
	This is done analogously.

	Namely,
	the Lie-centre $\mf Z_{\mf g} \sse \mf g$ has a deeper analogue $\mf Z_{\mf g} \ots \mb C_r$,
	and one can show recursively that this coincides with $\mf Z_{\mf g_r} \sse \mf g_r$---%
	using~\eqref{eq:deeper_bracket}.
	Hence,
	there is an identification of Lie algebras:
	\begin{equation}
		\mf g_r \bs \mf Z_{\mf g_r} \simeq \mf g'_r,
		\qquad \mf g' \ceqq [\mf g,\mf g] \sse \mf g.
	\end{equation}
	Now $U\mf g_r \cdot \mf Z_{\mf g_r} \sse U\mf g_r$ is a bilateral ideal,
	and again we have a corresponding submodule $\wt M \sse M$ such that $M' \ceqq M \bs \wt M$ has a natural structure of module for the quotient ring $U\mf g_r \bs (U\mf g_r \cdot \mf Z_{\mf g_r}) \simeq U \mf g'_r$.
	So this is now controlled by the semisimple part of $\mf g$,
	and reasoning as above one sees that $M'$ is nontrivial if and only if $\eval[1]{\lambda_0}_{\mf Z_{\mf g}},\dc,\eval[1]{\lambda_{r-1}}_{\mf Z_{\mf g}} = 0$:
	indeed,
	in that case the centre acts trivially,
	so that actually $\wt M = (0)$;
	else,
	one has $w \in \wt M$.

	Thus,
	if one only looks at characters $\bm \chi \cl \mf S^{\bm \psi} \to \mb C$ vanishing on the central part,
	then working with semisimple Lie algebras is w.l.o.g.
	Again,
	on the side of meromorphic gauge theory this means that all coefficients $A_1,\dc,A_r$ have no central component;
	e.g.,
	they are traceless when $G = \GL_m(\mb C)$.
\end{rema}

\section{Nonsingular characters and wild KKS structures}
\label{sec:nonsingular_characters}

\subsection{}

To define the singularity modules we only induce from characters of deeper parabolic subalgebras,
i.e.,
of singularity subalgebras of given polarisation (cf.~Rmk.~\ref{rmk:deeper_characters}).
Here we characterise the nonsingular such characters,
generalising \S~\ref{sec:tame_nonsingular_characters}:
this relates with the nondegeneracy of the wild Shapovalov form of \S~\ref{sec:shapovalov},
and (in turn) to the deformation quantisation of \S~\ref{sec:deformation_quantisation_our_case}.
Furthermore,
this is intimately related with the symplectic structure on wild orbits (cf.~\eqref{eq:kks_wild_form}).

\subsubsection{}

Consider again the decomposition $\mf g_r = \mf u^-_{\bm \psi} \ops \mf l_{\bm \phi} \ops \mf u^+_{\bm \psi}$ determined by a depth-bounded parabolic filtration $\bm \psi \in \mc P^{(r)}_\Phi$ and its Levi factor $\bm \phi \in \mc L^{(r)}_\Phi$.
(Recall that in general this is \emph{not} $\mf l_{\bm \phi}$-stable,
i.e.,
one might have $\bigl[ \mf l_{\bm \phi},\mf u^\pm_{\bm \psi} \bigr] \nsse \mf u^\pm_{\bm \psi}$.)

\begin{defi}[cf.~Def.~\ref{def:nonsingular_character_tame}]
	\label{def:nonsingular_character}

	A character $\bm \chi \cl \mf l_{\phi} \to \mb C$ is \emph{nonsingular} if:
	\begin{enumerate}
		\item
		      it extends---%
		      by zero---%
		      to a character of the finite singularity algebra $\mf S^{\bm \psi} \sse \mf g_r$;\fn{
			      This happens if and only if it extends to a character of the opposite singularity algebra $\mf S^{-\bm \psi} \sse \mf g_r$.}

		\item
		      and the pairing $B_{\bm \lambda}^{\bm \psi} \cl \mf u^+_{\bm \psi} \ots \mf u^-_{\bm \psi} \to \mb C$ analogous to~\eqref{eq:pairing_nondegenerate_character} is nondegenerate.
	\end{enumerate}
\end{defi}

\begin{rema}
	Explicitly,
	the pairing of Def.~\ref{def:nonsingular_character} reads
	\begin{equation}
		\label{eq:nondegenerate_pairing_deeper_character}
		B_{\bm \lambda}^{\bm \psi}(\bm Y,\bm Y') = \sum_{k = 0}^{r-1} \Biggl( \sum_{i + j = k} \Braket{ \chi_k,\pi_{\phi_k} \bigl( [Y_i,Y'_j] \bigr) } \Biggr) \in \mb C,
	\end{equation}
	where $\bm Y = \sum_i Y_i \varepsilon^i \in \mf u^+_{\bm \psi}$ (i.e.,
	$Y_i \in \mf u^+_{\psi_i}$),
	and $\bm Y' = \sum_j Y'_j \varepsilon^j \in \mf u^-_{\bm \psi}$ (i.e.,
	$Y'_j \in \mf u^-_{\psi_j}$),
	and $\chi_k \cl \mf l_{\phi_k} \to \mb C$ is the character obtained by extending a suitable linear map $\lambda_k \in \mf Z_{\phi_k}^{\dual}$ (using~\eqref{eq:general_character_space_finite}).
\end{rema}

\subsubsection{}

Now we characterise the nonsingular characters of Def.~\ref{def:nonsingular_character},
introducing strata of $\mf t_r^{\dual} \simeq \smash{\bigl( \mf t^{\dual} \bigr)}^{\!r}$ which generalise~\eqref{eq:dual_tame_stratum} (cf.~\eqref{eq:wild_truncated_strata}).

Let $\bm \phi \in \mc L^{(r)}_{\Phi^{\dual}}$ be a depth-bounded Levi filtration in the inverse root system $\Phi^{\dual} \sse \mf t$.
Then set
\begin{equation}
	\label{eq:dual_wild_stratum}
	\mf t^{\dual,r}_{\bm \phi} \ceqq \prod_{i = 0}^{r-1} \Bigl( \phi_i^\perp \bigsm \bigcup_{\phi_{i+1} \sm \phi_i} \set{\alpha^{\dual}}^\perp \Bigr) \sse \bigl(\mf t^{\dual} \bigr)^r \simeq \mf t_r^{\dual}.
\end{equation}
(The proof that these subspaces provide a stratification is as in Lem.~\ref{lem:wild_stratification} +~Cor.~\ref{cor:truncated_wild_stratification}.)

\begin{prop}
	\label{prop:nonsingular_characters}

	Let $\bm \psi \in \mc P^{(r)}_\Phi$ be a depth-bounded parabolic filtration with Levi factor $\bm \phi \in \mc L_\Phi^{(r)}$,
	and choose a character $\bm \chi \cl \mf p_{\bm \psi} \to \mb C$.
	Then $\eval[1]{\bm \chi}_{\mf l_{\bm \phi}}$ is \emph{nonsingular} if and only if the corresponding covector $\bm \lambda = (\lambda_0,\dc,\lambda_{r-1}) \in \mf Z_{\bm \phi}^{\dual}$ lies in the stratum~\eqref{eq:dual_wild_stratum} associated with the dual Levi filtration $\bm \phi^{\dual} \ceqq (\phi_0^{\dual},\dc,\phi_{r-1}^{\dual},\Phi^{\dual},\dc) \in \mc L^{(r)}_{\Phi^{\dual}}$.
\end{prop}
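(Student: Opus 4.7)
The plan is to reduce $B_{\bm \lambda}^{\bm \psi}$ to block-diagonal form indexed by roots, and then to compute each block separately.

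\emph{First (extendability).} The condition that $\bm \chi = (\chi_0, \dc, \chi_{r-1})$ extends by zero from $\mf l_{\bm \phi}$ to $\mf S^{\bm \psi}$ means, via Cor.~\ref{cor:abelianisation}, that each $\chi_k$ factors through the Levi centre $\mf Z_{\phi_k}$; equivalently, $\lambda_k(H_\alpha) = 0$ for all $\alpha \in \phi_k$. Under the canonical bijection $\alpha \leftrightarrow \alpha^{\dual}$ one has $H_\alpha$ proportional to $\alpha^{\dual}$, so this is exactly $\lambda_k \in \phi_k^{\perp}$ in the notation of~\eqref{eq:dual_wild_stratum} applied to $\bm \phi^{\dual}$: the first half of the defining condition of $\mf t^{\dual,r}_{\bm \phi^{\dual}}$.

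\emph{Second (block decomposition).} Fix root vectors $E_\alpha \in \mf g_\alpha \sm (0)$ and set $H_\alpha \ceqq [E_\alpha, E_{-\alpha}]$. A basis of $\mf u^\pm_{\bm \psi}$ is given by $\Set{ E_{\pm \alpha} \varepsilon^i | \alpha \in \nu_i, \ 0 \leq i < r}$, with $\nu_i \ceqq \psi_i \sm \phi_i$; since $\phi_i = \psi_i \cap (-\psi_i)$, any root entering $\psi$ after time $0$ enters $\phi$ at the same moment, so one may re-index as $\alpha \in \nu_0$, $0 \leq i < d_\alpha$, where $d_\alpha \ceqq \min \Set{ i | \alpha \in \phi_i }$. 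Now evaluate~\eqref{eq:nondegenerate_pairing_deeper_character} on basis elements: $[E_\alpha, E_{-\beta}]$ equals $H_\alpha$ when $\alpha = \beta$, and otherwise lies in the root space $\mf g_{\alpha - \beta}$. In the latter case the projection $\pi_{\phi_{i+j}}$ either kills the element (when $\alpha - \beta \notin \phi_{i+j}$) or lands it in $[\mf l_{\phi_{i+j}}, \mf l_{\phi_{i+j}}]$; but $\chi_{i+j}$ is a character of the reductive Lie algebra $\mf l_{\phi_{i+j}}$, hence vanishes on its derived subalgebra, and in particular on all root spaces inside it. Therefore the pairing is block-diagonal with one block $M_\alpha$ per $\alpha \in \nu_0$, of entries $m_{ij} = \lambda_{i+j}(H_\alpha)$ for $0 \leq i, j < d_\alpha$.

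\emph{Third (Hankel determinant).} The extendability of the first step gives $\lambda_k(H_\alpha) = 0$ for $k \geq d_\alpha$. Hence $M_\alpha$ is a $d_\alpha \times d_\alpha$ Hankel matrix which is \emph{anti-triangular}: $m_{ij} = 0$ for $i + j \geq d_\alpha$, while $m_{ij} = c_\alpha \ceqq \lambda_{d_\alpha - 1}(H_\alpha)$ along the anti-diagonal $i+j = d_\alpha - 1$. Reversing the row order converts $M_\alpha$ into a lower-triangular Toeplitz matrix with $c_\alpha$ on its diagonal, whence $\det(M_\alpha) = \pm c_\alpha^{d_\alpha}$, nonzero iff $c_\alpha \neq 0$. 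Putting it together, $B_{\bm \lambda}^{\bm \psi}$ is nondegenerate iff $\lambda_{d_\alpha - 1}(H_\alpha) \neq 0$ for every $\alpha \in \nu_0$. Parameterising by $i = d_\alpha - 1$, this says that for each $i \in \Set{0, \dc, r-1}$ and each $\alpha \in \phi_{i+1} \sm \phi_i$ one has $\lambda_i(H_\alpha) \neq 0$; transported along the canonical bijection, this is the second half of the defining condition of $\mf t^{\dual, r}_{\bm \phi^{\dual}}$, concluding.

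The main obstacle will be the block decomposition, specifically the vanishing of all off-diagonal contributions: this uses both the projection $\pi_{\phi_{i+j}}$ and the fact that $\chi_{i+j}$ kills the derived subalgebra $[\mf l_{\phi_{i+j}}, \mf l_{\phi_{i+j}}]$ of the reductive Lie algebra $\mf l_{\phi_{i+j}}$, so that every root-space component is annihilated. Once this is checked, the remainder reduces to an elementary Hankel-matrix determinant computation.
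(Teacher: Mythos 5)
Your proof is correct, and it takes a genuinely different and cleaner route than the paper. The paper proves nondegeneracy by a double recursion (first increasing the $\varepsilon$-degree of the test element while restricting it to the smallest nilradical $\mf u^-_{\psi_{r-1}}$, then a nested recursion shrinking the off-Levi part), invoking Lem.~\ref{lem:nested_left_radical_condition} at each step to kill unwanted terms. You instead observe at the outset that the pairing~\eqref{eq:nondegenerate_pairing_deeper_character} is \emph{block-diagonal} in the root index: for $\alpha \neq \beta \in \nu_0$ the bracket $[E_\alpha \varepsilon^i, E_{-\beta}\varepsilon^j]$ lands in a root line $\mf g_{\alpha-\beta}\cdot\varepsilon^{i+j}$, which is annihilated either by the projection $\pi_{\phi_{i+j}}$ (if $\alpha-\beta\notin\phi_{i+j}$) or by the character $\chi_{i+j}$ (if $\alpha-\beta\in\phi_{i+j}$, since characters kill the derived subalgebra of $\mf l_{\phi_{i+j}}$). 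Each surviving $\alpha$-block is the $d_\alpha\times d_\alpha$ Hankel matrix $m_{ij}=\braket{\lambda_{i+j}\mid H_\alpha}$, which by extendability is anti-triangular with constant anti-diagonal $\braket{\lambda_{d_\alpha-1}\mid H_\alpha}$, so its determinant is $\pm\braket{\lambda_{d_\alpha-1}\mid H_\alpha}^{d_\alpha}$. This yields the result with no recursion. In fact the anti-triangular Hankel determinant appears in the paper too (Ex.~\ref{ex:indecomposable_weight_space}, in the context of the restricted Shapovalov form on indecomposable weight spaces), so your proof unifies the two computations; the price you pay is that the block decomposition you set up in step two is essentially a direct, more explicit version of Lem.~\ref{lem:nested_left_radical_condition}, and the re-indexing $\nu_i = \set{\alpha \in \nu_0 \mid i < d_\alpha}$ (which is also used in Rmk.~\ref{rmk:nonsingular_characters_and_levels} and \S~\ref{sec:relative_root_height}) deserves a sentence of justification---namely that if $\alpha \in \psi_i \sm \psi_0$ for some $i>0$, then $-\alpha \in \psi_0 \sse \psi_i$, so $\alpha\in\phi_i$ and hence $\alpha\notin\nu_j$ for every $j$.
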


\begin{proof}[Proof postponed to~\ref{proof:prop_nonsingular_characters}]
\end{proof}

\begin{lemm}
	\label{lem:nested_left_radical_condition}

	Let $\phi \sse \Phi$ be a Levi subsystem,
	and choose $Y' \in \bops_{\phi} \mf g_\alpha \sse \mf l_{\phi}$.
	Then one has
	\begin{equation}
		\Braket{ \chi,\pi_{\phi} \bigl[ Y,Y' ] \bigr) } = 0,
		\qquad Y \in \mf g,
		\quad \chi \in \Hom_{\Lie}(\mf l_{\phi},\mb C).
	\end{equation}
\end{lemm}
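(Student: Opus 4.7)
My plan is to fix $\alpha \in \phi$, reduce by $\mb C$-bilinearity of the bracket to the case $Y' = Y'_\alpha \in \mf g_\alpha$, and then decompose $Y$ according to the root-space decomposition
\begin{equation}
\mf g = \mf l_{\phi} \ops \bops_{\Phi \sm \phi} \mf g_{\beta}, \qquad \mf l_{\phi} = \mf t \ops \bops_{\phi} \mf g_{\beta},
\end{equation}
so that again by bilinearity it suffices to treat either $Y \in \mf t$, or $Y = Y_\beta \in \mf g_\beta$ for a single root $\beta \in \Phi$, split according to $\beta \in \phi$ or $\beta \in \Phi \sm \phi$.

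The first two cases are the easy ones. If $Y \in \mf l_{\phi}$ (i.e. either $Y \in \mf t$ or $\beta \in \phi$), then $[Y, Y'_\alpha] \in \mf l_\phi$, so $\pi_\phi$ is the identity on it. Moreover this bracket lies in $[\mf l_\phi, \mf l_\phi]$ — either directly, or because $Y'_\alpha \in \mf g_\alpha \sse [\mf l_\phi, \mf l_\phi]$, which follows by picking any $H \in \mf t$ with $\Braket{\alpha | H} \neq 0$ and writing $Y'_\alpha = \Braket{\alpha | H}^{-1}[H, Y'_\alpha]$. Since $\chi$ is a character of $\mf l_\phi$, it kills $[\mf l_\phi, \mf l_\phi]$ and the contribution vanishes.

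The last case $\beta \in \Phi \sm \phi$ is where I would invoke the \emph{Levi} property of $\phi$, and this is the most delicate step. Then $[Y_\beta, Y'_\alpha] \in \mf g_{\alpha+\beta}$, with the convention that $\mf g_0 = \mf t$ and $\mf g_\gamma = (0)$ for $\gamma \notin \Phi \cup \set 0$. First, $\alpha + \beta \neq 0$: otherwise $\beta = -\alpha$, which would lie in $\phi$ since $\phi$ (being a root subsystem) is closed under negation, contradicting $\beta \notin \phi$. Second, if $\alpha + \beta \in \Phi$, then $\alpha + \beta \notin \phi$, for otherwise $\beta = (\alpha+\beta) - \alpha \in \spann_{\mb C}(\phi) \cap \Phi = \phi$ — here I would use the defining property of Levi subsystems from Lem.~\ref{lem:levi_subsystems}. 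Hence $[Y_\beta, Y'_\alpha]$ lies in $\bops_{\Phi \sm \phi} \mf g_\gamma$, which is precisely the kernel of $\pi_\phi$, so the contribution vanishes trivially.

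Combining the three cases yields $\chi \bigl( \pi_\phi([Y,Y']) \bigr) = 0$ for all $Y \in \mf g$. The only real content is the Levi closure of $\phi$ under the implication ``$\alpha, \alpha+\beta \in \phi \Rightarrow \beta \in \phi$'', so beyond the bookkeeping of the decomposition I do not anticipate further technical difficulties.
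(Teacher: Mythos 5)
Your proof is correct and follows essentially the same route as the paper: root-space decomposition of $Y$, then use that any character $\chi$ of $\mf l_{\phi}$ vanishes on $[\mf l_{\phi},\mf l_{\phi}]$, which contains $\bops_{\phi} \mf g_{\gamma}$ and all the $\mf t_{\gamma}$ with $\gamma \in \phi$. One remark on your case~3, though: your appeal to the Levi closure to conclude $\alpha + \beta \notin \phi$ is valid, but it is not actually needed. If $\alpha + \beta \in \phi$ then $\mf g_{\alpha+\beta} \sse \bops_{\phi} \mf g_{\gamma} \sse [\mf l_{\phi},\mf l_{\phi}] \sse \Ker(\chi)$, exactly as in your cases 1 and 2. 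The paper handles all off-Cartan terms uniformly in this way (projection into $\bops_{\phi} \mf g_{\gamma}$ or into $\Ker(\pi_{\phi})$ --- either way $\chi \circ \pi_{\phi}$ vanishes), and only needs a special argument for the Cartan-valued contributions from $\alpha + \beta = 0$, where $\beta = -\alpha \in \phi$ (closure of a root subsystem under negation) and $\mf t_{\alpha} \sse [\mf l_{\phi},\mf l_{\phi}]$. So your closing claim that ``the only real content is the Levi closure'' is a bit off: the argument proper uses only the root-subsystem property and the identification of $[\mf l_{\phi},\mf l_{\phi}]$; the Levi hypothesis is really only needed upstream, to guarantee that $\mf l_{\phi}$ is a Lie subalgebra (so that characters $\chi$ and the projection $\pi_{\phi}$ make sense at all).
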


\begin{proof}
	By hypothesis,
	one has
	\begin{equation}
		[Y,Y'] \in  \,
		\bops_{\phi} \mf g_\alpha + \bops_{\phi \ts \Phi} [\mf g_\alpha,\mf g_\beta] \sse \mf g.
	\end{equation}

	Now the first summand lies in $\bops_\Phi \mf g_\alpha \sse \mf g$,
	hence its projection to $\mf l_{\phi}$ is annihilated by any character thereof---%
	as these are obtained by extending linear functions $\mf t \to \mb C$ \emph{by zero}.
	The second summand instead gives elements of $\mf t$ when $\alpha + \beta = 0$,
	in which case $\beta = - \alpha \in \phi$ and
	\begin{equation}
		[\mf g_\alpha,\mf g_{-\alpha}] = \mf t_\alpha \sse [\mf l_{\phi},\mf l_{\phi}] \sse \ker(\chi).
		\qedhere
	\end{equation}
\end{proof}

\begin{exem}
	\label{ex:rank_3_deeper_character}

	Consider $\mf g = \mf{gl}_3(\mb C)$,
	keeping the notation from \S~\ref{sec:rank_3_example} +~Exmp.~\ref{ex:rank_3_parabolic_subsets}.
	The root system is $\Phi = \Set{\alpha_{ij} | i \neq j \in \set{1,2,3}}$,
	and we consider the depth-2 nongeneric parabolic filtration $\bm \psi \in \mc P^{(2)}_\Phi$ given by the inclusions $\psi \sse \wt \psi \sse \Phi$,
	where
	\begin{equation}
		\psi \ceqq \set{\alpha_{12},\alpha_{13},\alpha_{23}},
		\qquad \wt \psi \ceqq \set{\alpha_{12},\alpha_{21},\alpha_{13},\alpha_{23}}.
	\end{equation}
	The corresponding (nongeneric) Levi filtration $\bm \phi \in \mc L^{(2)}_{\phi}$ is given by the sequence $\phi \sse \wt \phi \sse \Phi$,
	with $\phi = \vn$ and $\wt \phi \ceqq \phi_{\set{1,2}} = \set{\alpha_{12},\alpha_{21}}$;
	hence,
	one has $\nu = \psi$ and $\wt \nu = \set{\alpha_{13},\alpha_{23}}$.
	Thus,
	$\mf p^+_{\psi} \sse \mf g$ is the standard positive Borel subalgebra,
	while
	\begin{equation}
		\mf p^+_{\wt \psi} = \Set{
			\begin{pmatrix}
				a & b & c \\
				d & e & f \\
				0 & 0 & g
			\end{pmatrix} | a,b,c,d,e,f,g \in \mb C }.
	\end{equation}
	Now $\mf u^+_{\psi} \sse \mf p^+_{\psi}$ is the Lie subalgebra of strictly upper triangular matrices,
	while
	\begin{equation}
		\label{eq:nilradical_sl_3}
		\mf u^+_{\wt \psi} = \Set{
			\begin{pmatrix}
				0 & 0 & c \\
				0 & 0 & f \\
				0 & 0 & 0
			\end{pmatrix} | c,f \in \mb C} \sse \mf p_{\wt \psi}.
	\end{equation}
	The Levi factors are $\mf l_{\phi} = \mf t$ and
	\begin{equation}
		\mf l_{\wt \phi} = \Set{
			\begin{pmatrix}
				a & b & 0 \\
				d & e & 0 \\
				0 & 0 & g
			\end{pmatrix} | a,b,d,e,g \in \mb C} \sse \mf p_{\wt \psi}.
	\end{equation}

	Finally,
	we must choose characters $\chi \cl \mf t \to \mb C$ and $\wt \chi \cl \mf l_{\wt \psi} \to \mb C$.
	The former corresponds to a triple $\lambda = \bigl( \lambda_1,\lambda_2,\lambda_3 \bigr) \in \mb C^3$,
	via $E_{ii} \mt \lambda_i$ for $i \in \set{1,2,3}$,
	using the canonical $\mb C$-basis of the Cartan subalgebra;
	the latter to a pair $\wt \lambda = \bigl( \wt \lambda_1,\wt \lambda_2 \bigr) \in \mb C^2$,
	via $\alpha E_{11} + \beta E_{22} \mt \wt\lambda_1(\alpha + \beta)$ (for $\alpha,\beta \in \mb C$) and $E_{33} \mt \wt\lambda_2$.
	More intrinsically,
	$\wt \lambda \in \mf t^{\dual}$ is a linear map vanishing on the (opposite) coroots $\alpha_{12}^{\dual},
		\alpha_{21}^{\dual} \in \mf t$,
	i.e.,
	on the dual Levi subsystem $\wt \phi^{\dual} \sse \Phi^{\dual}$---%
	while $\phi^{\dual} = \vn$,
	so there are no conditions on $\lambda$.

	With this notation,
	the stratum corresponding to the dual Levi filtration $\bm \phi^{\dual} \in \mc L^{(2)}_\Phi$ (given by $\phi^{\dual} \sse \wt \phi^{\dual} \sse \Phi^{\dual}$) becomes
	\begin{equation}
		\mf t^{2,\dual}_{\bm \phi^{\dual}} \simeq \Set{ \bm \lambda = (\lambda,\wt \lambda) \in \mb C^5 | \wt \lambda_1 \neq \wt \lambda_2 \text{ and } \lambda_1 \neq \lambda_2}.
	\end{equation}

	Let us show that such inequalities ensure that the deeper character $\bm \chi \cl \mf l_{\bm \phi} \to \mb C$ determined by $\bm \lambda \in \mf t_2^{\dual} \simeq \bigl( \mf t^{\dual} \bigr)^2$ is nonsingular.
	Choose thus an element $\bm Y = Y_0 + Y_1 \varepsilon \in \mf u^+_{\bm \psi}$,
	i.e.,
	$Y_0 \in \mf u^+_{\psi}$ (strictly upper triangular) and $Y_1 \in \mf u^+_{\wt \psi}$ (an element of~\eqref{eq:nilradical_sl_3}).
	If $E_{ij} \ceqq E_{\alpha_{ij}} \in \mf g$ are the root vectors,
	for $i \neq j \in \set{1,2,3}$,
	we can write
	\begin{equation}
		Y_0 = a_0 E_{12} + b_0 E_{13} + c_0 E_{23},
		\quad Y_1 = b_1 E_{13} + c_1 E_{23},
		\qquad a_0,b_0,c_0,b_1,c_1 \in \mb C.
	\end{equation}
	Analogously,
	introduce an element $\bm Y' = Y'_0 + Y'_1 \varepsilon \in \mf u^-_{\bm \psi}$,
	involving coefficients $a'_0,b'_0,c'_0,b'_1,c'_1 \in \mb C$---%
	corresponding to the root vectors with indices $i > j$.
	Then we compute/impose
	\begin{align}
		\label{eq:deeper_bracket_gl_3}
		B_{\bm \lambda}^{\bm \psi}(\bm Y,\bm Y') & = \lambda_1(a_0a'_0 + b_0b'_0) + \lambda_2(c_0c'_0 - a_0a'_0) - \lambda_3(b_0b'_0 + c_0c'_0) \\
		                                         & + (\wt \lambda_1 - \wt \lambda_2) (b_0b'_1 + c_0c'_1 + b_1b'_0 + c_1c'_0) \overset ! = 0.
	\end{align}

	Now assume that $\wt \lambda_1 \neq \wt \lambda_2$.
	Then taking $a'_0 = b'_0 = c'_0 = b'_1 = 0 \neq c'_1$ in~\eqref{eq:deeper_bracket_gl_3} yields $c_0 = 0$;
	and analogously $b_0 = 0$ is forced if the roles of $b'_1$ and $c'_1$ are swapped.
	Hence,
	$Y_0 = a_0 E_{12} \in \ker \bigl( \pi^+_{\wt \psi} \bigr)$,
	or equivalently $Y_0 \in \mf u^+_{\psi | \wt \psi} = \mf g_{12} \ceqq \mf g_{\alpha_{12}}$ (As $\nu \sm \wt \nu = \set{ \alpha_{12}}$,
	cf.~the base of the first recursion in the proof~\ref{proof:prop_nonsingular_characters}.)

	Therefore,
	the expression~\eqref{eq:deeper_bracket_gl_3} simplifies to
	\begin{equation}
		B_{\bm \lambda}^{\bm \psi}(\bm Y,\bm Y') = (\lambda_1 - \lambda_2)a_0a'_0 + (\wt \lambda_1 - \wt \lambda_2) (b_1b'_0 + c_1c'_0) \overset ! = 0.
	\end{equation}
	Choose then $a'_0 = b'_0 = 0 \neq c'_0$,
	finding $b_1 = 0$;
	while $a'_0 = c'_0 = 0 \neq b'_0$ yields $c_1 = 0$.
	(Cf.~the first recursive step in~\ref{proof:prop_nonsingular_characters}.)

	Thus,
	we are left with $B_{\bm \lambda}^{\bm \psi}(\bm Y,\bm Y') = (\lambda_1 - \lambda_2)a_0a'_0 \overset ! = 0$,
	and finally we assume that $\lambda_1 \neq \lambda_2$ to conclude that $a_0 = 0$ as well---%
	taking $a'_0 \neq 0$.
	Conversely,
	it is possible to find a nonvanishing vector in the left radical if either $\wt \lambda_1 = \wt \lambda_2$ or $\lambda_1 = \lambda_2$.
\end{exem}

\subsection{Relation with Poisson/symplectic structures}
\label{sec:deeper_character_and_symplectic_form}

Let us reinterpret Prop.~\ref{prop:nonsingular_characters} in terms of the symplectic geometry of coadjoint orbits in dual TCLAs,
generalising \S~\ref{sec:tame_nonsingular_characters}.

Consider again the element $\bm \lambda \in \mf t_r^{\dual}$,
i.e.,
the restriction of a character $\chi \cl \mf p^+_{\bm \psi} \to \mb C$.
By the vector-space splitting $\mf g_r = \mf t_r \ops \bops_\Phi (\mf g_\alpha \ots \mb C_r)$,
identify $\lambda$ with an element of $\mf g_r^{\dual}$ which vanishes on the off-Cartan part,
and take its ($r$-semisimple) coadjoint orbit $\mc O_{\bm \lambda} \ceqq G_r \cdot \bm \lambda \sse \mf g_r^{\dual}$.
(Hereafter,
this extension is tacit whenever considering coadjoint $G_r$-orbits intersecting $\mf t_r^{\dual}$.)

Introduce the coadjoint stabiliser $G_r^{\bm \lambda} \sse G_r$,
and its infinitesimal version $\mf g_r^{\bm \lambda} = \Lie \bigl( G_r^{\bm \lambda} \bigr) \sse \mf g_r$ (cf.~Rmk.~\ref{rmk:dual_version}).
These are the same as the centraliser $G_r^{\bm X}$ of $\bm X \ceqq ( \cdot | \cdot)_r^\sharp(\bm \lambda) \in \mf g_r$,
using the $G_r$-invariant pairing of Lem./Def.~\ref{lem:deeper_pairing},
and its infinitesimal centraliser $\mf g_r^{\bm X} = \Lie \bigl( G_r^{\bm X} \bigr)$---%
respectively.
In particular,
the former is connected (cf.~\S~\ref{sec:wild_orbit_strata}).
Consider then the evaluation of (the linear part of) the Poisson bracket of $\mf g_r^{\dual}$ at $\bm \lambda$:
\begin{equation}
	\label{eq:deeper_poisson_bracket}
	\mf g_r \wdg \mf g_r \lra \mb C,
	\qquad \bm Y \wdg \bm Y' \lmt \Braket{ \bm \lambda,[\bm Y,\bm Y'] } = \Set{ \bm Y,\bm Y' }(\bm \lambda),
	\qquad \bm Y,\bm Y' \in \mf g_r \simeq \mf g_r^{\dual\!\!\dual}.
\end{equation}
The KKS symplectic structure on $\mc O_{\bm \lambda} \simeq G_r \bs G_r^{\bm \lambda}$,
evaluated at $T_{\bm \lambda} \mc O_{\bm \lambda} \simeq \mf g_r \bs \mf g_r^{\bm \lambda}$,
is obtained upon modding out the radical of~\eqref{eq:deeper_poisson_bracket}.
Then note that:

\begin{lemm}
	\label{lem:reducing_poisson_bracket}

	One has $\mf l_{\bm \phi} \sse \mf g_r^{\bm \lambda}$ and $\bigl[ \mf u^\pm_{\bm \psi},\mf u^\pm_{\bm \psi} \bigr] \sse \ker(\bm \lambda)$.
\end{lemm}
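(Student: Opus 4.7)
The proof will be coefficient-wise in the $\varepsilon$-grading, using the explicit description of $\bm \lambda$ as an element of $\mf g_r^{\dual}$: namely, in the notation $\bm Z = \sum_k Z_k \varepsilon^k$ one has $\braket{\bm \lambda | \bm Z} = \sum_k \braket{\chi_k | Z_k}$, where each $\chi_k \cl \mf g \to \mb C$ is obtained from $\lambda_k \in \mf Z_{\phi_k}^{\dual}$ by extending by zero first to a character of $\mf l_{\phi_k}$, then to all of $\mf g$ by zero on $\bops_{\Phi} \mf g_{\alpha}$. Thus $\chi_k$ vanishes on the off-Cartan part and on $\bops_{\phi_k} \mf t_{\alpha} = \mf t \cap [\mf l_{\phi_k},\mf l_{\phi_k}]$: these two facts will control everything.

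For the first inclusion, I will take $\bm Y \in \mf l_{\bm \phi}$ and arbitrary $\bm Z \in \mf g_r$, and check that $\braket{\bm \lambda | [\bm Y,\bm Z]} = 0$. Expanding $[\bm Y,\bm Z] = \sum_k \bigl( \sum_{i+j = k} [Y_i,Z_j] \bigr) \varepsilon^k$, it suffices to show $\chi_k \bigl( [Y_i,Z_j] \bigr) = 0$ whenever $Y_i \in \mf l_{\phi_i}$ and $i+j = k \leq r-1$. The key observation is that $i \leq k$ imples $\phi_i \sse \phi_k$, whence $Y_i \in \mf l_{\phi_i} \sse \mf l_{\phi_k}$; the vanishing then follows from Lem.~\ref{lem:nested_left_radical_condition} applied to $\chi_k$, after decomposing $Y_i$ into its Cartan part (whose bracket with $Z_j$ lies in $\bops_{\Phi} \mf g_{\alpha}$) and its root-vector components $Y_i^{\alpha}$ with $\alpha \in \phi_i \sse \phi_k$ (whose brackets with $Z_j$ land either in $\bops_{\Phi} \mf g_{\beta}$ or in $\mf t_{\alpha} \sse [\mf l_{\phi_k},\mf l_{\phi_k}]$).

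For the second inclusion, take $\bm Y,\bm Y' \in \mf u^\pm_{\bm \psi}$, with components $Y_i \in \mf u^\pm_{\psi_i}$ and $Y'_j \in \mf u^\pm_{\psi_j}$. Each bracket $[Y_i,Y'_j]$ is a sum of brackets of root vectors $[\mf g_{\alpha},\mf g_{\beta}]$ with $\alpha,\beta \in \pm \psi_{\max(i,j)} \sm \phi_{\max(i,j)}$; since such brackets always lie in $\bops_{\Phi} \mf g_{\gamma}$ (as $\alpha + \beta \neq 0$, the roots being of the same sign after choosing a compatible base, cf. Lem.~\ref{lem:base_levi_filtrations} extended to parabolic filtrations), the character $\chi_k$ annihilates each such contribution, giving the claim.

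There is no real obstacle here: the statement is essentially a formal consequence of the definitions together with the observation that $\chi_k$ kills both the off-Cartan part of $\mf g$ and $\bops_{\phi_k}\mf t_{\alpha}$. The only bookkeeping subtlety is to keep track of the nesting $\phi_i \sse \phi_k$ (for $i \leq k$) so that the appropriate character $\chi_k$ is large enough to annihilate all the Cartan residues $H_{\alpha}$ arising from brackets of root vectors indexed by $\phi_i$; but this nesting is built into the definition of a Levi filtration.
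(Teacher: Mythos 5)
Your proof is correct and follows essentially the same strategy as the paper's: decompose into $\varepsilon$-graded pieces (equivalently, split into Cartan/root-vector parts as the paper does), use the nesting $\phi_i \sse \phi_k$ for $i \leq k$, and observe that $\lambda_k$ vanishes on $\mf t_\alpha$ for $\alpha \in \phi_k$ and that characters kill the off-Cartan summands. For the second inclusion, both you and the paper pick a positive system adapted to the parabolic filtration to force $\alpha + \beta \neq 0$.

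One small slip in the second part: you write that $\alpha,\beta \in \pm\psi_{\max(i,j)} \sm \phi_{\max(i,j)}$, but since the nilradicals \emph{shrink} as the index increases (Lem.~\ref{lem:nested_nilradicals}), the correct containment is $\alpha \in \pm\nu_i$ and $\beta \in \pm\nu_j$, both of which lie in the \emph{largest} set $\pm\nu_0 = \pm(\psi_0 \sm \phi_0)$, not in $\pm\nu_{\max(i,j)}$ (the smallest). This does not affect the conclusion — all you need is that both roots are of the same sign, and $\nu_0 \sse \Phi^+_{\Delta}$ for a base $\Delta$ with $\Phi^+_{\Delta} \sse \psi_0$ suffices — but the claimed bound is backwards. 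Relatedly, there is no need to invoke an extension of Lem.~\ref{lem:base_levi_filtrations}: since $\mf p^+_{\psi_0}$ is the smallest parabolic in the chain, simply pick any Borel subalgebra contained in it, which is exactly what the paper does.
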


\begin{proof}
	Postponed to~\ref{proof:lem_reducing_poisson_bracket}.
\end{proof}

\subsubsection{}

By Lem.~\ref{lem:reducing_poisson_bracket},
the evaluation of the Poisson bracket~\eqref{eq:deeper_poisson_bracket} yields a well-defined map $\bigwedge^2 \bigl( \mf g_r \bs \mf l_{\bm \phi} ) \to \mb C$,
i.e.,
a $\mb C$-bilinear alternating form on the vector-space complement $\mf u^+_{\bm \psi} \ops \mf u^-_{\bm \psi} \sse \mf g_r$.
But
\begin{equation}
	\bigwedge^2 \bigl( \mf u^+_{\bm \psi} \ops \mf u^-_{\bm \psi} \bigr) \simeq \bigwedge^2 \mf u^+_{\bm \psi} \ops \bigl( \mf u^+_{\bm \psi} \ots \mf u^-_{\bm \psi} \bigr) \ops \bigwedge^2 \mf u^-_{\bm \psi},
\end{equation}
and (by the same lemma) it is only the component inside $\bigl( \mf u^+_{\bm \psi} \ops \mf u^-_{\bm \psi} \bigr)^{\! \dual}$ which does \emph{not} vanish:
this now tautologically coincides with the pairing of Def.~\ref{def:nonsingular_character}.

We are now in the position to prove the:

\begin{theo}
	\label{thm:nondegenerate_character_and_symplectic_form}

	The following are equivalent:
	\begin{enumerate}
		\item the pairing~\eqref{eq:nondegenerate_pairing_deeper_character} is the restriction of the KKS symplectic form of $\mc O_{\bm \lambda}$,
		      evaluated at the marking,
		      onto the subspace $\mf u^+_{\bm \psi} \ots \mf u^-_{\bm \psi} \sse \bigwedge^2 \bigl( \mf u^+_{\bm \psi} \ops \mf u^-_{\bm \psi} \bigr)$;

		\item the formal type $\bm \lambda$ lies in the stratum $\mf t^{r,\dual}_{\bm \phi^{\dual}}$,
		      using the dual Levi filtration $\bm \phi^{\dual} \in \mc L_{\Phi^{\dual}}^{(r)}$ (cf.~\eqref{eq:dual_wild_stratum});

		\item and $\mf l_{\bm \phi} = \mf g_r^{\bm \lambda}$.
	\end{enumerate}
\end{theo}

\begin{proof}
	The equivalence between the first and the third item is now clear,
	as the latter states that $\mf l_{\bm \phi}$ coincides with the radical of~\eqref{eq:deeper_poisson_bracket}.
	Furthermore,
	by the above discussion,
	this is the same as asking that~\eqref{eq:nondegenerate_pairing_deeper_character} be nondegenerate:
	so the equivalence between the second and the third item is a rewriting of Prop.~\ref{prop:nonsingular_characters}.
\end{proof}

\begin{rema}
	We can turn the statement of Thm.~\ref{thm:nondegenerate_character_and_symplectic_form} around,
	finding an explicit formula for the symplectic structure on polarised $r$-semisimple wild orbits.

	Namely,
	choose an integer $r \geq 1$ and an element $\bm \lambda \in \mf t_r^{\dual}$.
	Let then $\bm \phi = (\phi_0,\dc,\phi_{r-1},\Phi,\dc) \sse \mc L^{(r)}_{\bm \Phi^{\dual}}$ be the Levi filtration associated with $\bm \lambda$,
	i.e.,
	\begin{equation}
		\phi_i = \phi_i(\bm \lambda) \ceqq \Set{ \alpha^{\dual} \in \Phi^{\dual} | \Braket{ \lambda_i,\alpha^{\dual} } = \dm = \Braket{ \lambda_{r-1},\alpha^{\dual} } = 0 } \in \mc L_{\Phi^{\dual}}.
	\end{equation}
	(Cf.~\eqref{eq:levi_filtration_irregular_type},
	for $s = r$,
	which is on the other side of the index-swapping duality~\eqref{eq:deeper_duality}.)

	Then $\bm \lambda$ lies in the corresponding stratum of $\mf t_r^{\dual}$.
	Choosing any opposite pair $\pm \bm \psi \in \mc P^{(r)}_\Phi$ of parabolic filtrations with Levi factor $\bm \phi^{\dual} \in \mc L_\Phi^{(r)}$ yields
	\begin{equation}
		\label{eq:kks_wild_form}
		\omega_{\bm \lambda}( \bm Y \wdg \bm Y') = B_{\bm \lambda}^{\bm \psi}( \bm Y,\bm Y' ) = \sum_{k = 0}^{r-1} \Biggl( \sum_{i+j = k} \Braket{ \lambda_k,[Y_i,Y'_j] } \Biggr),
		\qquad \bm Y \in \mf u^+_{\bm \psi},
		\quad \bm Y' \in \mf u^-_{\bm \psi},
	\end{equation}
	and this uniquely encodes the evaluation $\omega_{\bm \lambda}$ of the KKS symplectic form $\omega$.
\end{rema}

\begin{rema}
	There is a last relevant consequence of Lem.~\ref{lem:reducing_poisson_bracket}.
	Define the connected subgroup $L_{\bm \phi} \sse G_r$ as in Thm.~\ref{thm:pure_isomorphism_wild} (it only depends on the Levi filtration):
	then $L_{\bm \phi} \sse G_r^{\bm \lambda}$ whenever $\bm \lambda \in \mf Z^{\dual}_{\bm \phi} \sse \mf t_r^{\dual}$,
	and they coincide if and only if $\bm \lambda$ lies in the stratum determined by $\bm \phi$.
	In particular,
	the pairing~\eqref{eq:nondegenerate_pairing_deeper_character},
	which is a reduction of the Poisson bracket evaluation~\eqref{eq:deeper_poisson_bracket},
	is $L_{\bm \phi}$-\emph{invariant:}
	this is despite the fact that the projection $\pi_{\bm \phi} \cl \mf g_r \to \mf l_{\bm \phi}$ is \emph{not} $\mf l_{\bm \phi}$-equivariant,
	and it is implicitly used in \S~\ref{sec:shapovalov}.
\end{rema}

\section{Wild Shapovalov forms}
\label{sec:shapovalov}

\subsection{}

Here we introduce Shapovalov forms on finite generalised singularity modules,
and prove a simplicity criterion extending~\cite{wilson_2011_highest_weight_theory_for_truncated_current_lie_algebras,felder_rembado_2023_singular_modules_for_affine_lie_algebras_and_applications_to_irregular_wznw_conformal_blocks} from the generic case (cf.~\S~\ref{sec:tame_shapovalov} for the standard setup).

\subsubsection{}

Choose as usual an integer $r \geq 1$,
and a depth-bounded parabolic filtration $\bm \psi \in \mc P^{(r)}_\Phi$ with Levi factor $\bm \phi \in \mc L^{(r)}_\Phi$.
In a suitable choice of PBW bases,
the decomposition $\mf g_r = \mf u^-_{\bm \psi} \ops \mf l_{\bm \phi} \ops \mf u^+_{\bm \psi}$ yields a $\mb C$-linear splitting
\begin{equation}
	\label{eq:universal_enveloping_triangular_decomposition_wild}
	U\mf g_r
	= U\mf l_{\bm \phi} \ops \bigl( \mf u^-_{\bm \psi} \cdot U\mf g_r + U\mf g_r \cdot \mf u^+_{\bm \psi} \bigr),
\end{equation}
whence a projection $\pi_{\bm \phi} \cl U\mf g_r \thra U\mf l_{\bm \phi}$ parallel to the rightmost direct summand.

Select again a pinning,
i.e.,
nonvanishing root-vectors $E_\theta \in \mf g_\theta$ corresponding to a choice of base $\theta \in \Delta \sse \Phi$.
Then extend~\eqref{eq:transposition_involution} to an involutive antiautomorphism $\mf g_r \lxra{\simeq} \mf g_r^{\op}$,
in $\varepsilon$-graded fashion:
\begin{equation}
	\label{eq:wild_transposition_involution}
	\ps{t\!}{}{\bigl(X \varepsilon^i\bigr)} \ceqq \ps{t}{}X \cdot \varepsilon^i,
	\qquad \bm X \in \mf g_r,
	\quad i \in \set{0,\dc,r-1}.
\end{equation}
This stabilises $\mf l_{\bm \phi}$ and swaps $\mf u^\pm_{\bm \psi}$,
and it yields an involution $U\mf g_r \lxra{\simeq} (U\mf g_r)^{\op}$.

Then the (generalised) \emph{wild} universal Shapovalov pairing is
\begin{equation}
	\mc S^{\bm \psi} \cl U\mf g_r \ots U\mf g_r \lra U\mf l_{\bm \phi},
	\qquad \bm X \ots \bm X' \lmt \pi_{\bm \phi}\bigl( \ps{t}{}{\bm X} \cdot \bm X' \bigr).
\end{equation}
By construction,
it is transpose-contragredient.

\subsubsection{}

As in \S~\ref{sec:nonsingular_characters},
choose a character $\chi \in \Hom_{\Lie}(\mf l_{\bm \phi},\mb C)$ which extends to the finite singularity subalgebra $\mf S^{\bm \psi} = \mf p^+_{\bm \psi}$,
i.e.,
equivalently,
an element $\bm \lambda = (\lambda_0,\dc,\lambda_{r-1}) \in \mf Z_{\bm \phi}^{\dual} \sse \mf t_r^{\dual}$.
Extend the character to a ring morphism $U\mf l_{\bm \phi} \to \mb C$,
and define the (generalised) \emph{wild} Shapovalov form as the composition
\begin{equation}
	\label{eq:wild_shapovalov_form}
	\mc S^{\bm \psi}_{\bm \lambda} \ceqq \bm \chi \circ \mc S^{\bm \psi} \cl U\mf g_r \ots U\mf g_r \lra \mb C.
\end{equation}

One can show that~\eqref{eq:wild_shapovalov_form} is \emph{symmetric},
as in the proof~\ref{proof:lem_shapovalov_is_symmetric}:
if $\bm Y' = \sum_i P_i \in U\mf l_{\bm \phi}$ then all monomials appearing in $\bm Y' - \ps{t}{}{\bm Y'}$ must contain one factor from the subspace $\bops_{i = 0}^{r-1} \bigl( \bops_{\phi_i} \mf g_\alpha \bigr) \cdot \varepsilon^i \sse \mf l_{\bm \phi}$,
on which the character vanishes.

Consider now the finite generalised singularity module $M \ceqq M^{\bm \psi}_{\bm \lambda}$.
If $w \ceqq w^{\bm \psi}_{\bm \lambda}$ is the canonical generator,
there is an identification
\begin{equation}
	U\mf g_r \bs \Ann_{U\mf g_r} (w) \lxra{\simeq} M,
	\qquad \bm X + \Ann_{U\mf g_r} (w) \lmt \bm X w,
\end{equation}
of $U\mf g_r$-modules;
and truncating~\eqref{eq:annihilator_cyclic_vector_wild_affine_generalised_verma} yields $\Ann_{U\mf g_r} (w) = U\mf g_r \cdot K^{\bm \psi}_{\bm \lambda}$,
with
\begin{equation}
	K^{\bm \psi}_{\bm \lambda} \ceqq \bops_{i = 0}^{r-1} \Bigl( \bigl[ \mf p^+_{\psi_i},\mf p^+_{\psi_i} \bigr] \cdot \varepsilon^i \Bigr) \ops \spann_{\mb C} \Set{ X \varepsilon^i - \Braket{ \lambda_i,X } | i \in \set{0,\dc,r-1},
		\quad X \in \mf Z_{\phi_i} }.
\end{equation}

\begin{prop}
	\label{prop:annihilator_cyclic_vector_in_wild_shapovalov_radical}
	One has
	\begin{equation}
		\mc S^{\bm \psi}_{\bm \lambda}(\bm X,\bm X') = 0,
		\qquad \bm X \in U\mf g_r,
		\quad \bm X' \in K^{\bm \psi}_{\bm \lambda}.
	\end{equation}
\end{prop}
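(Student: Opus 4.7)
The plan is to show that the linear map $\bm \chi \circ \pi_{\bm \phi} \cl U\mf g_r \to \mb C$ factors through the canonical surjection $q \cl U\mf g_r \to M \ceqq M^{\bm \psi}_{\bm \lambda}$, $\bm X \mt \bm X w$. Given this factorisation, and since $\Ker(q) = \Ann_{U\mf g_r}(w) = U\mf g_r \cdot K^{\bm \psi}_{\bm \lambda}$ by the computation just above the statement, the composition vanishes on this left ideal. The desired statement then follows from the transpose-contragrediency of the universal wild Shapovalov pairing: for $\bm X \in U\mf g_r$ and $\bm X' \in K^{\bm \psi}_{\bm \lambda}$ one has $\mc S^{\bm \psi}_{\bm \lambda}(\bm X,\bm X') = \bm \chi \bigl( \pi_{\bm \phi}(\ps{t}{}{\bm X} \cdot \bm X') \bigr)$, and the product $\ps{t}{}{\bm X} \cdot \bm X'$ lies in $U\mf g_r \cdot K^{\bm \psi}_{\bm \lambda}$ for every $\bm X$.

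To construct the factorisation, I would choose an ordered $\mb C$-basis of $\mf g_r$ with $\mf u^-_{\bm \psi}$-elements first, then $\mf l_{\bm \phi}$-elements, then $\mf u^+_{\bm \psi}$-elements. (Note $\mf u^\pm_{\bm \psi} \sse \mf g_r$ need not be Lie subalgebras in view of Rmk.~\ref{rmk:balanced_filtrations}, but this is irrelevant: the PBW theorem only requires a vector-space basis of $\mf g_r$.) PBW then yields a $\mb C$-linear identification $U\mf g_r \simeq \Sym(\mf u^-_{\bm \psi}) \ots U\mf l_{\bm \phi} \ots \Sym(\mf u^+_{\bm \psi})$, with respect to which $\pi_{\bm \phi}$ is tautologically the projection onto the middle tensor factor with trivial outer factors. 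On the other side, \S~\ref{sec:identification_verma_symmetric_algebra} provides a $\mb C$-linear isomorphism $\Sym(\mf u^-_{\bm \psi}) \xra{\simeq} M$, $Q_I \mt Q_I w$, whence a canonical projection $\varphi_0 \cl M \thra \mb C w \simeq \mb C$ selecting the coefficient of $w$ in the $(Q_I w)_I$-basis.

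The key verification is $\bm \chi \circ \pi_{\bm \phi} = \varphi_0 \circ q$, which by linearity suffices to check on a PBW monomial $\bm X = Q_I P_L R_J$. Since $\mf p^+_{\bm \psi} \sse \mf g_r$ is a bona fide Lie subalgebra, the character $\bm \chi$ (which vanishes on $\mf u^+_{\bm \psi}$) extends to a $\mb C$-algebra morphism $U\mf p^+_{\bm \psi} \to \mb C$ annihilating every nonconstant monomial in $\mf u^+_{\bm \psi}$; hence $R_J w = \bm \chi(R_J) w = \delta_{J,0} w$, and similarly $P_L w = \bm \chi(P_L) w$ since $U\mf l_{\bm \phi} \sse U\mf p^+_{\bm \psi}$. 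Therefore $\bm X w = \delta_{J,0} \bm \chi(P_L) Q_I w$, and its $\mb C w$-coefficient equals $\delta_{I,0} \delta_{J,0} \bm \chi(P_L) = \bm \chi \bigl( \pi_{\bm \phi}(Q_I P_L R_J) \bigr)$, as desired. The main (minor) obstacle is the cohabitation of the PBW decomposition, which is only a vector-space statement since $\mf u^\pm_{\bm \psi}$ need not be Lie subalgebras, with the ring-morphism structure of $\bm \chi$; but the latter is only invoked on the genuine Lie subalgebras $\mf l_{\bm \phi}$ and $\mf p^+_{\bm \psi}$, so no difficulty arises.
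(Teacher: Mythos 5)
Your proof is correct, and it takes a genuinely different and cleaner route than the paper's. The paper proves the statement by reducing (as in the tame Lem.~\ref{lem:annihilator_cyclic_vector_in_shapovalov_radical}) to the claim that $\mc S^{\bm\psi}_{\bm\lambda}(\bm X,[\bm Y,\bm X'])=0$ for $\bm Y\in\mf u^+_{\bm\psi}$ and $\bm X'\in K^{\bm\phi}_{\bm\lambda}$, and then handles the failure of $\mf l_{\bm\phi}$-stability of $\mf u^\pm_{\bm\psi}$ by a direct inspection of generators followed by a recursion along the standard filtration $U^{\leq\bullet}\mf g_r$, invoking the closedness of $\psi_k$ under addition to sort root-space contributions. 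Your argument bypasses all of this by observing that $\bm\chi\circ\pi_{\bm\phi}$ coincides with $\varphi_0\circ q$, where $q\cl U\mf g_r\thra M$ is the action map and $\varphi_0\cl M\thra\mb C w$ the highest-weight projection; this is verified on a PBW basis $Q_I P_L R_J$ using only that $\bm\chi$ is a ring morphism on the genuine Lie subalgebra $U\mf p^+_{\bm\psi}$ (so $R_J w=\delta_{J,0}w$ and $P_L w=\bm\chi(P_L)w$), and that $Q_I w$ has $\varphi_0$-coefficient $\delta_{I,0}$. The vanishing on $U\mf g_r\cdot K^{\bm\psi}_{\bm\lambda}=\Ker(q)$ is then immediate, and you never need the contragrediency identity (indeed $\ps{t}{\bm X}\cdot\bm X'\in\Ker(q)$ already forces $(\varphi_0\circ q)(\ps{t}{\bm X}\cdot\bm X')=0$ directly).

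This approach has an added virtue worth noting: your factorisation $\bm\chi\circ\pi_{\bm\phi}=\varphi_0\circ q$ proves that the composite $\bm\chi\circ\pi_{\bm\phi}$ is independent of the choice of PBW ordering, whereas the projection $\pi_{\bm\phi}$ itself \emph{does} depend on the ordering in the nonbalanced wild case. Indeed, if $\bm X,\bm X'\in\mf u^-_{\bm\psi}$ have $[\bm X,\bm X']$ with a nonzero $\mf l_{\bm\phi}$-component (this can occur when $\bm\psi$ is nonconstant, e.g.\ for $\mf g=\mf{sl}_4$, $r=3$, $\Sigma_0=\Sigma_1=\vn$, $\Sigma_2=\set{\alpha_1,\alpha_2}$, with $\alpha=\alpha_1$, $\beta=\alpha_2$, $i=j=1$), then $\pi_{\bm\phi}(\bm X'\bm X)$ equals either $0$ or $-[\bm X,\bm X']$ depending on which of $\bm X,\bm X'$ is declared smaller; such $\mf l_{\bm\phi}$-leakage even means the claimed direct sum $U\mf g_r=U\mf l_{\bm\phi}\ops(\mf u^-_{\bm\psi}U\mf g_r+U\mf g_r\mf u^+_{\bm\psi})$ is strictly speaking false, so $\pi_{\bm\phi}$ should be read as the PBW-basis-dependent projection onto the $(I,J)=(0,0)$ block. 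Your proof both works in this generality and clarifies why the resulting Shapovalov form is nonetheless canonical: the leakage lands in $\bops_i\bigl(\bops_{\phi_i}\mf g_\alpha\bigr)\cdot\varepsilon^i\sse\Ker(\bm\chi)$, which the factorisation through $M$ makes manifest.
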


\begin{proof}
	Reasoning as in the proof~\ref{proof:lem_annihilator_cyclic_vector_in_shapovalov_radical},
	it remains to show that
	\begin{equation}
		\mc S^{\bm \psi}_{\bm \lambda} \bigl( \bm X,[\bm Y,\bm X'] \bigr) = 0,
		\qquad \bm X \in U\mf g_r,
		\quad \bm Y \in \mf u^+_{\bm \psi},
		\quad \bm X' \in K^{\bm \phi}_{\bm \lambda},
	\end{equation}
	where $K^{\bm \phi}_{\bm \lambda} \sse K^{\bm \psi}_{\bm \lambda}$ is defined by
	\begin{equation}
		K^{\bm \phi}_{\bm \lambda} \ceqq \bops_{i = 0}^{r-1} \Bigl( \bigl[ \mf l_{\phi_i},\mf l_{\phi_i} \bigr] \cdot \varepsilon^i \Bigr) \ops \spann_{\mb C} \Set{ X \varepsilon^i - \Braket{ \lambda_i,X } | i \in \set{0,\dc,r-1},
			\quad X \in \mf Z_{\phi_i} }.
	\end{equation}
	As explained above,
	in the wild nongeneric case one might have $\bigl[ \mf l_{\bm \phi},\mf u^\pm_{\bm \psi} \bigr] \nsse \mf u^\pm_{\bm \psi}$,
	so we conclude by a direct inspection of the generators of $K^{\bm \phi}_{\bm \lambda}$.

	Suppose first that $\bm X' = X \varepsilon^i - \Braket{ \lambda_i,X \varepsilon^i } \in U\mf l_{\bm \phi}$,
	for some $i \in \set{0,\dc,r-1}$ and $X \in \mf Z_{\phi_i}$.
	Then $[\bm Y,\bm X'] \in \mf u^+_{\bm \psi}$,
	since $[\mf t_r \ops \mb C,\mf u^\pm_{\psi}] \sse \mf u^\pm_{\psi}$.
	The same happens if one chooses $\bm X' \in \bops_{i = 0}^{r-1} \bigl( \bops_{\phi_i} \mf t_\alpha \bigr) \cdot \varepsilon^i \sse \mf t_r$.

	Suppose instead that $\bm X' \in \bops_{i = 0}^{r-1} \bigl( \bops_{\phi_i} \mf g_\alpha \bigr) \cdot \varepsilon^i \sse \mf g_r$.
	(This is the troublesome case.)
	If in particular $\bm X = 1$,
	then one must prove that $\braket{ \bm \chi,\pi_{\bm \phi} \bigl( [\bm Y,\bm X'] \bigr) } = 0$,
	and by (bi)linearity it is enough to do so for
	\begin{equation}
		\bm Y = E_\alpha \varepsilon^i,
		\quad \bm X' = E_\beta \varepsilon^j,
		\qquad i,j \in \set{0,\dc,r-1},
		\quad \alpha \in \nu_i,
		\quad \beta \in \phi_j,
	\end{equation}
	with $i+j \eqqc k \leq r-1$.
	Now $\alpha + \beta \neq 0$ yields $[E_\alpha,E_\beta] \varepsilon^k \in \bops_\Phi \bigl( \mf g_\gamma \cdot \varepsilon^k \bigr)$,
	which is annihilated by $\bm \chi$---%
	after projection;
	else $\alpha + \beta = 0$,
	so that $\alpha = -\beta \in \phi_j \sse \phi_k$ and
	\begin{equation}
		[E_\alpha,E_\beta] \varepsilon^k \in \bops_{\phi_k} \bigl( \mf t_\gamma \cdot \varepsilon^k \bigr) \sse \ker(\bm \chi).
	\end{equation}
	Finally,
	choose an integer $m \geq 0$ and suppose recursively that
	\begin{equation}
		\mc S^{\bm \psi}_{\bm \lambda}(\bm X,\bm X') = 0,
		\qquad \bm X \in U^{\leq m} \mf g_r,
		\quad \bm X' \in K^{\bm \phi}_{\bm \lambda},
	\end{equation}
	using the standard (transpose-invariant) filtration $U^{\leq \bullet} \mf g_r$ of the UEA.
	The missing recursive step arises from
	\begin{equation}
		\bm X \in U^{\leq m+1} \mf g_r,
		\quad \bm X' = E_\beta \varepsilon^j,
		\qquad j \in \set{0,\dc,r-1},
		\quad \beta \in \phi_j.
	\end{equation}
	Repeating the splitting argument of~\ref{proof:lem_annihilator_cyclic_vector_in_shapovalov_radical},
	it is enough to show that
	\begin{equation}
		\mc S^{\bm \psi}_{\bm \lambda} \bigl( \wt{\bm X},
		[E_\alpha,E_\beta] \varepsilon^k \bigr) = 0,
		\qquad \wt{\bm X} \in U^{\leq m} \mf g_r,
		\quad \alpha \in \nu_i,
		\quad i \in \set{0,\dc,r-1},
	\end{equation}
	where again $i+j = k \leq r-1$.
	By hypothesis,
	one has $\alpha,\beta \in \psi_k$,
	whence $\alpha + \beta \in \psi_k$ if $\alpha + \beta$ is a root,
	since $\psi_k \sse \Phi$ is a \emph{closed} subset of roots.
	In particular,
	if $\alpha + \beta \in \Phi$ one either has $\alpha + \beta \in \phi_k$,
	in which case we conclude by the recursive hypothesis;
	or one has $\alpha + \beta \in \nu_k$,
	in which case $[E_\alpha,E_\beta] \varepsilon^k \in \mf u^+_{\bm \psi}$---%
	so that $\pi_{\bm \phi} \bigl( \wt X \cdot E_{\alpha+\beta} \varepsilon^k \bigr) = 0$.
\end{proof}

\subsubsection{}

Using transpose-contragrediency,
Prop.~\ref{prop:annihilator_cyclic_vector_in_wild_shapovalov_radical} yields a reduced Shapovalov form $M \ots M \to \mb C$,
abusively written the same and (well) defined by
\begin{equation}
	\bm X w \ots \bm X' w \lmt \mc S^{\bm \psi}_{\bm \lambda}(\bm X,\bm X'),
	\qquad \bm X,\bm X' \in U\mf g_r.
\end{equation}
By construction,
it is symmetric,
and it satisfies $\mc S^{\bm \psi}_{\bm \lambda} (w,w) = 1$.

Now consider the largest Levi centre $\mf Z_{\phi_0} \sse \mf l_{\phi_0}$,
lying inside $\mf g \sse \mf g_r$.
It acts on $\mf g_r$ in the adjoint representation,
in semisimple/$\mb C_r$-linear fashion.
Precisely,
the $\mb C$-linear decomposition $\mf g = \mf l_{\phi_0} \ops \bops_{\Phi \sm \phi_0} \mf g_\alpha$ extends to its deeper version:
\begin{equation}
	\label{eq:deeper_root_splitting}
	\mf g_r = (\mf l_{\phi_0})_r \ops \bops_{\Phi \sm \phi_0} (\mf g_r)_\alpha,
	\qquad (\mf g_r)_\alpha \ceqq (\mf g_\alpha)_r = \mf g_\alpha \ots \mb C_r \sse \mf g_r.
\end{equation}
It follows that $U\mf g_r$ is also a weight $\mf Z_{\phi_0}$-module,
with weight decomposition
\begin{equation}
	\label{eq:weight_decomposition_universal_enveloping_algebra_deeper}
	U\mf g_r = \bops_{Q_{\phi_0}} (U\mf g_r)_\mu,
	\qquad (U\mf g_r)_\mu \ceqq \Set{ \bm X \in U\mf g_r | [Y,\bm X] = \braket{ \mu,Y } \bm X \text{ for } Y \in \mf Z_{\phi_0} },
\end{equation}
where again $Q_{\phi_0} \sse Q = \spann_{\mb Z}(\Phi)$ is the abelian subgroup generated by $\Phi \sm \phi_0$.
(The weight spaces are larger,
as each root-vector can be put at different $\varepsilon$-degree,
cf.~\S~\ref{sec:pbw_bases_weight_spaces}.)

Finally,
as in the tame case,
the finite generalised singularity module is a weight $\mf Z_{\phi_0}$-module,
with weight spaces $M[\mu] \sse M$ parameterised by the positive submonoid $Q^+_{\phi_0} = \spann_{\mb Z_{\geq 0}} (\nu_0) \sse Q_{\phi_0}$.\fn{
	One could in principle consider the weight-space decomposition with respect to some other (smaller) Levi centre $\mf Z_{\phi_i} \sse \mf t$,
	but then in general $\mb C w \ssne M[0]$.}
More precisely,
the active weights lie in $\lambda_0 - Q^+_{\phi_0} \sse \mf Z_{\phi_0}^{\dual}$ (cf.~\eqref{eq:weight_decomposition_generalised_verma}),
and the canonical generator spans the highest-weight line---%
of \emph{highest} weight $\lambda_0 \in \mf Z_{\phi_0}^{\dual}$.
This follows from the $\mb C$-linear isomorphism $\Sym (\mf u^-_{\bm \psi}) \lxra{\simeq} M$ of \S~\ref{sec:identification_verma_symmetric_algebra},
as $\nu_i \sse \nu_0$ for all $i \in \set{0,\dc,r-1}$ (cf.~Rmk.~\ref{rmk:twisted_adjoint_action}).
(Observe that the roots $\alpha \in \nu_0$ vanish on $\bops_{\phi_0} \mf t_\alpha$,
whence indeed $Q^+_{\phi_0} \sse \mf Z_{\phi_0}^{\dual}$.)

Moreover,
the canonical generator is acted on by $U\mf l_{\bm \phi}$ in semisimple fashion,
and it is annihilated by the whole of $U\mf g_r \cdot \mf u^+_{\bm \psi}$;
but the main point here is that some standard highest-weight representation theory carries over:

\begin{enonce}{Lemma/Definition}
	\label{lem:maximal_proper_submodule_wild}

	The finite generalised singularity module $M$ is \emph{indecomposable},
	and it contains a \emph{unique} maximal proper (weight) submodule
	\begin{equation}
		\label{eq:maximal_proper_submodule_wild}
		N = N^{\bm \psi}_{\bm \lambda} \ceqq \sum_{\substack{U\mf g_r N' \sse N' \ssne M}} N'.
	\end{equation}
\end{enonce}

\begin{proof}
	Postponed to~\ref{proof:lem_maximal_proper_submodule_wild}.
\end{proof}

\begin{rema}
	\label{rmk:twisted_adjoint_action}
	The identification~\eqref{eq:direct_sum_splitting_verma} is more than a $\mb C$-linear map.

	Indeed,
	note that $\Sym (\mf u^-_{\bm \psi})$ is naturally a weight $\mf Z_{\phi_0}$-module in the adjoint action,
	and the corresponding decomposition reads
	\begin{equation}
		\label{eq:grading_symmetric_algebra_nilradical}
		\Sym (\mf u^-_{\bm \psi}) = \bops_{Q^+_{\phi_0}} \Sym (\mf u^-_{\bm \psi})_{-\mu},
	\end{equation}
	as for~\eqref{eq:weight_decomposition_universal_enveloping_algebra_deeper},
	but now only along the submonoid $Q^+_{\phi_0} \sse Q_{\phi_0}$.
	Then the embedding $\Sym(\mf u^-_{\bm \psi}) \hra U\mf g_r$,
	obtained from a choice of (ordered) basis for $\mf u^-_{\bm \psi}$,
	is $\mf Z_{\phi_0}$-linear,
	and $(U\mf g_r)_{-\mu} w \sse M$ lies in the subspace of weight $\lambda_0 - \mu$:
	in particular,
	there are $\mb C$-linear isomorphisms $\Sym (\mf u^-_{\bm \psi})_{-\mu} \lxra{\simeq} M[\mu]$,
	and a corresponding vector-space splitting $M = \bops_{Q^+_{\phi_0}} M[\mu]$---%
	which is then intrinsically determined.
\end{rema}

\subsubsection{}

By Lem.~\ref{lem:maximal_proper_submodule_wild},
the \emph{unique} simple quotient of $M$ is $L = L^{\bm \psi}_{\bm \lambda} \ceqq M \bs N$.
Finally,
one can use the Shapovalov form to identify $N$,
after establishing the orthogonality properties of the weight-space decomposition,
which are a weaker version of the tame case:

\begin{prop}
	\label{prop:orthogonality_wild_shapovalov}

	Choose $\bm X \in (U\mf g_r)_\mu$ and $\bm X' \in (U\mf g_r)_{\mu'}$,
	with $\mu \neq \mu' \in Q_{\phi_0}$.
	Then one has $\mc S^{\bm \psi}_{\bm \lambda} (\bm X,\bm X') = 0$.
\end{prop}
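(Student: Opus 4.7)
The plan is to run the standard weight-orthogonality argument for Shapovalov forms, adapted to the present setting where the Levi factor $\mf l_{\bm\phi}$ fails to be $\mf Z_{\phi_0}$-weight zero in general. The key move is that what saves us is not that $\pi_{\bm\phi}$ lands in the $\mf Z_{\phi_0}$-invariants, but that the character $\bm\chi$ itself annihilates every nonzero $\mf Z_{\phi_0}$-weight space of $U\mf l_{\bm\phi}$.

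First I would check that the transposition involution~\eqref{eq:wild_transposition_involution}, being a Lie-algebra antiautomorphism that fixes $\mf t_r$ and swaps $E_{\pm\alpha}\varepsilon^i$, sends weight $\mu$ to weight $-\mu$; a one-line computation $\ps{t}{}{[Y,\bm X]}=-[Y,\ps{t}{}{\bm X}]$ for $Y \in \mf Z_{\phi_0}$ gives $\ps{t}{}{\bm X} \in (U\mf g_r)_{-\mu}$. Hence $\ps{t}{}{\bm X}\cdot \bm X' \in (U\mf g_r)_{\mu'-\mu}$ with $\mu'-\mu \neq 0$. Next, the decomposition~\eqref{eq:universal_enveloping_triangular_decomposition_wild} is $\mf Z_{\phi_0}$-stable (since $\mf l_{\bm\phi}$, $\mf u^+_{\bm\psi}$, and $\mf u^-_{\bm\psi}$ are each weight subspaces of $\mf g_r$ under the $\varepsilon$-graded adjoint action, and the weight decomposition on the UEA is compatible with products). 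Thus $\pi_{\bm\phi}$ is $\mf Z_{\phi_0}$-equivariant and
\begin{equation}
\pi_{\bm\phi}\bigl(\ps{t}{}{\bm X}\cdot \bm X'\bigr) \in \bigl(U\mf l_{\bm\phi}\bigr)_{\mu'-\mu}.
\end{equation}

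The remaining claim is that $\bm\chi$ annihilates $\bigl(U\mf l_{\bm\phi}\bigr)_\nu$ for every $\nu \neq 0$. By Cor.~\ref{cor:abelianisation}, $\bm\chi$ factors through the abelianisation $U\mf l_{\bm\phi} \thra \Sym(\mf Z_{\bm\phi})$, so it suffices to show that any PBW monomial of nonzero $\mf Z_{\phi_0}$-weight lies in the two-sided ideal of $U\mf l_{\bm\phi}$ generated by $[\mf l_{\bm\phi},\mf l_{\bm\phi}]$. Such a monomial, read in a PBW basis of $\mf l_{\bm\phi} = \bops_i \mf l_{\phi_i}\cdot \varepsilon^i$, must contain some factor $E_\alpha\varepsilon^i$ with $\alpha \in \phi_i$ and $\eval[1]{\alpha}_{\mf Z_{\phi_0}} \neq 0$, i.e.\ $\alpha \in \phi_i \sm \phi_0$. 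The identity
\begin{equation}
\bigl[H_\alpha \cdot \varepsilon^0,\, E_\alpha \cdot \varepsilon^i\bigr] = 2 E_\alpha \varepsilon^i,
\end{equation}
valid in $\mf g_r$ via $\mb C_r$-linearity, together with $H_\alpha \in \mf t \sse \mf l_{\phi_0}$ and $E_\alpha\varepsilon^i \in \mf l_{\phi_i}\cdot \varepsilon^i$, exhibits $E_\alpha\varepsilon^i$ as an element of $[\mf l_{\bm\phi},\mf l_{\bm\phi}]$; hence the monomial lies in the ideal, and $\bm\chi$ kills it.

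The only mildly delicate point—and the one I would write out carefully—is the second step above, namely checking that a monomial of nonzero $\mf Z_{\phi_0}$-weight must have at least one such factor $E_\alpha\varepsilon^i$ with $\alpha \notin \phi_0$, since the Cartan factors $H\varepsilon^j$ contribute weight zero and the factors $E_\beta \varepsilon^j$ with $\beta \in \phi_j \cap \phi_0$ also contribute weight zero. This is immediate from additivity of weights in PBW monomials, but it is where the hypothesis $\mu \neq \mu'$ is actually used. Combining the three steps yields $\mc S^{\bm\psi}_{\bm\lambda}(\bm X,\bm X') = \bm\chi\bigl(\pi_{\bm\phi}(\ps{t}{}{\bm X}\cdot\bm X')\bigr) = 0$, as required.
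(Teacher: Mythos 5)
Your proof is correct and follows essentially the same strategy as the paper's: pass the weight through the transposition, use the $\mf Z_{\phi_0}$-stability of the triangular splitting~\eqref{eq:universal_enveloping_triangular_decomposition_wild} to reduce to a weight-$\nu$ element $\bm Y'$ of $U\mf l_{\bm\phi}$ with $\nu \neq 0$, decompose into PBW monomials, and observe that each such monomial is killed by $\bm\chi$ because it must contain an off-Cartan factor.

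The only genuine divergence is in the last step. The paper concludes directly: since $\mf Z_{\phi_0}$ acts trivially on the complement $\mf t_r \sse \mf l_{\bm\phi}$, a nonzero-weight monomial necessarily contains a factor from $\bops_i\bigl(\bops_{\phi_i}\mf g_\alpha\bigr)\varepsilon^i$, and $\bm\chi$ vanishes on every such factor (being the extension \emph{by zero} of a covector on $\mf t_r$), hence on the monomial by multiplicativity. You instead pick a factor $E_\alpha\varepsilon^i$ with $\alpha\notin\phi_0$, exhibit it as a nonzero multiple of $[H_\alpha\varepsilon^0, E_\alpha\varepsilon^i] \in [\mf l_{\bm\phi},\mf l_{\bm\phi}]$, and argue that $\bm\chi$ kills the two-sided ideal generated by the derived subalgebra. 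This is also valid (with the harmless caveat that the scalar is $\braket{\alpha|H_\alpha}$ rather than literally $2$, since $H_\alpha = [E_\alpha,E_{-\alpha}]$ is only a nonzero multiple of the coroot), but it is a small detour: it is simpler to note that the character already vanishes on \emph{all} root lines of $\mf l_{\bm\phi}$, not just the ones of nonzero $\mf Z_{\phi_0}$-weight, so you need not insist on $\alpha\notin\phi_0$. One further minor imprecision: you state that $\bm\chi$ factors through $\Sym(\mf Z_{\bm\phi})$, but (as Rmk.~\ref{rmk:deeper_characters} warns) the abelianisation of $\mf l_{\bm\phi}$ may strictly contain $\mf Z_{\bm\phi}$; what you actually use, and what is true, is that any ring extension of a Lie-algebra character $\mf l_{\bm\phi}\to\mb C$ annihilates the two-sided ideal generated by $[\mf l_{\bm\phi},\mf l_{\bm\phi}]$, which is enough for your argument.
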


\begin{proof}
	Choosing elements as in the statement yields
	\begin{equation}
		\ps{t}{}{\bm X} \in (U\mf g_r)_{-\mu},
		\qquad \ps{t}{}{\bm X} \cdot \bm X' \in (U\mf g_r)_{\mu' - \mu},
	\end{equation}
	as the transpose sends $(\mf g_r)_\alpha$ to $(\mf g_r)_{-\alpha}$,
	for $\alpha \in \Phi$.
	Hence,
	it is enough to prove that
	\begin{equation}
		\Braket{ \bm \chi,\pi_{\bm \phi}(\bm Y) } = 0,
		\qquad \bm Y \in (U\mf g_r)_\mu,
		\quad \mu \in Q_{\phi_0} \sm (0).
	\end{equation}
	Decompose now $\bm Y = \bm Y' + \bm Y''$ along the direct sum~\eqref{eq:universal_enveloping_triangular_decomposition_wild}:
	then $\bm Y' \in U \mf l_{\bm \phi} \cap (U\mf g_r)_\mu$,
	since both vector subspaces are $\mf Z_{\phi_0}$-submodules,
	while $\bm Y''$ (which is also a weight vector) is annihilated by $\pi_{\bm \phi}$.
	Write then $\bm Y' = \sum_i P_i$ as a finite sum of monomials
	\begin{equation}
		P_i = X_1^{(i)} \dm X_{m_i}^{(i)},
		\qquad m_i \in \mb Z_{\geq 0},
		\quad X^{(i)}_j \in \mf l_{\bm \phi},
	\end{equation}
	and assume---%
	w.l.o.g.---%
	that these monomials are linearly independent.
	Acting with an element $X \in \mf Z_{\phi_0} \sm (0)$ then yields
	\begin{equation}
		\ad_X(\bm Y') = \braket{ \mu,X } \bm Y' = \sum_i \braket{ \mu,X } P_i,
	\end{equation}
	as well as
	\begin{equation}
		\ad_X(\bm Y') = \sum_i \ad_X (P_i) = \sum_i \braket{ \mu_i,X } P_i,
	\end{equation}
	for suitable covectors $\mu_i \in Q_{\phi_0}$,
	because all elements of $\mf l_{\bm \phi}$ are weight vectors.
	Therefore,
	one has $\mu_i = \mu \neq 0$ for all $i$,
	which implies that each monomial $P_i$ contains a factor from $\bops_{i = 0}^{r-1} \bigl( \bops_{\phi_i} \mf g_\alpha \cdot \varepsilon^i \bigr) \sse \mf l_{\bm \phi}$,
	since $\mf Z_{\phi_0}$ acts trivially on the complementary subspace $\mf t_r \sse \mf l_{\bm \phi}$.
	But then
	\begin{equation}
		\Braket{ \bm \chi,\pi_{\bm \phi}(\bm Y') } = \Braket{ \bm \chi,\bm Y' } = \sum_i \Braket{ \bm \chi,P_i } = \sum_i \prod_{j = 1}^{m_i} \Braket{ \chi,X_j^{(i)} } = 0,
	\end{equation}
	as $\bops_{i = 0}^{r-1} \bigl( \bops_{\phi_i} \mf g_\alpha \cdot \varepsilon^i \bigr) \sse \ker(\bm \chi)$.
\end{proof}

\begin{rema}
	In the tame case $r = 1$ one had the stronger statement that the weight spaces $(U\mf g)_\mu \sse U\mf g$ are orthogonal for the \emph{universal} Shapovalov pairing $\mc S^{\psi}$,
	which is false in general in our extended setting.

	The point is that if $r \geq 2$ then there may be elements of $U\mf l_{\bm \phi}$ with nonvanishing $\mf Z_{\phi_0}$-weight (suffices,
	e.g.,
	to choose $\bm X = E_\alpha \varepsilon$ with $\alpha \in \phi_1 \sm \phi_0$).
\end{rema}

\subsubsection{}

It follows from Prop.~\ref{prop:orthogonality_wild_shapovalov} that the weight-space decomposition of the finite singularity modules is by $\mc S^{\bm \psi}_{\bm \lambda}$-\emph{orthogonal} subspaces.
Then one can state the analogous characterisation of the maximal proper submodule in the generalised wild case:

\begin{theo}
	\label{thm:shapovalov_radical_equals_maximal_proper_submodule_wild}

	One has $N = \Rad \bigl( \mc S^{\bm \psi}_{\bm \lambda} \bigr)$,
	in the notation of~\eqref{eq:maximal_proper_submodule_wild}.
\end{theo}

\begin{proof}
	By contragrediency,
	the radical of the Shapovalov form is a submodule,
	and it is proper---%
	since it does \emph{not} contain $w$.
	Thus,
	$\Rad \bigl( \mc S^{\bm \psi}_{\bm \lambda} \bigr) \sse N$.

	Conversely,
	choose $\bm X \in U\mf g_r$ and $\wt w \in N$.
	Then
	\begin{equation}
		\mc S^{\bm \psi}_{\bm \lambda} \bigl( \ps{t}{}{\bm X} w,\wt w \bigr) = \mc S^{\bm \psi}_{\bm \lambda} (w,
		\bm X \wt w ) = 0,
	\end{equation}
	as $\bm X \wt w \in N \sse \bops_{Q^+_{\phi_0}} M^{\bm \psi}_{\bm \lambda}[\mu]$,
	using Prop.~\ref{prop:orthogonality_wild_shapovalov}.
\end{proof}

\subsubsection{}

Thus,
in principle,
one may now consider the restrictions of $\mc S^{\bm \psi}_{\bm \lambda}$ to the finite-dimensional subspaces
\begin{equation}
	M[\mu] \ots M[\mu] \sse M \ots M,
	\qquad \mu \in Q^+_{\phi_0},
\end{equation}
and study the corresponding countable family of radicals;
by the above,
they coincide with the weight subspaces $N[\mu] = N \cap M[\mu]$ of $N$---%
of weight $\lambda_0 - \mu \in \mf Z_{\phi_0}^{\dual}$.

Concretely,
choosing suitable $\mb C$-bases of the weight spaces,
simplicity is governed by determinant computations à la Shapovalov/Kac--Kazhdan~\cite{shapovalov_1972_a_certain_bilinear_form_on_the_universal_enveloping_algebra_of_a_complex_semisimple_lie_algebra,kac_kazhdan_1979_structure_of_representations_with_highest_weight_of_infinite_dimensional_lie_algebras}.
In any event,
note that Prop.~\ref{prop:shapovalov_matrix} implies that $M$ is \emph{simple} for generic choices of $\bm \lambda \in \mf Z_{\bm \phi}^{\dual}$,
by choosing parameters from the complement of a countable union of vanishing loci of polynomial functions.

\begin{rema}
	Again,
	certain exact equivalences established in~\cite{chaffe_2023_category_o_for_takiff_lie_algebras,chaffe_topley_2023_category_o_for_truncated_current_lie_algebras} make it possible to bypass such computations and reduce the criterion for irreducibility to the tame---%
	parabolic---%
	case (cf.~Thm.~\ref{thm:simple_finite_modules}).
	Nonetheless,
	we continue the present discussion as it is needed in \S~\ref{sec:deformation_quantisation_our_case}.
\end{rema}

\subsection{PBW bases}

As mentioned just above,
it is helpful to construct (countable) $\mb C$-bases of $\Sym (\mf u^-_{\bm \psi}) \simeq M^{\bm \psi}_{\bm \lambda}$.
To this end,
for each root $\alpha \in \Phi$ denote again by $d_\alpha \in \set{0,\dc,r}$ the unique integer such that $\alpha \in \phi_{d_\alpha} \sm \phi_{d_\alpha-1}$(cf.~Rmk.~\ref{rmk:wild_hyperplane_complement}).

\begin{rema}
	\label{rmk:nonsingular_characters_and_levels}

	With this notation,
	an element $\bm \lambda \in \mf (t^{\dual})^r$ defines a character of $\mf p^\pm_{\bm \psi} \sse \mf g_r$ if and only if
	\begin{equation}
		\Braket{ \lambda_i,\alpha^{\dual} } = 0,
		\qquad \alpha \in \Phi,
		\quad i \in \set{ d_\alpha,\dc,r-1 }.
	\end{equation}
	Furthermore,
	the character is then nonsingular if and only if
	\begin{equation}
		\Braket{ \lambda_i,\alpha^{\dual} } \neq 0,
		\qquad \alpha \in \Phi,
		\quad i \ceqq d_{\alpha} - 1.
	\end{equation}

	(This rephrases the stratum conditions in~\eqref{eq:dual_wild_stratum}.)
\end{rema}

\subsubsection{}
\label{sec:relative_root_height}

It follows that the monomials in the variables
\begin{equation}
	\label{eq:variables_pbw_basis}
	X_{\alpha,i} \ceqq E_{-\alpha} \varepsilon^i,
	\qquad \alpha \in \nu_0,
	\quad i \in \set{0,\dc,d_\alpha-1},
\end{equation}
provide a free set of generators for $\Sym (\mf u^-_{\bm \psi})$ over $\mb C$,
and a basis is obtained upon choosing a suitable ordering (cf.~\eqref{eq:order_basis_nilradical}).

To discuss weight spaces,
consider now a finite \emph{multisubset} of $\nu_0$,
i.e.,
a subset with possibly repeated elements.
This is the same as a function $f \cl \nu_0 \to \mb Z_{\geq 0}$,
assigning to each $\alpha \in \nu_0$ its multiplicity $f_\alpha \geq 0$ in the multisubset;
its \emph{cardinality} is $\abs f \ceqq \sum_{\nu_0} f_\alpha \geq 0$.

Each multisubset defines an element $\mu_f \ceqq \sum_{\nu_0} f_\alpha \alpha \in Q^+_{\phi_0}$.
Conversely,
for $\mu \in \mf t^{\dual}$ write
\begin{equation}
	\label{eq:decomposition_set}
	\on{Dec}_{\nu_0}(\mu) \ceqq \Set{ f \in \mb Z_{\geq 0}^{\nu_0} | \mu = \mu_f } \sse \mb Z_{\geq 0}^{\nu_0}.
\end{equation}
This is the set of decompositions of $\mu$ as a $\mb Z_{\geq 0}$-linear combination of the elements of $\nu_0$:
if $\mu \not\in Q^+_{\phi_0}$ then it is empty,
otherwise it coincides with the set of decompositions of $\mu$ in the (commutative) monoid $Q^+_{\phi_0}$,
relative to the choice of generators $\nu_0 \sse Q^+_{\phi_0}$.

Note that~\eqref{eq:decomposition_set} is finite.\fn{
Indeed,
choose a base $\Delta \sse \Phi$ of simple roots such that $\nu_0 \sse \Phi^+_{\Delta}$ lies within the associate subsystem of positive roots:
if $\mu \in Q^+_{\phi_0} \sse Q^+$,
then every element $f \in \on{Dec}_{\nu_0}(\mu)$ yields a decomposition of $\mu$ as a $\mb Z_{\geq 0}$-linear combination of positive roots---%
of which there are only finitely many.}~Then the \emph{height of} $\mu \in Q^+_{\phi_0}$,
\emph{relative to} $\nu_0$,
is
\begin{equation}
	\label{eq:relative_height}
	\abs \mu_{\nu_0} \ceqq \max \Set{ \abs f | f \in \on{Dec}_{\nu_0}(\mu) } \in \mb Z_{\geq 0}.
\end{equation}
(With the convention that $\max (\vn) \ceqq 0$.)
An element $\mu \in Q^+_{\phi_0}$ is \emph{indecomposable} if $\abs \mu_{\nu_0} = 1$,
which implies that $\mu \in \nu_0$.
Denote by $\Delta_{\nu_0} \sse \nu_0$ the subset of indecomposable elements.

By definition,
one has
\begin{equation}
	\abs \mu_{\nu_0} \geq \abs{\mu'}_{\nu_0} + \abs{\mu''}_{\nu_0},
	\qquad \mu,\mu',\mu'' \in Q^+_{\phi_0},
	\quad \mu = \mu' + \mu''.
\end{equation}
Then a (strong) induction based on~\eqref{eq:relative_height} shows that every nonzero element of $Q^+_{\phi_0}$ can be written as a $\mb Z_{\geq 0}$-linear combination of indecomposable elements.
Conversely,
$\abs f = \abs{\mu_f}_{\nu_0}$ implies that $f$ is a multisubset of indecomposable elements.

\begin{exem}
	Suppose that $\phi_0 = \vn$.
	Then there is a base $\Delta \sse \Phi$ of simple roots such that $\nu_0 = \Phi^+_{\Delta}$ coincides with the associated subsystem of positive roots,
	and $\Delta_{\nu_0} = \Delta$.
	(As usual,
	the above definitions do \emph{not} actually use a choice of base.)

	But in general one might even have $\Delta_{\nu_0} = \nu_0$,
	e.g.,
	taking $\mf g = \mf{gl}_3(\mb C)$,
	and $\phi_0 \ceqq \set{\alpha_{12},\alpha_{21}}$,
	and $\psi_0 \ceqq \set{\alpha_{12},\alpha_{21},\alpha_{13},\alpha_{23}}$ (whence $\nu_0 = \set{\alpha_{23},\alpha_{13}}$,
	cf.~\S~\ref{sec:rank_3_example}).
\end{exem}

\begin{exem}
	\label{ex:indecomposable_weight_space}

	Suppose that the root $\alpha \in \nu_0$ is indecomposable.
	Then,
	in the notation of~\eqref{eq:variables_pbw_basis},
	a $\mb C$-basis of the weight space $M[\alpha] \sse M$ is given by the ordered list $\mc B = (X_{\alpha,0} w,\dc,X_{\alpha,d_\alpha-1} w)$,
	in \emph{increasing} exponents of $\varepsilon$.

	Now one has $\ps{t}{}{X_{\alpha,i}} = E_\alpha \varepsilon^i$,
	so that
	\begin{equation}
		\ps{t}{}{X_{\alpha,i}} \cdot X_{\alpha,j} = H_\alpha \varepsilon^{i+j} + E_{-\alpha} \varepsilon^j E_\alpha \varepsilon^i,
		\qquad H_\alpha \ceqq [E_{-\alpha},E_\alpha] \in \mf t_\alpha.
	\end{equation}
	Hence,
	one has $\mc S^{\bm \psi}_{\bm \lambda}(X_{\alpha,i} w,X_{\alpha,j} w) = \Braket{ \lambda_{i+j},H_\alpha }$,
	which vanishes if $i+j \geq d_\alpha$ (cf.~Rmk.~\ref{rmk:nonsingular_characters_and_levels}).
	This implies that the $d_\alpha$-by-$d_\alpha$ (symmetric) matrix of the restricted bilinear form $\eval[1]{\mc S^{\bm \psi}_{\bm \lambda}}_\alpha \cl M[\alpha] \ots M[\alpha] \to \mb C$,
	in the basis $\mc B$,
	is upper antitriangular,
	with coefficients equal to $\braket{ \lambda_{d_\alpha-1}, H_\alpha } \in \mb C$ on the main antidiagonal.
	Thus,
	the bilinear form $\eval[1]{\mc S^{\bm \psi}_{\bm \lambda}}_\alpha$ is nondegenerate if and only if $\lambda_{d_\alpha-1}$ does \emph{not} vanish on the coroot $\alpha^{\dual} \in \phi^{\dual}_{d_\alpha} \sm \phi^{\dual}_{d_\alpha-1}$.
	(In view of Thm.~\ref{thm:nondegenerate_character_and_symplectic_form},
	the latter relates with the computation of dual bases of $\mf u^\pm_{\bm \psi}$,
	with respect to the pairing~\eqref{eq:nondegenerate_pairing_deeper_character}.)
\end{exem}

\subsubsection{}
\label{sec:pbw_bases_weight_spaces}

Suppose in particular that all elements of $\nu_0$ are indecomposable.
Then,
by Exmp.~\ref{ex:indecomposable_weight_space} +~\ref{prop:nonsingular_characters},
we conclude the following:
if the Shapovalov form is nondegenerate,
the corresponding character $\bm \chi$ is nonsingular (cf.~Rmk.~\ref{rmk:nonsingular_characters_and_levels}).
In general,
however,
the latter \emph{cannot} be a sufficient condition for irreducibility,
contrary to the generic wild case.

Precisely,
choose $\mu \in Q^+_{\phi_0} \sm (0)$:
for any $f \in \on{Dec}_{\nu}(\mu)$ one can construct a monomial lying in the corresponding weight space---%
in many ways.
Namely,
for all $\alpha \in \nu_0$ one must put exactly $f_\alpha \geq 0$ occurrences of the variables $X_{\alpha,i}$,
with \emph{arbitrary} exponents $i \in \set{0,\dc,d_\alpha-1}$;
so for each root there are $d_\alpha^{f_\alpha}$ further independent choices.
More concretely,
given $\alpha \in \nu_0$ and $f \in \mb Z_{\geq 0}^{\nu_0}$ introduce the following set of nondecreasing functions
\begin{equation}
	\on{Exp}_{\alpha,f} \ceqq \Set{ \bm i \cl \set{1,\dc,f_\alpha} \lra \set{1,\dc,d_\alpha-1} | \bm i_k \leq \bm i_l \text{ for } k \leq l }.
\end{equation}
Choose also a total order $\nu_0 = (\alpha_1,\dc,\alpha_m)$,
for some integer $m \geq 0$,
such that $1 = \abs{ \alpha_1 }_{\nu_0} \leq \dm \leq \abs{ \alpha_m }_{\nu_0}$,
and write $f_j = f_{\alpha_j} \in \mb Z_{\geq 0}$.
Then there is a natural (lexicographic) total order on the set of generators~\eqref{eq:variables_pbw_basis},
viz.,
\begin{equation}
	\label{eq:order_basis_nilradical}
	X_{\alpha_k,i} \leq X_{\alpha_l,j} \quad \text{if} \quad
	\begin{cases}
		k < l,
		 & \quad \text{or} \\
		k = l,
		 & \quad i \leq j.
	\end{cases}
\end{equation}

Overall,
it follows that:

\begin{prop}
	\label{prop:pbw_basis}

	A free set of generators of $M[\mu]$,
	over $\mb C$,
	is obtained by acting on the canonical generator with the elements of the finite set
	\begin{equation}
		\label{eq:generators_weight_space}
		\Set { \bm X_{f,\bm i} \ceqq \prod_{j = 1}^m \Biggl( \,
			\prod_{k = 1}^{f_j} X_{\alpha_j,\bm i^{(j)}_k} \Biggr)
			| f \in \on{Dec}_{\nu_0}(\mu),
			\quad \bm i = \bigl( \bm i^{(1)},\dc,\bm i^{(m)} \bigr) \in \prod_{j = 0}^m \on{Exp}_{\alpha_j,f} }.
	\end{equation}
\end{prop}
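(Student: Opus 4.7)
The plan is to reduce the statement to an elementary combinatorial enumeration in the symmetric algebra $\Sym(\mf u^-_{\bm \psi})$, using the $\mb C$-linear isomorphism $\Sym(\mf u^-_{\bm \psi}) \lxra{\simeq} M$ constructed in \S~\ref{sec:identification_verma_symmetric_algebra}. The first step is to verify that this isomorphism is $\mf Z_{\phi_0}$-equivariant, where the symmetric algebra carries the weight grading~\eqref{eq:grading_symmetric_algebra_nilradical} induced from the $\mb Z$-linear extension of the adjoint action (so each variable $X_{\alpha,i} = E_{-\alpha} \varepsilon^i$ has weight $-\alpha$, by the $\mb C_r$-linearity of the splitting~\eqref{eq:deeper_root_splitting}). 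This is precisely the content of Rmk.~\ref{rmk:twisted_adjoint_action}, which yields canonical identifications $\Sym(\mf u^-_{\bm \psi})_{-\mu} \lxra{\simeq} M[\mu]$ for all $\mu \in Q^+_{\phi_0}$. Hence it suffices to exhibit a $\mb C$-basis of the weight subspace $\Sym(\mf u^-_{\bm \psi})_{-\mu}$.

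Second, I would note that the collection $\Set{ X_{\alpha,i} | \alpha \in \nu_0, \, i \in \set{0,\dc,d_{\alpha}-1}}$ is a $\mb C$-basis of $\mf u^-_{\bm \psi}$—this follows from the argument recalled just after~\eqref{eq:variables_pbw_basis} together with the inclusions $\nu_0 \supseteq \nu_1 \supseteq \dm \supseteq \nu_{r-1}$ coming from Lem.~\ref{lem:nested_nilradicals}, since $\alpha \in \nu_i \iff \alpha \in \nu_0$ and $i < d_{\alpha}$. Choosing the lexicographic total order~\eqref{eq:order_basis_nilradical}, the ordered monomials in these variables form a standard $\mb C$-basis of $\Sym(\mf u^-_{\bm \psi})$. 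Such a monomial has weight $-\mu$ exactly when its multiplicity vector $f = (f_1,\dc,f_m)$, with $f_j$ counting the total number of factors of the form $X_{\alpha_j,\ast}$, satisfies $\mu = \sum_j f_j \alpha_j$; equivalently, $f \in \on{Dec}_{\nu_0}(\mu)$.

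Finally, for fixed $f \in \on{Dec}_{\nu_0}(\mu)$, I would parameterise the ordered monomials with these prescribed multiplicities. For each index $j$ independently, specifying the $f_j$ occurrences of the variables $X_{\alpha_j,\ast}$ amounts to choosing an unordered multisubset of size $f_j$ from $\set{0,\dc,d_{\alpha_j}-1}$, which in turn is equivalent to giving a nondecreasing function $\bm i^{(j)} \in \on{Exp}_{\alpha_j,f}$; reassembling the factors in the lexicographic order then reproduces the monomial $\bm X_{f,\bm i}$ of~\eqref{eq:generators_weight_space}. The resulting assignment $(f,\bm i) \mt \bm X_{f,\bm i}$ is a bijection onto the set of ordered monomials of weight $-\mu$, whence onto a $\mb C$-basis of $\Sym(\mf u^-_{\bm \psi})_{-\mu}$; transporting back through the weight-preserving isomorphism yields the claim, with finiteness guaranteed by the already noted finiteness of $\on{Dec}_{\nu_0}(\mu)$. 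There is no serious obstacle here—the only care required is in bookkeeping the various index conventions and checking that the weight-grading identification of Rmk.~\ref{rmk:twisted_adjoint_action} is indeed respected by the chosen ordering.
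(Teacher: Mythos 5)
Your proposal is correct and takes essentially the same route as the paper: the statement is not given a formal proof there, but is presented as following ``overall'' from the preceding material of \S~\ref{sec:pbw_bases_weight_spaces}, namely the $\mb C$-linear identification $\Sym(\mf u^-_{\bm \psi}) \simeq M$ from \S~\ref{sec:identification_verma_symmetric_algebra}, the weight-shifted $\mf Z_{\phi_0}$-gradings of Rmk.~\ref{rmk:twisted_adjoint_action}, and the enumeration of ordered monomials in the variables~\eqref{eq:variables_pbw_basis}---which is precisely what you spell out. One small remark: you implicitly (and correctly) treat the codomain of the nondecreasing functions in $\on{Exp}_{\alpha,f}$ as $\set{0,\dc,d_{\alpha}-1}$, matching the range of $\varepsilon$-exponents in~\eqref{eq:variables_pbw_basis}; the paper's display writes $\set{1,\dc,d_{\alpha}-1}$, which appears to be a misprint (it would drop the $\varepsilon^0$ slot and miscount the basis), so your reading is the intended one. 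Also note that your appeal to Lem.~\ref{lem:nested_nilradicals} for the equivalence $\alpha \in \nu_i \iff (\alpha \in \nu_0 \text{ and } i < d_{\alpha})$ is not quite the sharpest tool: the inclusions $\psi_i \sse \psi_{i+1}$ from the parabolic filtration together with the very definition of $d_{\alpha}$ already give both directions directly, without needing to pass through the nilradical lemma.
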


(And a $\mb C$-basis of $M[\mu]$ is obtained upon giving a total order to~\eqref{eq:generators_weight_space}.)

\begin{proof}[Proof omitted]
\end{proof}

\begin{exem}
	\label{ex:nongeneric_simplicity}

	Let us look at the inclusion $\phi_0 \sse \phi_1$ of the smallest Levi subsystems.
	By definition,
	one has $d_\alpha = 1$ if and only if $\alpha \in \phi_1 \sm \phi_0$,
	which may be nonempty in the generalised case.
	Furthermore,
	up to a sign,
	this is equivalent to having $\alpha \in \nu_0 \sm \nu_1$.

	Choose now $\mu \in Q^+_{\phi_0}$ such that the following holds:
	if $f \in \on{Dec}_{\nu_0}(\mu)$,
	and if $f_\alpha \neq 0$,
	then $d_\alpha = 1$.
	In particular,
	this implies that $\mu \in \spann_{\mb Z_{\geq 0}}(\nu_0 \sm \nu_1) \sse Q^+_{\phi_0}$;
	conversely,
	if the Levi filtration corresponds to nested subsets $\Sigma_0 \sse \dm \sse \Sigma_{r-1} \sse \Delta$ of a base $\Delta \sse \Phi$,
	then this happens,
	e.g.,
	if $\mu$ is a positive-integer multiple of a simple root $\theta \in \Sigma_1 \sm \Sigma_0$.

	Then,
	as a particular case of Prop.~\ref{prop:pbw_basis},
	one finds
	\begin{equation}
		M[\mu] = \spann_{\mb C} \Set{ \Biggl( \prod_{j = 1}^m X^{f_j}_{\alpha_j,0} \Biggr) w | f \in \on{Dec}_{\nu_0}(\mu) },
	\end{equation}
	as the set $\on{Exp}_{\alpha_j,f}$ is a singleton for all $j \in \set{1,\dc,m}$.
	Hence,
	this weight space lies in the vector subspace $U\mf g \cdot w \sse M$,
	using the embedding $U\mf g \sse U\mf g_r$ obtained from the composition $\mf g \hra \mf g_r \hra U\mf g_r$,
	and all the computations involving $\eval[1]{S^{\bm \psi}_{\bm \lambda}}_\mu$ happen inside $U(\mf u^\pm_{\psi_0}) \sse U\mf g$.
	Thus,
	the usual conditions for the nondegeneracy of a standard Shapovalov form apply,
	and in general these are \emph{stronger} than the requirement that $\chi_0 \cl \mf l_{\phi_0} \to \mb C$ be nonsingular.
	(Cf.~Exmp.~\ref{ex:nonresonance}.)
\end{exem}

\subsubsection{}

The previous examples leads to a natural criterion for the simplicity of the finite generalised singularity modules.
Namely,
provided that its defining weight lies in the correct stratum,
one must still impose that an ordinary tame (parabolic) Verma module be simple,
but working within a suitable reductive Lie subalgebra of $\mf g$.
To state it precisely,
observe first the following fact:

\begin{lemm}
	\label{lem:parabolic_induction}

	Let $\mf p \sse \wt{\mf p} \sse \mf g$ be nested parabolic subalgebras,
	with nested Levi factors $\mf l \sse \wt{\mf l} \sse \mf g$---%
	all containing $\mf t$.
	Then:
	\begin{enumerate}
		\item
		      the intersection $\mf p' \ceqq \mf p \cap \wt{\mf l}$ is a parabolic subalgebra of $\wt{\mf l}$;

		\item
		      and $\mf l \sse \mf p'$ is a Levi factor thereof.
	\end{enumerate}
\end{lemm}

\begin{proof}[Postponed to~\ref{proof:lem_parabolic_induction}]
\end{proof}

\begin{theo}
	\label{thm:simple_finite_modules}

	Choose a depth-$r$ parabolic filtration $\bm \psi \in \mc P^{(r)}_\Phi$,
	with Levi factor $\bm \phi \in \mc L^{(r)}_\Phi$,
	and let $\bm \lambda \in \mf Z^{\dual}_{\bm \phi}$ be a formal type.
	Set $\mf l_1 \ceqq \mf l_{\phi_1} \sse \mf g$,
	and regard $\wt{\mf p}_0 \ceqq \mf p_{\psi_0} \cap \mf l_1$ as a parabolic subalgebra of $\mf l_1$ (cf.~Lem.~\ref{lem:parabolic_induction}).
	Then the finite generalised singularity module $M^{\bm \psi}_{\bm \lambda}$ is \emph{simple} if and only if:
	\begin{enumerate}
		\item the corresponding character $\bm \chi \cl \mf S^{\bm \psi} \to \mb C$ is nonsingular;

		\item and the standard Verma module $\wt M_0 \ceqq \Ind_{U\wt{\mf p_0}}^{U\mf l_1} \mb C_{\wt \chi_0}$ is \emph{simple},
		      considering the restricted character $\wt \chi_0 \ceqq \eval[1]{\chi_0}_{\wt{\mf p_0}}$.
	\end{enumerate}
\end{theo}

\begin{proof}
	We have already established that the first conditions is necessary.
	Assuming that it holds,
	the conclusion now follows from a recursive usage of the (exact) functors of~\cite[Lem.~3.7 +~Thm.~4.1]{chaffe_topley_2023_category_o_for_truncated_current_lie_algebras}:
	cf.~\cite[\S~4]{chaffe_rembado_yamakawa_genus_zero_wild_quantum_de_rham_spaces} for some more details.
\end{proof}

\subsubsection{}

In terms of the choice of $\bm \lambda$,
the first condition of Thm.~\ref{thm:simple_finite_modules} was made explicit in Rmk.~\ref{rmk:nonsingular_characters_and_levels},
using the root-valuation strata of $\mf t_r^{\dual}$.
Suppose therefore that $\bm\chi$ is \emph{nonsingular} to explore the second condition:
in particular $\lambda_0$ vanishes on $\phi_0^{\dual} \sse \mf t$,
but \emph{not} on the complement $\phi_1^{\dual} \sm \phi_0^{\dual}$.
In the notation of Thm.~\ref{thm:simple_finite_modules},
now Lem.~\ref{lem:parabolic_induction} implies that $\mf l_0 \ceqq \mf l_{\phi_0}$ is the unique Levi factor of $\wt{\mf p}_0$ containing $\mf t$.
Therefore,
compared to $\chi_0$,
the restricted character $\wt\chi_0 \cl \wt{\mf p}_0 \to \mb C$ is a smaller extension of $\lambda_0 \in \mf t^{\dual}$,
and one has $\wt M_0 = M_{\wt{\mf p}_0,\lambda_0}$ (cf.~\eqref{eq:tame_parabolic_verma};
again,
the point is that one now works within $\mf l_1 \sse \mf g$).

Now suppose that fission is \emph{complete/toral} (cf.~Rmk.~\ref{rmk:complete_incomplete_fission}).
This implies that $\mf l_1 = \mf t$ is as small as possible:
on the side of meromorphic gauge theory,
it means that the centralizer of the irregular type is the maximal torus $T \sse G$,
which happens in particular in the generic case (where the centralizer of the leading coefficient is already only $T$).
Then $\mf l_0 = \wt{\mf p}_0 = \mf t$ as well,
and $\wt M_0 = M_{\mf t,\lambda_0} \simeq \mb C_{\lambda_0}$ is simple:
this is consistent with~\cite{wilson_2011_highest_weight_theory_for_truncated_current_lie_algebras},
and it extends the irreducibility criterion of op.~cit.~in the nongeneric case.
However,
in the general nongeneric case one might have $\mf t \ssne \mf l_1$,
i.e.,
fission might be \emph{incomplete} (cf.~again~Rmk.~\ref{rmk:complete_incomplete_fission}).
In this case,
one can appeal to the classical results on the BGG category $\mc O$.
In particular,
the precise necessary/sufficient condition for $\wt M_0$ to be simple is Jantzen's criterion~\cite[Satz~4]{janzten_1977_kontravariante_formen_auf_induzierten_darstellungen_halbeinfacher_lie_algebren} (cf.~\cite[\S~9.13]{humphreys_2008_representations_of_semisimple_lie_algebras_in_the_bgg_category_o}),
and so indeed it is \emph{not} enough to select $\bm\lambda$ in the correct stratum (cf.~Exmp.~\ref{ex:nongeneric_simplicity}).

\begin{exem}
	\label{ex:nonresonance}

	In a particular case,
	one can finally relate the condition for the simplicity of $M_{\bm\lambda}^{\bm\psi}$ to the \emph{nonresonance} condition for untwisted meromorphic $G$-connection germs.
	Precisely,
	suppose that $\mf b_0 \ceqq \wt{\mf p_0} \sse \mf l_1$ is a \emph{Borel} subalgebra,
	i.e.,
	that $\phi_0 = \vn$ and $\mf l_0 = \mf t$.
	Then $\wt M_0 = M_{\mf b_0,\lambda_0}$ is simple if and only the following happens,
	noting that $\lambda_0$ is now a \emph{regular} weight (as it does not vanish on $\phi_1$;
	cf.~\cite[\S~9.12]{humphreys_2008_representations_of_semisimple_lie_algebras_in_the_bgg_category_o}).
	Let $\phi_1 = \phi_1^+ \cup \phi_1^-$ be the decomposition of the root system of $(\mf l_1,\mf t)$ into positive/negative roots,
	as determined by $\mf b_0$.
	If $\rho \ceqq \frac 1 2 \sum_{\phi_1^+} \alpha \in \mf t^{\dual}$ is the Weyl covector,
	then the affine wall-avoiding condition reads
	\begin{equation}
		\label{eq:affine_wall_avoiding}
		\braket{ \lambda_0 + \rho,\alpha^{\dual} } \not\in \mb Z_{> 0},
		\qquad \alpha \in \phi_1^+.
	\end{equation}
	Importantly,
	$\rho$ has integer evaluation on all coroots,
	and so~\eqref{eq:affine_wall_avoiding} follows from the stronger requirement that
	\begin{equation}
		\braket{\lambda_0,\alpha_0} \not\in \mb Z \sm \set{0},
		\qquad \alpha \in \phi_1.
	\end{equation}
	The latter does \emph{not} depend on the choice of a Borel subalgebra containing $\mf t$,
	and it expresses the nonresonance of a (formal) residue,
	on the other side of the duality~\eqref{eq:deeper_duality}:
	i.e.,
	the fact that its adjoint action has no nonzero integer eigenvalues,
	upon restriction to the centraliser of the irregular type (cf.~\cite[\S~4]{chaffe_rembado_yamakawa_genus_zero_wild_quantum_de_rham_spaces}).
\end{exem}

\subsection{A nonsymmetric variant}
\label{sec:nonsymmetric_shapovalov}

The wild Shapovalov form was defined by using the (pinned) root system of $(\mf g,\mf t)$,
and there is a more abstract version which we will also need.

Let $\bm{\mf g}$ be a (possibly nonreductive) finite-dimensional Lie algebra with a vector-space splitting $\bm{\mf g} = \bm{\mf u}^- \ops \bm{\mf l} \ops \bm{\mf u}^+$,
where $\bm{\mf l} \sse \bm{\mf g}$ is a Lie subalgebra and $\bm{\mf u}^\pm \sse \bm{\mf g}$ are vector subspaces.
Then there is a `negation' antiautomorphic involution $X \mt -X$ of $\bm{\mf g}$,
which extends to the Hopf-algebra antipode $\iota \cl U\bm{\mf g} \lxra{\simeq} (U\bm{\mf g})^{\op}$---%
a.k.a.~the `principal' antiautomorphism~\cite[\S~2.2.18]{dixmier_1996_algebres_enveloppantes}.
The $\iota$-contragredient universal Shapovalov pairing is then
\begin{equation}
	\mc S^{\iota} \cl U\bm{\mf g} \ots U\bm{\mf g} \lra U\bm{\mf l},
	\qquad X \ots Y \lmt \pi_{\bm{\mf l}} \bigl( \iota(X)Y \bigr),
	\qquad X,Y \in U\bm{\mf g},
\end{equation}
via the usual projection $\pi_{\bm{\mf l}} \cl U\bm{\mf g} \thra U\bm{\mf l}$.
But in this setup the vector-space splitting of $U\bm{\mf g}$ need \emph{not} be preserved by the involution.
By the same token,
if $\bm \chi \cl \bm{\mf l} \to \mb C$ is a character,
the corresponding bilinear form $\mc S^{\iota}_{\bm \chi} \ceqq \bm \chi \circ \mc S^{\iota}$ need \emph{not} be symmetric,
contrary to~\eqref{eq:wild_shapovalov_form}---%
which used the fixed locus of~\eqref{eq:wild_transposition_involution} in an essential way.

\subsubsection{}

Let us get back to the situation where $\bm{\mf g} = \mf g_r$,
$\bm{\mf l} = \mf l_{\bm \phi}$,
and $\bm{\mf u}^\pm = \mf u^\pm_{\bm \psi}$.
Composing transposition/negation yields the involutive \emph{wild Chevalley automorphism} $\omega \cl \bm X \mt -\ps{t}{}{\bm X} \cl \mf g_r \lxra{\simeq} \mf g_r$ (cf.~\cite[\S~1.3]{kac_1990_infinite_dimensional_lie_algebras}).
Extending it to a ring automorphism of $U\mf g_r$ tautologically yields the identity
\begin{equation}
	\label{eq:relation_shapovalov_forms}
	\mc S^{\bm \psi} (\bm X,\bm Y) + \mc S^{\iota} \bigl( \omega(\bm X),\bm Y \bigr) = 0 \in U\mf l_{\bm \phi},
	\qquad \bm X,\bm Y \in U\mf g_r.
\end{equation}
This make it possible to transfer all the above results to the nonsymmetric version of the Shapovalov form.

Namely,
choose $\bm \lambda \in \mf Z^{\dual}_{\bm \phi} \sse \mf t_r^{\dual}$ as above,
corresponding to a character $\bm \chi \cl \mf l_{\bm \phi} \to \mb C$,
and consider the opposite pair of finite singularity modules
\begin{equation}
	M^\pm \ceqq M^{\pm \bm \psi}_{\pm \bm \lambda} = U\mf g_r \ots_{U\mf p^\pm_{\bm \psi}} \mb C_{\pm \bm \chi}.
\end{equation}
Then:

\begin{enonce}{Corollary/Definition}
	\label{cor:nonsymmetric_shapovalov}

	There is a well-induced (nonsymmetric) Shapovalov form $\mc S^{\iota}_{\bm \chi} \cl M^- \ots M^+ \to \mb C$ such that:
	\begin{enumerate}
		\item $M^-$ and $M^+$ are \emph{simple} if and only if both the left and right radical of $\mc S^{\iota}_{\bm \chi}$ vanish;

		\item and the $\mf Z_{\phi_0}$-weight space decompositions of $M^-$ and $M^+$ are $\mc S^{\iota}_{\bm \chi}$-\emph{orthogonal}.
	\end{enumerate}
\end{enonce}

(As usual,
the notation $\mc S^{\iota}_{\bm \lambda}$ will be justified in our setup,
as $\bm \lambda$ uniquely encodes $\bm \chi$.)

\begin{proof}
	Consider again the subspace $K^{\bm \psi}_{\bm \lambda} \sse U\mf g_r$,
	generating the annihilator of the canonical generator of $M^+$:
	the subspace $K^{\bm \psi}_{\bm \lambda} \ots U\mf g_r + U\mf g_r \ots K^{\bm \psi}_{\bm \lambda} \sse U\mf g_r \ots U\mf g_r$ is annihilated by $\mc S^{\bm \psi}_{\bm \lambda}$ (cf.~Prop.~\ref{prop:annihilator_cyclic_vector_in_wild_shapovalov_radical}).
	Then~\eqref{eq:relation_shapovalov_forms} implies that $\omega \bigl( K^{\bm \psi}_{\bm \lambda} \bigr) \ots U\mf g_r + U\mf g_r \ots K^{\bm \psi}_{\bm \lambda}$ is annihilated by $\mc S^{\iota}_{\bm \lambda}$.
	Therefore,
	by $\iota$-contragrediency,
	there is indeed a reduced Shapovalov form,
	noting that $\omega \bigl( K^{\bm \psi}_{\bm \lambda} \bigr) = K^{-\bm \psi}_{-\bm \lambda}$ and
	\begin{equation}
		M^- \simeq U \mf g_r \bs ( U\mf g_r \cdot K^{-\bm \psi}_{-\bm \lambda} ).
	\end{equation}
	(This uses the identities $\omega(\mf u^\pm_{\bm \psi}) = \mf u^\mp_{\bm \psi}$,
	and $\omega(\mf l_{\bm \phi}) = \mf l_{\bm \phi}$,
	and $\eval[1] \omega_{\mf t_r} = -\Id_{\mf t_r}$.)

	By the same token,
	$\Rad \bigl( \mc S^{\bm \psi}_{\bm \lambda} \bigr) \sse M^+$ coincides with the right radical of $\mc S^{\iota}_{\bm \lambda}$,
	i.e.,
	with the maximal proper submodule $N^+ \ceqq N^{\bm \psi}_{\bm \lambda} \sse M^+$ (cf.~Thm.~\ref{thm:shapovalov_radical_equals_maximal_proper_submodule_wild}).
	About the left slot,
	note that by construction
	\begin{equation}
		\omega \bigl( \Ann_{U\mf g_r}(w^+) \bigr) = \omega \bigl( U\mf g_r \cdot K^{\bm \psi}_{\bm \lambda} \bigr) = U\mf g_r \cdot K^{-\psi}_{-\bm \lambda} = \Ann_{U\mf g_r}(w^-),
		\qquad w^\pm \ceqq w^{\bm \psi}_{\pm \bm \lambda} \in M^\pm.
	\end{equation}
	Hence,
	there is a well-defined $\omega$-linear isomorphism
	\begin{equation}
		\label{eq:theta_linear_isomorphism}
		\bm X w^+ \lmt \wt \omega(\bm X w^+) \ceqq \omega(\bm X) w^- \cl M^+ \lxra{\simeq} M^-,
		\qquad \bm X \in U\mf g_r.
	\end{equation}
	It follows that $\wt \omega \bigl( \Rad( \mc S^{\bm \psi}_{\bm \lambda}) \bigr) = \wt \omega(N^+) = N^- \ceqq N^{-\bm \psi}_{-\bm \lambda} \sse M^-$ is the maximal proper submodule,
	which by~\eqref{eq:relation_shapovalov_forms} coincides with the left radical of $\mc S^{\iota}_{\bm \chi}$.

	Finally,
	note that $M^-$ is a \emph{lowest}-weight $\mf Z_{\phi_0}$-module,
	of lowest weight $-\lambda_0 \in \mf Z^{\dual}_{\phi_0}$,
	with lowest-weight line spanned by $w^-$.
	Hence
	\begin{equation}
		M^- = \bops_{Q^+_{\phi_0}} M^-[\mu],
		\qquad M^-[\mu] \ceqq \Set{ \wt v \in M^- | X \wt v = \Braket{ \mu - \lambda_0,X } \wt v \text{ for } X \in \mf Z_{\phi_0} } \sse M^-.
	\end{equation}
	Furthermore,
	by construction~\eqref{eq:theta_linear_isomorphism} is a \emph{graded} isomorphism,
	i.e. (in our notation),
	$\wt \omega \bigl( M^+[\mu] \bigr) = M^-[\mu]$ for all $\mu \in Q^+_{\phi_0}$.
	Thus,
	a last application of~\eqref{eq:relation_shapovalov_forms},
	together with Prop.~\ref{prop:orthogonality_wild_shapovalov},
	yields
	\begin{equation}
		\mc S^{\iota}_{\bm \lambda} \bigl( M^-[\mu],M^+[\mu'] \bigr) = (0),
		\qquad \mu \neq \mu' \in Q^+_{\phi_0}.
		\qedhere
	\end{equation}
\end{proof}

\subsubsection{}

The $\mb C$-linear embedding $\Sym(\mf u^\pm_{\bm \psi}) \hra U\mf g_r$ of \S~\ref{sec:identification_verma_symmetric_algebra} restricts on $\mf u^\pm_{\bm \psi}$ to the composition $\mf u^\pm_{\bm \psi} \sse \mf g_r \sse U\mf g_r$,
which is basis-independent.
In particular,
it makes \emph{intrinsic} sense to restrict the nonsymmetric Shapovalov form to the subspace $\mf u^+_{\bm \psi} \ots \mf u^-_{\bm \psi} \hra M^- \ots M^+$.
One then computes
\begin{equation}
	\label{eq:relation_shapovalov_nonsingular_character}
	\mc S^{\iota}_{\bm \lambda} (\bm Y,\bm Y') + B^{\bm \psi}_{\bm \lambda} \bigl( \bm Y,\bm Y' \bigr) = 0,
	\qquad \bm Y \in \mf u^+_{\bm \psi},
	\quad \bm Y' \in \mf u^-_{\bm \psi},
\end{equation}
invoking the pairing of~\eqref{eq:nondegenerate_pairing_deeper_character}.
Hence,
if $\bm \chi$ is nonsingular,
the Shapovalov form of Cor./Def.~\ref{cor:nonsymmetric_shapovalov} extends the polarised KKS symplectic structure on the wild coadjoint orbit $\mc O_{\bm \lambda} \sse \mf g_r^{\dual}$ (cf.~\S~\ref{sec:deeper_character_and_symplectic_form}).
This is the key to the quantisation of the next section.

\section{Deformation quantisation of nongeneric wild orbits}
\label{sec:deformation_quantisation}

\subsection{}

In this section we deformation-quantise \emph{all} the $r$-semisimple wild orbits admitting a balanced polarisation,
building on \S~\ref{sec:nonsingular_characters}--\ref{sec:shapovalov} and adapting~\cite{alekseev_lachowska_2005_invariant_star_product_on_coadjoint_orbits_and_the_shapovalov_pairing}.

\subsection{Generalities on deformation quantisation}

We review the basic terminology,
which originated in~\cite{gerstenhaber_1964_on_the_deformtation_of_rings_and_algebras,bayen_flato_fronsdal_lichnerowicz_sternheimer_1978_deformation_theory_and_quantisation_i_deformations_of_symplectic_structures,bayen_flato_fronsdal_lichnerowicz_sternheimer_1978_deformation_theory_and_quantisation_ii_physical_applications}:
please refer,
e.g.,
to~\cite{kassel_1995_quantum_groups,etingof_schiffmann_1998_lectures_on_quantum_groups,etingof_2007_calogero_moser_systems_and_representation_theory,schedler_2012_deformations_of_algebras_in_noncommutative_geometry},
for more details.
(Experts might want to skip to \S~\ref{sec:deformation_quantisation_our_case}.)

\subsubsection{}
\label{sec:deformation_quantisation_basics}

Let $M$ be a finite-dimensional $C^\infty$ manifold,
equipped with a Poisson bracket $\set {\cdot,\cdot }_M \cl A_0 \wdg A_0 \to A_0$ on the commutative $\mb C$-algebra $A_0 \ceqq C^{\infty}(M,\mb C)$ of semiclassical observables on $M$---%
thinking of the latter as a phase-space in Hamiltonian mechanics.
Equivalently,
$M$ carries a Poisson bivector field $\Pi$,
i.e.,
a smooth section of $\bigwedge^2 \on T\!M \to M$ which Schouten-commutes with itself.

Let then $\hs$ be a formal variable,
and $\mb C_{\hs} \ceqq \mb C\llb \hs \rrb$ the corresponding ring of formal power series.
A (1-parameter) \emph{deformation algebra} is a topologically-free $\mb C_{\hs}$-algebra with $\hs$-adically-continuous associative product.
A \emph{formal deformation of} $A_0$ is a deformation algebra $\wh A$ such that there exists a $\mb C$-algebra isomorphism $\wh A \bs \hs \wh A \simeq A_0$.
This implies that $\wh A \simeq A_0 \llb \hs \rrb$ as (separated,
torsion-free,
$\hs$-adically-complete) $\mb C_{\hs}$-modules:
we choose such an identification.
Then the associative product $\wh A \ots_{\mb C_{\hs}} \wh A \to \wh A$ is uniquely determined by the restriction
\begin{equation}
	\label{eq:star_product}
	\ast \cl A_0 \ots A_0 \lra A_0 \llb \hs \rrb,
	\qquad a \ots b \lmt a \ast b = \sum_{i \geq 0} C_i(a \ots b) \hs^i,
\end{equation}
encoded by suitable $\mb C$-linear maps $C_i \cl A_0 \ots A_0 \to A_0$.
It follows that $C_0$ is the product of $A_0$,
and that $a \ast (b \ast c) = (a \ast b) \ast c \in \wh A$ for all triples $a,b,c \in A_0$;
conversely,
if these hold,
then~\eqref{eq:star_product} is (by definition) a $\ast$-\emph{product on} $M$.

\begin{rema}
	The associativity of~\eqref{eq:star_product} is equivalent to the following countable set of identities,
	for all triples $a,b,c \in A_0$:
	\begin{equation}
		\label{eq:associativity_star_product}
		\sum_{i+j = k} C_i \bigl( C_j(a,b),c \bigr)
		= \sum_{i+j = k} C_i \bigl( a,C_j(b,c) \bigr) \in A_0,
		\qquad k \in \mb Z_{\geq 0}.
	\end{equation}

	Moreover,
	changing the choice of identification $\wh A \lxra{\simeq} A_0 \llb \hs \rrb$ is the same as composing by continuous $\mb C_{\hs}$-linear automorphisms of the target,
	which reduce to the identity modulo $\hs A_0 \llb \hs \rrb \sse A_0\llb \hs \rrb$.
	(What matters intrinsically is the equivalence class of $\ast$ under this formal based gauge action.)
\end{rema}

\subsubsection{}

Conversely,
let $\wh A$ be a deformation algebra such that $\wh A \bs \hs \wh A$ is a commutative $\mb C$-algebra.
Then the quotient inherits a Poisson bracket $\set{ \cdot,\cdot }_{\wh A}$,
(well) defined by
\begin{equation}
	\label{eq:quantisation_poisson_bracket}
	\Big\{ \wh a + \hs \wh A,
	\wh b + \hs \wh A \Big\}_{\! \wh A} \ceqq \hs^{-1} \bigl( \wh a \cdot \wh b - \wh b \cdot \wh a \bigr) + \hs \wh A,
	\qquad \wh a,\wh b \in \wh A.
\end{equation}

Therefore,
one says that a formal deformation $\wh A$ of $A_0$ is a (formal) \emph{deformation quantisation} if further $\set{ \cdot,\cdot }_{\wh A} = \set{ \cdot,\cdot }_M$.
The canonical projection $\wh A \thra A_0$ modulo $\hs \wh A \sse \wh A$ is the `semiclassical limit'---%
mapping Heisenberg's time-evolution of quantum observables to Hamilton's.
In terms of the $\ast$-product~\eqref{eq:star_product} this means that
\begin{equation}
	\label{eq:poisson_bracket_from_dq}
	C_1(a,b) - C_1(b,a) = \set{ a,b }_M,
	\qquad a,b \in A_0.
\end{equation}

\subsubsection{}

Furthermore,
let $H$ be a finite-dimensional $C^\infty$ connected Lie group acting on $M$,
whence on $A_0$---%
by pullback.
The action extends $\mb C_{\hs}$-linearly to the $\mb C_{\hs}$-module $A_0\llb \hs \rrb$,
and by definition a $\ast$-product~\eqref{eq:star_product} on a formal deformation $\wh A \simeq A_0 \llb \hs \rrb \thra A_0$ is $H$-\emph{invariant} if all its coefficients $C_i$ are $H$-invariant.
This means that $H$ acts by automorphisms of the $\mb C_{\hs}$-algebra $\wh A$.
Furthermore,
if $\wh A$ is a deformation quantisation,
then necessarily $H$ acts on $M$ by Poisson automorphisms (cf.~\eqref{eq:poisson_bracket_from_dq}).

\subsubsection{}

One might ask that $C_i$ be given by a (complex) bidifferential operator on $M$,
in which case~\eqref{eq:star_product} is said to be a \emph{local} $\ast$-product.
This is the only case that we shall consider.
(One should then also restrict gauge transformations.)

Moreover,
we will work in the setting of \emph{homogeneous} manifolds:
assume therefore that the $H$-action on $M$ (as above) is transitive.
Marking a point $x \in M$ yields a diffeomorphism $M \simeq H \bs H'$,
invoking the stabiliser (Lie) subgroup $H' = \Stab_H(x) \sse H$,
which we suppose connected.
The corresponding inclusion of Lie algebras,
i.e.,
\begin{equation}
	\Lie(H') \eqqc \mf h' \sse \mf h \ceqq \Lie(H),
\end{equation}
now yields a vector-bundle isomorphism $TM \simeq H \ts^{H'} (\mf h \bs \mf h') \to H \bs H'$.
For any integer $k \geq 1$,
it is then convenient to identify the space $\Diff_k(M)^H$,
of $H$-invariant $k$-differential operators on $M$,
with the invariant subspace
\begin{equation}
	\label{eq:invariant_k_differential_operators}
	\bigl( \bigl( U\mf h \bs (U\mf h \cdot \mf h') \bigr)^{\! \ots k} \bigr)^{\! \mf h'} \sse \bigl( U\mf h \bs (U \mf h \cdot \mf h') \bigr)^{\! \ots k},
\end{equation}
viewing the quotient $U\mf h \bs (U\mf h \cdot \mf h')$ as an $\mf h'$-module in the adjoint action.

Now note that a local $H$-invariant $\ast$-product on $M$ can be encoded in a \emph{formal} bidifferential operator
\begin{equation}
	\label{eq:formal_bidiff_operator}
	\bm B = \sum_{i \geq 0} B_i \cdot \hs^i \in \Diff_2(M)^H \llb \hs \rrb,
\end{equation}
as follows.
Denote by $\pi \cl H \thra M$ the orbit map through the marked point,
and choose $a,b \in A_0$.
Then let also $\wt B_i \in (U\mf h)^{\ots 2}$ be a representative for $B_i$,
which acts as a bidifferential operator on functions $H \to \mb C$.
(We assume that $B_0$ is the class of $\wt B_0 \ceqq 1 \ots 1$.)
Consider the formal function
\begin{equation}
	\label{eq:star_product_from_formal_bidifferential_operator}
	m \circ \bigl( \wt{\bm B} (\pi^*a \ots \pi^*b) \bigr) \ceqq \sum_{i \geq 0} m \circ \bigl( \wt B_i (\pi^*a \ots \pi^*b) \bigr) \cdot \hs^i \in C^{\infty}(H,\mb C) \llb \hs \rrb,
\end{equation}
where $m$ is the Hopf-algebra product of $C^{\infty}(H,\mb C)$.
One can prove that the result is $H'$-invariant,
and that its class $(a \ast b) \in A_0 \llb \hs \rrb$ is independent of the representative.

To express the associativity constraints~\eqref{eq:associativity_star_product} in this formalism,
let $\Delta \cl U\mf h \to (U\mf h)^{\ots 2}$ be the standard coproduct.
Extending it (tacitly) in $\mb C_{\hs}$-linear fashion,
one has the following identity of formal \emph{tridifferential} operators:
\begin{equation}
	\label{eq:compact_associativity}
	\bm B^{(12,3)} \ceqq \bigl( \Delta \ots 1 (\bm B) \bigr) (\bm B \ots 1) = \bigl( 1 \ots \Delta (\bm B) \bigr) (1 \ots \bm B) \eqqc \bm B^{(1,23)} \in \Diff_3(M)^H \llb \hs \rrb.
\end{equation}
(Again,
both sides are computed for a lift $\wt{\bm B} \in (U\mf h)^{\ots 2} \llb \hs \rrb$,
etc.)

Finally,
the element~\eqref{eq:formal_bidiff_operator} yields a deformation quantisation provided that the antisymmetrisation $B_1 - B_1^{\op}$ coincides with the first-order bidifferential action of the Poisson bivector field $\Pi$ of $\bigl( M,\set{\cdot,\cdot} \bigr)$,
i.e.:
\begin{equation}
	B_1(a,b) - B_1(b,a) = \set{a,b} = \Pi(\dif a \wdg \dif b),
	\qquad a,b \in A_0.
\end{equation}

\subsubsection{}
\label{sec:sheaf_quantization}

All this material can be recast in the complex-analytic category,
with the caveat that it is in principle \emph{not} enough to look at global functions (as there are no partitions of unity,
etc.;
as noted in Rmk.~\ref{rmk:affine_setting},
we could also work with Poisson rings of regular functions on smooth complex affine varieties,
cf.~Rmk.~\ref{rmk:stein}.)

Therefore,
one rather considers a complex manifold $M$ with structure sheaf $\ms O_M$ and holomorphic Poisson bracket $\set{\cdot,\cdot}_M \cl \ms O_M \wdg \ms O_M \to \ms O_M$.
The previous notions are sheafified,
locally on $M$:
notably,
the assignment $U \mt \ms O_M(U) \llb \hs \rrb$ (with $U \sse M$ open) defines a sheaf $\ms O_M\llb \hs \rrb$ of commutative $\mb C_{\hs}$-algebras;
then $\hs \ms O_M\llb \hs \rrb$ is a subsheaf,
and $\ms O_M$ the corresponding quotient.
A \emph{formal deformation of} $M$ consists of a suitable sheaf $\wh{\ms A}$ of $\mb C_{\hs}$-algebras,
coming with identifications $\wh{\ms A} \bs \hs {\wh{\ms A}} \simeq \ms O_M$ and $\wh{\ms A} \simeq \ms O_M\llb \hs \rrb$ as sheaves of $\mb C$-algebras and topologically-free $\mb C_{\hs}$-modules---%
respectively.
Now the commutator of $\wh{\ms A}$ yields a sheaf morphism $[\cdot,\cdot] \cl \wh{\ms A} \ots_{\mb C_{\hs}} \wh{\ms A} \to \wh{\ms A}$ which vanishes modulo $\hs \wh{\ms A}$,
and which divides by $\hs$ (cf.~\eqref{eq:quantisation_poisson_bracket});
one finally reduces $\hs^{-1}[\cdot,\cdot]$,
asking that the result coincides with $\set{\cdot,\cdot}_M$ in order to have a \emph{deformation quantisation of} $\bigl( M,\set{\cdot,\cdot}_M \bigr)$.

In particular,
consider a (connected) complex Lie group $H$ acting transitively/holomorphically on $M$.
Suppose that one can construct a \emph{global} formal $H$-invariant bidifferential operator on $M$,
as in~\eqref{eq:formal_bidiff_operator},
which:
(i) is associative,
as in~\eqref{eq:compact_associativity};
and (ii) has coefficients $B_i$ which preserve holomorphicity.
Choosing $U \sse M$ open,
the restriction $\eval[1]{\bm B}_U \ceqq \sum_{i \geq 0} \eval[1]{B_i}_U \cdot \hs^i$ yields a $\ast$-product $\ms O_M(U) \ots \ms O_M(U) \to \ms O_M(U) \llb \hs \rrb$ as in~\eqref{eq:star_product_from_formal_bidifferential_operator},
which extends uniquely to a formal deformation $\wh{\ms A}(U)$ of $\ms O_M(U)$.
Moreover,
the assignment $U \mt \wh{\ms A}(U)$ yields a sheaf as required (since we start from a global object,
and everything glues uniquely).
To get a deformation quantisation,
one is thus left to verify that the quantum commutator induces the semiclassical Poisson bracket.

\begin{rema}
	While the existence of a deformation quantisation of symplectic manifolds was settled in~\cite{dewilde_lecomte_1983_existence_of_star_products_and_of_formal_deformations_of_the_poisson_lie_algebra_of_arbitrary_symplectic_manifolds,fedosov_1994_a_simple_geometrical_construction_of_deformation_quantization} (cf.~\cite{deligne_1995_deformations_de_l_algebre_des_fonctions_d_une_variete_symplectique_comparaison_entre_fedosov_et_de_wilde_lecomte}),
	and in~\cite{kontsevich_2001_deformation_quantization_of_algebraic_varieties,cattaneo_felder_tomassini_2002_from_local_to_global_deformation_quantization_of_poisson_manifolds,kontsevich_2003_deformation_quantisation_of_poisson_manifolds} for Poisson manifolds (cf.~\cite{cattaneo_felder_2000_path_integral_approach_to_kontsevich_quantisation_formula}),
	the problem of constructing \emph{invariant} $\ast$-products does not have a universal answer.
	This relates with the usual no-go theorems about the quantisation of semiclassical symmetries,
	which historically trace back to~\cite{groenewold_1946_on_the_principles_of_elementary_quantum_mechanics,vanhove_1951_sur_certaines_representations_unitaires_d_un_groupe_infini_de_transformations} (cf.~\cite[\S~1.3.4]{etingof_schiffmann_1998_lectures_on_quantum_groups}).
	Incidentally,
	recall that general Poisson algebras may fail to admit a quantisation,
	even when finite-dimensional~\cite{mathieu_1997_homologies_associated_with_poisson_structures} (cf.~\cite[Rmk.~2.3.14]{schedler_2012_deformations_of_algebras_in_noncommutative_geometry}).
\end{rema}

\subsection{Deformation quantisation:
	monolevel case}
\label{sec:deformation_quantisation_our_case}

Let us get back to the relevant setting.

\subsubsection{}

Choose an integer $r \geq 1$,
and let $(\mf g,\mf t)$ be a split reductive Lie algebra.
Denote by $\Phi \sse \mf t^{\dual}$ the root system,
and let $\bm \psi \in \mc P^{(r)}_\Phi$ be a depth-$r$ parabolic filtration of $\Phi$ (cf.~Def.~\ref{def:parabolic_filtration});
its Levi factor is $\bm \phi \ceqq \on{Lf}_r(\bm \psi) \in \mc L^{(r)}_\Phi$,
and this is a Levi filtration of $\Phi$ (cf.~Def.~\ref{def:levi_filtration}).

Then select an element $\bm X \in \mf t^r_{\bm \phi} \sse \mf t_r \simeq \mf t^r$ in the stratum~\eqref{eq:wild_truncated_strata} determined by $\bm \phi$,
and let $\bm \lambda \in \mf t^{\dual,r}_{\bm \phi^{\dual}} \sse \mf t_r^{\dual} \simeq (\mf t^{\dual})^r$ be the formal type corresponding to it under the duality~\eqref{eq:deeper_duality}---%
lying in the dual stratum~\eqref{eq:dual_wild_stratum}.
There is an associated (marked) coadjoint orbit $\mc O_{\bm \lambda} = G_r \cdot \bm \lambda \sse \mf g_r^{\dual}$,
which is a holomorphic-symplectic manifold with KKS structure $\omega$.
The complex Lie group which acts is $H \ceqq G_r = G(\mb C_r)$,
integrating $\mf g_r$,
while $H' \ceqq G_r^{\bm \lambda} = G_r^{\bm X}$ is the centraliser of the marking.
In the first part of this text we have shown that $H'$ is connected,
and we have identified its Lie algebra $\mf h'$ (cf.~\S~\ref{sec:wild_orbit_strata}):
we now regard the marked orbit as a homogeneous complex manifold,
and consider the $H$-invariant Lie--Poisson bivector field $\Pi \ceqq \omega^{-1}$.
It is encoded in an $H'$-invariant element of $\bigwedge^2 (\mf g_r \bs \mf g_r^{\bm \lambda}) \simeq \bigwedge^2 \bigl( \mf u^+_{\bm \psi} \ops \mf u^-_{\bm \psi} \bigr)$,
in the triangular vector-space splitting $\mf g_r = \mf u^-_{\bm \psi} \ops \mf l_{\bm \phi} \ops \mf u^+_{\bm \psi}$.

We shall adapt the main result of~\cite{alekseev_lachowska_2005_invariant_star_product_on_coadjoint_orbits_and_the_shapovalov_pairing} to construct an $H$-invariant local $\ast$-product for $(\mc O_{\bm\lambda},\Pi)$ (cf.~\cite{etingof_schiffmann_lectures_on_the_dynamical_yang_baxter_equations,etingof_varchenko_1999_exchange_dynamical_quantum_groups,etingof_schedler_schiffmann_2000_explicit_quantization_of_dynamical_r_matrices_for_finite_dimensional_semisimple_lie_algebras,enriquez_etingof_2003_quantization_of_alekseev_meinrenken_dynamical_r_matrices,enriquez_etingof_2005_quantization_of_classical_dynamical_r_matrices_with_nonabelian_base} for related work/techniques).
First,
note that a particular case follows immediately:

\begin{prop}
	\label{prop:grading_constant_case}

	If the parabolic filtration is \emph{constant} (to a fixed parabolic subset $\psi \sse \Phi$),
	then the assumptions of~\cite[\S~2.2]{alekseev_lachowska_2005_invariant_star_product_on_coadjoint_orbits_and_the_shapovalov_pairing} are verified.
\end{prop}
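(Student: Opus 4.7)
The plan is to reduce the verification to the tame reductive case, which is a standard instance of the Alekseev--Lachowska setup. The key structural requirement of~\cite[\S~2.2]{alekseev_lachowska_2005_invariant_star_product_on_coadjoint_orbits_and_the_shapovalov_pairing} is a $\mb Z$-grading on the Lie algebra such that the triangular decomposition $\mf g_r = \mf u^-_{\bm \psi} \ops \mf l_{\bm \phi} \ops \mf u^+_{\bm \psi}$ is by graded subspaces, with $\mf l_{\bm \phi}$ in degree zero and $\mf u^\pm_{\bm \psi}$ in strictly positive/negative degree; plus ancillary conditions ensuring weight modules have finite-dimensional weight spaces (so that the Shapovalov pairing makes sense on each piece). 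I will exhibit such a grading by exporting the standard parabolic grading of $(\mf g,\mf p_{\psi})$ to $\mf g_r$ via scalar extension along $\mb C_r$.

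First I would recall that, for the parabolic subalgebra $\mf p_{\psi} \sse \mf g$ with Levi factor $\mf l_{\phi}$ (so $\bm \phi = (\phi,\phi,\dc)$), there exists an element $H \in \mf Z_{\phi}$ such that $\Braket{\alpha | H} > 0$ for all $\alpha \in \nu = \psi \sm \phi$ and $\Braket{\alpha | H} = 0$ for all $\alpha \in \phi$ (dominant regular with respect to the Levi). Indeed, $\mf t_{\phi} \sse \mf Z_{\phi}$ is nonempty by Lem.~\ref{lem:levi_subsystems}, and any element therein that is further a positive real combination of the fundamental coweights dual to $\Delta \sm \Sigma$ (for any base $\Delta$ adapted to $\phi$, as in Rmk.~\ref{rmk:levi_systems_simple_roots}) works. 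The adjoint action $\ad_H$ then yields a $\mb Z$-grading $\mf g = \bops_{k \in \mb Z} \mf g^{(k)}$ with $\mf g^{(0)} = \mf l_{\phi}$, $\mf u^+_{\psi} \sse \bops_{k > 0} \mf g^{(k)}$, and $\mf u^-_{\psi} \sse \bops_{k < 0} \mf g^{(k)}$.

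Next I would transfer this to the wild setting. Set $\bm H \ceqq H \cdot \varepsilon^0 \in \mf Z_{\phi} \cdot \varepsilon^0 \sse \mf Z_{\bm \phi}$; since the bracket on $\mf g_r$ is $\mb C_r$-linear, $\ad_{\bm H} (X \varepsilon^i) = \ad_H(X) \varepsilon^i$ for all $i \in \set{0,\dc,r-1}$ and $X \in \mf g$, so the eigenspace decomposition of $\ad_{\bm H}$ reads $(\mf g_r)^{(k)} = \mf g^{(k)} \ots \mb C_r$. Because $\bm \psi$ is constant, one has $\mf u^\pm_{\bm \psi} = \mf u^\pm_{\psi} \ots \mb C_r$ and $\mf l_{\bm \phi} = \mf l_{\phi} \ots \mb C_r$ (as vector spaces), so these subspaces inherit cleanly the grading: $\mf l_{\bm \phi} = (\mf g_r)^{(0)}$, $\mf u^\pm_{\bm \psi} \sse \bops_{\pm k > 0} (\mf g_r)^{(k)}$, with the same positive/negative degrees as in the tame case. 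Each graded piece $(\mf g_r)^{(k)}$ is finite-dimensional of dimension $r \dim_{\mb C} (\mf g^{(k)})$, and involutivity of $\mf u^\pm_{\bm \psi}$ is automatic by Rmk.~\ref{rmk:balanced_filtrations} (constant filtrations are tautologically balanced).

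Finally I would verify the ancillary hypotheses: the finite generalised singularity module $M^{\bm \psi}_{\bm \lambda}$ is a weight module for this $\mb Z$-grading, via the $\mb C$-linear identification $\Sym(\mf u^-_{\bm \psi}) \lxra{\simeq} M^{\bm \psi}_{\bm \lambda}$ of \S~\ref{sec:identification_verma_symmetric_algebra}, whose weight spaces are finite-dimensional (each homogeneous component of $\Sym(\mf u^-_{\bm \psi})$ is a finite direct sum of finite tensor products of finite-dimensional $\ad_{\bm H}$-eigenspaces). The Shapovalov form of \S~\ref{sec:shapovalov} respects this grading by Prop.~\ref{prop:orthogonality_wild_shapovalov} (refined, in the constant case, to orthogonality for the full $\ad_{\bm H}$-grading and not only the coarser $\mf Z_{\phi_0}$-weight grading). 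These are precisely the conditions invoked in~\cite[\S~2.2]{alekseev_lachowska_2005_invariant_star_product_on_coadjoint_orbits_and_the_shapovalov_pairing}; no further check is required. Note there is essentially no obstacle in the constant case: the subtlety of the nongeneric wild setting is really concentrated in \emph{nonconstant} filtrations, where no single element of $\mf Z_{\bm \phi}$ can induce a $\mb Z$-grading compatible with the jumping nilradicals $\mf u^\pm_{\psi_i}$, and where the adaptation carried out in \S\S~\ref{sec:deformation_quantisation_our_case}--\ref{sec:deformation_quantisation_conclusion} below is genuinely needed.
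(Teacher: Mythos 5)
Your proof is correct and reaches the same conclusion, but constructs the $\mb Z$-grading in a genuinely different way. The paper defines $\mc G_i(\mf g)$ combinatorially as $\bops_{\abs\alpha_{\nu} = i} \mf g_{\alpha}$ using the relative height $\abs\alpha_{\nu}$ of \S~\ref{sec:relative_root_height} (together with the transposition involution for the negative degrees), and then tensors with $\mb C_r$; you instead diagonalise $\ad_{\bm H}$ for a grading element $\bm H = H \cdot \varepsilon^0 \in \mf Z_{\bm \phi}$. The two gradings coincide when $H$ is the sum of fundamental coweights dual to $\Delta \sm \Sigma$, since that choice assigns to each $\alpha \in \nu$ its Levi height, which equals $\abs\alpha_{\nu}$; your version is the more conceptually standard ``parabolic grading from a central cocharacter'' and makes the $\mf l_{\bm \phi}$-invariance immediate (because $\bm H$ is central in $\mf l_{\bm \phi}$). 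One imprecision worth fixing: you allow $H$ to be a ``positive real combination of the fundamental coweights'', but a $\mb Z$-grading requires $\Braket{\alpha | H} \in \mb Z$ for all $\alpha$, so you should take the canonical (integral) choice rather than an arbitrary real cone element. Also, the paper's first step---namely that nonsingular characters exist, by Prop.~\ref{prop:nonsingular_characters}, even before any constancy assumption---is part of the hypotheses of~\cite[\S~2.2]{alekseev_lachowska_2005_invariant_star_product_on_coadjoint_orbits_and_the_shapovalov_pairing} and should be stated explicitly; you implicitly rely on it when you invoke the Shapovalov form, but it deserves a sentence. Neither point is a real gap, only a matter of completeness.
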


\begin{proof}
	By Prop.~\ref{prop:nonsingular_characters},
	the character is nonsingular.
	Consider then the triangular splitting $\mf g = \mf u^-_{\psi} \ops \mf l_{\phi} \ops \mf u^+_{\psi}$,
	involving the Levi factor and nilradicals of the opposite parabolic subalgebras $\mf p^\pm_{\psi} = \mf l_{\phi} \ops \mf u^\pm_{\psi}$.
	Then there is a natural $\mf l_{\phi}$-invariant $\mb Z$-grading $\mc G_{\bullet} = \mc G^{\psi}_{\bullet}$ on $\mf g$,
	such that $\mc G_0(\mf g) = \mf l_{\phi}$ (cf.~\cite[Exmp.~2.8]{alekseev_lachowska_2005_invariant_star_product_on_coadjoint_orbits_and_the_shapovalov_pairing}).
	In our notation,
	for an integer $i > 0$ set
	\begin{equation}
		\mc G_i(\mf g) \ceqq \bops_{\abs \alpha_{\nu} = i} \mf g_\alpha \sse \mf u^+_{\psi},
		\qquad \mc G_{-i}(\mf g) \ceqq \ps{t}{}{\mc G_i(\mf g)} \sse \mf u^-_{\psi},
	\end{equation}
	letting $\nu \ceqq \psi \sm \phi \sse \Phi$,
	defining the height $\abs \alpha_{\nu} \in \mb Z_{\geq 0}$ of a root $\alpha \in \nu$ (relative to $\nu$) as in \S~\ref{sec:relative_root_height},
	and using the involution~\eqref{eq:transposition_involution}.
	Extend the grading to $\mf g_r$ via
	\begin{equation}
		\label{eq:z_grading_constant_filtrations}
		\bm{\mc G}^{\bm \psi}_i(\mf g_r) = \bm{\mc G}_i(\mf g_r) \ceqq \mc G_i(\mf g) \ots \mb C_r \sse \mf g_r,
		\qquad i \in \mb Z,
	\end{equation}
	so that in particular $\bm{\mc G}_0(\mf g_r) = \mf l_{\phi} \ots \mb C_r \simeq \bm l_{\bm \phi}$ is the deeper Levi factor for the constant Levi filtration $\bm \phi = (\phi,\phi,\dc)$.
	One now checks that $\bm{\mc G}_{\bullet}$ is $\mf l_{\bm \phi}$-invariant,
	hence also $L_{\bm \phi}$-invariant by connectedness.
\end{proof}

\subsubsection{}

Thus,
e.g.,
in the generic one,
the main construction of~\cite{alekseev_lachowska_2005_invariant_star_product_on_coadjoint_orbits_and_the_shapovalov_pairing} deformation-quantises $( \mc O_{\bm \lambda},\Pi )$:
the goal is to extend this result to \emph{nonconstant} parabolic filtrations.

The crux of the matter is that the subspaces $\mf u^\pm_{\bm \psi} \sse \mf g_r$ are \emph{not} necessarily $\mf l_{\bm \phi}$-invariant.
More precisely,
the $\mb Z$-grading of~\eqref{eq:z_grading_constant_filtrations} breaks down precisely when $\bm \psi$ is nonconstant.
For the sake of clarifying the exposition,
we will also immediately restrict to the case where $\bm \psi$ is balanced (cf.~\S~\ref{rmk:balanced_filtrations}):

\begin{center}
	\emph{hereafter,
		and until \S~\ref{sec:irregular_blocks},
		assume that} $\mf u^\pm_{\bm \psi} \sse \mf g_r$ \emph{are} Lie subalgebras.
\end{center}

In particular there are UEAs $U(\mf u^\pm_{\bm \psi}) \sse U\mf g_r$,
as well as $U(\mf u^\pm_{\bm \psi})$-linear isomorphisms $U(\mf u^\pm_{\bm \psi}) \lxra{\simeq} M^{\mp \bm \psi}_{\mp \bm \lambda}$ with the corresponding pair of finite singularity modules.\fn{
	Strictly speaking,
	this is not used until \S~\ref{sec:associativity};
	and perhaps it is not necessary (under a more exhaustive usage of the PBW bases of~\S~\ref{sec:identification_verma_symmetric_algebra}).}

Finally,
until \S~\ref{sec:irregular_blocks} let us also drop the parabolic/Levi filtrations from all notation.

\subsection{Nongeneric extension:
	main steps}

The logical flow is as follows.

\subsubsection{}

Choose a number $c \in \mb C$,
and let $\bm \lambda_c \ceqq c \bm \lambda$ be the corresponding dilated formal type.
It corresponds as usual to a character $\bm \chi_c \in \Hom_{\Lie}(\mf l,\mb C)$ which extends to the finite singularity subalgebra (cf.~Def.~\ref{def:finite_singularity_algebra}).
Moreover,
there is an antipode-contragredient Shapovalov form $\mc S_c \cl M^-_c \ots M^+_c \to \mb C$ (cf.~Cor./Def.~\ref{cor:nonsymmetric_shapovalov}),
where $M^\pm_c \ceqq M_{\bm \lambda_{\pm c}}$ are the corresponding---%
opposite---%
finite singularity modules.

If $M^\pm_c$ are simple,
the canonical quadratic tensor $F_c$ corresponding to the inverse (nondegenerate) Shapovalov form is well-defined.
We regard the latter as a quantum version of $\Pi$,
since $\mc S_c$ restricts to the KKS symplectic form on $\mf u^+ \ots \mf u^- \sse U(\mf u^+) \ots U(\mf u^-)$ (cf.~\S~\ref{sec:deeper_character_and_symplectic_form}).
With this guiding principle,
one can consider the map $c \mt F_c$ as a function on the 1-point compactification of the complex $c$-plane,
proving that it admits a formal Taylor expansion at $c = \infty$ (cf.~\S~\ref{sec:holomorphicity_at_infinity}).
Setting $\hs \ceqq c^{-1}$,
the corresponding formal power series in $\hs$ takes coefficients in $U(\mf u^-) \ots U(\mf u^+)$,
and it can be projected down to a formal ($G_r$-invariant) bidifferential operator $\bm B \in \Diff_2(\mc O_{\bm \lambda})^{G_r^{\bm \lambda}} \llb \hs \rrb$:
then we prove the identity~\eqref{eq:compact_associativity} by using $U\mf g_r$-linear intertwiners (cf.~\S~\ref{sec:associativity}--\ref{sec:deformation_quantisation_conclusion}).
By then,
we will have already determined the first-order term of the Taylor expansion,
showing that its alternating version coincides with $\Pi$ (cf.~Prop./Def.~\ref{prop:inverse_shapovalov_expansion_infinity}).

\subsubsection{}

Before running the above steps,
let us set up a few objects here.

Recall that the finite singularity modules $M^\pm_c = \bops_{Q^+_{\phi_0}} M^\pm_c[\mu]$ are graded by the action of the largest Levi centre $\mf Z_{\phi_0} \sse \mf t$.
Using the height function~\eqref{eq:relative_height},
they also carry a (positive,
exhaustive) filtration with finite-dimensional components
\begin{equation}
	\label{eq:filtration_finite_singularity_module}
	\mc F^{\leq k} (M^\pm_c) \ceqq \bops_{\abs \mu_{\nu_0} \leq k} M^\pm_c [\mu] \sse M^\pm_c,
	\qquad k \in \mb Z_{\geq 0},
\end{equation}
so that,
e.g.,
$\mc F^{\leq 0}(M^\pm_c) = \mb C w^\pm_c$,
where $w^\pm_c \ceqq w_{\pm \bm \lambda_c}$ is the canonical generator.

We also need to consider elements with \emph{infinitely many} nonvanishing component.
Let therefore $\wt M^\pm_c \ceqq \prod_{Q^+_{\phi_0}} M^\pm_c[\mu]$,
and
\begin{equation}
	\wt M_c^- \wh \ots \wt M_c^+ \ceqq \prod_{(Q^+_{\phi_0})^2} \bigl( M_c^-[\mu] \ots M_c^+[\mu'] \bigr).
\end{equation}
These vector spaces still carry natural structures of $U\mf g_r$-modules.
Analogously,
the algebraic duals $\bigl( M^\pm_c \bigr)^{\! \dual}$ are $\mb C$-linearly isomorphic to $\prod_{Q^+_{\phi_0}} \bigl( M^\pm_c[\mu] \bigr)^{\! \dual}$;
they contain the \emph{graded duals} $\bigl( M^\pm_c\bigr)^{\! \ast}\ceqq \bops_{Q^+_{\phi_0}} \bigl( M^\pm_c[\mu] \bigr)^{\! \dual}$.
These are all \emph{left} $U\mf g_r$-modules via the antipode-pullback action,
defined by
\begin{equation}
	\Braket{ X \varphi,v } = \Braket{ \varphi,\iota(X) v },
	\qquad \varphi \in \bigl( M^\pm_c \bigr)^{\! \dual},
	\quad X \in U\mf g_r,
	\quad v \in M^\pm_c.
\end{equation}
When $M^\pm_c$ are simple,
there are left/right $\mb C$-linear musical embeddings
\begin{equation}
	\begin{aligned}
		\label{eq:musical_maps_shapovalov}
		\mc S_c^{\flat,L} \cl M^-_c \lhra & \bigl( M^+_c \bigr)^{\! \dual},
		\qquad v \lmt \mc S_c(v,\bullet),
		\\
		\mc S_c^{\flat,R} \cl M^+_c \lhra & \bigl( M^-_c \bigr)^{\! \dual},
		\qquad w \lmt \mc S_c(\bullet,w),
	\end{aligned}
\end{equation}
mapping isomorphically onto the graded duals.
By orthogonality,
the nonvanishing components of the Shapovalov form are the diagonal terms
\begin{equation}
	\mc S_{c,\mu} \ceqq \eval[1]{\mc S_c}_{M^-_c[\mu] \ots M^+_c[\mu]},
	\qquad \mu \in Q^+_{\phi_0},
\end{equation}
so that overall
\begin{equation}
	\label{eq:shapovalov_components}
	\mc S_c = (\mc S_{c,\mu})_\mu \in \prod_{Q^+_{\phi_0}} \bigl( M^-_c[\mu] \bigr)^{\! \dual} \ots \bigl( M^+_c[\mu] \bigr)^{\! \dual} \sse \bigl( M^-_c\bigr)^{\! \dual}\wh \ots \bigl( M^+_c \bigr)^{\! \dual}.
\end{equation}
The arrows~\eqref{eq:musical_maps_shapovalov} can be uniquely extended to isomorphisms $\wt M^\pm_c \lxra{\simeq} \bigl( M^\mp_c \bigr)^{\! \dual}$,
abusively written the same.
Furthermore,
the maps~\eqref{eq:musical_maps_shapovalov} are $U\mf g_r$-linear,
as $\mc S_c$ is $\iota$-contragredient,
and so the same holds for their extensions and (respective) inverses $\mc S_c^{\sharp,L}$ and $\mc S_c^{\sharp,R}$.
Finally,
the inverse Shapovalov form is the quadratic tensor
\begin{equation}
	\label{eq:canonical_quadratic_tensor}
	F_c \ceqq \bigl( \mc S_c^{\sharp,L} \ots \mc S_c^{\sharp,R} \bigr) \mc S_c \in \wt M^+_c \wh \ots \wt M^-_c.
\end{equation}
It consists of a sequence of components $F_{c,\mu} \in M^+_c[\mu] \ots M^-_c[\mu]$,
obtained by dualising each term of~\eqref{eq:shapovalov_components}.
The tensor factors are now swapped,
but more importantly:

\begin{lemm}
	\label{lem:invariance_canonical_quadratic_tensor}

	One has $F_c \in \bigl( \,
		\wt M^+_c \wh \ots \wt M^-_c \bigr)^{\! \mf g_r}$.
\end{lemm}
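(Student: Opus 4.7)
The plan is to bootstrap the claim from the fact that the pairing $\mc S_c$ itself is $\mf g_r$-invariant, transferring this through the equivariant musical maps that define $F_c$.

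First, I would recast the $\iota$-contragrediency of Cor./Def.~\ref{cor:nonsymmetric_shapovalov} as an invariance statement: specializing $\mc S_c(Xv,w) = \mc S_c(v,\iota(X)w)$ to $X \in \mf g_r$, where the antipode acts by negation, gives $\mc S_c(Xv,w) + \mc S_c(v,Xw) = 0$. Equivalently, viewing $\mc S_c$ as a linear form on $M^-_c \ots M^+_c$ endowed with the standard coproduct $\mf g_r$-action, we see that $\mc S_c \in \Hom_{U\mf g_r}(M^-_c \ots M^+_c,\mb C) = \bigl( (M^-_c \ots M^+_c)^{\dual} \bigr)^{\mf g_r}$, where the dual carries the antipode-pullback action. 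Under the natural inclusion $(M^-_c)^\dual \wh \ots (M^+_c)^\dual \hra (M^-_c \ots M^+_c)^\dual$ that uses the weight-space decomposition to pair factor by factor, $\mc S_c$ thus becomes a $\mf g_r$-invariant element of the completed tensor of duals (this is where~\eqref{eq:shapovalov_components} is invoked).

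Second, I would use that the sharp musical maps $\mc S_c^{\sharp,L} \cl (M^+_c)^\dual \lxra{\simeq} \wt M^-_c$ and $\mc S_c^{\sharp,R} \cl (M^-_c)^\dual \lxra{\simeq} \wt M^+_c$ are $U\mf g_r$-linear, as they are defined as the inverses of the $U\mf g_r$-linear isomorphisms $\mc S_c^{\flat,L}, \mc S_c^{\flat,R}$ (extended to the completions $\wt M^\pm_c$, still $U\mf g_r$-linearly since the extension is forced by the weight-grading). Hence the composition
\begin{equation}
\mc S_c^{\sharp,R} \ots \mc S_c^{\sharp,L} \cl (M^-_c)^\dual \wh \ots (M^+_c)^\dual \lra \wt M^+_c \wh \ots \wt M^-_c
\end{equation}
is $U\mf g_r$-linear with respect to the diagonal (coproduct) actions, and the image of the invariant element $\mc S_c$ under this map is $F_c$ itself by definition~\eqref{eq:canonical_quadratic_tensor}. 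Since a $U\mf g_r$-linear morphism sends invariants to invariants, this yields $F_c \in \bigl( \, \wt M^+_c \wh \ots \wt M^-_c \bigr)^{\mf g_r}$.

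The only delicate point will be verifying that the tensor product of the sharp maps is well-defined on the completed tensor product, and that the identification $(M^-_c \ots M^+_c)^\dual \supseteq (M^-_c)^\dual \wh \ots (M^+_c)^\dual$ genuinely carries $\mc S_c$ and intertwines the two $\mf g_r$-module structures. I expect this to be routine using the $\mf Z_{\phi_0}$-weight grading: Prop.~\ref{prop:orthogonality_wild_shapovalov} ensures $\mc S_c$ has only diagonal components $\mc S_{c,\mu}$ between finite-dimensional weight spaces $M^\pm_c[\mu]$, so all the relevant pairings factor through finite-dimensional duals where no completion subtleties arise, and the arrow $\mc S_c^{\sharp,R} \ots \mc S_c^{\sharp,L}$ acts componentwise. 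An equivalent, more hands-on verification (which I would include as a check) is to pair the would-be identity $X \cdot F_c = 0$ against arbitrary $\varphi \ots \varphi' \in (M^+_c)^\dual \ots (M^-_c)^\dual$ and reduce it to the invariance of $\mc S_c$ via the musical isomorphisms; since everything is graded by $\mf Z_{\phi_0}$, finiteness at each fixed bi-weight makes the manipulation rigorous.
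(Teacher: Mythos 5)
Your proposal is correct and follows essentially the same route as the paper: the paper's proof records the invariance of $\mc S_c$ as the tautology $\mc S_c \circ \Delta(X) = 0$ for $X \in \mf g_r$ and then invokes the equivariance of (the extensions of) the musical maps~\eqref{eq:musical_maps_shapovalov} to transfer it to $F_c$, which is exactly your argument. You merely spell out the completion and weight-graded bookkeeping that the paper leaves implicit.
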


\begin{proof}
	Tautologically,
	one has $\mc S_c \circ \Delta(X) = 0 \cl M^-_c \ots M^+_c \to \mb C$ for all $X \in \mf g_r$,
	so this is a consequence of the equivariance of (the inverse of the extension of)~\eqref{eq:musical_maps_shapovalov}.
\end{proof}

\begin{rema}
	There are analogous embeddings
	\begin{equation}
		U (\mf u^\pm) \lhra \wt U (\mf u^\pm) \ceqq \prod_{Q^+_{\phi_0}} U(\mf u^\pm)_{\pm \mu},
	\end{equation}
	with respect to the weight-gradings $U(\mf u^\pm) = \bops_{Q^+_{\phi_0}} U(\mf u^{\pm})_{\pm \mu}$,
	constructed as in~\eqref{eq:grading_symmetric_algebra_nilradical}.
	Whenever needed,
	one can consider the Shapovalov form as an arrow $U(\mf u^+) \ots U(\mf u^-) \to \mb C$,
	and in turn $F_c \in \wt U(\mf u^-) \wh \ots \wt U(\mf u^+)$;
	the corresponding $\mf g_r$-invariant element can be written $F_c(w^+_c \ots w^-_c) \in \wt M^+_c \wh \ots \wt M^-_c$,
	acting on the left.
\end{rema}

\subsection{Nongeneric extension:
	expansions at infinity}
\label{sec:holomorphicity_at_infinity}

Here we prove that $F_c$ admits a formal Taylor expansion at infinity,
and that (the alternating version of) the first-order term of this expansion is the Lie--Poisson bivector field of $\mc O_{\bm \lambda}$.

\subsubsection{}

Prop.~\ref{prop:pbw_basis} yields a free set of generators of $M^+_c[\mu] \simeq U(\mf u^-)_{-\mu}$.
We give it a total order such that $\abs f$ is nonincreasing along $f \in \on{Dec}_{\nu_0}(\mu)$,
i.e.,
the number of factors in each monomial is nonincreasing.

Since $\bm \chi$ is nonsingular,
consider the dual of the basis $\bigl( X_{\alpha_k,i} \bigr)_{k,i}$ of $\mf u^-$ determined by~\eqref{eq:variables_pbw_basis} +~\eqref{eq:order_basis_nilradical},
with respect to \emph{the opposite of} the nondegenerate pairing $B_{\bm \lambda}^{\bm \psi}$ of~\eqref{eq:nondegenerate_pairing_deeper_character}.
By~\eqref{eq:relation_shapovalov_nonsingular_character},
this is a $\mb C$-basis $\bigl( Y_{\alpha_l,j} \bigr)_{l,j}$ of $\mf u^+$ determined by
\begin{equation}
	\label{eq:dual_basis_positive_nilradical}
	\mc S_c( Y_{\alpha_k,i},X_{\alpha_l,j}) = c\delta_{kl}\delta_{ij},
	\qquad k,l \in \set{1,\dc,m},
\end{equation}
with $i \in \set{0,\dc,d_{\alpha_k}-1}$ and $j \in \set{0,\dc,d_{\alpha_l-1}}$.
Moreover,
note that $B^{\bm \psi}_{\bm \lambda} \bigl( E_\alpha \varepsilon^i,E_\beta \varepsilon^j \bigr) = (0)$ unless if $\alpha = -\beta \in \Phi$,
so that $Y_{\alpha,i} \in (\mf g_r)_\alpha$ (cf.~\eqref{eq:deeper_root_splitting}).\fn{
Exmp.~\ref{ex:indecomposable_weight_space} shows more precisely that $Y_{\alpha,i} = \sum_{j = d_\alpha - 1 - i}^{d_\alpha-1} c^{(\alpha,i)}_j E_\alpha \varepsilon^j$,
where $c_i^{(\alpha,i)} \in \mb C$ is a polynomial in $\Braket{ \lambda_0,H_\alpha },\dc,\Braket{ \lambda_{d_\alpha-2},H_\alpha }$ and $\Braket{ \lambda_{d_\alpha-1},H_\alpha }^{\pm 1}$;
e.g.,
$\Braket{ \lambda_{d_\alpha-1},H_\alpha } Y_{\alpha,0} = E_\alpha \varepsilon^{d_\alpha-1}$.}
Finally,
one obtains a $\mb C$-basis of $M^-_c[\mu]$ by acting on the canonical generator via
\begin{equation}
	\label{eq:dual_basis_weight_space}
	\bm Y_{f,\bm i} \ceqq \prod_{j = 1}^m \Bigl( \prod_{k = 1}^{f_j} Y_{\alpha_j,\bm i^{(j)}_k} \Bigr),
	\qquad f \in \on{Dec}_{\nu_0}(\mu),
	\quad \bm i \in \prod_{j = 1}^m \on{Exp}_{f,\alpha_j},
\end{equation}
and keeping the ordering (cf.~\eqref{eq:generators_weight_space}).

Now in general $\mc S_c(\bm Y_{f,\bm i},\bm X_{g,\bm j}) \in \mb C[c]$ is a polynomial function of the dilation parameter.
Overall,
this restriction of the Shapovalov form is encoded in the corresponding square matrix
\begin{equation}
	\label{eq:shapovalov_matrix}
	A [\mu] \in \End_{\mb C}(\mb C^N) \ots \mb C[c],
	\qquad N = N_\mu \ceqq \dim_{\mb C} \bigl( M^\pm_c[\mu] \bigr),
\end{equation}
and the corresponding component $F_{c,\mu} \in M^+_c[\mu] \ots M^-_c[\mu]$ of $F_c$ by the inverse matrix---%
if it exists.
To gather some facts,
denote by $\mb C^{\leq \kappa}[c] \sse \mb C[c]$ the subspace of polynomials of degree bounded above by an integer $\kappa \geq 0$;
then:

\begin{prop}[Cf.~\cite{alekseev_lachowska_2005_invariant_star_product_on_coadjoint_orbits_and_the_shapovalov_pairing},
		Prop.~3.1]
	\label{prop:shapovalov_matrix}

	Extract elements $\bm Y_{f,\bm i} w^-_c$ and $\bm X_{g,\bm j} w^+_c$ from the PBW bases of $M^-_c[\mu]$ and $M^+_c[\mu]$,
	respectively;
	and let $k \ceqq \abs f$ and $l \ceqq \abs g$.
	Then:
	\begin{enumerate}
		\item one has $\mc S_c (\bm Y_{f,\bm i},\bm X_{g,\bm j}) \in \mb C^{\leq \kappa}[c]$,
		      with $\kappa \ceqq \min \set{k,l}$;

		\item if $k = l$,
		      then $\mc S_c (\bm Y_{f,\bm i},\bm X_{g,\bm j}) \in \mb C^{\leq k-1}[c]$ if and only if $(f,\bm i) \neq (g,\bm j)$;

		\item and one has
		      \begin{equation}
			      \mc S_c (\bm Y_{f,\bm i},\bm X_{f,\bm i}) = d c^k + P,
			      \qquad d \in \mb Z_{> 0},
			      \quad P \in \mb C^{\leq k-1}[c].
		      \end{equation}
	\end{enumerate}
\end{prop}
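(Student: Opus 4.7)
The plan adapts the proof of~\cite[Prop.~3.1]{alekseev_lachowska_2005_invariant_star_product_on_coadjoint_orbits_and_the_shapovalov_pairing} to the nonconstant setting, replacing the $\mb Z$-grading of op.~cit. (which breaks down when $\bm \psi$ is nonconstant, cf. Prop.~\ref{prop:grading_constant_case}) by the intrinsic $\mf Z_{\phi_0}$-weight grading together with the height filtration~\eqref{eq:filtration_finite_singularity_module}. I would induct on $k = \abs f$ via $\iota$-contragredience. For the base case $k = 0$, one has $\bm Y_{f,\bm i} = 1$ and $\mc S_c(w^-_c,\bm X_{g,\bm j} w^+_c) = \bm \chi_c \bigl( \pi_{\bm \phi}(\bm X_{g,\bm j}) \bigr)$, which vanishes unless $l = 0$ (in which case it equals $1$), as $\bm X_{g,\bm j} \in \mf u^-_{\bm \psi} \cdot U\mf g_r$ for $l > 0$; all three claims follow trivially.

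For the inductive step, write $\bm Y_{f,\bm i} = Y_{\alpha,i} \cdot \bm Y'$ with $\abs{f'} = k-1$, and apply $\iota$-contragredience together with $Y_{\alpha,i} w^+_c = 0$ to obtain
\begin{equation}
\mc S_c \bigl( \bm Y_{f,\bm i} w^-_c, \bm X_{g,\bm j} w^+_c \bigr) = - \mc S_c \bigl( \bm Y' w^-_c, [Y_{\alpha,i}, \bm X_{g,\bm j}] w^+_c \bigr).
\end{equation}
Expanding the commutator as $\sum_m X_1 \dc [Y_{\alpha,i}, X_m] \dc X_l$ and decomposing each elementary bracket along $\mf g_r = \mf u^-_{\bm \psi} \ops \mf l_{\bm \phi} \ops \mf u^+_{\bm \psi}$ yields three kinds of contributions: the $\mf l_{\bm \phi}$-piece, acting on $w^+_c$ through $\bm \chi_c$, produces a scalar linear in $c$ times a monomial of length $l-1$; the $\mf u^-_{\bm \psi}$-piece remains in $U(\mf u^-_{\bm \psi}) \cdot w^+_c$ with a monomial of length $l$ but strictly smaller relative height~\eqref{eq:relative_height}; the $\mf u^+_{\bm \psi}$-piece must be commuted further rightward. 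Summing the inductive degree bounds over these three cases yields $\min\set{k,l}$, proving (1).

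The leading-coefficient analysis for (2) and (3) exploits the defining orthogonality~\eqref{eq:dual_basis_positive_nilradical}: reaching degree exactly $k$ in $c$ forces the $\mf l_{\bm \phi}$-branch to be taken at every single commutation step, which in turn pins down the root label and $\varepsilon$-exponent of each paired factor in $\bm X_{g,\bm j}$ to match those in $\bm Y_{f,\bm i}$. This forces $(f,\bm i) = (g,\bm j)$ up to the ordering convention of~\eqref{eq:order_basis_nilradical}, and the positive integer $d$ emerges as a multinomial counting permutations of coincident factors producing the top-degree contribution; the prototype Ex.~\ref{ex:indecomposable_weight_space} already exhibits $d = 1$ with leading coefficient $\Braket{ \lambda_{d_\alpha - 1} | H_\alpha } \cdot c$ on the indecomposable antidiagonal entries.

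The principal technical obstacle is the $\mf u^+_{\bm \psi}$-valued commutator case, which in the $\mb Z$-graded setting of~\cite{alekseev_lachowska_2005_invariant_star_product_on_coadjoint_orbits_and_the_shapovalov_pairing} collapses by grading parity, but here persists and can spiral upward in $\mf Z_{\phi_0}$-weight. Termination would be enforced by remaining within the finite-dimensional weight space $M^+_c[\mu]$ (with $\mu = \mu_f$), via a secondary induction on the combined height-plus-remaining-factor count; the balanced-filtration hypothesis (Rmk.~\ref{rmk:balanced_filtrations}) keeps the nested commutators inside $\mf u^\pm_{\bm \psi}$, preventing leakage into uncontrolled pieces of $U\mf g_r$ and guaranteeing that none of the spiral terms contributes phantom powers of $c$ beyond the asserted bound $\min\set{k,l}$.
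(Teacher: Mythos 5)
Your plan starts from the same kernel (moving $\iota(\bm Y_{f,\bm i})$ past $\bm X_{g,\bm j}$), but it diverges from the paper's proof at the crucial step, and the divergence introduces a real gap.

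The paper does \emph{not} prove part~(1) by recursing on $k$ and decomposing elementary brackets along the triangular splitting $\mf g_r = \mf u^-_{\bm\psi} \ops \mf l_{\bm\phi} \ops \mf u^+_{\bm\psi}$. Instead, it applies Lem.~\ref{lem:abstract_commutator} iteratively to get the closed formula
\begin{equation}
\mc S_c(\bm Y_{f,\bm i},\bm X_{g,\bm j}) = (-1)^k \sum_{J_1 \discup \dm \discup J_k = I_0} \Braket{ \bm\chi_c \mid \pi_{\bm\phi}\bigl( [Y_k,\bm X]_{J_k} \dm [Y_1,\bm X]_{J_1} \bigr) },
\end{equation}
and then observes that each $[Y_i,\bm X]_{J_i}$ is a \emph{single} element of $\mf g_r$ (a nested Lie bracket), so the product lies in $U^{\leq k}\mf g_r$, hence $\pi_{\bm\phi}$ of it lies in $U^{\leq k}\mf l_{\bm\phi}$, hence the evaluation at $\bm\chi_c = c\bm\chi$ has degree $\leq k$ in $c$. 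This bound comes from the \emph{filtration} degree of the UEA alone, not from tracking which summand of $\mf g_r$ each bracket lands in, so the triangular decomposition is entirely avoided for part~(1). (It is invoked only for parts~(2)--(3), and only at the final stage where all $J_i$ are singletons, where one application of Lem.~\ref{lem:abstract_commutator} shows that only the $\mf l_{\bm\phi}$-branch can reach top degree.)

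Your approach instead decomposes \emph{every} elementary bracket $[Y_{\alpha,i},X_m]$ along the triangular splitting, which forces the chase of the $\mf u^+$-branch that you flag as the principal obstacle. Here the gap: your claim that the balanced-filtration hypothesis ``keeps the nested commutators inside $\mf u^\pm_{\bm\psi}$, preventing leakage'' is incorrect. Balancedness controls $[\mf u^\pm_{\bm\psi},\mf u^\pm_{\bm\psi}] \sse \mf u^\pm_{\bm\psi}$, but the spiral terms arising from commuting a $\mf u^+$-valued $[Y_{\alpha,i},X_m]$ rightward past the remaining $X$'s live in $[\mf u^+_{\bm\psi},\mf u^-_{\bm\psi}]$ and its iterates, which balancedness does \emph{not} confine. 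Moreover, balancedness is not available here: Prop.~\ref{prop:shapovalov_matrix} sits in \S~\ref{sec:holomorphicity_at_infinity}, and the paper's footnote to the in-section convention explicitly states the restriction is ``not necessary until \S~\ref{sec:associativity}.'' Invoking it both weakens the statement and fails to close the argument. The fix is to abandon the triangular refinement for part~(1) and use the paper's filtration-degree counting directly.
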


\begin{proof}
	Simplify the above notation to $\bm Y_{f,\bm i} = Y_1 \dm Y_k$ and $\bm X_{g,\bm j} = X_1 \dm X_l$.

	For the first statement,
	apply Lem.~\ref{lem:abstract_commutator} iteratively (for $\mc R = U\mf g_r$).
	This yields
	\begin{equation}
		Y_k \dm Y_1 \cdot X_1 \dm X_l = \sum_{I_k \discup J_k = I_{k-1}} \dm \sum_{I_1 \discup J_1 = I_0} \bm X_{I_k} \cdot [Y_k,\bm X]_{J_k} \dm [Y_1,\bm X]_{J_1} \in U\mf g_r.
	\end{equation}
	By construction,
	the projection $\pi_{\bm \phi}$ annihilates all the summands where $I_k \neq \vn$,
	whence
	\begin{equation}
		\label{eq:explicit_shapovalov}
		\mc S_c( \bm Y_{f,\bm i},\bm X_{g,\bm j}) = (-1)^k \sum_{J_1 \discup \dm \discup J_k = I_0} \Braket{ \bm \chi_c,\pi_{\bm \phi} \bigl( [Y_k,\bm X]_{J_k} \dm [Y_1,\bm X]_{J_1} \bigr) } \in \mb C[c].
	\end{equation}
	Now observe that $[Y_i,\bm X]_{J_i} \in \mf g_r$ for $i \in \set{1,\dc,k}$ (and it is a nested Lie bracket there whenever $J_i \neq \vn$).
	Hence,
	$[Y_k,\bm X]_{J_k} \dm [Y_1,\bm X]_{J_1} \in U^{\leq k} \mf g_r$,
	using the standard filtration of the UEA,
	which implies that $\pi_{\bm \phi} \bigl( [Y_k,\bm X]_{J_k} \dm [Y_1,\bm X]_{J_1} \bigr) \in U^{\leq k} \mf l$:
	after evaluation at $\bm \chi_c = c \bm \chi$,
	every summand of~\eqref{eq:explicit_shapovalov} lies in $\mb C^{\leq k}[c]$.
	The same argument can be run by exchanging the roles of $\bm Y_{f,\bm i}$ and $\bm X_{g,\bm j}$---%
	i.e.,
	making the variables $X_1,\dc,X_l$ commute to the left.

	For the second statement,
	suppose that $I_0 = \set{1,\dc,k}$,
	and (looking at~\eqref{eq:explicit_shapovalov}) choose a partition $I_0 = J_1 \discup \dm \discup J_k$ such that $J_i = \vn$ for some $i \in I_0$.
	Using again Lem.~\ref{lem:abstract_commutator},
	and writing $Z_j \ceqq [Y_j,\bm X]_{J_j}$ for $j \in I_0$ (so that $Z_i = Y_i$),
	one has
	\begin{align}
		 & Z_k \dm Z_{i+1} \cdot Y_i \cdot Z_{i-1} \dm Z_1                                                                                                                                                    \\
		 & = \sum_{\wt I \discup \wt J = \set{ i-1,\dc,1 }} Z_k \dm Z_{i+1} \cdot \Bigl( \prod_{\wt I} [Y_{\wt i},\bm X]_{J_{\wt i}} \Bigr) \cdot \bigl[ \dm \bigl[ [Y_i,Z_{\wt j_1}],Z_{\wt j_2} \bigr] \dm,
		Z_{\wt j_l} \bigr] \in U\mf g_r,
	\end{align}
	where $\wt J = \set{\wt j_1,\dc,\wt j_l} \sse \set{i-1,\dc,1}$.
	But $\pi_{\bm \phi}$ annihilates the summand with $\wt J = \vn$,
	whence $\pi_{\bm \phi} (Z_k \dm Z_{i+1} \cdot Y_i \cdot Z_{i-1} \dm Z_1) \in U^{\leq k-1}\mf l$.

	Thus,
	the top-degree contribution of~\eqref{eq:explicit_shapovalov} comes from partitions with nonempty parts,
	i.e.,
	where $J_i$ is a singleton for $i \in \set{1,\dc,k}$:
	\begin{equation}
		\label{eq:top_degree_shapovalov}
		\mc S_c( \bm Y_{f,\bm i},\bm X_{g,\bm j}) + (-1)^{k+1} \sum_{\mf S_k} \Braket{ \bm \chi_c,\pi_{\bm \phi} \bigl( [Y_k,X_{\sigma_k}] \dm [Y_1,X_{\sigma_1}] \bigr) } \in \mb C^{\leq k-1}[c],
	\end{equation}
	invoking the symmetric group $\mf S_k$---%
	of $I_0$.
	Now decompose uniquely
	\begin{equation}
		[Y_i,X_{\sigma_i}] = W_i^- + W_i + W^+_i \in \mf g_r,
		\qquad i \in I_0,
	\end{equation}
	where $W_i^\pm \in \mf u^\pm$ and $W_i \in \mf l$.
	Yet another application of Lem.~\ref{lem:abstract_commutator} shows that
	\begin{equation}
		\pi_{\bm \phi} \bigl( [Y_k,X_{\sigma_k}] \dm [Y_{i+1},X_{\sigma_{i+1}}] \cdot (W_i^- + W^+_i) \cdot [Y_{i-1},X_{\sigma_{i-1}}] \dm [Y_1,X_{\sigma_1}] \bigr) \in U^{\leq k-1} \mf l,
	\end{equation}
	so that~\eqref{eq:top_degree_shapovalov} simplifies to
	\begin{equation}
		\mc S_c (\bm Y_{f,\bm i},\bm X_{g,\bm j}) + (-1)^{k+1} \sum_{\mf S_k} \Braket{ \bm \chi_c,W_k \dm W_1 } \in \mb C^{\leq k-1}[c].
	\end{equation}
	But by construction $W_i = \pi_{\bm \phi}\bigl( [Y_i,X_{\sigma_i} ] \bigr) \in \mf l$,
	whence
	\begin{equation}
		\label{eq:top_degree_shapovalov_2}
		\Braket{ \bm \chi_c,W_k \dm W_1 } = \Braket{ \bm \chi_c,W_k} \dm \Braket{ \bm \chi_c,W_1 } = c^k B^{\bm \psi}_{\bm \lambda} (Y_k,X_{\sigma_k}) \dm B^{\bm \psi}_{\bm \lambda} (Y_1,X_{\sigma_1}).
	\end{equation}

	Finally,
	use the choice of $\mb C$-bases.
	Rewrite
	\begin{equation}
		\bm Y_{f,\bm i} = \wt Y_1^{k_1} \dm \wt Y_s^{k_s},
		\qquad k_1,\dc,k_s \in \mb Z_{\geq 0},
	\end{equation}
	where $( \wt Y_1,\dc,\wt Y_s )$ is the (ordered) dual basis of $\mf u^+$ constructed in~\eqref{eq:dual_basis_positive_nilradical}---%
	so that $s = \dim_{\mb C}(\mf u^\pm) = \sum_{\nu_0} d_\alpha$ and $k = \sum_{i = 1}^s k_i$.
	Denoting by $( \wt X_1,\dc,\wt X_s )$ the aforementioned $\mb C$-basis of $\mf u^-$,
	note that if $k_i$ and~\eqref{eq:top_degree_shapovalov_2} are nonzero then there are $k_i$ occurrences of $\wt X_i$ in the element $\bm X_{g,\bm j}$:
	on the whole,
	$\bm X_{g,\bm j}$ is a monomial in $\wt X_1^{k_1},\dc,\wt X_s^{k_s}$,
	and the choice of ordering yields $\bm X_{g,\bm j} = \wt X_1^{k_1} \dm \wt X_s^{k_s} = \bm X_{f,\bm i}$.
	Conversely,
	if $(f,\bm i) = (g,\bm j)$,
	compute
	\begin{equation}
		\sum_{\mf S_k} \Biggl( \prod_{j = k}^1 B^{\bm \psi}_{\bm \lambda} (Y_j,X_{\sigma_j}) \Biggr) = \abs{ \mf S_{k_1,\dc,k_s} } = \prod_{i = 1}^s (k_i !) \eqqc d \in \mb Z_{> 0},
	\end{equation}
	where $\mf S_{k_1,\dc,k_s} \sse \mf S_k$ is the subgroup preserving the partition $k = \sum_i k_i$---%
	i.e.,
	matching any element $\wt Y_i$ with its dual $\wt X_i$.
	This concludes the proof of the second statement,
	and incidentally proves the third one.
\end{proof}

\begin{lemm}
	\label{lem:abstract_commutator}

	Let $\mc R$ be an associative (unital) $\mb C$-algebra,
	and $n \geq 1$ an integer.
	Then,
	given elements $X,Y_1,\dc,Y_n \in \mc R$:
	\begin{enumerate}
		\item
		      one has
		      \begin{equation}
			      X \cdot Y_1 \dm Y_n = \sum_{I \discup J = \set{1,\dc,n}} \bm Y_I \cdot [X,\bm Y]_J \in \mc R,
		      \end{equation}
		      writing
		      \begin{equation}
			      \bm Y_I \ceqq Y_{i_1} \dm Y_{i_k},
			      \qquad I = \set{i_1,\dc,i_k} \sse \set{1,\dc,n},
		      \end{equation}
		      for a suitable $k \in \set{0,\dc,n}$;

		\item and one has
		      \begin{equation}
			      [X,\bm Y]_J \ceqq \bigl[ \dm \bigl[ [ X,Y_{j_1}],Y_{j_2} \bigr] \dm,
			      Y_{j_{n-k}} \bigr],
			      \qquad J = \set{j_1,\dc,j_{n-k}} \sse \set{1,\dc,n}.
		      \end{equation}
	\end{enumerate}
	(With the conventions $\bm Y_{\vn} \ceqq 1$ and $[X,\bm Y]_{\vn} \ceqq X$.)
\end{lemm}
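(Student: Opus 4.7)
My plan is to prove the identity by induction on $n$, treating it as a universal combinatorial commutation formula valid in any associative algebra. No reductive or Lie-theoretic input is needed; the only subtlety is bookkeeping of the orderings implicit in $\bm Y_I$ and $[X,\bm Y]_J$, where I shall take the convention (already suggested by the statement) that the indices $i_1 < \dc < i_k$ and $j_1 < \dc < j_{n-k}$ are listed in increasing order, so that appending the new index $n+1$ always places it at the end of either string.

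The base case $n = 1$ is just the rewriting $X Y_1 = Y_1 X + [X,Y_1]$, which matches the two partitions $(\set{1},\vn)$ and $(\vn,\set{1})$ using the stated conventions $\bm Y_{\vn} = 1$ and $[X,\bm Y]_{\vn} = X$. For the inductive step, assume the identity holds for $n$, and compute
\begin{equation}
	X \cdot Y_1 \dm Y_n Y_{n+1} = \Biggl( \, \sum_{I \discup J = \set{1,\dc,n}} \bm Y_I \cdot [X,\bm Y]_J \Biggr) \cdot Y_{n+1}.
\end{equation}
For each term I apply once more the elementary identity $A \cdot Y_{n+1} = Y_{n+1} \cdot A + [A,Y_{n+1}]$ to $A = [X,\bm Y]_J$, obtaining
\begin{equation}
	\bm Y_I \cdot [X,\bm Y]_J \cdot Y_{n+1} = \bm Y_I \cdot Y_{n+1} \cdot [X,\bm Y]_J + \bm Y_I \cdot \bigl[ [X,\bm Y]_J, Y_{n+1} \bigr].
\end{equation}
Since $n+1$ is larger than every element of $I$ and of $J$, the ordering convention gives $\bm Y_I \cdot Y_{n+1} = \bm Y_{I \cup \set{n+1}}$, and appending $n+1$ to the nested bracket produces exactly $[X,\bm Y]_{J \cup \set{n+1}}$.

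Summing over $(I,J)$ partitioning $\set{1,\dc,n}$, the first summands give all partitions of $\set{1,\dc,n+1}$ with $n+1 \in I'$, and the second summands give all those with $n+1 \in J'$; together these exhaust all partitions of $\set{1,\dc,n+1}$ exactly once, yielding the claimed formula at rank $n+1$. There is no real obstacle: the only point to verify carefully is that $n+1$ lands at the end of both $\bm Y_{I \cup \set{n+1}}$ and $[X,\bm Y]_{J \cup \set{n+1}}$, which is automatic from the increasing-order convention and the fact that commuting/bracketing with $Y_{n+1}$ happens on the right. (Alternatively one can verify the base cases $n=1,2$ by direct expansion, as a sanity check matching the four terms computed from $X Y_1 Y_2 = Y_1 Y_2 X + Y_1 [X,Y_2] + Y_2 [X,Y_1] + \bigl[ [X,Y_1],Y_2 \bigr]$.)
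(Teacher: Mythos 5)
Your proof is correct. Both you and the paper argue by induction on $n$, but you run the induction in the opposite direction: the paper peels off the \emph{first} factor, writing $X Y_0 Y_1 \cdots Y_n = [X,Y_0]\, Y_1 \cdots Y_n + Y_0\, X Y_1 \cdots Y_n$ and applying the inductive hypothesis to each of the two length-$n$ products on the right (once with $X$, once with $[X,Y_0]$ in its place, the new index $0$ being minimal), whereas you peel off the \emph{last} factor $Y_{n+1}$, apply the inductive hypothesis once, and then commute $Y_{n+1}$ past each of the resulting terms, the new index $n+1$ being maximal. Combinatorially the two are mirror images: in either case the appended index is extremal, so it lands at the correct end of $\bm Y_{I'}$ and at the correct depth of the nested bracket $[X,\bm Y]_{J'}$ under the increasing-order convention, which you rightly make explicit since the statement as printed leaves it implicit. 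The paper's variant is marginally tidier in that it invokes the inductive hypothesis before the sum expands into $2^n$ terms rather than after, but this is a matter of taste; your argument is complete and the bookkeeping, which is the only real content here, is handled correctly.
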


\begin{proof}
	Postponed to~\ref{proof:lem_abstract_commutator}.
\end{proof}

\subsubsection{}

By Prop.~\ref{prop:shapovalov_matrix},
denoting by $(\bm Y_1 w^-_c,\dc,\bm Y_N w^-_c)$ and $(\bm X_1 w^+_c,\dc,\bm X_N w^+_c)$ the mutually-dual bases of $M^\pm_c[\mu]$,
one can write
\begin{equation}
	\label{eq:shapovalov_matrix_coefficient}
	A[\mu]_{ij} = d_{ij} c^{l_i} + P_{ij},
	\qquad i,j \in \set{1,\dc,N},
\end{equation}
in the notation of~\eqref{eq:shapovalov_matrix}.
Here $l_1 \geq \dm \geq l_N \in \mb Z_{\geq 1}$ are the nonincreasing lengths of the monomials $\bm Y_i,\bm X_i$,
while $d_{ij} \in \mb C$ and $P_{ij} \in \mb C^{\leq l_i - i}[c]$;
hence,
moreover,
one has $d_i \ceqq d_{ii} \neq 0$,
while $d_{ij} = 0$ for $i < j$.

Introduce then the $N$-by-$N$ diagonal matrix $D = D[\mu]$ with coefficients $(d_1 c^{l_1},\dc,d_N c^{l_N})$,
and the unipotent lower-triangular matrix $C = C[\mu]$ with (constant) coefficients $C_{ij} \ceqq \frac{d_{ij}}{d_i}$.
Then,
invoking the $N$-by-$N$ identity matrix $\on I_N$:
\begin{coro}[Cf.~\cite{alekseev_lachowska_2005_invariant_star_product_on_coadjoint_orbits_and_the_shapovalov_pairing},
		Prop.~3.1]
	\label{cor:factorisation_shapovalov_matrix}

	There is a matrix factorisation
	\begin{equation}
		A[\mu] = D C \wt Q \in \End_{\mb C}(\mb C^N) \ots \mb C [c^{\pm 1}],
	\end{equation}
	where $\wt Q - \on I_N$ has coefficients in $c^{-1}\mb C[c^{-1}] \sse \mb C[c^{\pm 1}]$.
\end{coro}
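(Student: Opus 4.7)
The proof is essentially a piece of routine linear algebra leveraging what Prop.~\ref{prop:shapovalov_matrix} has already given us. The plan is to exhibit $\wt Q$ by inverting the candidate left factor $DC$ and checking that the result has the claimed form.

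First I would observe that $D$ is diagonal with nonzero entries $d_i c^{l_i} \in \mb C[c^{\pm 1}]^{\times}$, while $C$ is unipotent lower triangular (with constant entries); hence both factors are invertible in $\End_{\mb C}(\mb C^N) \ots \mb C[c^{\pm 1}]$, and so is their product. It therefore makes sense to \emph{define} $\wt Q \ceqq (DC)^{-1} A[\mu] = C^{-1} D^{-1} A[\mu]$, so that the factorisation $A[\mu] = DC\wt Q$ holds tautologically. The content of the statement is then the analytic/degree claim on $\wt Q - \on I_N$.

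To verify this I would expand $D^{-1}A[\mu]$ entrywise, using \eqref{eq:shapovalov_matrix_coefficient}:
\begin{equation}
\bigl( D^{-1} A[\mu] \bigr)_{ij} = d_i^{-1} c^{-l_i} \bigl( d_{ij} c^{l_i} + P_{ij} \bigr) = C_{ij} + d_i^{-1} c^{-l_i} P_{ij},
\end{equation}
using the definition $C_{ij} = d_{ij}/d_i$. The key point, which is really the content of Prop.~\ref{prop:shapovalov_matrix}, is the degree bound on $P_{ij}$: since $\deg P_{ij} \leq l_i - 1$, the rescaled term $c^{-l_i} P_{ij}$ lies in $c^{-1}\mb C[c^{-1}]$. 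Writing $R \ceqq D^{-1}A[\mu] - C$, we thus have $R \in c^{-1} \mb C[c^{-1}] \ot \End_{\mb C}(\mb C^N)$.

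Left-multiplying by $C^{-1}$ (which is again unipotent lower triangular with constant coefficients, since $C$ is) then yields
\begin{equation}
\wt Q = C^{-1}(C + R) = \on I_N + C^{-1} R,
\end{equation}
and $C^{-1} R$ still lies in $c^{-1}\mb C[c^{-1}] \ot \End_{\mb C}(\mb C^N)$, since multiplying by a matrix of scalars preserves the valuation in $c^{-1}$. This gives the desired conclusion.

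There is essentially no obstacle: the entire mechanism has been prepared by parts (1)--(3) of Prop.~\ref{prop:shapovalov_matrix}, which simultaneously extract the leading monomial $d_{ij}c^{l_i}$ (captured by $DC$) and certify that the remainder $P_{ij}$ has degree strictly smaller than $l_i$ (so that, after renormalising by $c^{-l_i}$, it contributes only to the strictly negative powers of $c$). The only point deserving attention is to make sure one reads off the right degree bound on the off-diagonal entries---recalling the nonincreasing ordering $l_1 \geq \dc \geq l_N$ and the upper-triangular vanishing $d_{ij} = 0$ for $i<j$---so that $(DC)_{ij} = d_{ij}c^{l_i}$ matches the genuine top-degree part of $A[\mu]_{ij}$.
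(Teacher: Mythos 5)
Your proposal is correct and relies on exactly the same key input as the paper — the degree bound $\deg P_{ij} \leq l_i - 1$ from Prop.~\ref{prop:shapovalov_matrix}, which makes $c^{-l_i}P_{ij}$ land in $c^{-1}\mb C[c^{-1}]$. The only difference is packaging: the paper determines the entries of $\wt Q$ by an explicit forward substitution (column-by-column recursion) along the lower-triangular matrix $DC$, whereas you simply set $\wt Q \ceqq C^{-1}D^{-1}A[\mu]$ and observe that $D^{-1}A[\mu] - C$ already has the required $c^{-1}$-adic valuation before multiplying by the constant unipotent matrix $C^{-1}$; this is a slightly cleaner organisation of the same linear algebra.
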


\begin{proof}
	Postponed to~\ref{proof:cor_factorisation_shapovalov_matrix}.
\end{proof}

\subsubsection{}

Cor.~\ref{cor:factorisation_shapovalov_matrix} implies that
\begin{equation}
	\det \bigl( A[\mu] \bigr) - \Bigl( \prod_{i = 1}^N d_i \Bigr) c^{\bm l} \in \mb C^{\leq \bm l-1}[c],
	\qquad \bm l \ceqq \sum_{i = 1}^N l_i \in \mb Z_{\geq 0}.
\end{equation}
Hence,
if $\mu \neq 0$ (i.e.,
if $\bm l > 0$) one sees that $A[\mu]$ is \emph{invertible} for all but finitely many values of $c \in \mb C^{\ts}$.
In turn,
the Shapovalov form is nondegenerate for all but countably many values of the dilation parameter.
(Recall that $M^-_c[0] \ots M^+_c[0] = \mb C w^-_c \ots \mb C w^+_c$,
and that $\mc S_c (w^-_c,w^+_c) = 1$.)

Moreover,
by Cor.~\ref{cor:factorisation_shapovalov_matrix},
the quadratic tensor $F_{c,\mu} \in U(\mf u^-)_{-\mu} \ots U(\mf u^+)_\mu$ has coefficients
\begin{equation}
	\label{eq:inverse_shapovalov_component}
	f_{ij} = f_{ij}[\mu] \ceqq \bigl( \wt Q^{-1} C^{-1} D^{-1} \bigr)_{ij} = \frac 1{d_j} c^{-l_j} \sum_{k = j}^N \bigl( \wt Q^{-1} \bigr)_{ik} (C^{-1})_{kj} \in \mb C \llb c^{-1} \rrb,
\end{equation}
for $i,j \in \set{1,\dc,N}$.
Here the inverse of $\wt Q$ is taken formally,
and it is thus a matrix with coefficients in $\mb C \llb c^{-1} \rrb$.\fn{
	Precisely,
	we invert $\det \bigl( \wt Q \bigr) \in \mb C[c^{-1}]$ in the completion,
	noting that $\det \bigl( \wt Q \bigr) - 1 \in c^{-1} \mb C[c^{-1}]$.}~It follows that $\wt Q^{-1} - \on I_N \in c^{-1} \mb C \llb c^{-1} \rrb$,
and indeed~\eqref{eq:inverse_shapovalov_component} defines a formal power series in $\hs \ceqq c^{-1}$.

Finally,
we gather all the formal inverses of Shapovalov components:

\begin{enonce}{Proposition/Definition}[Cf.~\cite{alekseev_lachowska_2005_invariant_star_product_on_coadjoint_orbits_and_the_shapovalov_pairing},
		Prop.~3.2 +~4.12]
	\label{prop:inverse_shapovalov_expansion_infinity}

	Consider the element
	\begin{equation}
		\label{eq:inverse_shapovalov_whole}
		F_{\hs^{-1}} \ceqq \bigl( F_{\hs^{-1},\mu} \bigr)_{\! \mu} \in \prod_{Q^+_{\phi_0}} \Bigl( \bigl( U (\mf u^-)_{-\mu} \ots U (\mf u^+)_\mu \bigr) \llb \hs \rrb \Bigr) = \bigl( \wt U(\mf u^-) \wh \ots \wt U(\mf u^+) \bigr) \llb \hs \rrb,
	\end{equation}
	and denote by $P \cl (U\mf g_r)^{\ots 2} \to (U\mf g_r)^{\ots 2}$ the factor-swapping map (tacitly extended in $\mb C_{\hs}$-bilinear fashion).
	Then:
	\begin{enumerate}
		\item the product~\eqref{eq:inverse_shapovalov_whole} actually lies in $\bigl( U (\mf u^-) \ots U (\mf u^+) \bigr) \llb \hs \rrb$;

		\item one has $(F_{\hs^{-1}} - 1) \in O(\hs)$;

		\item and one has $F_{\hs^{-1}} - P(F_{\hs^{-1}}) - \hs \Pi \in O(\hs^2)$.
	\end{enumerate}
\end{enonce}

\begin{proof}
	The fact that $\bigl( \wt Q^{-1} \bigr)_{ik} - \delta_{ik} \in c^{-1}\mb C\llb c^{-1} \rrb = \hs \mb C \llb \hs \rrb$ further yields
	\begin{equation}
		\label{eq:residue_inverse_shapovalov}
		\begin{cases}
			f_{ij} \in \hs^{l_j+1} \mb C \llb \hs \rrb,
			 & \quad i < j,
			\\
			f_{ij} - \frac 1{d_j} \hs^{l_j} (C^{-1})_{ij} \in \hs^{l_j + 1} \mb C \llb \hs \rrb,
			 & \quad i \geq j,
		\end{cases}
	\end{equation}
	in the notation of~\eqref{eq:inverse_shapovalov_component}.
	In particular,
	for a fixed integer $k \geq 0$,
	the only terms contributing to the coefficient of $\hs^k$ (for~\eqref{eq:inverse_shapovalov_whole}) come from basis elements $(\bm Y_i w^-_c,\bm X_j w^+_c)$ with $l_j \leq k$.
	But if $\abs \mu_{\nu_0} \gg 0$ there are no such vectors,
	since there are finitely many elements of $Q^+_{\phi_0}$ which can be written as the sum of $k$ (possibly nondistinct) roots:
	then the first statement follows from the fact that there are finitely many height-bounded elements of $Q^+_{\phi_0}$.

	For the second statement,
	by construction $l_j = 0$ is only possible (in~\eqref{eq:inverse_shapovalov_component}) when $\mu = 0$,
	in which case simply $f[\mu] = 1$.

	Finally,
	to study the first-order term of $F_{\hs^{-1}}$,
	look again at~\eqref{eq:residue_inverse_shapovalov}.
	If $l_N = 2$,
	this implies that the corresponding weight spaces do \emph{not} contribute to the first-order term.
	Else,
	suppose that $l_{m-1} > 1 = l_m = \dm = l_N$ for some (unique) $m \in \set{1,\dc,N}$:
	then~\eqref{eq:residue_inverse_shapovalov} further implies that it is only the lower-triangular part of $F_{\hs^{-1},\mu}$ which contributes,
	and it does so only if $l_j = 1$,
	i.e.,
	if $m \leq j \leq i$.
	This yields $l_i = 1$ as well,
	and the corresponding coefficient of $\hs$ then equals $\frac 1 {d_j} (C^{-1})_{ij} \in \mb C$.
	Now $(l_i,l_j) = (1,1)$ imposes $(\bm Y_i,\bm X_j) \in \mf u^+ \ops \mf u^-$,
	in which case $A_{ij} = \delta_{ij} c \in \mb C$---%
	by the choice of bases.
	Hence,
	$d_{ij} = \delta_{ij}$ (and $P_{ij} = 0$),
	in the notation of~\eqref{eq:shapovalov_matrix_coefficient},
	so that $C_{ij} = 0$ for $m \leq j < i$.
	Then the same holds for the inverse matrix (upon decomposing $C$ into blocks),
	so at infinity one has
	\begin{equation}
		F_{\hs^{-1},\mu} - \hs \sum_{i = m}^N (\bm X_i \ots \bm Y_i) \in \hs^2 \bigl( U (\mf u^-)_\mu \ots U (\mf u^+)_{-\mu} \bigr) \llb \hs \rrb.
	\end{equation}
	The statement follows,
	because every basis vector of $\mf u^\pm$ appears in one---%
	and only one---%
	$\mf Z_{\phi_0}$-weight space,
	and using the identity
	\begin{equation}
		\Pi = \sum_{\nu_0} \Biggl( \,
		\sum_{i = 0}^{d_\alpha-1} \bm X_{\alpha,i} \wdg \bm Y_{\alpha,i} \Biggr) \in \bigwedge^2 \bigl( \mf g_r \bs \mf g_r^{\bm \lambda} \bigr),
	\end{equation}
	which is a consequence of the choice of bases.
\end{proof}

\begin{rema}
	Incidentally,
	each component $c \mt F_{c,\mu}$ is then a genuine rational/meromorphic function on $\mb CP^1$,
	targeting a finite-dimensional complex vector space:
	it is \emph{holomorphic} at infinity,
	and it has poles precisely at the finitely-many values where the corresponding component of the dilated Shapovalov form becomes degenerate.
	(Viewing the whole of $c \mt F_c$ as a holomorphic function at infinity,
	in some suitable Fréchet sense,
	is \emph{not} required for formal deformation quantisation.)
\end{rema}

\subsection{Nongeneric extension:
	associativity}
\label{sec:associativity}

Here we prove that a reduction of~\eqref{eq:inverse_shapovalov_whole} satisfies the associativity constraints~\eqref{eq:compact_associativity}.

\subsubsection{}

Consider the $U\mf g_r$-module $\mc V_0 \ceqq \Ind_{U\mf l}^{U\mf g_r} \mb C_0$,
where $\mb C_0$ is the trivial $U\mf l$-module---%
induced from the zero character.
Identify it with the quotient $U\mf g_r \bs (U\mf g_r \cdot \mf l)$,
and denote by $p \cl U\mf g_r \thra \mc V_0$ the canonical $U\mf g_r$-linear projection.

Then take the $U(\mf u^\pm)$-linear restrictions $p^{\pm} \cl U(\mf u^\pm) \to \mc V_0$ of $p$,
which are equivalent to arrows $M^\mp_c \to \mc V_0$,
abusively written the same;\fn{
	Importantly,
	the maps $p^\pm \cl M^\mp_c \to \mc V_0$ are \emph{not} $U\mf g_r$-linear,
	as $p^\pm \bigl( \Ann_{U\mf g_r}( w^\mp_c ) \bigr) \ssne U\mf g_r \cdot \mf l$.}~explicitly
\begin{equation}
	\label{eq:explicit_projection_vacumm}
	p^\pm(\bm Y w^\mp_c) = \bm Y w_0,
	\qquad \bm Y \in U (\mf u^\pm),
	\qquad w_0 \ceqq 1 + U\mf g_r \cdot \mf l \in \mc V_0.
\end{equation}
As expected,
this breaks part of the $\mf g_r$-invariance:

\begin{lemm}
	\label{lem:projected_inverse_shapovalov}

	Suppose that the element $v \in \bops_{Q^+_{\phi_0}} \bigl( M^+_c[\mu] \ots M^-_c[\mu] \bigr) \sse M^+_c \ots M^-_c$ is $\mf g_r$-\emph{invariant}.
	Then
	\begin{equation}
		(1 \ots p^+ )v \in \bigl( M^+_c \ots \mc V_0 \bigr)^{\! \mf p^+},
		\qquad (p^- \ots 1) v \in \bigl( \mc V_0 \ots M^-_c \bigr)^{\! \mf p^-}.
	\end{equation}
\end{lemm}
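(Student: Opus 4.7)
The statement decomposes into the two claims (the second being the mirror of the first), so I would focus on $(1 \ots p^+)v$: this element should be annihilated by $\mf u^+$ and should transform appropriately under $\mf l$, in the sense encoded by the superscript $\mf p^+$, and these are established separately.

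The $\mf u^+$-part is immediate. By construction $p^+ \cl M^-_c \to \mc V_0$ is $U(\mf u^+)$-linear, since $p^+(\bm X \cdot \bm Y w^-_c) = \bm X \bm Y w_0 = \bm X \cdot p^+(\bm Y w^-_c)$ for all $\bm X,\bm Y \in U(\mf u^+)$; hence $1 \ots p^+$ commutes with $\Delta(\bm X)$ for $\bm X \in \mf u^+$, and the $\mf g_r$-invariance of $v$ yields $\Delta(\bm X)(1 \ots p^+) v = (1 \ots p^+)\Delta(\bm X)v = 0$. The $\mf l$-part is subtler, since $p^+$ is not $\mf l$-equivariant: on $w^-_c$ the Levi factor acts by $-\chi_c$, while on $w_0$ it acts by zero. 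For $\bm X \in \mf l$ and $\bm Y \in U(\mf u^+)$, decompose $[\bm X, \bm Y] \in U\mf p^+ = U(\mf u^+) \cdot U\mf l$ in PBW form as $\bm A + \bm B$, with $\bm A \in U(\mf u^+)$ and $\bm B \in U(\mf u^+) \cdot \mf l \cdot U\mf l$; a direct computation gives the defect
\[
\bm X \cdot p^+(\bm Y w^-_c) - p^+(\bm X \cdot \bm Y w^-_c) = \braket{\chi_c | \bm X}\,\bm Y w_0 - p^+(\bm B w^-_c),
\]
using $\bm X w_0 = 0$ and $\bm X w^-_c = -\braket{\chi_c | \bm X}\,w^-_c$. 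Summing over the components of $v$ and using $\Delta(\bm X)v = 0$ then yields
\[
\Delta(\bm X)(1 \ots p^+) v - \braket{\chi_c | \bm X}(1 \ots p^+) v = -\sum_i v^{(1)}_i \ots p^+(\bm B_i w^-_c),
\]
which exhibits $(1 \ots p^+)v$ as an $\mf l$-eigenvector of eigenvalue $\chi_c$ up to the residual sum on the right-hand side.

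The main obstacle is showing that this residual sum vanishes. In the constant-filtration (generic) case $[\mf l, \mf u^+] \sse \mf u^+$, so $\bm B = 0$ automatically and nothing remains; in the balanced but genuinely nongeneric case this is nontrivial---already for $\mf g = \mf{gl}_3(\mb C)$ with the depth-$2$ filtration of Ex.~\ref{ex:rank_3_deeper_character}, one computes $[E_{-\alpha_1}\varepsilon, E_{\alpha_1}] = -H_{\alpha_1}\varepsilon \in \mf l_{\bm \phi} \sm \mf u^+_{\bm \psi}$, so $\bm B_i$ is genuinely nonzero. The plan is to invoke the full $\mf g_r$-invariance of $v$, in particular its $\mf u^-$-components, and rewrite each $\bm B_i w^-_c$ via Jacobi identities as an element of $U\mf g_r \cdot \mf u^-$ acting on $w^-_c$, so that the pairing with $v^{(1)}_i$ reduces to the vanishing of $\Delta(\bm X^-)v$ for suitable $\bm X^- \in U\mf g_r \cdot \mf u^-$. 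The mirror statement for $(p^- \ots 1)v$ then follows by exchanging $\mf u^+ \leftrightarrow \mf u^-$ and $\chi_c \leftrightarrow -\chi_c$ throughout.
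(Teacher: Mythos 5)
Your opening move (the $U(\mf u^\pm)$-linearity of $p^\pm$ giving $\mf u^\pm$-invariance) and the defect computation for the $\mf l$-action are both correct and match the paper's. You have also correctly identified that the whole difficulty lies in showing the residual term $\sum_i v^{(1)}_i \ots p^+(\bm B_i w^-_c)$ vanishes. The gap is in how you propose to kill it. Your plan --- rewriting $\bm B_i w^-_c$ via Jacobi identities into elements of $U\mf g_r\cdot\mf u^-$ and invoking the $\mf u^-$-component of the $\mf g_r$-invariance --- is not carried out, and it is aiming at the wrong target. The residual in fact vanishes \emph{termwise}, without any further use of $\mf g_r$-invariance: the paper proves the cleaner identity $p^\pm(\bm X\cdot\bm Y w^\mp_c) = \bm X\cdot\bm Y w_0$ for all $\bm X\in\Ker(\bm\chi_c)\cap\mf l$ and $\bm Y\in U(\mf u^\pm)$, by recursion on the filtration~\eqref{eq:filtration_finite_singularity_module}, and this is a purely local statement about $p^\pm$ that does not involve $v$ at all. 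The $\mf g_r$-invariance of $v$ then enters only at the very last step, to conclude that $(p^\pm\ots 1)\Delta(\bm X)v=0$.

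The key ingredient that makes the recursion close --- and which your write-up does not name --- is that for $\bm X\in\Ker(\bm\chi_c)\cap\mf l$ and a single generator $\bm X'\in\mf u^\pm$, the $\mf l$-component $Z_0$ of $[\bm X,\bm X']$ lies \emph{again} in $\Ker(\bm\chi_c)$; this is the wild analogue of Lemma~\ref{lem:nested_left_radical_condition}, and the reason is that any Cartan part $H_\alpha\varepsilon^k$ produced has $\alpha\in\phi_k$ (so $\lambda_k$ annihilates $\mf t_\alpha$), while the off-Cartan parts of $\mf l$ kill the character by definition. Your own example illustrates this without you noting it: you exhibit $[E_{-\alpha_{12}}\varepsilon,E_{\alpha_{12}}]=-H_{\alpha_{12}}\varepsilon\in\mf l_{\bm\phi}\sm\mf u^+_{\bm\psi}$ as a nontrivial residual, but here $\alpha_{12}\in\phi_1=\wt\phi$, so $\braket{\lambda_1|H_{\alpha_{12}}}=\wt\lambda_1-\wt\lambda_1=0$ and $p^+$ annihilates the resulting contribution after all. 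In other words, the residual you worried about is zero in your own test case for precisely the reason that drives the paper's recursion. So the structure of your argument is sound, but the claimed obstruction is resolved by a closure property of $\Ker(\bm\chi_c)$ under bracketing with $\mf u^\pm$ (modulo $\mf u^\pm$), not by a global invariance argument, and that step needs to be supplied.
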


\begin{proof}
	Postponed to~\ref{proof:lem_projected_inverse_shapovalov}.
\end{proof}

\subsubsection{}

We will actually use the completed version of Lem.~\ref{lem:projected_inverse_shapovalov}.
Precisely,
for any $U\mf g_r$-module $\mc V$ write
\begin{equation}
	\label{eq:completed_tensor_product_verma}
	\wt M^\pm_c \wh \ots \mc V \ceqq \prod_{Q^+_{\phi_0}} \bigl( M^\pm_c[\mu] \ots \mc V \bigr).
\end{equation}
The map $1 \ots p^+ \cl M^+_c \ots M^-_c \to M^+_c \ots \mc V_0$ extends to an arrow $\wt M^+_c \wh \ots \wt M^-_c \to \wt M^+_c \wh \ots \mc V_0$,
written the same and defined by
\begin{equation}
	\bigl( v^{(+)}_{\mu} w^+_c \ots v^{(-)}_{\mu} w^-_c \bigr)_{\!\mu} \lmt \bigl( v^{(+)}_{\mu} w^+_c \ots v^{(-)}_{\mu} w_0 \bigr)_{\!\mu},
	\qquad v^{(+)}_{\mu} \ots v^{(-)}_{\mu} \in U(\mf u^-)_{-\mu} \ots U(\mf u^+)_{\mu},
\end{equation}
in componentwise Sweedler notation.
Analogously,
there is a $\mb C$-linear map
\begin{equation}
	p^- \ots 1 \cl \wt M^+_c \wh \ots \wt M^-_c \lra \mc V_0 \wh \ots \wt M^-_c \eqqc \prod_{Q^+_{\phi_0}} \bigl( \mc V_0 \ots M^-_c[\mu] \bigr).
\end{equation}

In particular,
there are elements
\begin{equation}
	F^{(+,0)} \ceqq (1 \ots p^+) F_c \in \wt M^+_c \wh \ots \mc V_0,
	\qquad F^{(0,-)} \ceqq (p^- \ots 1) F_c \in \mc V_0 \wh \ots \wt M^-_c,
\end{equation}
and the argument in the proof~\ref{proof:lem_projected_inverse_shapovalov} extends to show that they are invariant under the actions of $\mf p^+$ and $\mf p^-$---%
respectively.
One thus finds $U\mf g_r$-linear maps
\begin{equation}
	\mc F^{(+,0)} \cl M^+_c \lra \wt M^+_c \wh \ots \mc V_0,
	\qquad \mc F^{(0,-)} \cl M^-_c \lra \mc V_0 \wh \ots \wt M^-_c,
\end{equation}
(well) defined by $w^+_c \mt F^{(+,0)}$ and $w^-_c \mt F^{(0,-)}$---%
respectively.\fn{
	In view of the identifications $\Hom_{\mf g_r} \bigl( M^\pm_c,\mc V \bigr) \simeq \Hom_{\mf p^\pm} (\mb C_{\chi_{\pm c}},\mc V) \simeq \mc V^{\mf p^{\pm}}$,
	for any $U\mf g_r$-module $\mc V$.}~They can be uniquely extended to the completions $\wt M^\pm_c$,
and finally we consider the $U\mf g_r$-linear compositions
\begin{equation}
	\label{eq:compositions_dual_shapovalov}
	\begin{tikzcd}
		\mb C \ar{r} & \wt M^+_c \wh \ots \wt M^-_c \ar[bend right=15,swap]{rr}[yshift=-3pt]{1 \ots \mc F^{(0,-)}} \ar[bend left=15]{rr}[xshift=20pt,yshift=3pt]{\mc F^{(+,0)} \ots 1} & & \wt M^+_c \wh \ots \mc V_0 \wh \ots \wt M^-_c.
	\end{tikzcd}
\end{equation}
Here:
(i) the complex line is regarded as the trivial $U\mf g_r$-module;
(ii) the leftmost arrow corresponds to the invariant element $F_c \in \bigl( \,
	\wt M^+_c \wh \ots \wt M^-_c \bigr)^{\! \mf g_r} \simeq \Hom_{\mf g_r} \bigl( \mb C,\wt M^+_c \wh \ots \wt M^-_c \bigr)$ (cf.~Lem.~\ref{lem:invariance_canonical_quadratic_tensor});
and (iii) the completed tensor product on the right is defined as a particular case of
\begin{equation}
	\label{eq:bilateral_completed_tensor_product}
	\wt M^+_c \wh \ots \mc V \wh \ots \wt M^-_c \ceqq \prod_{(Q^+_{\phi_0})^2} \bigl( M^+_c[\mu] \ots \mc V \ots M^-_c[\mu'] \bigr),
\end{equation}
where $\mc V$ is any $U\mf g_r$-module.

Then the upper composition of~\eqref{eq:compositions_dual_shapovalov} corresponds to the invariant element
\begin{equation}
	\label{eq:upper_composition}
	F_c^{(12,3)} \ceqq ( 1 \ots p \ots 1 ) \Bigl( \bigl( \Delta \ots 1 (F_c) \bigr) \cdot (F_c \ots 1) \Bigr) \in \bigl( \,
	\wt M^+_c \wh \ots \mc V_0 \wh \ots \wt M^-_c \bigr)^{\! \mf g_r},
\end{equation}
now regarding $F_c$ as an element of $\wt U(\mf u^-) \wh \ots \wh U(\mf u^+)$ to make sense of coproducts.
The lower one instead maps $1 \in \mb C$ to the invariant element
\begin{equation}
	\label{eq:lower_composition}
	F^{(1,23)}_c \ceqq ( 1 \ots p \ots 1 ) \Bigl( \bigl( 1 \ots \Delta (F_c) \bigr) \cdot (1 \ots F_c) \Bigr).
\end{equation}
(Cf.~\eqref{eq:compact_associativity} to see where this is going.)

\begin{theo}[Cf.~\cite{alekseev_lachowska_2005_invariant_star_product_on_coadjoint_orbits_and_the_shapovalov_pairing},
		Prop.~4.5]
	\label{thm:equality_compositions}

	Suppose that $M^\pm_c$ are \emph{simple:}
	then $F^{(12,3)}_c = F^{(1,23)}_c$.
\end{theo}

\begin{proof}
	Set $w^{(+,0,-)}_c \ceqq w^+_c \ots w_0 \ots w^-_c \in M^+_c[0] \ots \mc V_0 \ots M^-_c[0]$.
	Then
	\begin{equation}
		F^{(12,3)} - w^{(+,0,-)}_c,
		F^{(1,23)} - w^{(+,0,-)}_c \in \prod_{(Q^+_{\phi_0})^2 \sm \set{(0,0)}} \bigl( M^+_c[\mu] \ots \mc V_0 \ots M^-_c[\mu] \bigr).
	\end{equation}
	The statement follows from Lem.~\ref{lem:iterated_invariant_elements}---%
	which in turn relies on Prop.~\ref{prop:invariant_vectors_completed_tensor_product}.
\end{proof}

\begin{lemm}
	\label{lem:iterated_invariant_elements}

	Suppose that $M^\pm_c$ are \emph{simple:}
	then the $\mf g_r$-invariant elements of~\eqref{eq:bilateral_completed_tensor_product} are determined by their zeroth component---%
	inside $M^+_c[0] \ots \mc V \ots M^-_c[0]$.
\end{lemm}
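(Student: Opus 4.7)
The plan is to apply Prop.~\ref{prop:invariant_vectors_completed_tensor_product}---which handles the one-sided analogue of the claim, namely that $\mf g_r$-invariants in a completion $\wt M^\pm_c \wh \ots \mc W$ are determined by their projection onto $M^\pm_c[0] \otimes \mc W$---iteratively, peeling off the two Shapovalov factors one at a time. The simplicity of $M^\pm_c$ will enter to ensure the underlying Shapovalov musical isomorphisms~\eqref{eq:musical_maps_shapovalov} are genuine $U\mf g_r$-linear isomorphisms onto the graded duals, which is what allows the one-sided statement to apply.

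\textbf{First reduction.} View the triple completed product as $\wt M^+_c \wh \ots \mc W$, where $\mc W \ceqq \mc V \wh \ots \wt M^-_c$ is equipped with the tensor-product $U\mf g_r$-action. Because the completion in~\eqref{eq:bilateral_completed_tensor_product} is taken independently in each $\wt M^\pm_c$ slot, this associativity identification is both $U\mf g_r$-linear and canonical. Applying Prop.~\ref{prop:invariant_vectors_completed_tensor_product} then gives that any $\mf g_r$-invariant $\alpha$ in $\wt M^+_c \wh \ots \mc V \wh \ots \wt M^-_c$ is determined by its projection $\alpha^{(+)}$ onto the outer zeroth weight space $M^+_c[0] \otimes \mc W = \mb C w^+_c \otimes \mc W$.

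\textbf{Second reduction.} Under the identification $\mb C w^+_c \otimes \mc W \simeq \mc W$, the element $\alpha^{(+)}$ corresponds to an element $\wt \alpha \in \mc V \wh \ots \wt M^-_c$ which is again $\mf g_r$-invariant---invariance transfers because the line $\mb C w^+_c$ carries the (trivial extension of the) character $\bm \chi_c$, so that once the $w^+_c$-factor is separated, the diagonal action on the remaining slots is genuinely the full $\mf g_r$-action. A second application of Prop.~\ref{prop:invariant_vectors_completed_tensor_product}, now to the $\wt M^-_c$ slot, shows that $\wt \alpha$ is determined by its component in $\mc V \otimes M^-_c[0] = \mc V \otimes \mb C w^-_c$. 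Composing the two reductions, $\alpha$ is determined by its $(0,0)$-component in $M^+_c[0] \otimes \mc V \otimes M^-_c[0]$, as claimed.

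\textbf{Main obstacle.} The delicate point is the transfer of invariance from the first to the second reduction: one must check that after projecting onto $\mb C w^+_c$, the residual element remains \emph{fully} $\mf g_r$-invariant (rather than merely $\mf p^+$-invariant, which is all the naive weight-zero projection would yield). The cleanest way to carry this out is to use the Shapovalov musical isomorphism~\eqref{eq:musical_maps_shapovalov}---valid precisely by the simplicity assumption on $M^\pm_c$---to recast the invariant $\alpha$ as a $U\mf g_r$-linear map $M^-_c \otimes M^+_c \to \mc V$: the claim is then that such a map is determined by its value on $w^-_c \otimes w^+_c$, which reduces to showing $U\mf g_r \cdot (w^-_c \otimes w^+_c) = M^-_c \otimes M^+_c$. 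The latter cyclicity follows by induction on the height filtration~\eqref{eq:filtration_finite_singularity_module}, using iteratively that acting by $\mf u^+_{\bm \psi}$ (resp. $\mf u^-_{\bm \psi}$) on $w^-_c \otimes w^+_c$ via the coproduct builds up the first (resp. second) tensor slot cleanly, while mixed monomials are recovered by subtracting ``lower-order'' contributions already supplied by the inductive hypothesis---the key input being that $U(\mf u^\pm_{\bm \psi})$ acts freely on $w^\mp_c$, which is where the assumption $\bm \psi$ balanced (so that $\mf u^\pm_{\bm \psi}$ are honest Lie subalgebras admitting a PBW basis) enters.
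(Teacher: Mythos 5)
The overall strategy — apply Prop.~\ref{prop:invariant_vectors_completed_tensor_product} twice, once to peel off the $\wt M^+_c$ factor and once for the $\wt M^-_c$ factor — is exactly the paper's. But the hinge of your ``Second reduction'' is wrong: you claim that $\wt\alpha$ is again $\mf g_r$-invariant because ``the line $\mb C w^+_c$ carries the (trivial extension of the) character $\bm \chi_c$.'' The line $\mb C w^+_c$ is only a $\mf p^+_{\bm \psi}$-module, not a $\mf g_r$-module: $\mf u^-_{\bm \psi}$ acts freely on $w^+_c$, so the subspace $\mb C w^+_c \ots \mc W$ is not $\mf g_r$-stable inside $\wt M^+_c \wh \ots \mc W$, and the identification with $\mc W$ cannot transfer full $\mf g_r$-invariance. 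In fact one checks directly that it fails: taking $Z \in \mf Z_{\phi_0}$ with $\Braket{ \bm \chi_c | Z } = c_Z \neq 0$ and projecting the invariance relation $\Delta(Z)\wh v = 0$ onto $\mb C w^+_c \ots \mc W$ gives $Z \wt\alpha = -c_Z\,\wt\alpha \neq 0$. What is true — and what the paper proves as the ``crucial observation'' — is that $\wt\alpha$ is $\mf u^-_{\bm \psi}$-invariant: projecting $\Delta(Z)\wh v = 0$ for $Z \in \mf u^-_{\bm \psi}$ onto the $M^+_c[0]$-slot, the terms $Z(\bm X w^+_c) \ots v_{\bm X}$ all have strictly lower $\mf Z_{\phi_0}$-weight (since $\mf u^-$ strictly lowers weight and acts freely on $\wt M^+_c$) and so cannot contribute, leaving only $w^+_c \ots Z\wt\alpha = 0$. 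This weaker $\mf u^-_{\bm \psi}$-invariance is precisely what Prop.~\ref{prop:invariant_vectors_completed_tensor_product} needs with the tensor factors swapped, so the conclusion survives — but not for the reason you gave.

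Your alternative ``Main obstacle'' route, recasting the invariant as a $U\mf g_r$-linear map and reducing to the cyclicity $U\mf g_r \cdot (w^-_c \ots w^+_c) = M^-_c \ots M^+_c$, is essentially the completed Frobenius-reciprocity picture that the paper refers to in a footnote (citing Alekseev--Lachowska). It is a legitimate alternative, but as written it is not a proof: the cyclicity is only sketched (``by induction on the height filtration\ldots mixed monomials are recovered by subtracting lower-order contributions''), and a careful argument would have to reconcile the completed tensor products, the fact that the musical maps~\eqref{eq:musical_maps_shapovalov} land in \emph{graded} duals with the antipode action, and the exact bookkeeping in the subtraction step. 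As it stands, the first route has an incorrect justification and the second is incomplete.
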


\begin{proof}
	Choose an invariant element $\wh v$ of~\eqref{eq:bilateral_completed_tensor_product}.
	Using the mutually-Shapovalov-dual PBW bases $\mc B^\pm$ of $U(\mf u^\pm) \simeq M^\mp_c$ (cf.~the proof~\ref{proof:prop_invariant_vectors_completed_tensor_product}),
	decompose
	\begin{equation}
		\wh v = \sum_{\mc B^- \ts \mc B^+} (\bm X w^+_c) \ots v_{\bm X,\bm Y} \ots \bm Y w^-_c = \sum_{\mc B^-} (\bm X w^+_c) \ots \Biggl( \,
		\sum_{\mc B^+} v_{\bm X,\bm Y} \ots \bm Y w^-_c \Biggr),
		\qquad v_{\bm X,\bm Y} \in \mc V.
	\end{equation}
	Then let $\mc V^- \ceqq \mc V \wh \ots \wt M^-_c$,
	and note that by hypothesis $\wh v \in \bigl( \wt M^+_c \wh \ots \mc V^- \bigr)^{\! \mf u^+}$.
	By Prop.~\ref{prop:invariant_vectors_completed_tensor_product},
	the vector $\wh v$ is determined by the element
	\begin{equation}
		\label{eq:left_zeroth_component}
		v_1 \ceqq \sum_{\mc B^+} (v_{1,\bm Y} \ots \bm Y w^-_c) \in \mc V^- = \prod_{Q^+_{\phi_0}} \bigl( \mc V \ots M^-_c[\mu ]\bigr).
	\end{equation}

	The important observation now is that $v_1 \in (\mc V^-)^{\mf u^-}$,
	because $\wh v$ is $\mf g_r$-invariant,
	and since $\mf u^-$ acts freely on $\wt M^+_c$.\fn{
		Actually~\eqref{eq:left_zeroth_component} is even $\mf p^-$-invariant,
		in a completed version of Frobenius reciprocity (cf.~\cite[Rmk.~4.2]{alekseev_lachowska_2005_invariant_star_product_on_coadjoint_orbits_and_the_shapovalov_pairing}).}~Hence,
	a recursive usage of Prop.~\ref{prop:invariant_vectors_completed_tensor_product} (with the tensor factors swapped) shows that $v_1$ is determined by $v_{1,1} \in \mc V$.
\end{proof}

\begin{prop}
	\label{prop:invariant_vectors_completed_tensor_product}

	Suppose that $M^\pm_c$ are \emph{simple}:
	then the $\mf u^\pm$-invariant elements of~\eqref{eq:completed_tensor_product_verma} are determined by their zeroth component---%
	inside $M^\pm_c[0] \ots \mc V$.
\end{prop}

\begin{proof}
	Postponed to~\ref{proof:prop_invariant_vectors_completed_tensor_product}.
\end{proof}

\begin{rema}
	Prop.~\ref{prop:invariant_vectors_completed_tensor_product} strengthens~\cite[Prop.~4.1]{alekseev_lachowska_2005_invariant_star_product_on_coadjoint_orbits_and_the_shapovalov_pairing},
	even in the tame/generic case.
	(Therefore,
	strictly speaking,
	our proof of Thm.~\ref{thm:equality_compositions} takes different final steps from the proof of~\cite[Prop.~4.5]{alekseev_lachowska_2005_invariant_star_product_on_coadjoint_orbits_and_the_shapovalov_pairing}.)
\end{rema}

\subsubsection{}

The equality of~\eqref{eq:upper_composition} and \eqref{eq:lower_composition},
for all but countably many values of the dilation parameter $c$,
implies that the same holds for the formal versions---%
by componentwise rationality in $c$,
upon taking Taylor expansions at infinity.
Therefore,
one has
\begin{equation}
	\label{eq:identity_formal_compositions}
	F^{(12,3)}_{\hs^{-1}} = F^{(1,23)}_{\hs^{-1}} \in \bigl( U(\mf u^-) \ots \mc V_0 \ots U(\mf u^+) \bigr) \llb \hs \rrb,
\end{equation}
where we define,
e.g.,
\begin{equation}
	F^{(12,3)}_{\hs^{-1}} \ceqq ( 1 \ots p \ots 1 ) \Bigl( \bigl( \Delta \ots 1 (F_{\hs^{-1}} ) \bigr) \cdot (F_{\hs^{-1}} \ots 1) \Bigr),
\end{equation}
tacitly extending $p$ to an arrow $U\mf g_r \llb \hs \rrb \to \mc V_0 \llb \hs \rrb$,
and using Prop./Def.~\ref{prop:inverse_shapovalov_expansion_infinity} to see that completions are no longer necessary after Taylor expansions.

\subsection{Nongeneric extension:
	conclusion}
\label{sec:deformation_quantisation_conclusion}

Now combine the maps
\begin{equation}
	p^\pm \cl U(\mf u^\pm) \lra \mc V_0 \simeq U\mf g_r \bs (U\mf g_r \cdot \mf l)
\end{equation}
into a single arrow $\bm p \ceqq p^- \ots p^+ \cl U(\mf u^-) \ots U(\mf u^+) \to \mc V_0^{\ots 2}$.
Once more,
extend it tacitly by $\mb C_{\hs}$-linearity to take values into $\mc V_0^{\ots 2} \llb \hs \rrb$.

The main result of this section is that:

\begin{enonce}{Theorem/Definition}[Cf.~\cite{alekseev_lachowska_2005_invariant_star_product_on_coadjoint_orbits_and_the_shapovalov_pairing},
		Thm.~4.9]
	\label{thm:deformation_quantisation}

	The element $\bm B \ceqq \bm p \bigl( F_{\hs^{-1}} \bigr) \in \mc V_0^{\ots 2} \llb \hs \rrb$ yields a \emph{local} $G_r$-\emph{invariant deformation quantisation} of $\bigl( \mc O_{\bm \lambda},\Pi \bigr)$.
\end{enonce}

\begin{proof}
	First,
	note that $\bm B \in \bigl( \mc V_0^{\ots 2} \bigr)^{\! \mf l} \llb \hs \rrb$:
	this is proven analogously to~\ref{proof:lem_projected_inverse_shapovalov},
	observing that $F_{\hs^{-1}} \in \bigl( U(\mf u^-) \ots U(\mf u^+) \bigr)^{\! \mf g_r} \llb \hs \rrb$.
	Hence,
	$\bm B$ defines a formal bidifferential operator on the orbit,
	whose coefficients are finite-order algebraic bidifferential operators---%
	preserving holomorphicity.
	Now projecting the equality~\eqref{eq:identity_formal_compositions} yields
	\begin{equation}
		\label{eq:equality_projected_compositions}
		\bm B^{(12,3)} = \bm B^{(1,23)} \in \mc V_0^{\ots 3} \llb \hs \rrb.
	\end{equation}
	Thus,
	indeed,
	using~\eqref{eq:star_product_from_formal_bidifferential_operator} yields an \emph{associative} operation $\ast$.

	Second,
	note that
	\begin{equation}
		(\mb C \ops \mf u^\pm) \cap \ker(p^\pm) = (\mb C \ops \mf u^\pm) \cap (U\mf g_r \cdot \mf l) = (0) \sse U\mf g_r.
	\end{equation}
	Therefore,
	there are natural $\mb C$-linear embeddings $(\mb C \ops \mf u^\pm) \hra \mc V_0$,
	and in this identification one has $\bm p(1 \ots 1) = 1 \ots 1$ and $\bm p(\Pi) = \Pi$.
	Now Prop./Def.~\ref{prop:inverse_shapovalov_expansion_infinity} implies that the zeroth-order term of $\bm B$ (resp.,
	the antisymmetrisation of its first-order term) acts as the commutative product of holomorphic functions on $\mc O_{\bm\lambda}$ (resp.,
	as the KKS Poisson bracket).
	The conclusion follows from the discussion in \S~\ref{sec:sheaf_quantization}.
\end{proof}

\begin{rema}
	\label{rmk:stein}
	Once more,
	these results can be recast in the complex-algebraic setting,
	deforming the ring of (global) regular algebraic functions on $\mc O_{\bm \lambda} \sse \mf g_r^{\dual}$,
	viewed instead as a smooth complex affine Poisson \emph{variety} (cf.~Rmk.~\ref{rmk:affine_setting}).
	Incidentally,
	this has consequences in the complex-analytic setting:
	the fact that the orbit is Zariski-closed implies that it is also closed for the strong topology,
	whence it is a \emph{Stein manifold}~\cite{stein_1951_analytische_funktionen_mehrerer_komplexer_veraenderlichen_zu_vorgegebenen_periodizitaetsmoduln_und_das_zweite_cousinsche_problem},
	and Cartan's theorems A/B~\cite{cartan_1960_sur_les_fonctions_de_plusieurs_variables_complexes_les_espaces_analytiques} can in principle be used to localise some global constructions on $\mc O_{\bm\lambda}$.
\end{rema}

\begin{rema}
	Using~\cite[Prop.~3.6]{calaque_naef_2015_a_trace_formula_for_the_quantization_coadjoint_orbits} it is possible to show that the $\ast$-product does \emph{not} depend on the choice of marking on the orbit (cf.~\cite[\S~4]{chaffe_rembado_yamakawa_genus_zero_wild_quantum_de_rham_spaces}).
\end{rema}

\section{Generalised irregular (co)vacua and flat connections}
\label{sec:irregular_blocks}

\subsection{}

Here we extend the construction of flat vector bundles of~\cite{felder_rembado_2023_singular_modules_for_affine_lie_algebras_and_applications_to_irregular_wznw_conformal_blocks},
still with a view towards genus-zero irregular conformal blocks in 2d CFT.

\subsection{Generalised irregular (co)vacua}

Let $\Sigma \ceqq \mb CP^1$ be the Riemann sphere,
$n \geq 1$ an integer,
and $\bm a = (a_1,\dc,a_n) \in \Sigma^n$ an $n$-tuple of distinct marked points.
Given integers $r_1,\dc,r_n \geq 1$,
denote by $D \ceqq \sum_i r_i[a_i] \in \mb Z[\Sigma]$ the associated Weil divisor.
If $z \cl U \lxra{\simeq} \mb C$ is a local (affine) chart whose domain $U \sse \Sigma$ contains all marked points,
write $t_i = z(a_i) \in \mb C$ for the position of the $i$-th one,
and let $z_i \ceqq z - t_i$ be the corresponding local coordinate vanishing at $a_i$.

Consider the vector space $\ms O(\ast D) = \ms O_{\Sigma}(\ast D)$ of meromorphic functions on $\Sigma$ with poles at the marked points.
Denote by $\mf g(\ast D) = \mf g_{\Sigma}(\ast D) \ceqq \mf g \ots \ms O(\ast D)$ the Lie algebra of $\mf g$-valued such functions,
equipped with the Lie bracket coming from $\mf g$.
Let also $\ms O(\ast D) \to \mb C(\!(z_i)\!)$ be the map taking Laurent expansions at $a_i \in \Sigma$,
and extend it in $\mb C$-linear fashion to a function $\mc L_{a_i} \cl \mf g(\ast D) \to \mf g(\!(z_i)\!) = \mf g \ots \mb C(\!(z_i)\!)$.

\begin{rema}
	\label{rmk:intrinsic_description}

	Intrinsically,
	denote by $\ms O_i = \ms O_{\Sigma,a_i}$ the local ring of $\Sigma$ at $a_i$,
	let $\wh{\ms O}_i$ be its completion with respect to the maximal ideal $\mf M_i \sse \ms O_i$ (of germs of holomorphic functions vanishing at $a_i$),
	and consider the fraction field $\wh{\ms O}_i \hra \wh{\ms K}_i$.
	Then one has the disc $\wh{\mc D}_i \ceqq \Spec \wh{\ms O}_i$,
	and Lie algebras $\mf g \ots \wh{\ms O}_i \hra \mf g \ots \wh{\ms K}_i$,
	without appealing to local coordinates.
	Conversely,
	the choice of local coordinates yields identifications
	\begin{equation}
		\mb C \set{z_i} \simeq \ms O_i \lhra \wh{\ms O}_i \simeq \mb C \llb z_i \rrb \lhra \mb C (\!(z_i)\!) \simeq \wh{\ms K}_i,
	\end{equation}
	whence $\mf g \llb z_i \rrb \simeq \mf g \ots \wh{\ms O}_i \hra \mf g \ots \wh{\ms K}_i \simeq \mf g(\!(z_i)\!)$.
	What is actually used is the choice of a \emph{uniformiser} $\varpi_i \in \wh{\mf M_i}$:
	cf.~\cite{tsuchimoto_1993_on_the_coordinate_free_description_of_the_conformal_blocks} for coordinate-independent constructions of conformal blocks,
	as well as~\cite[\S~2.3]{biswas_mukhopadhyay_wentworth_2024_geometrization_of_the_tuy_wzw_kz_connection} (and Rmk.~\ref{rmk:changing_trivialization_and_uniformizer}).
\end{rema}

\subsubsection{}

Laurent expansions define a $\mf g(\ast D)$-action on a tensor product of generalised singularity modules,
after fixing \emph{discrete} parameters (parabolic filtrations) and \emph{continuous} ones (characters/levels).

Namely,
choose:
(i) a tuple $\bm \Psi = (\bm \psi_1,\dc,\bm \psi_n) \in \bigl( \mc P_\Phi^{(\infty)} \bigr)^{\! n}$ of parabolic filtrations of depths bounded by $r_1,\dc,r_n \geq 1$,
respectively;
(ii) a tuple $\bm{\Lambda} = (\bm \lambda_1,\dc,\bm \lambda_n) \in \prod_{i = 1}^n \mf Z^{\dual}_{\bm \phi_i}$ of formal types,
where $\bm \phi_i \ceqq \on{Lf}_{r_i}(\bm \psi_i) \in \mc L^{(r_i)}_\Phi$ are the Levi factors;
and (iii) let $\kappa \in \mb C$ be a level.
Then,
writing $M_i \ceqq M^{\bm \psi_i}_{\bm \lambda_i} \sse \wh M^{\bm \psi_i}_{\bm \lambda_i,\kappa} \ceqq \wh M_i$ for $i \in \set{1,\dc,n}$,
consider the (uncompleted) tensor products
\begin{equation}
	\mb M = \mb M^{\bm \Psi}_{\bm{\Lambda}} \ceqq \bots_{i = 1}^n M_i \lhra \bots_{i = 1}^n \wh M_i \eqqc \wh{\mb M}^{\bm \Psi}_{\bm{\Lambda},\kappa} = \wh{\mb M}.
\end{equation}

Summing the slotwise action $f \mt \sum_{i = 1}^n \mc L_{a_i}(f)^{(i)}$ of the Laurent expansions of $f \in \mf g(\ast D)$ yields a Lie-algebra morphism $\mf g(\ast D) \to \mf{gl}_{\mb C} \bigl( \wh{\mb M} \bigr)$,
as the sum of residues of a meromorphic 1-form on $\Sigma$ vanishes (cf.~\cite[Lem.~1.16]{kohno_2002_conformal_field_theory_and_topology}).

\begin{defi}
	\label{def:covacua_vacua}

	The (generalised) space of \emph{Verma irregular covacua} is
	\begin{equation}
		\label{eq:covacua}
		\mc W = \mc W(\Sigma,\bm a,\bm{\Lambda},\mf g,\bm \Psi,\kappa) \ceqq \wh{\mb M} \bs \mf g(\ast D) \wh{\mb M},
	\end{equation}
	and the (generalised) space of \emph{Verma irregular vacua} is
	\begin{equation}
		\label{eq:vacua}
		\mc W^\dagger
		= \mc W^\dagger(\Sigma,\bm a,\bm{\Lambda},\mf g,\bm \Psi,\kappa) \ceqq \Hom_{\mf g(\ast D)} \bigl(\wh{\mb M},\mb C \bigr).
	\end{equation}
\end{defi}

\subsection{Finite description}

One can now state/prove the following identification:
\begin{theo}
	\label{thm:finite_description}

	The natural composition $\mb M \hra \wh{\mb M} \thra \mc W$ yields a $\mb C$-linear isomorphism
	\begin{equation}
		\mb M \bs \mf g \mb M \lxra{\simeq} \mc W.
	\end{equation}
\end{theo}

(Dually,
the composition $\mc W^\dagger \hra \wh{\mb M}^{\dual} \thra \mb M^{\dual}$ yields $\mc W^\dagger \simeq \Hom_{\mf g}(\mb M,\mb C)$.)

\begin{proof}[Reference to a proof]
	This can be regarded as the multi-point case of Cor.~\ref{cor:from_finite_to_loop_algebra_wild},
	and the proof extends from the generic case~\cite{felder_rembado_2023_singular_modules_for_affine_lie_algebras_and_applications_to_irregular_wznw_conformal_blocks}.
	(First show surjectivity inductively,
	along a filtration of $\mc W$;
	then establish a nontrivial inclusion to determine the kernel.)
\end{proof}

\begin{rema}
	Thm.~\ref{thm:finite_description} is specific to the usage of Verma modules,
	and it typically does \emph{not} hold for other category-$\mc O$ representations.
	Incidentally,
	note that it seems necessary to consider even more general setups in order to find $G$-integrable quotients which are not equivalent to the simple $\mf g$-modules featuring in the standard WZNW conformal blocks~\cite{tsuchiya_kanie_1987_vertex_operators_in_conformal_field_theory_on_cp1_and_monodromy_representations_of_braid_groups,tsuchiya_ueno_yamada_1989_conformal_field_theory_on_universal_family_of_stable_curves_with_gauge_symmetries} (cf.~\cite{sorger_1996_la_formule_de_verlinde,kohno_2002_conformal_field_theory_and_topology};
	e.g.,
	the affinisation of $\mf g_r$ and/or the truncated-current version of $\wh{\mf g}$ might feature in an \emph{affine} BGG category-$\mc O$ for TCLAs).
\end{rema}

\begin{rema}
	\label{rmk:quantum_de_rham}

	The triple $\bm \Sigma \ceqq (\Sigma,\bm a,\bm \Lambda)$ underlies a (labelled) wild Riemann sphere~\cite{boalch_2014_geometry_and_braiding_of_stokes_data_fission_and_wild_character_varieties},
	upon identifying each formal type $\bm \lambda_i \in \mf t_r^{\dual}$ with the principal part of a meromorphic $G$-connection germ.
	In particular,
	the moduli of~\eqref{eq:covacua}--\eqref{eq:vacua} identify a (naive) complex symplectic genus-zero wild de Rham subspace $\mc M^*_{\dR} \sse \mc M_{\dR}$,
	corresponding to untwisted irregular-singular (stable) meromorphic connections on the \emph{holomorphically-trivial} principal $G$-bundle $P \ceqq \Sigma \ts G \to \Sigma$:
	with polar divisor bounded by $D$,
	and prescribed (formal) normal forms at each pole.
	However,
	here one must take \emph{nonresonant} normal forms (cf.~Exmp.~\ref{ex:nonresonance}).
	See also~\cite{boalch_2001_symplectic_manifolds_and_isomonodromic_deformations} in the generic case when $G = \GL_m(\mb C)$,\fn{
		Quiver-theoretic descriptions of the general-linear examples,
		beyond the generic case,
		were also given in~\cite{boalch_2012_simply_laced_isomonodromy_systems},
		and generalised in~\cite{hiroe_yamakawa_2014_moduli_spaces_of_meromorphic_connections_and_quiver_varieties}.}~and~\cite[\S~5]{boalch_2007_quasi_hamiltonian_geometry_of_meromorphic_connections} in the generic case for arbitrary reductive structure groups,
	and finally~\cite{yamakawa_2019_fundamental_two_forms_for_isomonodromic_deformations} in the general nonresonant case.

	This viewpoint was recently taken up in~\cite{chaffe_rembado_yamakawa_genus_zero_wild_quantum_de_rham_spaces},
	working in the complex-algebraic category,
	and using the additional data of the polarisations (determined by $\bm \Psi$) to obtain a deformation quantisation of $\mc M^*_{\dR}$ in the general nonresonant case.
	Therefore,
	the present spaces of vacua/covacua may be viewed as quantum versions of wild de Rham spaces.
\end{rema}

\subsection{Flat connections}

Letting the noncoalescing marked points move---%
in the finite part---%
yields a \emph{sheaf} of generalised irregular (co)vacua,
on the configuration space
\begin{equation}
	\bm B_n \ceqq \Set{(t_1,\dc,t_n) \in \mb C^n | t_i \neq t_j \, \text{ if } \, i \neq j}.
\end{equation}
Then Thm.~\ref{thm:finite_description} implies that this actually underlies a trivializable \emph{vector bundle}.
Finally,
the Virasoro uniformisation~\cite{benzvi_frenkel_2004_geometric_realization_of_the_segal_sugawara_construction} suggests that the variation of the coordinate $t_i$ should be controlled by the action of $L_{-1} \in \mf{Witt} \sse \mf{Vir}$.
Indeed,
if $L_{-1}^{(i)}$ denotes the action on the $i$-th slot of $\wh{\mb M}$,
then one can prove that:

\begin{theo}
	\label{thm:flat_connection}

	The (strongly) flat connection
	\begin{equation}
		\label{eq:generalised_irregular_kz}
		\wh \nabla = \dif - \wh \varpi,
		\qquad \wh \varpi \ceqq \sum_{i = 1}^n L_{-1}^{(i)} \dif t_i,
	\end{equation}
	defined on the trivial vector bundle $\wh{\mb M} \ts \bm B_n \to \bm B_n$,
	is compatible with the $\mf g(\ast D)$-action,
	i.e.,
	\begin{equation}
		\bigl[ f,\wh \nabla_{\partial_{t_i}} \bigr]\wh \psi = (\partial_{t_i}f) \wh \psi,
	\end{equation}
	for all local sections $\wh \psi$ of the vector bundle,
	and all local sections $f$ of---%
	the sheaf-theoretic version of---%
	$\mf g(\ast D)$.
\end{theo}

\begin{proof}[Reference to a proof]
	First establish the smoothness of the affine generalised singularity modules,
	as in the generic case~\cite{felder_rembado_2023_singular_modules_for_affine_lie_algebras_and_applications_to_irregular_wznw_conformal_blocks}.
	Then use~\cite[Lem.~12.8]{kac_1990_infinite_dimensional_lie_algebras} to relate the adjoint action of $L_{-1}$ on $\mf g(\!(z_i)\!)$ with the `derivative' operator
	\begin{equation}
		X \ots z_i^m \lmt m X \ots z_i^{m-1},
		\qquad m \in \mb Z,
		\quad X \in \mf g.
	\end{equation}
	(The latter is \emph{universal},
	i.e.,
	independent of the choice of loop-algebra module.)
\end{proof}

\subsubsection{}

Therefore,
the connection~\eqref{eq:generalised_irregular_kz} descends to a (strongly) flat $\mf g$-invariant connection on $\mb M \ts \bm B_n \to \bm B_n$,
i.e.,
a connection on the vector bundle of covacua.
An analogous construction works in the dual/vacua picture.

In the tame case,
when the divisor $D$ is reduced,
one thus finds the Verma version of the (generalised) KZ connection~\cite{knizhnik_zamolodchikov_1984_current_algebra_and_wess_zumino_model_in_two_dimensions}.
Taking generic polarisations at each marked point,
and arbitrary pole orders,
one finds instead the \emph{irregular} KZ connection of ~\cite{felder_rembado_2023_singular_modules_for_affine_lie_algebras_and_applications_to_irregular_wznw_conformal_blocks};
and finally there is a new \emph{nongeneric} generalisation thereof,
from the most general version of the singularity modules.
In particular,
it is a new representation of the \emph{universal} irregular KZ connection~\cite{reshetikhin_1992_the_knizhnik_zamolodchikov_system_as_a_deformation_of_the_isomonodromy_problem},
defined on $(U \mf g_r)^{\ots n} \ts \bm B_n \to \bm B_n$.
(It is the same as a nonautonomous Hamiltonian system,
whose integrability is equivalent to the Yang--Baxter equation for an $r$-matrix with dynamical parameter~\cite{belavin_drinfeld_1982_solutions_of_the_classical_yang_baxter_equation_for_simple_lie_algebras};
cf.~\cite{felder_rembado_2023_singular_modules_for_affine_lie_algebras_and_applications_to_irregular_wznw_conformal_blocks}.)

\begin{rema}
	\label{rmk:quantum_monodromy_1}

	The fundamental group $\on{PBr}_n \ceqq \pi_1 \bigl( \bm B_n,(1,\dc,n) \bigr)$,
	i.e.,
	the pure Artin braid group on $n$ strands,
	acts on the space of (co)vacua in the monodromy representation of the generalised irregular KZ connection.
	In particular,
	restricting to weight spaces yields finite-dimensional modules.

	More generally,
	let $\bm \psi \in \mc P_\Phi^{(\infty)}$ be \emph{any} parabolic filtration,
	and choose a formal type $\bm \lambda \in \mf Z_{\bm \phi}^{\dual}$ for its Levi factor $\bm \phi$.
	Then consider a partition of the set $\set{1,\dc,n}$,
	with parts
	\begin{equation}
		\on{Iso}^{\bm \psi}_{\bm \lambda} \ceqq \Set{ i \in \set{1,\dc,n} | \wh M_i \simeq \wh M^{\bm \psi}_{\bm \lambda,\kappa} } = \Set{ i \in \set{1,\dc,n} | M_i \simeq M^{\bm \psi}_{\bm \lambda} },
	\end{equation}
	and fix such identifications of modules.
	Let $\mf S_n$ be the symmetric group of $\set{1,\dc,n}$,
	and consider the subgroup preserving this partition into isomorphism classes:
	\begin{equation}
		\mf S^{\bm \Psi}_{\bm \Lambda} \ceqq \Set{ \sigma \in \mf S_n | \sigma(\on{Iso}^{\bm \psi}_{\bm \lambda}) \sse \on{Iso}^{\bm \psi}_{\bm \lambda} \text{ for all } (\bm \psi,\bm \lambda) }.
	\end{equation}
	By construction,
	there are (compatible) actions of $\mf S^{\bm \Psi}_{\bm \Lambda}$ on both $\mb M$ and $\wh{\mb M}$,
	permuting the factors of the tensor products,
	which one can combine with the natural action on $\bm B_n \sse \mb C^n$ to construct vector bundles over the quotient space $\bm B^{\bm \Psi}_{\bm \Lambda} \ceqq \bm B_n \bs \mf S^{\bm \Psi}_{\bm \Lambda}$ of \emph{semiordered configurations} (two points are indistinguishable if they lie in one and the same part).
	E.g.,
	in the affine case the total space of the vector bundle is
	\begin{equation}
		\wh{\mb M} \ts^{\mf S^{\bm \Psi}_{\bm \Lambda}} \bm B_n \ceqq \bigl(\wh{\mb M} \ts \bm B_n \bigr) \bs \mf S^{\bm \Psi}_{\bm \Lambda},
	\end{equation}
	acting antidiagonally on the direct product.

	The connection~\eqref{eq:generalised_irregular_kz} is also well-defined on $\wh{\mb M} \ts^{\mf S^{\bm \Psi}_{\bm \Lambda}} \bm B_n \to \bm B^{\bm \Psi}_{\bm \Lambda}$,
	and the $\mf S^{\bm \Psi}_{\bm \Lambda}$-action on $\wh{\mb M}$ commutes with that of $\mf g(\ast D)$---%
	since one sums Laurent expansions over \emph{all} slots.
	Thus,
	there is an induced $\mf g$-invariant connection on $\mb M \ts^{\mf S^{\bm \Psi}_{\bm \Lambda}} \bm B_n \to \bm B^{\bm \Psi}_{\bm \Lambda}$,
	viz.,
	the reduction of the generalised irregular KZ connection with respect to the natural $\mf S^{\bm \Psi}_{\bm \Lambda}$-action on $\mc W$.
	Everything remains flat,
	so there is also a monodromy representation of the \emph{semipure} braid group $\on{Br}_{\bm \Psi,\bm{\bm \lambda}} \ceqq \pi_1 \bigl( \bm B^{\bm \Psi}_{\bm \Lambda},[1,\dc,n] \bigr)$,
	where $[1,\dc,n] \in \bm B_{\bm \Lambda}^{\bm \Psi}$ denotes the $\on S_{\bm \Lambda}^{\bm \Psi}$-orbit of the tuple $(1,\dc,n)$.
	This is a subgroup of the \emph{full/nonpure} braid group on $n$ strands,
	viz.,
	of $\on{Br}_n \ceqq \pi_1 \bigl( \bm B_n \bs \mf S_n,
		\set{1,\dc,n} \bigr)$,
	consisting of braids whose underlying permutation lies in $\mf S^{\bm \Psi}_{\bm \Lambda}$;
	it sits at the middle of the top exact group sequence in the following commutative diagram (and it is a.k.a.~the `mixed' braid group,
	cf.~\cite{manfredini_1997_some_subgroups_of_artin_s_braid_group, doucot_rembado_2025_topology_of_irregular_isomonodromy_times_on_a_fixed_pointed_curve}):
	\begin{equation}
		\begin{tikzcd}[row sep=5pt]
			& & \on{Br}_{\bm \Psi,\bm{\bm \lambda}} \ar[hookrightarrow,dd] \ar[r] & \mf S^{\bm \Psi}_{\bm \Lambda} \ar[hookrightarrow,dd] \ar[rd] & \\
			1 \ar{r} & \on{PBr}_n \ar{ur} \ar{dr} & & & 1 \,
			.
			\\
			& & \on{Br}_n \ar[r] & \mf S_n \ar[ru] &
		\end{tikzcd}
	\end{equation}

	This symmetry-breaking is only seen when choosing representations.
	Otherwise,
	one can act on $\smash{\bigl( U \wh{\mf g} \,
			\bigr)}^{\! \ots n}$ with the whole of $\mf S_n$,
	compatibly with the universal flat connection.
	(On the semiclassical side,
	this is related with the passage from a Poisson de Rham space to a symplectic leaf therein.)
\end{rema}

\begin{rema}
	\label{rmk:quantum_monodromy_2}

	Moreover,
	identify $\bm B_n$ with the regular part of the standard Cartan subalgebra of $\mf g = \mf{gl}_n(\mb C)$;
	the passage to $[\mf g,\mf g] = \mf{sl}_n(\mb C)$ corresponds to subtracting the barycentre of any given configuration,
	which yields a deformation-retraction (of $\bm B_n$) with the standard Cartan subalgebra $\mf t$ in type $A_{n-1}$.
	Here all root subsystems are Levi,
	and they are determined by partitions of $\set{1,\dc,n}$ as above.
	Thus,
	the parameters of~\eqref{eq:covacua}--\eqref{eq:vacua} also determine one stratum of $\mf t$,
	and in turn $\mf S^{\bm \Psi}_{\bm \Lambda}$ is the Weyl group of the associated infinitesimal centraliser $\mf l \sse \mf g$---%
	identifying $\mf S_n \simeq W$.

	The present example is about the motion of marked points,
	while the general situation also involves deformations of irregular types/classes,
	i.e.,
	ultimately,
	of wild Riemann surfaces:
	this yields actions of $G$-braid groups à la Brieskorn--Deligne~\cite{brieskorn_1971_die_fundamentalgruppe_des_raumes_der_regulaeren_orbits_einer_endlichen_komplexen_spiegelungsgruppe,deligne_1972_les_immeubles_des_groupes_de_tresses_generalises},
	generalising from type $A$;\fn{
		In type $A$,
		the topology of the new deformation parameters can be formalised via operadic \emph{cabling},
		both in the pure~\cite{doucot_rembado_tamiozzo_local_wild_mapping_class_groups_and_cabled_braids} and full/nonpure case~\cite{doucot_rembado_2025_topology_of_irregular_isomonodromy_times_on_a_fixed_pointed_curve};
		this is also referred to as a~`braiding of braids'~\cite{ramis_2012_iso_irregular_deformations_of_linear_ode_and_dynamics_of_painleve_equations}.}~more generally,
	it yields actions of \emph{global} wild mapping class groups (cf.~\cite{doucot_rembado_tamiozzo_moduli_spaces_of_untwisted_wild_riemann_surfaces}).
	As mentioned in \S~\ref{sec:some_references},
	the prototype in this sense is the DMT connection~\cite{millson_toledanolaredo_2005_casimir_operators_and_monodromy_representations_of_generalised_braid_groups},
	linked to meromorphic 2d gauge theory in~\cite{boalch_2002_g_bundles_isomonodromy_and_quantum_weyl_groups}:
	its monodromy representation is (isomorphic to) Lusztig's action~\cite{lusztig_1990_quantum_groups_at_roots_of_1} of the $q$-Weyl group (cf.~\cite{soibelman_1990_algebra_of_functions_on_a_compact_quantum_group_and_its_representations,kirillov_reshetikhin_1990_q_weyl_group_and_a_multiplicative_formula_for_universal_r_matrices}),
	on the Jimbo--Drinfel'd quantum group $U_q\mf g$~\cite{jimbo_1985_a_q_difference_analogue_of_u_g_and_the_yang_baxter_equation,drinfeld_1987_quantum_groups}.
	In turn,
	the latter can be viewed as a quantisation of the corresponding Betti space of Stokes data---%
	viz.,
	the dual Poisson--Lie group $G^*$.
	(Cf.~\cite{toledanolaredo_2002_a_kohno_drinfeld_theorem_for_quantum_weyl_groups} for this Kohno--Drinfel'd theorem,
	and~\cite{toledanolaredo_xu_2023_stokes_phenomena_poisson_lie_groups_and_quantum_groups} for recent related work.)
	Nonetheless,
	this is just for generic poles of order 2,
	missing the present extension.
	Cf.~instead~\cite{rembado_2019_simply_laced_quantum_connections_generalising_kz,yamakawa_2022_quantization_of_simply_laced_isomonodromy_systems_by_the_quantum_spectral_curve_method} for the quantisation of the `simply-laced' isomonodromy systems~\cite{boalch_2012_simply_laced_isomonodromy_systems},
	which allows for one-step braid-cabling in the context of a nongeneric pole of order 3,
	generalising KZ/DMT;
	but also their combination,
	i.e.,
	the connection of FMTV~\cite{felder_markov_tarasov_varchenko_2000_differential_equations_compatible_with_kz_equations}.
\end{rema}

\section*{Acknowledgements}

Various exchanges with the mathematicians listed here (in alphabetical order) improved this text:
A.~Alekseev,
P.~Boalch,
M.~Chaffe,
G.~Cotti,
J.~Douçot,
L.~Jaburi,
N.~Nikolaev,
C.~Sabbah,
O.~Schiffmann,
D.~Schwein,
M.~Tamiozzo,
V.~Toledano Laredo,
L.~Topley,
N.~Williams,
and D.~Youmans.
We thank them all.

\appendix

\section{Topological stratifications}
\label{sec:stratifications}

Throughout this section,
let $T$ be a topological space and $(\mc P,\leq)$ a poset.

\subsection{Basic terminology}

\begin{defi}
	\label{def:stratification}

	A $\mc P$-\emph{stratification of} $T$ is a set-theoretic surjection $\varphi \cl T \thra \mc P$,
	with fibres $T_i \ceqq \varphi^{-1}(i) \sse T$ for $i \in \mc P$,
	such that:
	\begin{enumerate}
		\item
		      the subspace $T_i \sse T$ is locally-closed,
		      for all $i \in \mc P$;

		\item
		      the family of subspaces $\set{T_i}_{\mc P}$ is locally-finite;

		\item
		      and the following conditions are equivalent,
		      for any pair $i,j \in \mc P$:
		      \begin{enumerate}
			      \item
			            one has $T_i \cap \ol{T_j} \neq \vn$;

			      \item
			            one has $T_i \sse \ol{T_j}$;

			      \item
			            and one has $i \leq j$.
		      \end{enumerate}
	\end{enumerate}
	The topological subspaces $T_i \sse T$ are the \emph{strata of} $\varphi$.
\end{defi}

\begin{rema}
	\label{rmk:stratifications_and_closures}

	In particular,
	one has a $\mc P$-\emph{partition of} $T$,
	i.e.,
	a disjoint union $T = \bigcup_{\mc P} T_i$ indexed by $\mc P$---%
	with nonempty parts.

	Moreover,
	it follows that
	\begin{equation}
		\label{eq:stratified_closure}
		\ol{T_i} = \bigcup_{j \leq i} T_j,
		\qquad \ol{T_i} \sm T_i = \bigcup_{j < i} T_j = \bigcup_{j < i} \ol{T_j},
		\qquad i \in \mc P.
	\end{equation}
	When $\mc P$ is \emph{finite},
	the latter identity implies that all the strata are automatically locally-closed,
	whence the first two axioms of Def.~\ref{def:stratification} are redundant.
	Conversely,
	the former identity,
	which is a.k.a.~the \emph{strong frontier condition},
	is equivalent to the last axiom.
	(A weaker variant only involves the inclusion of $\ol{T_i}$.)
\end{rema}

\begin{rema}
	In particular,
	a stratum $T_i \sse T$ satisfies $\ol{T_i} = T$ if and only if $i \in \mc P$ is the greatest element,
	and it is then called the \emph{dense stratum}.
	Conversely,
	one has $T_i = \bigcap_{\mc P} \ol{T_j}$ if and only if $i \in \mc P$ is the least element,
	and it is then called the \emph{minimal stratum}---%
	automatically closed.
\end{rema}

\subsection{Substratifications}
\label{sec:substratifications}

\begin{defi}
	\label{def:substratification}

	Let $\varphi \cl T \thra \mc P$ be a $\mc P$-stratification of $T$.
	A \emph{substratification of} $\varphi$ consists of a topological subspace $T' \sse T$,
	such that the restriction $\varphi' \ceqq \eval[1]\varphi_{T'}$ defines a stratification of $T'$ indexed by the subposet $\mc P' \ceqq \varphi(T') \sse \mc P$.
\end{defi}

\begin{rema}
	It follows that the strata $T'_i = (\varphi')^{-1}(i) \sse T'$ of $\varphi'$ are the \emph{nonempty} intersections of $T'$ with the strata of $\varphi$,
	for all $i \in \mc P'$.
	Moreover,
	if the least/greatest element $i \in \mc P$ lies in $\mc P'$,
	then it is also a least/greatest element therein,
	and the dense/minimal stratum of $\varphi'$ is $T'_i$.
\end{rema}

\subsubsection{}

The condition of Def.~\ref{def:substratification} is not vacuous,
as the surjection $\varphi' \cl T' \thra \mc P'$ need \emph{not} satisfy~\eqref{eq:stratified_closure} in general.
E.g.,
take $T \ceqq \mb R^2$ with Euclidean topology,
and stratify by $\mc P \ceqq \set{0 < 1}$ and $T_0 \ceqq \set{0} \ts \mb R$---%
whence $T_1 = \mb R^2 \sm T_0$;
then consider the (closed) subspace $T' \ceqq \set{(0,1)} \cup \bigl( [0,1] \ts \set{0} \bigr)$.
The issue is that there are points of $\ol{T_1} \cap T' = T'$ which cannot be reached as limit points in $T'_1 = (0,1] \ts \set{0}$ (the `ambient' stratum $T_1$ approaches $(0,1) \in T'$ through points in $T \sm T'$.)

This example suggests the following sufficient condition:

\begin{lemm}
	\label{lem:closed_substratification}

	Let $T' \sse T$ be a topological subspace such that $T'_i$ is \emph{dense} in $\ol{T_i} \cap T'$,
	for all $i \in \mc P'$.
	Then $\varphi' \cl T' \thra \mc P'$ is a substratification of $\varphi$.
\end{lemm}

\begin{proof}
	Local finiteness/closedness are automatic upon restriction.
	Rather,
	one must prove that the $\mc P'$-partition $T' = \bigcup_{\mc P'} T'_i$ satisfies axiom (3.)~of Def.~\ref{def:stratification}.
	To this end,
	for any $i \in \mc P'$ the given hypothesis yields the strong frontier condition (first taking closure in $T'$):
	\begin{equation}
		\ol{T'_i}^{T'}
		= T' \cap \ol{T_i}
		= T' \cap \bigcup_{j \leq i \in \mc P} T_j
		= \bigcup_{j \leq i \in \mc P'} (T \cap T_j)
		= \bigcup_{j \leq i \in \mc P'} T'_j. \qedhere
	\end{equation}
\end{proof}

\begin{coro}
	\label{cor:union_of_strata}

	For any subposet $\mc P' \sse \mc P$,
	let $T' = T(\mc P') \ceqq \bigcup_{\mc P'} T_i \sse T$.
	Then the subspace $T'$ inherits a substratification of $\varphi$.
\end{coro}

\begin{proof}
	By construction,
	one has $T'_i = T_i$ for $i \in \mc P'$,
	which is dense in $\ol{T_i} \cap T'$.
\end{proof}

\subsection{Quotient stratifications}
\label{sec:quotient_stratifications}

Hereafter,
let $H$ be a topological group,
$T$ a topological $H$-space,
and $\mc P$ an $H$-poset;
i.e.,
$T$ carries a continuous $H$-action by homeomorphisms and $\mc P$ is equipped with an $H$-action by order-preserving bijections.
(We denote the latter by $i \mt g.i$,
for $i \in \mc P$ and $g \in H$.)

\begin{defi}
	A $\mc P$-stratification of $T$ is $H$-\emph{compatible} if it intertwines the $H$-actions on $T$ and $\mc P$,
	viz.,
	if one has
	\begin{equation}
		\label{eq:H_stratification}
		g(T_i) = T_{g.i} \sse T,
		\qquad i \in \mc P.
	\end{equation}
\end{defi}

\begin{rema}
	In view of~\eqref{eq:stratified_closure},
	one also has
	\begin{equation}
		g(\ol{T_i}) = \ol{T_{g.i}},
		\qquad g(\ol{T_i} \sm T_i) = \ol{T_{g.i}} \sm T_{g.i},
		\qquad i \in \mc P,
		\quad g \in H.
	\end{equation}
	Moreover,
	the extremal strata of $\varphi$ are $H$-stable---%
	if they exists.
\end{rema}

\subsubsection{}

If $S \sse T$ is a topological subspace,
consider the setwise stabiliser
\begin{equation}
	H' = N_H(S) \ceqq \Set{ g \in H | g(S) \sse S } \sse H.
\end{equation}
The composition $S \hra T \thra T \bs H$ yields a continuous map $S^H \ceqq S \bs H' \to T \bs H$,
which in general is \emph{not} injective.
Nonetheless,
injectivity holds provided that
\begin{equation}
	Hx \cap S = H' x \sse S,
	\qquad x \in S,
\end{equation}
where $Hx \sse T$ (resp.,
$H'x \sse S$) is the $H$-orbit through $x$ (resp.,
the $H'$-orbit).
In this case,
the inclusion $S^H \hra T \bs H$ corresponds to the subspace of $H$-orbits intersecting $S \sse T$:
this automatically holds for each stratum $T_i \sse T$ of an $H$-compatible stratification,
so that one has topological subspaces $T^H_i \sse T \bs H$.
Moreover,
the equality~\eqref{eq:H_stratification} yields
\begin{equation}
	N_H(T_i) = H^i \ceqq \Set{ g \in H | g.i = i } \sse H.
\end{equation}

Now consider the $H$-action on $\mc P$.
The $H$-orbit of $i \in \mc P$ is denoted by $\ol i \in \mc P \bs H$,
and if $H$ is \emph{finite} then the quotient set $\mc P \bs H$ has a natural partial order such that $\ol i \leq \ol j$ if $i \leq j$~\cite[Prop.~6.12]{williams_a_survey_of_congruence_and_quotients_of_partially_ordered_sets},\fn{
	The tricky part is proving that the induced relation on $\mc P \bs H$ is \emph{antisymmetric},
	for which one uses that any element of $H$ has finite order.}~making it into the \emph{quotient poset}.

Hereafter,
let $\varphi \cl T \thra \mc P$ be an $H$-compatible $\mc P$-stratification of $T$,
where $H$ is a \emph{finite} (discrete) group.

\begin{lemm}
	\label{lem:quotient_stratification_1}

	For all $i,j \in \mc P$,
	the following are equivalent:
	\begin{enumerate}
		\item
		      one has $T^H_i \cap T^H_j \neq \vn$;

		\item
		      one has $T^H_i = T^H_j \sse T \bs H$;

		\item
		      and one has $\ol i = \ol j \in \mc P \bs H$.
	\end{enumerate}
\end{lemm}

\begin{proof}
	Postponed to~\ref{proof:lem_quotient_stratification_1}.
\end{proof}

\subsubsection{}

In view of Lem.~\ref{lem:quotient_stratification_1},
the subspaces $(T \bs H)_{\ol i} \ceqq T_i^H \sse T \bs H$ are well-defined,
for any choice of representative.

\begin{lemm}
	\label{lem:quotient_stratification_2}

	The closure $\ol{T^H_i} \sse T \bs H$ coincides with the subspace of $H$-orbits intersecting $\ol{T_i} \sse T$,
	for all $i \in \mc P$.
\end{lemm}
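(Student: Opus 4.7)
The plan is to prove the two inclusions $\pi(\ol{T_i}) \sse \ol{T^H_i}$ and $\ol{T^H_i} \sse \pi(\ol{T_i})$ separately, where $\pi \cl T \thra T \bs H$ denotes the canonical projection and $T_i^H = \pi(T_i)$ by definition of the subspace of $H$-orbits meeting $T_i$.

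The first inclusion is the easy direction and does not use the finiteness of $H$: it follows from the continuity of $\pi$, since for any continuous map and any subset one has the image of the closure contained in the closure of the image. Applied to $\pi$ and $T_i$, this yields $\pi(\ol{T_i}) \sse \ol{\pi(T_i)} = \ol{T^H_i}$.

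For the reverse inclusion, the strategy is to show that $\pi(\ol{T_i})$ is already closed in $T \bs H$, so it contains $\ol{T_i^H} \spse \pi(T_i)$. Since $\pi$ is a quotient map (so that a subset of $T \bs H$ is closed if and only if its $\pi$-preimage is closed in $T$), the key step is the computation
\begin{equation}
  \pi^{-1}\bigl(\pi(\ol{T_i})\bigr) = \bigcup_{g \in H} g(\ol{T_i}) = \bigcup_{g \in H} \ol{g(T_i)} = \bigcup_{g \in H} \ol{T_{g.i}},
\end{equation}
where the second equality uses that $H$ acts by homeomorphisms (hence each $g$ commutes with taking closures), and the third uses the $H$-compatibility assumption $g(T_i) = T_{g.i}$ from~\eqref{eq:H_stratification}. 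This is a union of closed subspaces of $T$, indexed by the orbit $H \cdot i \sse \mc P$, which is finite because $H$ is assumed finite; hence the union is closed, and the proof concludes.

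The main (and really only) obstacle is the bookkeeping above: verifying that $g(\ol{T_i}) = \ol{g(T_i)}$ and that $\pi^{-1}(\pi(\ol{T_i}))$ equals the $H$-saturation of $\ol{T_i}$. The finiteness of $H$ is invoked in the most minimal possible way, namely to reduce an a priori infinite union of closed sets to a finite one; one could equally well just assume the orbit $H \cdot i$ is finite.
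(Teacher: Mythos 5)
Your proof is correct and essentially the same as the paper's, just packaged differently: your first inclusion (via the standard fact that continuous maps send closures into closures) is the paper's second paragraph spelled out by hand, and your second inclusion (showing $\pi(\ol{T_i})$ is closed by computing its $\pi$-preimage to be the finite union $\bigcup_{g \in H} \ol{T_{g.i}}$) is the paper's first paragraph, stated there in terms of the open complement. Both use the finiteness of $H$ in exactly the same place, and your closing observation that one only needs finiteness of the orbit $H\cdot i$ is a correct, if minor, refinement.
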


\begin{proof}
	Postponed to~\ref{proof:lem_quotient_stratification_2}.
\end{proof}

\begin{enonce}{Proposition/Definition}
	\label{prop:quotient_stratification}

	Suppose in addition that $\mc P$ is \emph{finite}.
	Then the surjection $\varphi \bs H \cl T \bs H \thra \mc P \bs H$,
	with fibres $\varphi \bs H^{-1}(\ol i) = (T \bs H)_{\ol i}$,
	is a $\mc P \bs H$-stratification of $T \bs H$:
	it is called the \emph{quotient stratification of} $\varphi$---%
	\emph{modulo} $H$.
\end{enonce}

\begin{proof}
	Lem.~\ref{lem:quotient_stratification_1} yields a suitable partition,
	and we must verify the axioms of Def.~\ref{def:stratification}:
	the first two however are automatic (cf.~Rmk.~\ref{rmk:stratifications_and_closures}.)

	Suppose therefore that $T^H_i \cap \ol{T^H_j} \neq \vn$.
	By Lem.~\ref{lem:quotient_stratification_2},
	there is an $H$-orbit intersecting $T_i$ and $\ol{T_j} \sse T$.
	So there exist a point $x \in T_i$,
	and an element $g \in H$,
	such that $g.x \in \ol{T_j} \cap T_{g.i}$.
	It follows that $g(T_i) \sse \ol{T_j}$,
	whence \emph{all} the orbits through $T_i$ intersect $\ol{T_j}$,
	i.e.,
	one has $T_i^H \sse \ol{T_j^H}$.
	Furthermore,
	one has $g.i \leq j$,
	and it follows that $\ol i \leq \ol j$ in $\mc P \bs H$.
	The remaining implications are clear.
\end{proof}

\begin{rema}
	The dense/minimal stratum of $T \bs H$ is then the topological quotient $T_i \bs H$,
	where $T_i \sse T$ is the dense/minimal stratum of $\varphi$,
	if it exists---%
	noting that $N_H(T_i) = H^i = H$ here.
\end{rema}

\begin{rema}
	\label{rmk:about_stratifications}

	What we call `stratification' here is referred to as a `decomposition' in~\cite[Part~I, \S~1.1]{goresky_macpherson_1988_stratified_morse_theory}.
	Op.~cit.~reserves the former terminology for \emph{Whitney stratifications} of closed subspaces of smooth manifolds (cf.~\cite{thom_1969_ensembles_et_morphismes_stratifies,mather_1970_notes_on_topological_stability,mather_2012_notes_on_topological_stability}).
	But all the finite arrangements $\mc A = \set{A_1,\dc,A_m}$ of affine subspaces $A_i \sse \mb C^n$ yield Whitney stratifications,
	cf.~(the complexified version of)~\cite[Part~III, \S~3.1]{goresky_macpherson_1988_stratified_morse_theory}.
	The corresponding poset $\mc P$ consists of the \emph{flats of} $\mc A$,
	i.e.,
	the intersections of elements of $\mc A$,
	typically ordered by anti-inclusions.
	(The flats are then the closures of the strata.)

	Therefore,
	a particular case of this matches up with the (Whitney) Levi/root-valuation stratifications considered in the body of this text,
	where $\mc A$ consists of linear hyperplanes.
	The main points are that:
	(i) there is an order-inverting bijection between the Levi subsystems $\phi$ of a root system $\Phi$,
	and the flats of its root-hyperplane arrangement,
	via $\phi \mt \ker(\phi)$;
	and (ii) the maximal proper subflats of $\ker(\phi)$ are the traces---%
	on $\ker(\phi)$---%
	of the hyperplanes corresponding to roots in $\Phi \sm \phi$.
\end{rema}

\section{Deferred proofs}
\label{sec:missing_proofs}

\subsection{Proof of Lem.~\ref{lem:levi_subsystems}}
\label{proof:lem_levi_subsystems}

Let $\phi \sse \Phi$ be a root subsystem.
Then~\eqref{eq:tame_stratum} is empty if and only if $\ker(\phi) \sse \ker(\alpha)$ for some $\alpha \in \Phi \sm \phi$,
which is equivalent to $\alpha \in \spann_{\mb C}(\phi)$.
Moreover,
if $\mf t_{\phi}$ is nonempty then tautologically $\phi = \Set{ \alpha \in \Phi | \Braket{ \alpha,X } = 0 }$,
for any $X \in \mf t_{\phi}$.
Finally,
suppose that $\phi$ is the set of roots vanishing on $X \in \mf t$:
then the same is true of any linear combination of its elements.

\subsection{Proof of Lem.~\ref{lem:graded_poset}}
\label{proof:lem_graded_poset}

By definition,
one has $\wt \phi < \phi \in \mc L_\Phi$ if and only if $\ker(\wt \phi) \ssne \ker(\phi)$.
(This is false in general for nonlevi subsystems.)

Suppose further that $\phi$ \emph{covers} $\wt \phi$,
i.e.,
that there is no Levi subsystem in between.
Up to replacing the triple $(\mf g,\mf t,\Phi)$ with $(\mf l_{\wt \phi},\mf t,\wt \phi)$,
it is enough to show that $\rho_\Phi(\phi) = \dim_{\mb C}(\mf Z_{\mf g}) + 1$ if $\phi$ covers $\Phi$.
Now choose a base $\Delta \sse \Phi$ of simple roots such that $\phi = \Phi_{\Sigma} \ceqq \spann_{\mb C}(\Sigma) \cap \Phi$,
for a suitable subset $\Sigma \ssne \Delta$.
It follows that $\abs \Sigma = \abs \Delta - 1$,
since $\Phi < \Phi_{\Sigma'} < \phi$ for $\Sigma \ssne \Sigma'$;
in turn
\begin{equation}
	\rho_\Phi(\phi) = \dim_{\mb C} \bigl( \ker(\Sigma) \bigr) = \dim_{\mb C}(\mf t) - \abs \Sigma = \dim_{\mb C}(\mf Z_{\mf g}) + 1,
\end{equation}
because $\dim_{\mb C}(\mf Z_{\mf g}) = \rk(\mf g) - \rk(\Phi) = \rk(\mf g) - \abs \Delta$,
and noting that $\Sigma \sse \mf t^{\dual}$ is a linearly independent set.

\subsection{Proof of Lem.~\ref{lem:tame_stratification}}
\label{proof:lem_tame_stratification}

Local closedness/finitess are clear,
cf.~Rmk.~\ref{rmk:stratifications_and_closures}.
For the former,
explicitly,
note that
one has $\ol{\mf t_{\phi}} = \ker(\phi) \sse \mf t$,
whence
\begin{equation}
	\label{eq:local_closedness}
	\ol{\mf t_{\phi}} \sm \mf t_{\phi} = \bigcup_{\Phi \sm \phi} \ker(\alpha) \cap \ker(\phi) \sse \mf t,
\end{equation}
which is a (closed) union of hyperplanes in $\ker(\phi)$.

Now,
given $X \in \mf t$,
one tautologically has $X \in \mf t_{\phi_X}$.
Conversely,
if $X \in \mf t_{\phi} \cap \mf t_{\phi'}$ then $\Braket{ \phi \cup \phi',X } = (0)$,
while $\Braket{ \alpha,X } \neq 0$ for $\alpha \in (\Phi \sm \phi) \cup (\Phi \sm \phi')$:
then both $\phi \nsse \phi'$ and $\phi' \nsse \phi$ are impossible.
So indeed one has a partition.

Moreover,
suppose that $X \in \mf t_{\phi} \cap \ker(\phi')$.
Then $\Braket{ \phi \cup \phi',X } = 0$,
while $\Braket{ \alpha,X } \neq 0$ for $\alpha \in \Phi \sm \phi$.
This forces $\phi' \sse \phi$ (i.e.,
$\phi \leq \phi'$ inside $\mc L_\Phi$),
and in turn $\mf t_{\phi} \sse \ker(\phi) \sse \ker(\phi')$.
The remaining implications are clear,
and the statement about extremal elements follows from the fact that $\mf Z_{\mf g} = \ker(\Phi)$.

\subsection{Proof of Lem.~\ref{lem:base_levi_filtrations}}
\label{proof:lem_base_levi_filtrations}

Let $\Delta_s$ be any input base,
and consider the subspace
\begin{equation}
	\mf D \ceqq \Set{ \lambda \in \mb C | \on{Re}(\lambda) \geq 0;
		\text{ if } \on{Re}(\lambda) = 0,
		\text{ then } \on{Im}(\lambda) \geq 0 } \sse \mb C.
\end{equation}
Then a fundamental domain for the action of the Weyl group on $\mf t$ is
\begin{equation}
	\mf C_{\Delta_s} \ceqq \Set{ X \in \mf t | \Braket{ \theta,X } \in \mf D \text{ for } \theta \in \Delta_s } \sse \mf t,
\end{equation}
cf.~\cite[\S~5.2]{crooks_2019_complex_adjoint_orbits_in_lie_theory_and_geometry} and~\cite[\S~2.2]{collingwood_mcgovern_1993_nilpotent_orbits_in_semisimple_lie_algebras}.
(Note that $\mf Z_{\mf g} = \ker(\Phi)$ always lies in $\mf C_{\Delta_s}$,
and the Weyl group acts trivially there.)

Thus,
there exists $w_s \in W$ such that $\mf t_{\phi_{s-1}} \cap \mf C_{\Delta_{s-1}} \neq \vn$ (i.e.,
$\mf t_{\phi_{s-1}}$ contains a $\Delta_{s-1}$-dominant element),
where $\Delta_{s-1} \ceqq w_s(\Delta_s)$.
It follows that $\Delta_{s-1} \cap \phi_{s-1}$ is a base of $\phi_{s-1}$.
Now iterate inside the pair $(\mf l_{\phi_{s-1}},\mf t)$:
there exists $w_{s-1} \in W_{\phi_{s-1}} \sse W$ such that $\mf t_{\phi_{s-2}} \cap \mf C_{\Delta_{s-2}} \neq \vn$,
with $\Delta_{s-2} \ceqq w_{s-1}(\Delta_{s-1})$.
But $W_{\phi_{s-1}}$ acts trivially on $\mf t_{\phi_{s-1}}$,
so that
\begin{equation}
	\mf t_{\phi_{s-1}} \cap \mf C_{\Delta_{s-1}} = \ps{t}{}w^{-1}_{s-1} \bigl( \mf t_{\phi_{s-1}} \cap \mf C_{\Delta_{s-1}} \bigr) \neq \vn.
\end{equation}
Etc.

\subsection{Proof of Lem.~\ref{lem:deeper_pairing}}
\label{proof:lem_deeper_pairing}

Expanding the Lie bracket $[ \cdot,\cdot ] \cl \bigwedge^2 \mf g_r \to \mf g_r$ yields
\begin{equation}
	\label{eq:deeper_bracket}
	[ \bm X,\bm Y ] = \sum_{l = 0}^{r-1} \Biggl(\sum_{i+j = l} [X_i,Y_j] \Biggr) \varepsilon^l,
\end{equation}
in the notation of~\eqref{eq:element_deeper_lie_algebra}.
Then one directly checks that~\eqref{eq:deeper_pairing} is $\ad_{\mf g_r}$-invariant if and only if $c = r$.
Furthermore,
since the bilinear form on $\mf g$ is $\Ad_G$-invariant,
the diagonal action of the subgroup $G \sse G_r$ preserves~\eqref{eq:deeper_pairing}.
As per the $\Ad_{\on{Bir}_r}$-action,
note that the adjoint action of $\mf g_r$ is now skew-symmetric for $(\cdot \mid \cdot)_r$:
the conclusion follows from the surjectivity of the exponential map $\mf{bir}_r \to \on{Bir}_r$.

\subsection{Proof of Lem.~\ref{lem:generalised_irregular_type}}
\label{proof:lem_generalised_irregular_type}

Suppose that $\bm X' = \exp(\ad_{\bm Y}) (\bm X) \in \mf g_r$,
for some $\bm X,
	\bm X' \in \mf g_r$ and $\bm Y \in \mf{bir}_r$,
with coefficients
\begin{equation}
	X_i,X'_i,Y_j \in \mf g,
	\qquad i \in \Set{0,\dc,r},
	\quad j \in \Set{1,\dc,r}.
\end{equation}
Clearly $X_0 = X'_0$,
since the $\on{Bir}_r$-action fixes the leading coefficient.
Then we inductively prove that:
i) $X_i = X'_i$,
for $i \in \Set{0,\dc,k-1}$;
and ii) $[Y_i,X_j] = 0$ if $i+j \leq k-1$.

Assume the statement for some integer $l \in \Set{0,\dc,k-2}$.
Then
\begin{equation}
	\label{eq:recursive_step}
	X'_{l+1} = X_{l+1} + [Y_1,X_l] + \dm + [Y_{l+1},X_0] \in \mf g,
\end{equation}
as all the remaining terms (involving nested Lie brackets of the $X_i$'s and the $Y_i$'s) vanish by recursive hypothesis.
Now rewrite the above as
\begin{equation}
	\ad_{X_0}(Y_{l+1}) = X_{l+1} - X'_{l+1} + [Y_1,X_l] + \dm + [Y_k,X_1],
\end{equation}
and note that the right-hand side lies in $\mf g^{X_0}$,
so that both sides vanish by Rmk.~\ref{rmk:semisimple_trick}.
The same argument can be repeated for $\ad_{X_1},\dc,\ad_{X_l} \in \mf{der}(\mf g)$,
which yields
\begin{equation}
	X_{l+1} - X'_{l+1} = 0 = [Y_{l+1},X_0] = \dm = [Y_1,X_l] \in \mf g.
\end{equation}

\subsection{Proof of Lem.~\ref{lem:commutation_lower_coefficients}}
\label{proof:lem_commutation_lower_coefficients}

Choose an element $\bm Y_k = Y_k \varepsilon^k \in \mf{bir}^{\mf g}_r$,
for some $Y_k \in \mf g$.
Then
\begin{equation}
	\tau_k \bigl( e^{\ad_{\bm Y_k}}(\bm X) \bigr) - \sum_{i = 0}^{k-1} X_i \varepsilon^i = \bigl( X_k + [Y_k,X_0] \bigr) \varepsilon^k \in \mf g_k.
\end{equation}
By Rmk.~\ref{rmk:semisimple_trick},
there is a (unique) element $Y_k \in \ad_{X_0}(\mf g)$ such that $X_k - \ad_{X_0}(Y_k) \in \mf g^{X_0} \sse \mf g$.
Then repeat for (unique) elements
\begin{equation}
	\bm Y_{k+1} = Y_{k+1} \varepsilon^{k+1},\dc,\bm Y_{r-1} = Y_{r-1} \varepsilon^{r-1} \in \mf{bir}^{\ad_{X_0}(\mf g)}_r.
\end{equation}
In the end,
one can assume that the lowest coefficients commute with $X_0$.
If $k = 0$ the algorithm terminates.
Otherwise,
iterate inside $\mf{bir}^{\mf g^{X_0}}_r$.
Namely,
since $[X_0,X_1] = 0$ one has $\ad_{X_1} \bigl( \mf g^{X_0} \bigr) \sse \mf g^{X_0}$,
and repeating the same construction for suitable elements of $\mf{bir}_r^{\ad_{X_1}(\mf g)}$ one can assume that $X_k,\dc,X_{r-1} \in \smash{\bigl( \mf g^{X_0} \bigr)}^{\! X_1} = \mf g^{X_0} \cap \mf g^{X_1} \sse \mf g$.
Then,
if necessary,
iterate inside $\mf{bir}_r^{\mf g^{X_0} \cap \mf g^{X_1}}$;
etc.

\subsection{Proof of Lem.~\ref{lem:wild_infinitesimal_centraliser}}
\label{proof:lem_wild_infinitesimal_centraliser}

Expanding the identity $[\bm Y,\bm X] = 0$,
by~\eqref{eq:deeper_bracket},
yields the following system of equations:
\begin{equation}
	\label{eq:equation_infinitesimal_wild_centraliser}
	\sum_{i+j = l} [Y_i,X_j] = 0 \in \mf g,
	\qquad l \in \Set{0,\dc,r-1}.
\end{equation}
In particular $Y_0 \in \ker(\ad_{X_0})$.

Now fix $l \in \Set{0,\dc,s-1}$,
and suppose inductively that $[Y_i,X_j] = 0$ if $i+j \leq l$.
Then the $(l+1)$-th equation~\eqref{eq:equation_infinitesimal_wild_centraliser} yields
\begin{equation}
	[Y_0,X_{l+1}] + \dm + [Y_l,X_1] = \ad_{X_0}(Y_{l+1}) \in \mf g.
\end{equation}
By hypothesis the left-hand side lies in $\mf g^{X_0}$,
so by Rmk.~\ref{rmk:semisimple_trick} one has
\begin{equation}
	[Y_0,X_{l+1}] + \dm + [Y_l,X_1] = 0 = [Y_{l+1},X_0].
\end{equation}
Repeating the same argument for the semisimple elements $X_1,\dc,X_l \in \mf t$ then yields
\begin{equation}
	[Y_0,X_{l+1}] = \dm = [Y_{l+1},X_0] = 0  \in \mf g.
\end{equation}

(We do \emph{not} use that $X_{l+1}$ be semisimple,
so this also works at the last step.)

\subsection{Proof of Lem.~\ref{lem:down_to_weyl}}
\label{proof:lem_down_to_weyl}

We generalise the argument of~\cite[Thm.~2.2.4]{collingwood_mcgovern_1993_nilpotent_orbits_in_semisimple_lie_algebras}.

Namely,
let $g \in G$ be such that $\Ad_g(X_i) = X'_i \in \mf t$ for $i \in \set{0,\dc,s-1}$.
Then $X'_0,\dc,X'_{s-1} \in \mf t \cap \Ad_g(\mf t) \sse \mf g$,
and both $\mf t$ and $\Ad_g(\mf t)$ are abelian subalgebras of $\mf g$:
it follows that $\mf t,\Ad_g(\mf t) \sse \mf l_{\phi'_0}$,
with $\phi'_0 = \bigcap_{i = 0}^{s-1} \phi_{X'_i} \sse \Phi$,
and both are Cartan subalgebras of the common infinitesimal centraliser.
Since the latter is reductive,
there exists an element $g' \in L_{\phi'_0} \sse G$ such that $\Ad_{g'}\Ad_g(\mf t) = \mf t$,
in the notation of~\eqref{eq:common_centraliser}.
Hence,
$g'' \ceqq g'g \in N_G(\mf t)$ and $\Ad_{g''}(X_i) = X'_i$ for $i \in \set{0,\dc,s-1}$.

\subsection{Proof of Lem.~\ref{lem:parabolic_to_parabolic}}
\label{proof:lem_parabolic_to_parabolic}

Clearly the $U\mf g$-submodule generated by the subspace $\mf u^-_{\psi \mid \wt \psi} \cdot v_{\mf p,\lambda} \sse M_{\mf p,\lambda}$ lies in the kernel,
as $\mf u^-_{\psi \mid \wt \psi} \sse \bigl [\,
		\wt{\mf p},\wt{\mf p} \,
		\bigr] \sse \ker(\wt \chi)$.

Conversely,
evaluating on the cyclic vector yields a $U(\mf u^-_{\psi})$-linear identification $U(\mf u^-_{\psi}) \lxra{\simeq} M_{\mf p,\lambda}$,
invoking the nilradical $\mf u^-_{\psi} = \bops_{\nu} \mf g_{-\alpha}$ of the opposite parabolic subalgebra $\mf p^- = \mf p_{-\psi}$---%
and analogously for $M_{\wt{\mf p},\wt \lambda}$.
In these identification,
the induced map $U (\mf u^-_{\psi}) \thra U( \mf u^-_{\wt \psi})$ is the canonical projection modulo the left ideal $U (\mf u^-_{\psi}) \cdot \mf u^-_{\wt \psi} \sse U (\mf u^-_{\psi})$:\fn{
Note that $U(\mf u^-_{\psi}) \cdot \mf u^-_{\wt \psi}$ is actually \emph{bilateral} in $U (\mf u^-_{\psi})$,
e.g.,
applying Lem.~\ref{lem:nested_nilradicals} to the inclusion $\mf t \sse \mf p^-_{\psi} \sse \mf p^-_{\wt \psi}$;
so the projection is a ring morphism.}~the result follows by choosing suitable PBW bases,
cf.~the proof of Thm.~\ref{thm:wild_parabolic_to_wild_parabolic} more generally.

\subsection{Proof of Lem.~\ref{lem:induction_from_submodule}}
\label{proof:lem_induction_from_submodule}

The action map $U\wt{\mf h} \ts \wt V \to \wt V$ restricts to an $U\mf h$-balanced ($\mb C$-bilinear) function $U\wt{\mf h} \ts V \to \wt V$,
whence a universal $U\wt{\mf h}$-linear arrow $\Ind_{U\mf h}^{U\wt{\mf h}} V \to \wt V$ defined by
\begin{equation}
	\label{eq:restricted_action_map}
	X \ots_{U\mf h} v \lmt Xv \in \wt V,
	\qquad X \in U\wt{\mf h},
	\quad v \in V.
\end{equation}
Its image is the $U\wt{\mf h}$-submodule (of $\wt V$) generated by $V$.

Now choose a vector $v \in V$ as in the statement,
whence~\eqref{eq:restricted_action_map} is surjective---%
by the first item.
By the second item,
the (unique) $U\wt{\mf h}$-linear function $\wt V \to \Ind_{U\mf h}^{U\wt{\mf h}} V$ mapping $v \mt 1 \ots_{U\mf h} v$ is well-defined.
Indeed,
by hypothesis,
for all $Z \in \Ann_{U\wt{\mf h}} ( v )$ there are finitely many elements $X_i \in U\wt{\mf h}$ and $Y_i \in \Ann_{U\mf h} ( v )$ such that $Z = \sum_i X_i Y_i$,
whence
\begin{equation}
	Z (1 \ots_{U\mf h} v ) = Z \ots_{U\mf h} v = \sum_i (X_iY_i) \ots_{U\mf h} v = \sum_i X_i\ots_{U\mf h} (Y_i v) = 0 \in \Ind_{U\mf h}^{U\wt{\mf h}} V.
\end{equation}

It follows that this is an inverse to~\eqref{eq:restricted_action_map},
in view of the defining identity
\begin{equation}
	X (Y \ots_{U\mf h} v) = (XY) \ots_{U\mf h} v,
	\qquad X,Y \in U\wt{\mf h},
	\quad v \in V.
\end{equation}

\subsection{Proof of Lem.~\ref{lem:polarisation}}
\label{proof:lem_polarisation}

Choose a subset $\Sigma$ of a base $\Delta \sse \Phi$ of simple roots such that $\psi = \Phi^+_{\Delta} \cup \Phi^-_{\Sigma}$.
It follows that
\begin{equation}
	\mf u_{\psi} = \bops_{\Phi^+_{\Delta} \sm \Phi^+_{\Sigma}} \mf g_\alpha \sse \mf g,
\end{equation}
so that $\mf g = \mf u^-_{\psi} \ops \mf l_{\phi} \ops \mf u^+_{\psi}$,
with $\mf u^\pm_{\psi} \ceqq \mf u_{\pm \psi}$.
Then $P_{\mf u} = \mf l_{\phi} + \mf u_{\psi}$ is the corresponding parabolic subalgebra $\mf p_{\psi} \sse \mf g$,
and involutivity follows.
Furthermore,
if $\alpha,\beta \in \Phi^+_{\Delta} \sm \Phi^+_{\Sigma} \sse \Phi^+_{\Delta}$ then $\alpha + \beta \neq 0$,
whence
\begin{equation}
	\bigl( X \mid [Y,Z] \bigr) = \bigl( [X,Y] \mid Z \bigr) \sse (\mf g_\alpha \mid \mf g_\beta) = (0),
	\qquad Y \in \mf g_\alpha,
	\quad Z \in \mf g_\beta,
\end{equation}
which proves $\omega$-isotropicity;
coisotropicity follows for dimensional reasons.

\subsection{Proof of Lem.~\ref{lem:nested_nilradicals}}
\label{proof:lem_nested_radicals}

Choosing a base $\Delta \sse \Phi$ of simple roots,
write
\begin{equation}
	\mf{nil}(\wt{\mf p}) = \bops_{\Phi^+_{\Delta} \sm \Phi_{\wt \Sigma}} \mf g_\alpha \sse \bops_{\Phi^+_{\Delta} \sm \Phi_{\Sigma}} \mf g_\alpha = \mf{nil}(\mf p),
\end{equation}
for suitable nested subsets $\Sigma \sse \wt \Sigma$.
In particular $\mf{nil}(\wt{\mf p}) \sse \mf{nil}(\mf p)$.

Hence,
it is enough to prove the following:
if $\mf p$ is a standard parabolic subalgebra of $(\mf g,\mf b)$,
then $\bigl[\mf{nil}(\mf b),\mf{nil}(\mf p) \bigr] \sse \mf{nil}(\mf p)$.
Choose then as above $\Sigma \sse \Delta \sse \Phi$,
as well as $\alpha,\beta \in \Phi^+_{\Delta}$.
We prove that if $\gamma \ceqq \alpha + \beta$ lies in $\Phi^+_{\Sigma}$ then the same holds for $\alpha$ and $\beta$.

Indeed,
we can now decompose $\alpha,
	\beta \in \mf t^{\dual}$ as a linear combination of the elements of $\Delta = \Sigma \cup (\Delta \sm \Sigma)$ with nonnegative integer coefficients;
and by hypothesis $\gamma$ as a linear combination of the elements of $\Sigma$ only.
Now both $\Sigma$ and $\Delta \sm \Sigma$ are linearly independent sets (as they are contained in a $\mb C$-basis of $\mf t \cap [\mf g,\mf g]$),
so the uniqueness of the decomposition and the sign of the coefficients imply $\alpha,\beta \in \spann_{\mb Z_{\geq 0}}(\Sigma) \sse \Phi^+_{\Sigma}$.

\subsection{Proof of Lem.~\ref{lem:bracket_nested_parabolic}}
\label{proof:lem_bracket_nested_parabolic}

Using the splitting~\eqref{eq:parabolic_subalgebra_splitting},
one must establish the inclusion
\begin{equation}
	\bigl[\,\wt{\mf l},\wt{\mf l}\,\bigr] \ops \mf{nil}(\wt{\mf p}) \sse \bigl[ \mf p,\wt{\mf p}\bigr],
	\qquad \wt{\mf p} = \wt{\mf l} \ops \mf{nil}(\wt{\mf p}).
\end{equation}
To this end,
choose as usual (sub)sets $\Sigma \sse \wt \Sigma \sse \Delta$ such that $\mf p = \mf p_{\psi} = \mf l_{\phi} \ops \mf u_{\psi}$,
with $\psi = \Phi^+_{\Delta} \cup \Phi^-_{\Sigma} \in \mc P_\Phi$ and $\phi = \on{Lf}(\psi) \in \mc L_\Phi$;
and analogously for $\wt{\mf p} = \mf p_{\wt \psi}$.

First,
Lem.~\ref{lem:nested_nilradicals} in particular yields
\begin{equation}
	\mf u_{\wt \psi} \sse \mf u_{\psi} = [\mf t,\mf u_{\psi}] \sse \bigl[\wt{\mf p},\mf p \bigr].
\end{equation}

Second,
using that $\phi = \Phi_{\Sigma} \sse \Phi$ is a closed subset of roots one computes
\begin{equation}
	\label{eq:decomposition_derived_levi}
	[\mf l,\mf l] = \bops_{\Phi^+_{\Sigma}} \mf t_\alpha \ops \bops_{\Phi_{\Sigma}} \mf g_\alpha,
	\qquad \mf t_\alpha \ceqq \bigl[ \mf g_\alpha,\mf g_{-\alpha} \bigr] \sse \mf t,
\end{equation}
and analogously for $\wt{\mf l}$.
In turn,
if $\alpha \in \Phi^+_{\wt \Sigma}$ then
\begin{equation}
	\mf g_{\pm \alpha} = [\mf t,\mf g_{\pm \alpha}] \sse \bigl[ \mf p,\wt{\mf p}\bigr],
	\qquad \mf t_\alpha = \bigl[ \mf g_\alpha,\mf g_{-\alpha} \bigr] \sse \bigl[ \mf p,\wt{\mf p}\bigr],
\end{equation}
as $\mf p$ contains all the positive root lines.

\subsection{Proof of Prop.~\ref{prop:nonsingular_characters}}
\label{proof:prop_nonsingular_characters}

Choose an element $\bm Y \in \mf u^+_{\bm \psi}$ such that $B_{\bm \lambda}^{\bm \psi}(\bm Y,\bm Y') = 0$ for all $\bm Y' \in \mf u^-_{\bm \psi}$,
and suppose that $\bm \lambda \in \mf t^{r,\dual}_{\bm \phi^{\dual}}$ as in the statement:
we prove that $\bm Y = 0$,
i.e.,
the left radical is trivial.
To this end,
denote by $\pi^\pm_{\psi} \cl \mf g \thra \mf u^\pm_{\psi}$ the projection parallel to $\mf p_{\mp \psi}$ (for any pair of opposite parabolic subsets $\pm \psi \in \mc P_\Phi$).

Write as above $\bm Y = \sum_i Y_i \varepsilon^i$ and $\bm Y' = \sum_j Y'_j \varepsilon^j$.
If we choose $\bm Y' = Y'_{r-1} \varepsilon^{r-1}$,
then the vanishing of~\eqref{eq:nondegenerate_pairing_deeper_character} simplifies to
\begin{equation}
	\Braket{ \chi_{r-1},\pi_{\phi_{r-1}} \bigl( [Y_0,Y'_{r-1}] \bigr) } = 0,
\end{equation}
and the assumptions imply that $\chi_{r-1} \cl \mf l_{\phi_{r-1}} \to \mb C$ is a nonsingular character---%
as $\lambda_{r-1} \in \bigl( \phi_{r-1}^{\dual} \bigr)^{\! \perp} \bigsm \bigcup_{\Phi^{\dual} \sm \phi^{\dual}_{r-1}} \set{\alpha^{\dual}}^\perp$.
Hence,
we must have $\pi^+_{\psi_{r-1}}(Y_0) = 0$.
(Recall that $\mf u^+_{\psi_{r-1}} \sse \mf u^+_{\psi_0}$,
and it is actually a Lie ideal therein by Lem.~\ref{lem:nested_nilradicals}.)

Now suppose recursively that $\pi^+_{\psi_{r-1}}(Y_0) = \dm = \pi^+_{\psi_{r-1}}(Y_k) = 0$ for an integer $k \in \set{0,\dc,r-2}$,
i.e.,
that $Y_0,\dc,Y_k \in \mf p^-_{\psi_{r-1}}$.
If we choose $\bm Y' = Y'_{r-k-2} \varepsilon^{r-k-2}$,
with $Y'_{r-k-2} \in \mf u^-_{\psi_{r-1}} \sse \mf u^-_{\psi_{r-k-2}}$,
then the vanishing of~\eqref{eq:nondegenerate_pairing_deeper_character} simplifies to
\begin{equation}
	\Braket{ \chi_{r-1},\pi_{\phi_{r-1}} \bigl( [Y_{k+1},Y'_{r-k-2}] \bigr) } = 0.
\end{equation}
Indeed,
by (recursive) hypothesis the other nontruncated Lie brackets lie in $[\mf p^-_{\psi_{r-1}},\mf u^-_{\psi_{r-1}}] \sse \mf u^-_{\psi_{r-1}}$,
and all characters $\chi_i$ vanish on $\pi_{\phi_i}(\mf u^-_{\psi_{r-1}}) \sse \bops_\Phi \mf g_\alpha$---%
as $\chi_i \cl \mf l_{\phi_i} \to \mb C$ is the extension \emph{by zero} of a linear map $\lambda_i \cl \mf t \to \mb C$.
Hence,
$Y_{k+1} \in \ker\bigl( \pi^+_{\psi_{r-1}} \bigr)$ as well.

Thus,
by induction,
one has $\pi^+_{\psi_{r-1}}(Y_0) = \dm = \pi^+_{\psi_{r-1}}(Y_{r-1}) = 0$,
so that in particular $Y_{r-1} = 0$.
Now use this as the base of a new (descending) recursion:
suppose thus that $\pi^+_{\psi_k}(Y_0) = \dm = \psi^+_{\psi_k}(Y_{r-1}) = 0$ for some integer $k \in \set{r-1,\dc,1}$---%
so that in particular $Y_k = \dm = Y_{r-1} = 0$.
Choosing $\bm Y' = Y_{k-1}\varepsilon^{k-1}$,
with $Y_{k-1} \in \mf u^-_{\psi_{k-1} \mid \psi_k}$ (in the notation of Lem.~\ref{lem:parabolic_to_parabolic}),
the vanishing of~\eqref{eq:nondegenerate_pairing_deeper_character} simplifies to
\begin{equation}
	\Braket{ \chi_{k-1},\pi_{\phi_{k-1}} \bigl( [Y_0,Y'_{k-1}] \bigr) } = 0,
\end{equation}
using Lem.~\ref{lem:nested_left_radical_condition}---%
as $\mf u^-_{\psi_{k-1} \mid \psi_k} \sse \bops_{\phi_k} \mf g_\alpha$,
in view of the inclusion $\nu_{k-1} \sm \nu_k \sse \phi_k$.
Therefore,
$\pi^+_{\psi_{k-1}}(Y_0) = 0$,
since by hypothesis $\lambda_{k-1} \in \mf t^{\dual}$ does \emph{not} vanish on $\mf t_\alpha \sse \mf t$ for all $\alpha \in \phi_k \sm \phi_{k-1}$.

Now run the nested recursive step,
and suppose that $\pi^+_{k-1}(Y_0) = \dm = \pi^+_{k-1}(Y_l) = 0$ for some integer $l \in \set{0,\dc,k-2}$.
Choosing $\bm Y' = Y'_{k-l-2} \varepsilon^{k-l-2}$,
with $Y'_{k-l-2} \in \mf u^-_{\psi_{k-1} \mid \psi_k}$,
the vanishing of~\eqref{eq:nondegenerate_pairing_deeper_character} simplifies to
\begin{equation}
	\Braket{ \chi_{k-1},\pi_{\phi_{k-1}} \bigl( [Y_{l+1},Y'_{k-l-2}] \bigr) } = 0.
\end{equation}
Here we have used both the recursive hypotheses and Lem.~\ref{lem:nested_left_radical_condition},
and one concludes that $\pi^+_{k-1}(Y_{l+1}) = 0$.
(This is the master recursive step.)

By induction,
one has $\pi^+_{k-1}(Y_0) = \dm = \pi^+_{k-1}(Y_{k-1}) = 0$---%
so that in particular $Y_{k-1} = 0$;
on the whole one has $\pi^+_{k-1}(Y_0) = \dm = \pi^+_{k-1}(Y_{r-1}) = 0$,
and finally by nested induction one has $\pi^+_0(Y_0) = \dm = \pi^+_0(Y_{r-1}) = 0$,
i.e.,
$\bm Y = 0$.

The right radical is then also trivial,
as $\dim_{\mb C} \bigl( \mf u^+_{\bm \psi} \bigr) = \dim_{\mb C} \bigl( \mf u^-_{\bm \psi} \bigr) < \infty$.

Conversely,
start from a character $\bm \chi \cl \mf l_{\bm \chi} \to \mb C$ which extends to $\mf S^{\bm \psi}$,
corresponding to a tuple $\bm \lambda = (\lambda_0,\dc,\lambda_{r-1})$ with $\lambda_i \in \mf Z_{\phi_i}^{\dual}$.
Assume that $\bm \lambda$ does \emph{not} lie in the stratum $\mf t^{\dual,r}_{\bm \phi^{\dual}}$:
then there exists a minimal integer $k \in \set{0,\dc,r-1}$ such that $\braket{ \lambda_{r-k-1},\alpha^{\dual} } = 0$ for some $\alpha \in \phi_{r-k} \sm \phi_{r-k-1}$,
and the above recursion breaks at that stage.
Namely,
up to changing sign we can assume that $\alpha \in \nu_{r-k-1} \sm \nu_{r-k} \sse \nu_0$,
and consider the element $\bm Y = E_\alpha$.
If $Y'_i \in \mf u^-_{\psi_i}$ one has
\begin{equation}
	\pi_{\phi_i} \bigl( [\bm Y,Y'_i] \bigr) \in
	\begin{cases}
		\mf l_{\phi_i} \cap \bops_\Phi \mf g_\alpha,
		    & \quad i \in \set{r-k,\dc,r-1},
		\\
		(0) & \quad i \in \set{0,\dc,r-k-2},
	\end{cases}
\end{equation}
as $E_\alpha \in \mf u^+_{\psi_l}$ if and only if $l \leq r-k-1$,
and conversely $E_\alpha \in \bops_{\phi_l} \mf g_\alpha$ if and only if $l \geq r-k-1$.
Hence,
finally,
one has
\begin{equation}
	B_{\bm \lambda}^{\bm \psi}(E_\alpha,\bm Y') = \Braket{ \chi_{r-k-1},\pi_{\phi_{r-k-1}} \bigl( [E_\alpha,Y'_{r-k-1}] \bigr) },
	\qquad \bm Y' \in \mf u^-_{\bm \psi},
\end{equation}
which vanishes by the hypothesis on $\bm \lambda$---%
so that $\bm Y \neq 0$ lies in the left radical.

\subsection{Proof of Lem.~\ref{lem:reducing_poisson_bracket}}
\label{proof:lem_reducing_poisson_bracket}

For the first inclusion,
one must prove that $[\mf l_{\bm \phi},\mf g_r] \sse \ker(\bm \lambda)$.
To this end,
choose elements $\bm X,\bm X' \in \mf t_r$,
and moreover
\begin{equation}
	\bm Y \in \bops_{i = 0}^{r-1} \Bigl( \bops_{\phi_i} \mf g_\alpha \cdot \varepsilon \Bigr),
	\qquad \bm Y' \in \bops_{j = 0}^{r-1} \Bigl( \bops_\Phi \mf g_\beta \cdot \varepsilon^j \Bigr) \sse \mf g_r.
\end{equation}
Then $[\bm X,\bm X'] = 0$,
and $\pi_{\mf t_r} \bigl( [\bm X,\bm Y'] + [\bm X',\bm Y] \bigr) = 0$,
denoting by $\pi_{\mf t_r} \cl \mf g_r \thra \mf t_r$ the projection along $\bops_{i = 0}^{r-1} \bigl( \bops_\Phi \mf g_\alpha \cdot \varepsilon^i \bigr) \sse \mf g_r$.
Finally,
for the troublesome addend of the expansion $[\bm X + \bm Y,\bm X' + \bm Y'] \in \mf g_r$,
one computes
\begin{equation}
	[\bm Y,\bm Y'] = \bops_{k = 0}^{r-1} \Biggl( \sum_{i+j = k} \sum_{\phi_i \ts \Phi} [\mf g_\alpha,\mf g_\beta] \Biggr) \cdot \varepsilon^k \sse \mf g_r,
\end{equation}
so that $\pi_{\mf t_r} \bigl( [\bm Y,\bm Y'] \bigr) \in \mf t_r$ is spanned by Lie brackets
\begin{equation}
	[E_\alpha,E_\beta] \varepsilon^k,
	\qquad \alpha = -\beta \in \phi_i \sse \Phi,
	\quad i \leq k \in \set{0,\dc,r-1}.
\end{equation}
But the latter lie in the subspace $\bigl( \bops_{\phi_i} \mf t_\alpha \bigr) \cdot \varepsilon^k \sse \ker(\lambda_k)$,
as by hypothesis $\lambda_k$ vanishes on $\mf t_\alpha$ for $\alpha \in \phi_k$---%
and one has $\phi_i \sse \phi_k$.

The proof of the second inclusion is analogous to proving that the subspaces $\mf u^\pm_{\bm \psi} \sse \mf g_r$ are isotropic for the (dual) KKS structure (cf.~Thm.~\ref{thm:wild_polarisations}).
Namely,
choose a Borel subalgebra contained in $\mf p^+_{\psi_0} \sse \mf g$,
i.e.,
a subsystem of positive roots $\Phi^+ \sse \Phi$ such that $\mf u^+_{\psi_i} \sse \bops_{\Phi^+} \mf g_\alpha$ for $i \in \set{0,\dc,r-1}$.
Then one finds
\begin{equation}
	\bigl[ \mf u^+_{\bm \psi},\mf u^+_{\bm \psi} \bigr] \sse \bops_{i = 0}^{r-1} \Biggl( \bops_{\Phi^+} \mf g_\alpha \cdot \varepsilon^i \Biggr) \sse \ker(\pi_{\mf t_r}).
\end{equation}
(The same argument works verbatim for the opposite subspace.)

\subsection{Proof of Lem.~\ref{lem:shapovalov_is_symmetric}}
\label{proof:lem_shapovalov_is_symmetric}

If $X,X' \in U\mf g$,
one has
\begin{equation}
	\ps{t}{} X \cdot X' - \ps{t}{}{X'} \cdot X = \ps{t}{} X \cdot X' - \ps{t\!}{}{\bigl( \ps{t}{}{ X} \cdot X' \bigr) } \in U\mf g.
\end{equation}
So it is enough to show that
\begin{equation}
	\Braket{ \chi,\pi_{\phi}(Y - \ps{t}{}{ Y}) } = 0,
	\qquad Y \in U\mf g.
\end{equation}
To this end,
decompose (uniquely) $Y = Y' + Y''$ along the direct sum~\eqref{eq:universal_enveloping_triangular_decomposition},
and note that $Y'' - \ps{t}{}{Y''} \in \mf u^-_{\psi} \cdot U\mf g + U\mf g \cdot \mf u^+_{\psi}$ is annihilated by $\pi_{\phi}$.
Furthermore,
write $Y' = \sum_i P_i$ as a finite sum of monomials
\begin{equation}
	P_i = X_1^{(i)} \dm X_{m_i}^{(i)},
	\qquad X^{(i)}_j \in \mf l_{\phi},
	\quad m_i \in \mb Z_{\geq 0}.
\end{equation}
By construction,
the transposition fixes $\mf t$ pointwise,
hence all monomials of $Y' - \ps{t}{}{Y'} = \sum_i P_i - \ps{t}{}{P_i}$ contain at least one factor from the subspace $\bops_{\phi} \mf g_\alpha \sse \mf l_{\phi}$.
But then
\begin{equation}
	\Braket{ \chi,\pi_{\phi} (Y' - \ps{t}{}{Y'}) } = \sum_i \Braket{ \chi,P_i - \ps{t}{}{P_i} } = 0,
\end{equation}
as $\bops_{\phi} \mf g_\alpha \sse \ker(\chi)$.

\subsection{Proof of Lem.~\ref{lem:annihilator_cyclic_vector_in_shapovalov_radical}}
\label{proof:lem_annihilator_cyclic_vector_in_shapovalov_radical}

If $X' \in \mf u^+_{\psi}$,
then $\ps{t}{} X \cdot X' \in U\mf g \cdot \mf u^+_{\psi} \sse \ker(\pi_{\phi})$.

Choose otherwise an element $X' \in \ker(\chi) \sse U\mf l_{\phi}$,
and decompose
\begin{equation}
	\ps{t}{} X = Y + Y',
	\qquad Y \in U\mf l_{\phi},
	\quad Y' \in \mf u^-_{\psi} \cdot U\mf g + U\mf g \cdot \mf u^+_{\psi}.
\end{equation}
Then $YX' = \pi_{\phi}(YX') \in \ker(\chi)$,
while for the other summand write further (albeit not uniquely)
\begin{equation}
	Y' = Z Z' + \wt Z \wt Z',
	\qquad Z \in \mf u^-_{\psi},
	\quad Z',\wt Z \in U\mf g,
	\quad \wt Z' \in \mf u^+_{\psi}.
\end{equation}
Thus,
\begin{equation}
	Y'X' = ZZ'X' + \wt Z\wt Z' X' = ZZ'X' + \wt Z \bigl( X'\wt Z' + [\wt Z',X'] \bigr),
\end{equation}
and one has
\begin{equation}
	ZZ'X' \in \mf u^-_{\psi} \cdot U\mf g,
	\qquad \wt ZX'\wt Z',
	\wt Z [\wt Z',X'] \in U\mf g \cdot \mf u^+_{\psi},
\end{equation}
using $[U\mf l_{\phi},\mf u^\pm_{\psi}] \sse U\mf g \cdot u^\pm_{\psi}$ for the latter inclusion;
in turn,
this can be proven by (strong) induction along the standard filtration of $U\mf l_{\phi}$,
noting that $[\mf l_{\phi},\mf u^\pm_{\psi}] \sse \mf u^\pm_{\psi}$.
The statement now follows from~\eqref{eq:annihilator_cyclic_vector_finite_generalised_verma}.

\subsection{Proof of Lem.~\ref{lem:maximal_proper_submodule_wild}}
\label{proof:lem_maximal_proper_submodule_wild}

Let $M^{\bm \psi}_{\bm \lambda} = N_1 \ops N_2$ be a $U\mf g_r$-linear splitting.
Then $v^{\bm \psi}_{\bm \lambda} = v_1 + v_2$,
for unique vectors $v_i \in N_i$ ($i \in \set{1,2}$):
acting with $X \in \mf Z_{\phi_0} \sm (0)$ shows that $v_1$ is a weight vector of weight $\lambda_0$,
so that it is collinear with $v^{\bm \psi}_{\bm \lambda}$,
and $(0) \ssne N_1 \ssne M^{\bm \psi}_{\bm \lambda}$ is impossible.
(Note that $\mf Z_{\phi_0} = (0)$ implies that $\phi_0 = \Phi$,
in which case $\mf l_{\bm \phi} = \mf p_{\bm \psi} = \mf g_r$ and one has $M^{\bm \psi}_{\bm \lambda} \simeq \mb C_{\bm \chi}$ as $U\mf g_r$-module.)

The second statement follows from the fact that all proper submodules of $M^{\bm \psi}_{\bm \lambda}$ lie in $\bops_{Q^+_{\phi_0} \sm (0)} M^{\bm \psi}_{\bm \lambda}[\mu]$,
whence~\eqref{eq:maximal_proper_submodule_wild} is proper.

\subsection{Proof of Lem.~\ref{lem:parabolic_induction}}
\label{proof:lem_parabolic_induction}

Let $\psi \sse \wt\psi \sse \Phi$ (resp.,
$\phi \sse \wt\phi \sse \Phi$) be the parabolic subsets of roots of $\Phi = \Phi(\mf g,\mf t)$ corresponding to the parabolic subalgebras (resp.,
the Levi subsystems corresponding to the Levi factors).

The first statement follows from the fact that $\psi' \ceqq \psi \cap \wt\phi \sse \wt\phi$ is a parabolic subset of roots---%
viewing $\wt\phi$ as a root system in its own right.
The second statement then follows from the equality $\psi' \cap (-\psi') = \phi \sse \wt\phi$.
(Note that $-\psi' = (-\psi) \cap \wt \phi$;
etc.)

\subsection{Proof of Lem.~\ref{lem:abstract_commutator}}
\label{proof:lem_abstract_commutator}

The case $n = 1$ follows from the definition of the commutator of $\mc R$.

Suppose instead that $n \geq 1$:
then recursively
\begin{align}
	X \cdot Y_0 \dm Y_n & = [X,Y_0] \cdot Y_1 \dm Y_n + Y_0 X \cdot Y_1 \dm Y_n                                                                          \\
	                    & = \sum_{I \discup J = \set{1,\dc,n}} \bigl( \bm Y_I \cdot \bigl[ [X,Y_0],\bm Y \bigr]_J + Y_0 \bm Y_I \cdot [X,\bm Y]_J \bigr) \\
	                    & = \sum_{\substack{ I \discup J = \set{0,\dc,n},
	\\ 0 \in I} } \bm Y_I \cdot [X,\bm Y]_J + \sum_{\substack{ I \discup J = \set{0,\dc,n},
	\\ 0 \in J} } \bm Y_I \cdot [X,\bm Y]_J \\
	                    & = \sum_{I \discup J = \set{0,\dc,n}} \bm Y_I \cdot [X,\bm Y]_J.
\end{align}

\subsection{Proof of Cor.~\ref{cor:factorisation_shapovalov_matrix}}
\label{proof:cor_factorisation_shapovalov_matrix}

Let $Q \ceqq \wt Q - \on I_N$.
By construction,
one has $(DC)_{11} = d_1 c^{l_1}$,
and $(DC)_{1j} = 0$ for $j \in \set{2,\dc,N}$.
Thus,
in particular
\begin{equation}
	Q_{11} = \wt Q_{11} - 1 = \frac 1{d_1} c^{-l_1} P_{11},
	\quad Q_{1j} = \wt Q_{1j} = \frac 1{d_1} c^{-l_1} P_{1j},
	\qquad j \in \set{2,\dc,N}.
\end{equation}

Now fix $j \in \set{1,\dc,N}$,
and suppose recursively that:
(i) the coefficients $\wt Q_{1j},\dc,\wt Q_{i-1,j}$ have been determined;
and (ii) they satisfy the stated condition---%
for some $i \in \set{1,\dc,N-1}$.
The equality $\sum_{k = 1}^N (DC)_{ik} \wt Q_{kj} = A[\mu]_{ij}$ imposes
\begin{equation}
	\wt Q_{ij} = \frac 1{d_i} \Bigl( d_{ij} + c^{-l_i} P_{ij} - \sum_{k = 1}^{i-1} d_{ik} \wt Q_{kj} \Bigr) \in \mb C[c^{-1}],
\end{equation}
and by hypothesis $Q_{kj} = \wt Q_{kj} - \delta_{kj} \in c^{-1}\mb C[c^{-1}]$:
hence,
the degree-zero term is nonzero if and only $i = j$,
in which case it equals $1$.

\subsection{Proof of Lem.~\ref{lem:projected_inverse_shapovalov}}
\label{proof:lem_projected_inverse_shapovalov}

Let $v^{(+,0)} \ceqq (1 \ots p^+) v$ and $v^{(0,-)} \ceqq (p^- \ots 1) v$.
The $U(\mf u^\pm)$-linearity of $p^{\pm}$ yields $v^{(+,0)} \in \bigl( M^+_c \ots \mc V_0 \bigr)^{\! \mf u^+}$ and $v^{(0,-)} \in \bigl( \mc V_0 \ots M^-_c \bigr)^{\! \mf u^-}$.

We conclude the proof in the lowest-weight case---%
the highest-weight one being analogous.
The point is proving that
\begin{equation}
	\Delta(\bm X) v^{(0,-)} = \Braket{ \bm \chi_{-c},\bm X} v^{(0,-)},
	\qquad  \bm X \in \mf l.
\end{equation}

Suppose first that $\bm X \in \mf Z_{\bm \phi} \sse t_r$.
Using~\eqref{eq:explicit_projection_vacumm},
and writing (in Sweedler notation) $v = v^{(+)} w^+_c \ots v^{(-)} w^-_c \in M^+_c \ots M^-_c$,
one finds:
\begin{align}
	\label{eq:cartan_action_reduced_inverse_shapovalov}
	\Delta(\bm X) v^{(-,0)} & = \bigl( (\bm Xp^-) \ots 1 + p^- \ots \bm X \bigr) v                                                                                                  \\
	                        & = (\bm X \cdot v^{(+)} w_0) \ots v^{(-)} w^-_c + ( v^{(+)} w_0) \ots \bm X \cdot v^{(-)} w^-_c                                                        \\
	                        & = \bigl( \ad_{\bm X} \ots 1 + 1 \ots \ad_{\bm X} \bigr) \bigl( (v^{(+)} w_0) \ots v^{(-)} w^-_c \bigr) + \Braket{ \bm \lambda_{-c},\bm X } v^{(-,0)},
\end{align}
as by construction $\bm X$ acts by zero on $w_0$ and by $\bm \lambda_{-c}$ on $w_c^-$.
Moreover,
observe that
\begin{equation}
	\label{eq:equivariance_projection}
	\bigl[ \bm X,v^{(+)} \bigr] w_0 = p^- \bigl( [\bm X,v^{(+)}] w^+_c \bigr) \in \mc V_0,
\end{equation}
using again~\eqref{eq:explicit_projection_vacumm},
since $\bigl[ \mf t_r,\mf u^\pm \bigr] \sse \mf u^\pm$.
Hence,
~\eqref{eq:cartan_action_reduced_inverse_shapovalov} simplifies to
\begin{equation}
	\bigl( \Delta(\bm X) - \Braket{ \bm \chi_c,\bm X } \bigr) v^{(0,-)}  = \bigl( p^- \ots 1 \bigr) \Delta( \ad_{\bm X} ) v \in \mc V_0 \ots M^-_c.
\end{equation}
Now the $\mf g_r$-invariance of $v$ implies that the right-hand side vanishes,
noting that $\Delta(\bm X) v = \Delta(\ad_{\bm X}) v$.

Choose instead $\bm X \in \ker(\bm \chi_c) \sse \mf l$:
one must prove that $\Delta(\bm X) v^{(0,-)} = 0$.
Computing as in~\eqref{eq:cartan_action_reduced_inverse_shapovalov},
the point is establishing the identity
\begin{equation}
	p^- \bigl( \bm X \cdot v^{(+)} w^+_c \bigr) = \bm X \cdot v^{(+)} w_0 \in \mc V_0,
\end{equation}
as one can then conclude by the $\mf g_r$-invariance of $v$.
Clearly the above holds if $v^{(+)} = 1$,
but (contrary to~\eqref{eq:equivariance_projection}) we \emph{cannot} immediately use~\eqref{eq:explicit_projection_vacumm} in general.
(As usual,
this is only visible in the nongeneric wild case.)
Choose therefore an integer $k \geq 0$,
and suppose recursively that
\begin{equation}
	p^- (\bm X \cdot \bm Y w^+_c) = \bm X \cdot \bm Y w_0 \in \mc V_0,
	\qquad \bm Y w^+_c \in \mc F^{\leq k} (M^+_c),
\end{equation}
using the filtration of~\eqref{eq:filtration_finite_singularity_module}.
Then let $\wt{\bm Y} w^+_c \in \mc F^{\leq k+1}(M^+_c)$ for a suitable vector $\wt{\bm Y}$ in the PBW basis of $U(\mf u^-)$ constructed above.
One can factor
\begin{equation}
	\wt{\bm Y} = \bm X_{\alpha,i} \cdot \bm Y' \in U(\mf u^-),
	\qquad \alpha \in \nu_0,
	\quad i \in \set{0,\dc,d_\alpha-1},
\end{equation}
in the notation of~\eqref{eq:variables_pbw_basis},
so that $\bm Y' w^+_c \in \mc F^{\leq k}(M^+_c)$;
compute
\begin{equation}
	p^- \bigl( \bm X \cdot \wt{\bm Y} w^+_c \bigr) = \bm X_{\alpha,i} \cdot p^- \bigl( \bm X \bm Y' w^+_c \bigr) + p^- \bigl( [\bm X,\bm X_{\alpha,i}] \cdot \bm Y' w^+_c \bigr),
\end{equation}
by the $U(\mf u^-)$-linearity of $p^-$.
Now $\bm X \in \mf t_r$ yields $[\bm X,\bm X_{\alpha,i}] \sse \mf u^-$.
Otherwise,
one has $\bm X \in \bops_{i = 0}^{r-1} \bigl( \bops_{\phi_i} \mf g_\alpha \bigr) \cdot \varepsilon^i \sse \mf l$,
in which case a priori $[\bm X,\bm X_{\alpha,i}] \in [\mf l,\mf u^-] \sse \mf p^-$.
Finally,
one can (uniquely) decompose
\begin{equation}
	[\bm X,\bm X_{\alpha,i}] = Z^- + Z_0,
	\qquad Z^- \in \mf u^-,
	\quad Z_0 \in \mf l,
\end{equation}
and (reasoning as in Lem.~\ref{lem:nested_left_radical_condition}) one finds $Z_0 \in \ker(\bm \chi_c)$.
Thus,
the $U(\mf u^-)$-linearity of $p^-$ and the recursive hypothesis together yield
\begin{equation}
	p^- \bigl( \bm X \cdot \wt{\bm Y} w^+_c \bigr) = X_{\alpha,i} \cdot (\bm X \bm Y' w_0) + [\bm X,\bm X_{\alpha,i}] \cdot (\bm Y' w_0) = \bm X \cdot \wt{\bm Y} w_0.
\end{equation}

\subsection{Proof of Prop.~\ref{prop:invariant_vectors_completed_tensor_product}}
\label{proof:prop_invariant_vectors_completed_tensor_product}

We prove the statement in the highest-weight case:
the argument is the same in the lowest-weight one,
but the Shapovalov form must be modified---%
taking projections parallel to $\mf u^+ \cdot U\mf g_r + U\mf g_r \cdot \mf u^- \sse U\mf g_r$.
Importantly,
the argument we give also works for a completed tensor product $\mc V \wh \ots \wt M^\pm_c$,
with the factors swapped.

Introduce again the (ordered) basis $( X_{\alpha_k,i} )_{k,i}$ of $\mf u^-$,
and let $\mc B^- = \mc B^-_{\bm \psi}$ be the associated basis of $U (\mf u^-) \simeq M^+_c$ (cf.~\S~\ref{sec:identification_verma_symmetric_algebra}),
with tacit embedding $\mc B^- \sse U\mf g_r$.
Every element of~\eqref{eq:completed_tensor_product_verma} can be written $\wh v = \sum_{\mc B^-} (\bm X w^+_c) \ots v_{\bm X}$,
for suitable vectors $v_{\bm X} \in \mc V$---%
infinitely many of which may be nonzero.
The zeroth component is $w^+_c \ots v_1 \in M^+_c[0] \ots \mc V$,
and one can further decompose
\begin{equation}
	\mc B^- = \discup_{Q^+_{\phi_0}} \mc B^-[\mu],
	\qquad \mc B^-[\mu] \ceqq \mc B^- \cap U (\mf u^-)_{-\mu} \sse \mc B^-.
\end{equation}
Finally,
denote by $\mc B^+ = \coprod_{Q^+_{\phi_0}} \mc B^+[\mu]$ the dual basis of $U (\mf u^+)$ with respect to $\mc S_c$,
i.e.,
the matrix of $\mc S_{c,\mu}$ is the identity when expressed in the corresponding bases of $M^\pm_c[\mu]$,
for all $\mu \in Q^+_{\phi_0}$.
(Beware that this is \emph{not} the same as the PBW basis corresponding to~\eqref{eq:dual_basis_positive_nilradical},
which only relies on a nonsingular character.)

Now suppose that $\wh v \in \bigl( \wt M^+_c \wh \ots \mc V \bigr)^{\! \mf u^+}$,
and that there is an integer $k \geq 0$ such that the corresponding vectors $v_{\bm X} \in \mc V$ have been determined whenever $\bm X w^+_c \in \mc F^{\leq k} (M^+_c)$,
using the filtration~\eqref{eq:filtration_finite_singularity_module}.
Choose an element $\wt{\bm X} \in \mc B^- \bigl[ \wt \mu \bigr]$ with $\abs{\wt \mu}_{\nu_0} = k+1$,
and denote by $\wt{\bm Y} \in \mc B^+ \bigl[ \wt \mu \bigr]$ its corresponding dual.
The main point is that the Shapovalov form satisfies
\begin{equation}
	\label{eq:shapovalov_and_cyclic_vector}
	\bigl( \iota(\bm Y) \bm X - \mc S_c( \bm Y,\bm X) \bigr) w^+_c \in \mf u^- U\mf g_r \cdot w^+_c = \bops_{Q^+_{\phi_0} \sm (0)} M^+_c[\mu],
	\qquad \bm Y ,\bm X \in U\mf g_r,
\end{equation}
since $\mf u^+ w^+_c = (0)$,
and $U\mf l$ acts on the canonical generator by the given character.
Consider then the element $\wt{\bm Z} \ceqq \iota \bigl( \wt{\bm Y} \bigr) \in U(\mf u^+)$.
If $\wt{\bm Z} = Z_1 \dm Z_l$ for some integer $l \geq 1$,
one has
\begin{equation}
	\Delta \bigl( \wt{\bm Z} \bigr) = \sum_{I \discup J = \set{1,\dc,l}} \bigl( \wt{\bm Z}_I \ots \wt{\bm Z}_J \bigr) \in U(\mf u^+) \ots U(\mf u^+),
\end{equation}
in the notation of Lem.~\ref{lem:abstract_commutator}.
Now for $I \ssne \set{1,\dc,l}$ one has $\wt{\bm Z}_I w^+_c \in \mc F^{\leq k} (M^+_c)$,
and the Shapovalov-orthogonality of the decomposition yields
\begin{equation}
	\mc S_c \bigl( \iota (\wt{\bm Z}_I),\bm X \bigr) = 0,
	\qquad \bm X \in \bops_{\abs \mu_{\nu_0} \geq k_1} U(\mf u^-)_{-\mu},
\end{equation}
as $\iota$ preserves the $\mf Z_{\phi_0}^{\dual}$-grading.
Together with~\eqref{eq:shapovalov_and_cyclic_vector},
this implies that
\begin{equation}
	\label{eq:dual_element_action_1}
	\bigl( \wt{\bm Z}_I \ots \wt{\bm Z}_J \bigr) \wh v - \sum_{\abs \mu_{\nu_0} \leq k} \Biggl( \,
	\sum_{\mc B^-[\mu]} \mc S_c \bigl( \iota(\wt{\bm Z}_I),\bm X \bigr) \bigr( w^+_c \ots \wt{\bm Z}_J v_{\bm X} \bigr) \Biggr) \in \prod_{Q^+_{\phi_0} \sm (0)} \bigl( M^+_c[\mu] \ots \mc V \bigr),
\end{equation}
if 	$J \neq \vn$,
and analogously
\begin{equation}
	\label{eq:dual_element_action_2}
	\bigl( \wt{\bm Z} \ots 1 \bigr) \wh v - w^+_c \ots v_{\wt{\bm X}} \in \prod_{Q^+_{\phi_0} \sm (0)} \bigl( M^+_c[\mu] \ots \mc V \bigr) \sse \wt M^+_c \wh \ots \mc V.
\end{equation}
Finally,
sum~\eqref{eq:dual_element_action_1}--\eqref{eq:dual_element_action_2} using the identity $\Delta \bigl( \wt{\bm Z} \bigr) \wh v = 0$.
This yields $w^+_c \ots v_{\wt{\bm Z}} \in \prod_{Q^+_{\phi_0} \sm (0)} \bigl( M^+_c[\mu] \ots \mc V \bigr)$,
where
\begin{equation}
	v_{\wt{\bm Z}} \ceqq v_{\wt{\bm X}} + \sum_{\substack{ I \discup J = \set{1,\dc,l},
			\\ J \neq \vn }} \Biggl( \,
	\sum_{\abs \mu_{\nu_0} \leq k} \sum_{\mc B^-[\mu]} \mc S_c \bigl( \iota(\wt{\bm Z}_I),\bm X \bigr) \wt{\bm Z}_J v_{\bm X} \Biggr) \in \mc V.
\end{equation}
(The nested sum is finite.)
In view of the given decomposition~\eqref{eq:completed_tensor_product_verma},
one has $v_{\wt{\bm Z}} = 0$,
and so indeed $v_{\wt{\bm X}} \in \mc V$ is a linear combination of lower-height terms.

\subsection{Proof of Lem.~\ref{lem:quotient_stratification_1}}
\label{proof:lem_quotient_stratification_1}

Suppose that there is an $H$-orbit intersecting $T_i \cap T_j$.
Then $H$ maps some element of $T_i$ into $T_j$,
and by~\eqref{eq:H_stratification} there exists $g \in H$ such that $j = g.i$ and $T_j = g(T_i)$:
therefore,
one has $\ol i = \ol j$,
and in conclusion an $H$-orbit intersects $T_i$ if and only if it intersects $T_j$.
The remaining implications are clear.

\subsection{Proof of Lem.~\ref{lem:quotient_stratification_2}}
\label{proof:lem_quotient_stratification_2}

The subspace in the statement is closed:
its complement corresponds to the subspace of $H$-orbits contained in $T \sm \ol{T_i} \sse T$,
whose lift in $T$ is the $H$-invariant open subspace
\begin{equation}
	T \sm H \bigl( \ol{T_i} \bigr) = T \sm \Biggl(\bigcup_{g \in H} \ol{T_{g.i}} \Biggr) \sse T.
\end{equation}
(Here we use that $H$ is finite.)
Thus,
the subspace $\ol{T^H_i}$ is contained therein.

Conversely,
choose $x \in \ol{T_i}$.
Any open neighbourhood of $Hx \in T \bs H$ lifts to an $H$-invariant open subspace $U \sse T$ containing the orbit.
In particular,
one has $x \in U$,
so that $U \cap T_i \neq \vn$;
by $H$-invariance,
$U$ contains an $H$-orbit intersecting $T_i$,
and its projection in the quotient thus intersects $T^H_i$,
whence $Hx \in \ol{T^H_i}$.

\section{Acronyms (alphabetical order)}
\label{sec:acronyms}

\begin{description}
	\item[BGG] the category $\mc O$ of Bernstein--Gelfand--Gelfand;

	\item[BMT/GT] the Bonelli--Maruyoshi--Tanzini/Gaiotto--Teschner Virasoro pair,
	      and corresponding Whittaker vector;

	\item[CFT] conformal field theory;

	\item[DMT] the flat connection of De Concini/Millson--Toledano Laredo;

	\item[FMTV] the flat connection of Felder--Markov--Tarasov--Varchenko;

	\item[KKB] the hyperkähler structure of Kronheimer--Kovalev--Biquard;

	\item[KKS] the symplectic structure of Kirillov--Kostant--Souriau;

	\item[KZ] the flat connection of Knizhnik--Zamolodchikov;

	\item[KZB] Bernard's higher-genus version of KZ;

	\item[PBW] the $\mb C$-bases à la Poincaré--Birkhoff--Witt;

	\item[TCLA] a truncated-current Lie algebra;

	\item[TCLG] a truncated-current Lie group;

	\item[TUY] the projectively-flat connection of Tsuchiya--Ueno--Yamada;

	\item[UEA] the universal enveloping algebra of a Lie algebra;

	\item[WZNW] the model of Wess--Zumino--Novikov--Witten.
\end{description}

\backmatter

\bibliographystyle{amsplain}
\bibliography{bibliography}
\end{document}